\documentclass[12pt,a4paper]{article}
\usepackage{amsmath}
\usepackage{amsthm}
\usepackage{amsfonts}
\usepackage{amssymb}
\usepackage{stmaryrd}
\usepackage{latexsym}
\usepackage{eucal}

\addtolength{\hoffset}{1.5cm} \setlength{\textwidth}{17.20cm}
\setlength{\textheight}{250.0mm}
\setlength{\voffset}{-3.4in}
\addtolength{\voffset}{2cm} \setlength{\headsep}{0.5cm} \voffset=-0.95in \hoffset=-0.5in

\theoremstyle{theorem}
\newtheorem{theorem}{Theorem}[section]
\newtheorem{lemma}[theorem]{Lemma}
\newtheorem{corollary}[theorem]{Corollary}
\newtheorem{proposition}[theorem]{Proposition}

\newtheorem{definition}[theorem]{Definition}
\newtheorem{procedure}[theorem]{Procedure}
\theoremstyle{definition}
\newtheorem{remark}[theorem]{Remark}
\newtheorem{example}[theorem]{Example}

\renewenvironment{proof}[1][\proofname]{\noindent \textbf{Proof. }}
{%
  \qed\endtrivlist
}

\author{V. A. Shcherbacov
\\ Institute of Mathematics and Computer Science\\
Academy of Sciences of Moldova \\
 Academiei str. 5,
Chi\c{s}in\u{a}u,  MD$-$2028,
 Moldova \\
 E-mail: \emph{scerb@math.md}\quad}

\title{A-nuclei and A-centers of a quasigroup}               
\date{}               
\begin{document}

\tolerance1500

\maketitle

\begin{abstract}
  A-nuclei (groups of regular permutations) of a quasigroup are studied. A quasigroup is A-nuclear if and only if it is group isotope. Any quasigroup with permutation medial or  paramedial identity is an  abelian group isotope.  Definition   of A-center of a quasigroup is given. A quasigroup is A-central if and only if it is abelian group isotope.  If a quasigroup is central in Belyavskaya-Smith sense, then it is A-central. Conditions when  A-nucleus define  normal  congruence of a quasigroup  are established, conditions normality of nuclei of some inverse quasigroups are given. Notice, definition of A-nucleus of  a loop and A-center of a loop coincides, in fact, with corresponding standard definition.

Keywords: {Quasigroup, left quasigroup, left nucleus, middle nucleus, left congruence, congruence, isostrophy, autotopy, translation}.

MSC \: {20N05}.
\end{abstract}

\footnote{\textsf{
The research to this paper was sponsored by Special Projects Office, Special and Extension Programs of the
Central European University. 
}}

\def\Ibf{{\text{\bf P}}}
\def\Lbf{{\text{\bf L}}}
\def\Rbf{{\text{\bf R}}}
\def\Tbf{{\text{\bf T}}}

\tableofcontents

\section{Introduction}

\subsection{Historical notes}

G.N. Garrison \cite{GARRISON} was the first who  has defined   concept of a quasigroup nucleus.
A nucleus (middle, left, or right) "measures" how far is a quasigroup from a group.
Unfortunately, if a quasigroup $(Q, \cdot)$ has a non-trivial Garrison's nucleus, then this quasigroup is a right loop, or a left loop, either is a loop \cite[p. 17]{HOP}.
Therefore many authors tried to diffuse (to generalize) concept of nucleus on "proper" quasigroup case.

Various definitions of groupoid, quasigroup and loop nuclei  were given and researched by A. Sade, R.H. Bruck,  V.D. Belousov, P.I. Gramma, A.A. Gva\-ra\-miya,  M.D. Kitaroag\u a, G.B. Be\-lyav\-s\-kaya,  H.O. Pflugfelder  and many  other mathematicians  \cite{BRUCK_46, SADE_58, Kertez_Sade_59, RHB, GRAMMA_PI, VD, GVARAM_72, BELAS, BEL_RUSS_75, MDK, VDB_GVAR, LUMPOV_90, gbb, gbb1, HOP, SCERB_03, KEED_SCERB}.

V.D. Belousov discovered  connections of quasigroup nuclei  with the gro\-ups of regular mappings of quasigroups \cite{vdb0, VD, 1a,  kepka71, ks3, KEED_SCERB, kepka_05}.

    Belousov studied   autotopisms of the form $(\alpha , \varepsilon , \alpha )$  of a quasigroup $(Q,\circ )$. In this paper we study   autotopisms of the form $(\alpha , \varepsilon , \gamma )$, i.e. we use Kepka's generalization  \cite{kepka71, kepka75}. By this approach "usual" nuclei of quasigroups are obtained as some   orbits by the action of components of  A-nuclei on the set $Q$. The study of A-nuclei of $(\alpha, \beta, \gamma)$- and $(r; s; t)$-inverse quasigroups was initiated in \cite{ks3, KEED_SCERB}.

In the paper we develop Belousov approach to parastrophes of quasigroup nuclei \cite{vdb_63} using concept of middle translation \cite{BELAS}.
Also we make an attempt to extend some  results   of  A. Drapal  and  P. Jedli\u cka \cite{DR_04, DRAP_JEDL} about  normality of nuclei.

\subsection{Quasigroups, identity elements, translations}

For convenience of readers we start from some  definitions which it is possible to find in \cite{VD, 1a, HOP,
JDH_2007, SCERB_03}.

\begin{definition} \label{GROU}
{\it A binary groupoid $(G, A)$ is understood to be a non-empty set $G$ together with a binary operation $A$.}
\end{definition}

As usual the product of mappings is their consecutive realization.  We shall use the following (left) order of
multiplication of maps: if $\mu, \nu$ are some maps, then $(\mu\nu) (x)= \mu (\nu (x))$.

\begin{definition} \label{RIGHT_QUSIGROUP} \label{LEFT_QUSIGROUP}
A groupoid $(Q,\circ )$ is called a \textit{right  quasigroup} (a \textit{left  quasigroup}) if, for all $a,b
\in Q$, there exists a unique solution $x \in Q$ to the equation $x\circ a = b$ ($a\circ x = b$), i.e. in this
case any right (left) translation of the groupoid $(Q,\circ )$ is a bijective map  of the set $Q$.
\end{definition}

In this case any right (left) translation of the groupoid $(Q,\circ )$ is a bijective map  of the set $Q$.

\begin{definition} \label{def2}
A groupoid $(Q,\circ )$ is called a \textit{quasigroup} if for any fixed pair of elements $a, b
\in Q$ there exist a unique solution $x \in Q$ to the equation $x\circ a = b$ and a unique solution $y \in Q$ to the equation $a\circ y = b$.
\end{definition}

\begin{definition} An element $f$ is a left  identity element of a quasigroup $(Q,\cdot)$ means that $f \cdot x = x$ for all $x\in
Q$.

An element $e$ is a right  identity element of a quasigroup $(Q,\cdot)$ means that $x \cdot e = x$ for all $x\in
Q$.

An element $s$ is a middle  identity element of a quasigroup $(Q,\cdot)$ means that $x \cdot x = s$ for all $x\in
Q$.

An element $e$ is an identity element of a quasigroup $(Q,\cdot)$, if  $x\cdot e = e \cdot x = x$ for all $x\in Q$.
\end{definition}

\label{loop_DEFF}
\begin{definition} A quasigroup $(Q,\cdot)$ with a left  identity  element $f \in Q$  is called a {\it left loop}.

A quasigroup $(Q,\cdot)$ with a right  identity  element $e \in Q$  is called a {\it right  loop}.

A quasigroup $(Q,\cdot)$ with a middle  identity  element $s \in Q$  is called an {\it unipotent quasigroup}.

A quasigroup $(Q,\cdot)$ with an  identity  element $e \in Q$  is called a {\it loop}.
\end{definition}

Define in a quasigroup $(Q, \cdot)$ the following mappings: $f : x\mapsto f(x)$, where $f(x)\cdot x = x$; $e : x\mapsto e(x)$, where $x\cdot e(x) = x$;
 $s : x\mapsto s(x)$, where  $s(x) = x\cdot x$.

\begin{remark} \label{LEFT_RIGHT_LOOP}
In a left loop   $f(Q) = 1$,  in a right loop   $e(Q) = 1$, in an unipotent quasigroup   $s(Q) = 1$, where $1$ is a fixed element of the set  $Q$.
\end{remark}

\begin{definition} A quasigroup $(Q,\cdot)$ with  identity   $x\cdot x = x$   is called an {\it idempotent quasigroup}.
\end{definition}

\begin{remark} \label{IDEMPOT_QUAS}
In an idempotent quasigroup the mappings $e, f, s$ are identity permutations of the set $Q$. Moreover, if one from these three mappings is identity permutation, then all other also are identity permutations.
\end{remark}

In a loop $(Q,\cdot)$, as usual \cite{VD}, an element $b$ such that $b\cdot a = 1$ is called left inverse element to the element $a$, $b = {}^{-1}(a)$; an element $c$ such that $a\cdot c = 1$ is called right  inverse element to the element $a$, $c = (a)^{-1}$.

\begin{definition} \label{def1}  A binary  groupoid $(Q, A)$ with a binary operation $A$ such that in the equality $A(x_1,
x_2) = x_3$ knowledge of any $2$ elements of $x_1, x_2,x_3$ uniquely specifies the remaining one is called a
\textit{binary quasigroup}  \cite{2, MUFANG}. \end{definition}

From Definition \ref{def1} it follows that  with any quasigroup $(Q, A)$ it possible to associate else
$(3!-1)=5$ quasigroups, so-called parastrophes of quasigroup $(Q, A)$:
 $A(x_1, x_2) = x_3 \Leftrightarrow A^{(12)}(x_2, x_1) = x_3
\Leftrightarrow {A}^{(13)}(x_3, x_2) = x_1 \Leftrightarrow {A}^{(23)}(x_1, x_3) = x_2 \Leftrightarrow
{A}^{(123)}(x_2, x_3) = x_1 \Leftrightarrow {A}^{(132)}(x_3, x_1) = x_2.$

\medskip

We shall denote:

the operation of $(12)$-parastrophe of a quasigroup $(Q, \cdot)$  by $\ast$;

the operation of $(13)$-parastrophe of a quasigroup $(Q, \cdot)$  by $/$;

the operation of $(23)$-parastrophe of a quasigroup $(Q, \cdot)$  by $\backslash$.

\smallskip

We have defined left and right translations of a groupoid and, therefore, of  a quasigroup. But for  quasigroups
it is possible to define  and the third kind of translations, namely the map $P_a : Q\longrightarrow Q$, $x\cdot
P_{a}x = a$  for all $x \in Q$ \cite{BELAS}.

\smallskip

In the following table connections between different kinds of translations in different parastrophes
of a quasigroup  $(Q,\cdot)$  are given \cite{DUPLAK, SCERB_91}. In fact this table  is in  \cite{BELAS}.

\hfill Table 1 \label{Table_0}
\[
\begin{array}{|c||c| c| c| c| c| c|}
\hline
  & \varepsilon  & (12) & (13) & (23) & (123) & (132)\\
\hline\hline
R  & R & L & R^{-1} & P & P^{-1} & L^{-1} \\
\hline
L  & L & R & P^{-1} & L^{-1} & R^{-1} & P \\
\hline
P  & P & P^{-1} & L^{-1} & R & L & R^{-1} \\
\hline
R^{-1} & R^{-1} & L^{-1} & R & P^{-1} &  P & L \\
\hline
L^{-1}  & L^{-1} & R^{-1} & P & L & R & P^{-1} \\
\hline
P^{-1}  & P^{-1} & P & L & R^{-1} & L^{-1} & R \\
\hline
\end{array}
\]

With any quasigroup $(Q,\cdot)$ we can associate the sets of all left translations ($\mathbb {L}$), right
translations ($\mathbb {R}$) and  middle translations ($\mathbb {P}$). Denote the groups generated by all left, right
and middle translations of a quasigroup $(Q,\cdot)$ as $LM(Q,\cdot)$, $RM(Q,\cdot)$ and $PM(Q,\cdot)$,
respectively.

The group generated by all left and right translations of a quasigroup $(Q,\cdot)$ is called (following articles
of A.A. Albert \cite{A1, A2})  multiplication group of a quasigroup. This group usually is denoted by $M(Q,\cdot)$.
By $FM(Q,\cdot)$ we shall denote  a group generated by the sets $\mathbb{L,R, P}$ of a quasigroup $(Q,\cdot)$.

Connections between different kinds of local identity elements (and of course of identity elements) in different parastrophes of a quasigroup
$(Q,\cdot)$ are given in the following table  \cite{SC_89_1, SCERB_91}.

\medskip

\hfill Table 2 \label{TABLE_two}
\[
\begin{array}{|c||c| c| c| c| c| c|}
\hline
  & \varepsilon  & (12)  & (13) & (23)   & (123) & (132) \\
\hline\hline
 {f}  &  {f} &  {e} &  {s} &  {f} &  {e} &  {s} \\
\hline
 {e}  &  {e} &  {f} &  {e} &  {s} &  {s} &  {f} \\
\hline
 {s}  &  {s} &  {s} &  {f} &  {e} &  {f} &  {e} \\
\hline
\end{array}
\]

In Table 2, for example, ${s}^{(123)} =  {f}^{(\cdot)}.$

\begin{lemma}
Parastrophic image of a loop is a loop, or an unipotent left loop, either an unipotent right loop. Parastrophic image of an idempotent quasigroup is an idempotent quasigroup.
\end{lemma}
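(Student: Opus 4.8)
The plan is to derive everything from Table~2 together with Remark~\ref{LEFT_RIGHT_LOOP} and Remark~\ref{IDEMPOT_QUAS}, so that no direct computation with the parastrophic operations is needed. The first step is to record the elementary dictionary: in any quasigroup $(Q,\circ)$ the map $f$ is a constant map if and only if $(Q,\circ)$ possesses a left identity element, and similarly $e$ is constant iff $(Q,\circ)$ possesses a right identity element, while $s$ is constant iff $(Q,\circ)$ possesses a middle identity element, i.e. is unipotent. Indeed, if $f(x)=f_0$ for all $x$ then $f_0\circ x=f(x)\circ x=x$, so $f_0$ is a left identity; conversely a left identity $f_0$ satisfies $f_0\circ x=x$, and uniqueness of the solution of $y\circ x=x$ forces $f(x)=f_0$ for every $x$. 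The arguments for $e$ and $s$ are identical, using the equations $x\circ y=x$ and $x\circ x=y$. In particular, by Remark~\ref{LEFT_RIGHT_LOOP}, a loop is precisely a quasigroup for which both $f$ and $e$ are constant maps, while $s$ need not be constant.

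Next I would simply run through the six columns of Table~2, using the reading convention fixed by the worked example $s^{(123)}=f^{(\cdot)}$, namely that the entry in row $X$, column $\sigma$ is the local identity map of the original quasigroup that coincides with the $X$-map of the $\sigma$-parastrophe. For a loop, $f$ and $e$ are constant. Columns $\varepsilon$ and $(12)$ give $f^{\varepsilon}=f$, $e^{\varepsilon}=e$ and $f^{(12)}=e$, $e^{(12)}=f$, all constant, so these parastrophes are loops. Columns $(23)$ and $(123)$ give $f^{(23)}=f$, $s^{(23)}=e$ and $f^{(123)}=e$, $s^{(123)}=f$, all constant, so the $(23)$- and $(123)$-parastrophes possess a left identity and a middle identity, i.e. are unipotent left loops. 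Columns $(13)$ and $(132)$ give $e^{(13)}=e$, $s^{(13)}=f$ and $e^{(132)}=f$, $s^{(132)}=e$, all constant, so the $(13)$- and $(132)$-parastrophes possess a right identity and a middle identity, i.e. are unipotent right loops. This exhausts the five proper parastrophes and yields the claimed trichotomy.

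For the idempotent case I would invoke Remark~\ref{IDEMPOT_QUAS}: in an idempotent quasigroup the maps $e$, $f$, $s$ are all the identity permutation of $Q$. By Table~2 each of $f^{\sigma}$, $e^{\sigma}$, $s^{\sigma}$ coincides with one of $f$, $e$, $s$, hence equals the identity permutation; in particular $s^{\sigma}$ is the identity permutation, and since $s^{\sigma}(x)=x\circ^{\sigma}x$ by the definition of $s$, the $\sigma$-parastrophe satisfies $x\circ^{\sigma}x=x$, i.e. is idempotent (one may equally well appeal to the last sentence of Remark~\ref{IDEMPOT_QUAS}).

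I do not expect a genuine obstacle; the proof is essentially bookkeeping. The one place requiring care is the "constant map $\Leftrightarrow$ one-sided (or middle) identity element" correspondence set up in the first step, and then applying the reading convention of Table~2 consistently, so that the left/right distinction in each column comes out the correct way round — reversing that convention is the only way the argument could go astray.
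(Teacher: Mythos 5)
Your argument is correct and follows essentially the same route as the paper, whose entire proof is the remark that one can use Table~2; you have merely supplied the bookkeeping (the constant-map $\Leftrightarrow$ local-identity dictionary and the column-by-column reading), and your conclusions for each parastrophe agree with the paper's own Remark~\ref{REM_LOOP_PARASTR}.
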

\begin{proof}
It is possible to use Table 2.
\end{proof}

In this paper an algebra (or algebraic structure) is a set $A$ together with a collection of operations defined on $A$.
T. Evans \cite{EVANS_51}  defined  a binary quasigroup as an algebra $(Q, \cdot, /, \backslash)$ with three
binary operations. He has used  the following identities:
\begin{equation}
x\cdot(x \backslash y) = y, \label{(1)}
\end{equation}
\begin{equation}
(y / x)\cdot x = y, \label{(2)}
\end{equation}
\begin{equation}
x\backslash (x \cdot y) = y, \label{(3)}
\end{equation}
\begin{equation}
(y \cdot x)/ x = y. \label{(4)}
\end{equation}

  \begin{definition}    An algebra $(Q, \cdot, \backslash, /)$ with identities (\ref{(1)}) -- (\ref{(4)}) is called a \textit{quasigroup} \cite{EVANS_51, BIRKHOFF, BURRIS, VD, 1a,
HOP}. \label{def3}
\end{definition}

In any quasigroup $(Q, \cdot,
\backslash, /)$ the following identities are true  \cite{BIRKHOFF}:
\begin{equation}
(x/y)\backslash x = y, \label{(53)}
\end{equation}
\begin{equation}
x/(y\backslash x) = y. \label{(63)}
\end{equation}

\subsection{Isotopism}

\begin{definition}
A binary  groupoid $(G, \circ)$ is an isotope
of a binary groupoid $(G, \cdot)$ (in other words $(G, \circ)$ is an isotopic image of $(G, \cdot)$), if there exist permutations $\mu_1, \mu_2, \mu_3$ of the set
$G$ such that
$ x_1 \circ  x_2 = \mu^{-1}_3
(\mu_1 x_1  \cdot \mu_2 x_2)$ for all $x_1,  x_2 \in G$  \cite{VD, 1a, HOP}.
\end{definition}

 We can write this fact and in the
form $(G,\circ) = (G,\cdot)T$, where $T = (\mu_1, \mu_2, \mu_3)$.

\begin{definition}
If  $(\circ) = (\cdot)$, then the  triple  $(\mu _{1},\mu _{2},\mu _{3})$
is called an autotopy of groupoid $(Q,\cdot)$.
\end{definition}

 The set of all autotopisms of a groupoid $(Q, \cdot)$ forms a group relative to usual component-wise multiplications of autotopisms \cite{VD, 1a, HOP}. We shall denote this group  as $Avt(Q,\cdot)$.

The last component of an autotopy of a groupoid is called a \textit{quasiautomorphism}.

\begin{lemma} \label{TRANSLATIONS}  \begin{enumerate}
\item If $(Q, \circ) = (Q,\cdot)(\alpha, \varepsilon, \varepsilon)$,  then $L^{\circ}_x = L^{\cdot}_{\alpha x}$,  $R^{\circ}_x  = R^{\, \cdot}_x \alpha  $, $P^{\circ}_x = P^{\cdot}_x \alpha $ for all $x\in Q$.
\item If $(Q, \circ) = (Q,\cdot)(\varepsilon, \beta, \varepsilon)$,  then $L^{\circ}_x   = L^{\cdot}_{x} \beta $,  $R^{\,\circ}_x = R^{\,\cdot}_{\beta x}  $, $(P^{\circ}_x)^{-1}  = (P^{\, \cdot}_x )^{-1} \beta $ for all $x\in Q$.
\item If  $(Q, \circ) = (Q,\cdot)(\varepsilon, \varepsilon, \gamma)$,  then  $L^{\circ}_x = \gamma^{-1} L^{\cdot}_x$, $R^{\circ}_x = \gamma^{-1} R^{\,\cdot}_x$,  $P^{\circ}_{x} = P^{\,\cdot}_{\gamma x}$ for all $x\in Q$.
  \end{enumerate}
\end{lemma}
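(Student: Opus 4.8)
The plan is to verify each of the nine identities directly from the definitions of the three kinds of translations, handling the three cases separately. In each case I would first rewrite the hypothesis as an explicit formula for $\circ$ in terms of $\cdot$: in part (1), $(Q,\circ)=(Q,\cdot)(\alpha,\varepsilon,\varepsilon)$ means $x_1\circ x_2 = \alpha x_1 \cdot x_2$ for all $x_1,x_2\in Q$; in part (2) it means $x_1\circ x_2 = x_1\cdot \beta x_2$; and in part (3) it means $x_1\circ x_2 = \gamma^{-1}(x_1\cdot x_2)$. Everything then follows by substituting these formulas into $L^{\circ}_x y = x\circ y$, $R^{\circ}_x y = y\circ x$, and the defining relation of $P^{\circ}_x$, and reading off the resulting composition of maps.

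For the left and right translations the computations are one line each. For instance, in part (1), $R^{\circ}_x y = y\circ x = \alpha y \cdot x = R^{\,\cdot}_x(\alpha y)$, so $R^{\circ}_x = R^{\,\cdot}_x \alpha$; and $L^{\circ}_x y = x\circ y = \alpha x\cdot y = L^{\cdot}_{\alpha x} y$, so $L^{\circ}_x = L^{\cdot}_{\alpha x}$. The corresponding identities in parts (2) and (3) are obtained the same way, simply moving the permutation to the correct side of the product.

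The middle translation needs slightly more care, because $P_a$ is defined implicitly by $z\cdot P_a z = a$ for all $z$, so its value depends both on the subscript $a$ and on the argument $z$; here I would invoke uniqueness of solutions in a quasigroup (which applies since $\alpha,\beta,\gamma$ are permutations). In part (1), $P^{\circ}_x(y)$ is the unique element with $y\circ P^{\circ}_x(y)=x$, i.e. $\alpha y\cdot P^{\circ}_x(y)=x$; but $P^{\cdot}_x(\alpha y)$ is the unique element with $\alpha y\cdot P^{\cdot}_x(\alpha y)=x$, whence $P^{\circ}_x(y)=P^{\cdot}_x(\alpha y)$ and $P^{\circ}_x=P^{\cdot}_x\alpha$. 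In part (2) the relation $y\cdot\beta P^{\circ}_x(y)=x$ forces $\beta P^{\circ}_x(y)=P^{\cdot}_x(y)$, hence $(P^{\circ}_x)^{-1}=(P^{\cdot}_x)^{-1}\beta$; in part (3), $y\cdot P^{\circ}_x(y)=\gamma x$ gives $P^{\circ}_x(y)=P^{\cdot}_{\gamma x}(y)$. The only genuine obstacle is bookkeeping: keeping straight which argument each translation acts upon, not conflating the subscript of $P$ with its variable, and being consistent about the left-to-right convention for composing maps.
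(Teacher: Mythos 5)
Your proposal is correct and follows essentially the same route as the paper: rewrite each isotopy as an explicit formula for $\circ$, read off $L^{\circ}$ and $R^{\circ}$ directly, and treat $P^{\circ}$ via its defining relation $y\circ P^{\circ}_x y = x$, using uniqueness of solutions. All nine identities, including the inverted form $(P^{\circ}_x)^{-1}=(P^{\cdot}_x)^{-1}\beta$ in Case 2 and the subscript shift $P^{\circ}_x=P^{\cdot}_{\gamma x}$ in Case 3, come out exactly as in the paper's proof.
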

\begin{proof}
Case 1.
We can re-write equality   $(Q, \circ) = (Q,\cdot)(\alpha, \varepsilon, \varepsilon)$ in the form  $x\circ y = \alpha x \cdot y = z$ for all $x, y \in Q$. Therefore    $L^{\circ}_x  y = L^{\cdot}_{\alpha x}y$,  $R^{\circ}_y x = R^{\,\cdot}_y \alpha x $, $P^{\circ}_z x = P^{\,\cdot}_z \alpha x$.

Case 2.
We can re-write equality   $(Q, \circ) = (Q,\cdot)(\varepsilon, \beta, \varepsilon)$ in the form  $x\circ y = x \cdot \beta y = z$ for all $x, y \in Q$. Therefore $L^{\circ}_x  y = L^{\cdot}_{x} \beta y$,  $R^{\circ}_y x = R^{\cdot}_{\beta y}  x $, $(P^{\circ}_z)^{-1} y = (P^{\cdot}_z )^{-1} \beta y$.

Case 3.
We can re-write equality   $(Q, \circ) = (Q,\cdot)(\varepsilon, \varepsilon, \gamma)$ in the form  $x\circ y = \gamma^{-1}(x \cdot y)$ for all $x, y \in Q$. Therefore   $L^{\,\circ}_x = \gamma^{-1} L^{\,\cdot}_x$, $R^{\,\circ}_y = \gamma^{-1} R^{\,\cdot}_y$. If  $x\circ y = \gamma^{-1}(x \cdot y) = z$, then   $P^{\,\circ}_z  x = y$,  $P^{\,\cdot}_{\gamma \, z} x = y $, $P^{\,\circ}_z = P^{\,\cdot}_{\gamma \, z}$.
\end{proof}

\begin{corollary} \label{ONE_COMPONT_ISOTOPY}
In conditions of Lemma \ref{TRANSLATIONS}
\begin{enumerate}
\item
\begin{enumerate}
\item[(i)] if $\alpha = (R^{\,\cdot}_a)^{-1}$,  then $R^{\,\circ}_a = \varepsilon$,  $(Q, \circ)$ is a right loop.
\item[(ii)] if $\alpha = (P^{\,\cdot}_a)^{-1}$, then  $P^{\,\circ}_a = \varepsilon$, $(Q, \circ)$ is an unipotent quasigroup.
  \end{enumerate}
  \item
\begin{enumerate}
\item[(i)] if $\beta = (L^{\,\cdot}_b)^{-1}$, then  $L^{\circ}_b = \varepsilon$, $(Q, \circ)$ is a left loop.
\item[(ii)] if $\beta = P^{\,\cdot}_b$, then  $P^{\,\circ}_b = \varepsilon$, $(Q, \circ)$ is an unipotent quasigroup.
  \end{enumerate}
  \item
\begin{enumerate}
\item[(i)] if $\gamma = L^{\cdot}_c$, then  $L^{\circ}_c = \varepsilon$, $(Q, \circ)$ is a left loop.
\item[(ii)] if $\gamma = R^{\,\cdot}_c$, then  $R^{\,\circ}_c = \varepsilon$, $(Q, \circ)$ is a right loop.
  \end{enumerate}
\end{enumerate}
\end{corollary}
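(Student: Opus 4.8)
The plan is to derive every one of the six assertions by a direct substitution into the three groups of formulas proved in Lemma \ref{TRANSLATIONS}; in each item the prescribed value of the single isotopy component is exactly the factor that cancels the corresponding translation in those formulas.

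First I would treat item 1, which rests on Lemma \ref{TRANSLATIONS}(1): there $R^{\circ}_x = R^{\cdot}_x \alpha$ and $P^{\circ}_x = P^{\cdot}_x \alpha$ for all $x$. Putting $\alpha = (R^{\cdot}_a)^{-1}$ and specializing to $x = a$ gives $R^{\circ}_a = R^{\cdot}_a (R^{\cdot}_a)^{-1} = \varepsilon$; putting $\alpha = (P^{\cdot}_a)^{-1}$ and $x = a$ gives $P^{\circ}_a = \varepsilon$. It then remains to re-read these equalities: $R^{\circ}_a = \varepsilon$ says $x \circ a = x$ for every $x$, so $a$ is a right identity element and $(Q,\circ)$ is a right loop, while $P^{\circ}_a = \varepsilon$ says (by the defining property $x \circ P^{\circ}_a x = a$) that $x \circ x = a$ for every $x$, so $a$ is a middle identity element and $(Q,\circ)$ is unipotent.

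For item 2 I would use Lemma \ref{TRANSLATIONS}(2), which supplies $L^{\circ}_x = L^{\cdot}_x \beta$ and $(P^{\circ}_x)^{-1} = (P^{\cdot}_x)^{-1} \beta$. The choice $\beta = (L^{\cdot}_b)^{-1}$ yields $L^{\circ}_b = \varepsilon$, hence $b$ is a left identity element and $(Q,\circ)$ is a left loop; the choice $\beta = P^{\cdot}_b$ yields $(P^{\circ}_b)^{-1} = (P^{\cdot}_b)^{-1} P^{\cdot}_b = \varepsilon$, so $P^{\circ}_b = \varepsilon$ and $(Q,\circ)$ is unipotent. For item 3 I would use Lemma \ref{TRANSLATIONS}(3), where $L^{\circ}_x = \gamma^{-1} L^{\cdot}_x$ and $R^{\circ}_x = \gamma^{-1} R^{\cdot}_x$; taking $\gamma = L^{\cdot}_c$ forces $L^{\circ}_c = (L^{\cdot}_c)^{-1} L^{\cdot}_c = \varepsilon$, a left loop, and taking $\gamma = R^{\cdot}_c$ forces $R^{\circ}_c = \varepsilon$, a right loop.

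Since each step is a single cancellation followed by re-interpreting ``some translation $= \varepsilon$'' as the presence of a one-sided (or middle) identity element, I expect no real obstacle. The only things to watch are the side on which $\alpha$, $\beta$, $\gamma$ act in each line of Lemma \ref{TRANSLATIONS} --- so that the cancellation is legitimate --- and, in the two middle-translation subcases, the fact that the lemma records $(P^{\circ}_x)^{-1}$ rather than $P^{\circ}_x$, so one inverts once more before concluding $P^{\circ}_x = \varepsilon$.
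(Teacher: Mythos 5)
Your proposal is correct and is exactly the argument the paper intends: its proof of this corollary is the single line ``follows from Lemma \ref{TRANSLATIONS}'', and what you write out is just that substitution, with the composition order and the extra inversion of $(P^{\circ}_x)^{-1}$ handled correctly. Nothing further is needed.
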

\begin{proof}
The proof follows from Lemma \ref{TRANSLATIONS}.
\end{proof}

\begin{lemma}\label{LP_ISOTOPII}
\label{parastrophic_loop}
If $(Q,\circ) = (Q,\cdot) (\alpha, \beta, \varepsilon)$ and $(Q,\circ)$ is a loop, then
there exist elements $a, b \in Q$ such that $\alpha = R^{-1}_a$, $\beta = L^{-1}_b$ \cite{VD, 1a}.

If $(Q,\circ) = (Q,\cdot) (\varepsilon, \beta, \gamma)$ and $(Q,\circ)$ is a right unipotent loop, then
there exist elements $a, b \in Q$ such that $\beta = P_{a}$, $\gamma = R_{b}$.

If $(Q,\circ) = (Q,\cdot) (\alpha, \varepsilon, \gamma)$ and $(Q,\circ)$ is a left  unipotent loop, then
there exist elements $a, b \in Q$ such that $\alpha = P^{-1}_{a}$, $\gamma = L_b$.
\end{lemma}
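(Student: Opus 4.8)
The plan is to dispose of the three assertions one after another; in each case I would write the isotopy equality in the explicit translation-free form already used in the proof of Lemma~\ref{TRANSLATIONS}, then substitute into it the local identity elements of $(Q,\circ)$ supplied by the hypothesis, and finally read off each component of the triple directly from the defining relations of the translations $L^{\cdot}, R^{\cdot}, P^{\cdot}$ of $(Q,\cdot)$.

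\textit{First assertion.} Write $x\circ y=\alpha x\cdot\beta y$ and let $1$ be the identity element of the loop $(Q,\circ)$. Putting $y=1$ gives $x=\alpha x\cdot\beta 1$, i.e. $R^{\cdot}_{\beta 1}(\alpha x)=x$ for all $x$, whence $\alpha=(R^{\cdot}_{\beta 1})^{-1}$; take $a=\beta 1$. Putting $x=1$ gives $y=\alpha 1\cdot\beta y$, i.e. $L^{\cdot}_{\alpha 1}(\beta y)=y$ for all $y$, whence $\beta=(L^{\cdot}_{\alpha 1})^{-1}$; take $b=\alpha 1$.

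\textit{Second assertion.} Write $x\circ y=\gamma^{-1}(x\cdot\beta y)$ and let $e$ be the right identity and $s$ the middle identity of $(Q,\circ)$. From $x\circ e=x$ we get $x\cdot\beta e=\gamma x$ for all $x$, so $\gamma=R^{\cdot}_{\beta e}$; take $b=\beta e$. From $x\circ x=s$ we get $x\cdot\beta x=\gamma s$ for all $x$; comparing with the defining relation $x\cdot P^{\cdot}_c x=c$ of the middle translation and cancelling $x$ on the left, this says $\beta x=P^{\cdot}_{\gamma s}x$ for all $x$, i.e. $\beta=P^{\cdot}_{\gamma s}$; take $a=\gamma s$. \textit{Third assertion} (symmetric). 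Write $x\circ y=\gamma^{-1}(\alpha x\cdot y)$ and let $f$ be the left identity and $s$ the middle identity of $(Q,\circ)$. From $f\circ y=y$ we get $\alpha f\cdot y=\gamma y$ for all $y$, so $\gamma=L^{\cdot}_{\alpha f}$; take $b=\alpha f$. From $x\circ x=s$ we get $\alpha x\cdot x=\gamma s$ for all $x$; since $(P^{\cdot}_c)^{-1}$ is characterised by $((P^{\cdot}_c)^{-1}x)\cdot x=c$, this says $\alpha x=(P^{\cdot}_{\gamma s})^{-1}x$ for all $x$, i.e. $\alpha=(P^{\cdot}_{\gamma s})^{-1}$; take $a=\gamma s$.

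I expect no real obstacle beyond keeping the three kinds of translations straight: the single point that needs a moment's thought is the reading of the relation $x\cdot\beta x=c$ (respectively $\alpha x\cdot x=c$, for a fixed element $c$) as $\beta=P^{\cdot}_{c}$ (respectively $\alpha=(P^{\cdot}_{c})^{-1}$), which is immediate from the definition of the middle translation together with left (respectively right) cancellation in $(Q,\cdot)$. It is worth noting that in the last two assertions both halves of the ``unipotent loop'' hypothesis are genuinely used — the one-sided identity pins down $\gamma$, the middle identity pins down the remaining nontrivial component. As an alternative route, the second and third assertions can be reduced to the first: passing to the $(13)$-parastrophe (respectively the $(23)$-parastrophe) turns the middle component of the isotopy into $\varepsilon$ and, by Table 2, turns a ``one-sided identity plus unipotency'' quasigroup into an honest loop, so that the first assertion applies, after which one translates the kinds of translations back through Table 1; but the direct computation above is shorter.
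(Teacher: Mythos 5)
Your proof is correct and takes essentially the same route as the paper's: in each case you substitute the identity element (resp. the one-sided identity and the diagonal $x=y$) into the explicit form of the isotopy and read off the components as translations of $(Q,\cdot)$, exactly as the paper does, with the only cosmetic difference that you write $e,f,s$ where the paper uses the single element $1$ (these coincide in a unipotent one-sided loop, and your argument does not even need that).
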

\begin{proof}  Let $x \circ y = \alpha x \cdot \beta y$. If $x=1$, then we have $1\circ y = y = \alpha 1
\cdot \beta y$. Therefore $L_{\alpha 1}\beta = \varepsilon$, $\beta = L^{-1}_{\alpha 1}$. If we take $y = 1$,
then we have $x\circ 1 = x = \alpha x \cdot \beta 1$, $R_{\beta 1}\alpha = \varepsilon$, $\alpha = R^{-1}_{\beta
1}$.

Let $x \circ y = \gamma^{-1}( x \cdot \beta y)$.
If $y = 1$, then we have $x = \gamma^{-1}(x\cdot \beta 1)$. Therefore $\gamma x = R_{\beta 1} x$.
If $x = y$, then $1 = \gamma^{-1}(x \cdot \beta x)$, $\gamma 1 = x \cdot \beta x$, $P_{\gamma 1} x = \beta x$. Denote
$\beta 1$ by $b$, $\gamma 1$ by $a$.

Let $x \circ y = \gamma^{-1}( \alpha x \cdot  y)$.
If $x = 1$, then we have $y = \gamma^{-1}(\alpha 1\cdot  y)$. Therefore $\gamma y = L_{\alpha 1} y$.
If $x = y$, then $1 = \gamma^{-1}(\alpha  \cdot  x)$, $\gamma 1 = \alpha x \cdot x$, $P^{-1}_{\gamma 1} x = \alpha x$. Denote
$\alpha 1$ by $b$, $\gamma 1$ by $a$.
\end{proof}

\begin{definition} \label{LP_ISOt_def} Isotopism of the form $(R^{-1}_a, L^{-1}_b, \varepsilon)$ where $L_b, R_a$ are left and right translations of the quasigroup  $(Q,\cdot)$ is called LP-isotopism (loop isotopism) \cite{VD, 1a}.
\end{definition}

\begin{theorem} \label{LP_ISOT_AND_ANALOGS}
Any LP-isotope of a quasigroup $(Q,\cdot)$ is a loop \cite{VD, 1a}.

If $(Q,\circ) = (Q,\cdot) (\varepsilon, P_{a}, R_{b})$, where $(Q,\cdot)$ is a quasigroup,  then  $(Q,\circ)$ is a unipotent right  loop.

If $(Q,\circ) = (Q,\cdot) (P^{-1}_{a}, \varepsilon, L_b)$, where $(Q,\cdot)$ is a quasigroup, then  $(Q,\circ)$ is a unipotent left  loop.
\end{theorem}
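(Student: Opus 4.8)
The plan is to handle all three statements by a single device: rewrite the isotopy relation explicitly as $x\circ y = \mu_3^{-1}(\mu_1 x\cdot\mu_2 y)$, produce the claimed local identity element(s) of $(Q,\circ)$ by solving their defining equations inside $(Q,\cdot)$, and then use that an isotopic image of a quasigroup is again a quasigroup, so that a quasigroup with a two-sided identity is a loop, one with a right (left) identity is a right (left) loop, and one with a middle identity is unipotent. The first assertion is classical (Bruck, Belousov), but the short verification is worth recording: for $x\circ y = R_a^{-1}x\cdot L_b^{-1}y$ take $e = b\cdot a = R_a b = L_b a$; then $R_a^{-1}e = b$ gives $e\circ y = b\cdot L_b^{-1}y = y$, and $L_b^{-1}e = a$ gives $x\circ e = R_a^{-1}x\cdot a = x$, so $e$ is a two-sided identity and $(Q,\circ)$ is a loop.

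For the second statement, $(Q,\circ) = (Q,\cdot)(\varepsilon, P_a, R_b)$ means $x\circ y = R_b^{-1}(x\cdot P_a y)$. I would first check unipotence: by the defining property $x\cdot P_a x = a$ of the middle translation, $x\circ x = R_b^{-1}(x\cdot P_a x) = R_b^{-1}a$, which is independent of $x$, so $s = R_b^{-1}a = a/b$ is a middle identity of $(Q,\circ)$. Next I would find a right identity $e$: the requirement $x\circ e = x$ for all $x$ amounts to $x\cdot P_a e = x\cdot b$, i.e. $P_a e = b$, and together with $e\cdot P_a e = a$ this forces $e\cdot b = a$, that is $e = a/b$. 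One then verifies directly that $x\circ(a/b) = R_b^{-1}(x\cdot b) = x$. Hence $(Q,\circ)$ is a right loop and a unipotent quasigroup, i.e. a unipotent right loop (with right identity and middle identity both equal to $a/b$, as must happen in any such quasigroup, since $e\cdot e = e = s$).

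The third statement is the mirror image. From $(Q,\circ) = (Q,\cdot)(P_a^{-1}, \varepsilon, L_b)$ we get $x\circ y = L_b^{-1}(P_a^{-1}x\cdot y)$. Since $P_a^{-1}x$ is by definition the unique $z$ with $z\cdot x = a$, we have $P_a^{-1}x\cdot x = a$, so $x\circ x = L_b^{-1}a = b\backslash a$ for every $x$; thus $s = b\backslash a$ is a middle identity. A left identity $f$ must satisfy $P_a^{-1}f\cdot y = b\cdot y$ for all $y$, i.e. $P_a^{-1}f = b$, i.e. $b\cdot f = a$, so $f = b\backslash a = s$, and indeed $f\circ y = L_b^{-1}(b\cdot y) = y$; hence $(Q,\circ)$ is a unipotent left loop. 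There is no real conceptual obstacle here; the only point requiring care is the bookkeeping with the middle translations $P_a$, $P_a^{-1}$ and the induced division operations $/$ and $\backslash$, which are less symmetric than $L$ and $R$. If one prefers to avoid any such computation, one can instead split each isotopy into one-component isotopies and apply Lemma \ref{TRANSLATIONS} together with Corollary \ref{ONE_COMPONT_ISOTOPY} successively, but the direct argument above is shorter.
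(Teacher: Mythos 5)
Your proposal is correct and follows essentially the same route as the paper: you identify the same identity elements ($b\cdot a$ for the LP-isotope, $a/b = R^{-1}_b a$ and $b\backslash a = L^{-1}_b a$ in the other two cases) and verify the right/left identity and unipotence properties by the same direct computations with the middle translations, merely deriving the candidate identities from their defining equations instead of presenting them outright. Even your closing remark about splitting into one-component isotopies and invoking Corollary \ref{ONE_COMPONT_ISOTOPY} matches the paper's own remark following the theorem.
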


\begin{proof} Case 1. Prove that quasigroup $(Q,\circ)$, where $x\circ y = R^{-1}_a x \cdot  L^{-1}_b y$, is a loop.
Let $1 = b\cdot a.$ If we take $x=1$, then  $1\circ y = R^{-1}_a ba \cdot  L^{-1}_b y = R^{-1}_a R_a b \cdot
L^{-1}_b y = b\cdot L^{-1}_b y = L_bL^{-1}_b y = y$.

If we take $y=1$, then we have $x \circ 1 = R^{-1}_a x
\cdot L^{-1}_b ba = R^{-1}_a x \cdot a = R_a R^{-1}_a x = x$. Element $1$ is the identity element of the
quasigroup $(Q,\circ)$.

Case 2. Prove that quasigroup $(Q,\circ)$, where $x\circ y = R^{-1}_b ( x \cdot P_a  y)$, is a right unipotent loop.
Let $1 = R^{-1}_b a = a \slash b.$ If we take $y = 1$, then  $x\circ 1 = R^{-1}_b (x \cdot P_a R^{-1}_b a) = R^{-1}_b (x\cdot
b) = x$, since $P_a R^{-1}_b a = b$. Indeed, if $P_a R^{-1}_b a = b$, then $R^{-1}_b a \cdot b = a$, $a=a$. Also we can use identity (\ref{(2)}). Therefore element $1$ is the right identity element of quasigroup $(Q,\circ)$.

If we take $x=y$, then we have $x\circ x = R^{-1}_b ( x \cdot P_a  x) = R^{-1}_b a = 1$ since by definition of middle translation  $x \cdot P_a  x= a$,  $(Q,\circ)$ is an unipotent quasigroup.

Case 3. Prove that quasigroup $(Q,\circ)$, where $x\circ y = L^{-1}_b ( P^{-1}_{a} x \cdot  y)$, is a left  unipotent loop. Let $1 = L^{-1}_b a = b \backslash a.$
If we take $x = 1$, then  $1\circ y = L^{-1}_b ( P^{-1}_{a} L^{-1}_b a  \cdot  y) = L^{-1}_b (b   \cdot  y) = y$, since $P^{-1}_{a} L^{-1}_b a = b$. Indeed, if $P^{-1}_{a} L^{-1}_b a = b$, then $b \cdot L^{-1}_b a = a$, $a=a$.  Therefore element $1$ is the left  identity element of quasigroup $(Q,\circ)$.

If we take $x=y$, then we have $x\circ x = L^{-1}_b ( P^{-1}_{a} x \cdot  x) = L^{-1}_b a = 1$ since by definition of middle translation  $P^{-1}_a x \cdot x= a$,  $(Q,\circ)$ is an unipotent quasigroup.
 \end{proof}
\begin{remark}
Instead of direct proof of Theorem  \ref{LP_ISOT_AND_ANALOGS} it is possible to use Corollary \ref{ONE_COMPONT_ISOTOPY}.
\end{remark}

There exists  well known connection between translations of a quasigroup and translations of its LP-isotopes \cite{12, IHRINGER}.  In the following lemma we extend this connection.

\begin{lemma} \label{LOOP_TRANSLATIONS}
If $x\circ y = R^{-1}_a x \cdot L^{-1}_b y$, then $L_x^{\circ}  =
L_{R^{-1}_a x} L^{-1}_b $, $R_y^{\circ}  = R_{L^{-1}_b y}R^{-1}_a $, $P^{\circ}_z  = L_b P_z  R^{-1}_a $.

 If $x\circ y = R^{-1}_b ( x \cdot P_a  y)$, then  $L^{\circ}_x  = R^{-1}_b L_x P_a $,  $R^{\circ}_y  = R^{-1}_b R_{P_a  y}$,  $P^{\circ}_z  = P^{-1}_a P_{R_b z}$.

 If $x \circ y =  L^{-1}_{b}(P^{-1}_{a} x \cdot  y)$, then  $L^{\circ}_x  =  L^{-1}_{b}L_{P^{-1}_{a} x}$, $R^{\circ}_y  =  L^{-1}_{b}  R_y P^{-1}_{a} $,  $P^{\circ}_z = P_{b\cdot z} P^{-1}_{a}$.
\end{lemma}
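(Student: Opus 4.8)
The plan is to derive each of the three groups of formulas by factoring the relevant two-component isotopy into a product of two one-component isotopies and applying Lemma \ref{TRANSLATIONS} twice, once for each factor (Lemma \ref{TRANSLATIONS} is clearly a statement about an arbitrary groupoid, so it may be applied with the intermediate quasigroup playing the role of the base). For the first statement I would write $(Q,\circ) = (Q,\cdot)(R^{-1}_a, L^{-1}_b, \varepsilon)$ as $(Q,\circ') = (Q,\cdot)(R^{-1}_a,\varepsilon,\varepsilon)$ followed by $(Q,\circ) = (Q,\circ')(\varepsilon, L^{-1}_b,\varepsilon)$, which is legitimate since $x\circ y = R^{-1}_a x \cdot L^{-1}_b y = (R^{-1}_a x) \circ' (L^{-1}_b y)$ with $x\circ' y = R^{-1}_a x \cdot y$. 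Case 1 of Lemma \ref{TRANSLATIONS} with $\alpha = R^{-1}_a$ gives $L^{\circ'}_x = L^{\cdot}_{R^{-1}_a x}$, $R^{\circ'}_x = R^{\cdot}_x R^{-1}_a$, $P^{\circ'}_x = P^{\cdot}_x R^{-1}_a$; then Case 2 with $\beta = L^{-1}_b$ turns these into $L^{\circ}_x = L^{\circ'}_x L^{-1}_b$, $R^{\circ}_x = R^{\circ'}_{L^{-1}_b x}$, $(P^{\circ}_x)^{-1} = (P^{\circ'}_x)^{-1} L^{-1}_b$. Substituting and using $(P^{\cdot}_x R^{-1}_a)^{-1} = R_a (P^{\cdot}_x)^{-1}$ yields exactly $L_x^{\circ} = L_{R^{-1}_a x} L^{-1}_b$, $R_y^{\circ} = R_{L^{-1}_b y} R^{-1}_a$, $P^{\circ}_z = L_b P_z R^{-1}_a$.

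The remaining two statements go the same way. For the second I factor $(\varepsilon, P_a, R_b)$ as $(\varepsilon,\varepsilon,R_b)$ followed by $(\varepsilon, P_a, \varepsilon)$, apply Case 3 of Lemma \ref{TRANSLATIONS} with $\gamma = R_b$ and then Case 2 with $\beta = P_a$. For the third I factor $(P^{-1}_a, \varepsilon, L_b)$ as $(\varepsilon,\varepsilon,L_b)$ followed by $(P^{-1}_a,\varepsilon,\varepsilon)$, apply Case 3 with $\gamma = L_b$ and then Case 1 with $\alpha = P^{-1}_a$; here one also uses $L_b x = b\cdot x$ to rewrite $P^{\cdot}_{L_b z}$ as $P_{b\cdot z}$. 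In each instance the left- and right-translation identities drop out by immediate substitution, and only the middle-translation identity requires an inversion of the form $(AB)^{-1} = B^{-1}A^{-1}$.

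As an alternative (which I would mention but not write out in full), each identity can be checked straight from the definitions: evaluating $L^{\circ}_x$ and $R^{\circ}_y$ on an arbitrary element and reading off $x\circ y = R^{-1}_a x \cdot L^{-1}_b y$ (resp. the other two products) gives the left and right formulas at once, while for $P^{\circ}_z$ one starts from the defining relation $x\circ P^{\circ}_z x = z$, i.e. $R^{-1}_a x \cdot L^{-1}_b P^{\circ}_z x = z$, solves $L^{-1}_b P^{\circ}_z x = P^{\cdot}_z(R^{-1}_a x)$, and concludes $P^{\circ}_z = L_b P_z R^{-1}_a$, and similarly in the unipotent cases.

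The only place where care is needed — hence the main obstacle — is the bookkeeping for the middle translations: under Lemma \ref{TRANSLATIONS} a third-component isotopy transforms $P$ covariantly whereas a second-component isotopy transforms $P^{-1}$, so one must keep track of where the inversion lands, and one must respect the left-to-right composition convention $(\mu\nu)(x) = \mu(\nu(x))$ throughout. Once that is done, all three assertions reduce to routine substitution.
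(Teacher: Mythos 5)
Your proposal is correct, but your primary route differs from the paper's. The paper proves the lemma by direct computation from the definitions: for each product it reads off $L^{\circ}_x$ and $R^{\circ}_y$ by substitution, and for the middle translation it writes $x\circ y=z$ and solves for $y$; e.g.\ in the first case $R^{-1}_a x\cdot L^{-1}_b y=z$ gives $P_z R^{-1}_a x=L^{-1}_b y$, hence $P^{\circ}_z=L_bP_zR^{-1}_a$, and analogously $P_{R_bz}x=P_ay$ and $P_{L_bz}P^{-1}_ax=y$ in the other two cases. This is exactly the ``alternative'' you sketch at the end. Your main argument instead factors each two-component isotopy through an intermediate quasigroup $(Q,\circ')$ and applies Lemma \ref{TRANSLATIONS} twice; I checked the bookkeeping in all three cases (including the inversions $(P_zR^{-1}_a)^{-1}=R_aP_z^{-1}$ and $(P^{-1}_{R_bz}P_a)^{-1}=P^{-1}_aP_{R_bz}$, and the rewriting $P_{L_bz}=P_{b\cdot z}$) and it reproduces the stated formulas. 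The factorization approach is legitimate because Lemma \ref{TRANSLATIONS} applies to any quasigroup, so the intermediate isotope may serve as base; what it buys is a mechanical, reusable calculus of one-component rules (in the same spirit as the paper's Remark that Theorem \ref{LP_ISOT_AND_ANALOGS} could be obtained from Corollary \ref{ONE_COMPONT_ISOTOPY}), at the cost of carrying the intermediate structure and carefully tracking that second-component isotopies transform $P^{-1}$ while first- and third-component ones transform $P$ — a point you rightly flag. The paper's direct verification is shorter and needs no auxiliary isotope, but offers no systematic scheme for other isotopies.
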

\begin{proof}
Case 1. If $x\circ y = R^{-1}_a x \cdot L^{-1}_b y = z$, then  $P^{\circ}_z x =y$, $P_z  R^{-1}_a x = L^{-1}_b y$, $L_b P_z  R^{-1}_a x =  y$, $P^{\circ}_z x = L_b P_z  R^{-1}_a x$.

Case 2.
 If $x\circ y = R^{-1}_b ( x \cdot P_a  y) = z$, then  $R^{\circ}_y x = R^{-1}_b R_{P_a  y} x $,  $P^{\circ}_z x = y$, $ R^{-1}_b ( x \cdot P_a  y) = z$,   $P_{R_b z} x  =  P_a y$, $P^{-1}_a P_{R_b z} x  =   y$, $P^{\circ}_z x = P^{-1}_a P_{R_b z} x$, $L^{\circ}_x y = R^{-1}_b L_x P_a  y$.

Case 3.
 If $x \circ y =  L^{-1}_{b}(P^{-1}_{a} x \cdot  y) = z$, then $P^{\circ}_z x = y$, $P^{-1}_{a} x \cdot  y = L_{b} z$, $P_{L_{b} z} P^{-1}_{a} x  =  y $, $P^{\circ}_z = P_{b\cdot z} P^{-1}_{a}$,  $L^{\circ}_x y =  L^{-1}_{b}L_{P^{-1}_{a} x} y$, $R^{\circ}_y  x  =  L^{-1}_{b}  R_y P^{-1}_{a}  x $.
\end{proof}

We shall use the following well known fact.
\begin{lemma} \label{PARASTR_ISOT_GR_ISOT}
If a quasigroup $(Q, \cdot)$ is a group isotope, i.e. $(Q, \cdot) \sim (Q,+)$, where $(Q,+)$ is a group, then any parastrophe of this quasigroup also is a group isotope \cite{SOH_95_I}.
\end{lemma}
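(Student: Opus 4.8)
The plan is to reduce everything to the two transpositions $(12)$ and $(13)$. Since $S_3 = \langle (12),(13)\rangle$ and every parastrophe operation of $(Q,\cdot)$ is $A^{\sigma}$ for some $\sigma\in S_3$, it is enough to show that the $(12)$-parastrophe and the $(13)$-parastrophe of an arbitrary group isotope are again group isotopes; an arbitrary parastrophe is then obtained by applying these two operations in succession, each step landing back inside the class of group isotopes. So I would start by writing the hypothesis in the form $(Q,\cdot) = (Q,+)(\mu_1,\mu_2,\mu_3)$, that is $x\cdot y = \mu_3^{-1}(\mu_1 x + \mu_2 y)$ for permutations $\mu_1,\mu_2,\mu_3$ of $Q$ and a group $(Q,+)$ (not assumed abelian).

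For the $(12)$-parastrophe we have $x\ast y = y\cdot x = \mu_3^{-1}(\mu_1 y + \mu_2 x)$. The fix for the non-commutativity is to pass to the opposite group: define $(Q,\oplus)$ by $a\oplus b := b+a$, which is again a group on $Q$. Then $x\ast y = \mu_3^{-1}(\mu_2 x \oplus \mu_1 y)$, i.e. $(Q,\ast) = (Q,\oplus)(\mu_2,\mu_1,\mu_3)$ is a group isotope. For the $(13)$-parastrophe, use the defining relation $(x/y)\cdot y = x$: applying $\mu_3$ gives $\mu_1(x/y) + \mu_2 y = \mu_3 x$, hence $\mu_1(x/y) = \mu_3 x + (-\mu_2 y)$, where $-z$ is the inverse of $z$ in $(Q,+)$. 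Writing $\iota$ for the bijection $z\mapsto -z$, this reads $x/y = \mu_1^{-1}\bigl(\mu_3 x + \iota\mu_2 y\bigr)$, so $(Q,/) = (Q,+)(\mu_3,\iota\mu_2,\mu_1)$; since $\iota$ is a permutation of $Q$, the three components are permutations and $(Q,/)$ is a group isotope. (The same computation applied to $x\cdot(x\backslash y)=y$ gives $(Q,\backslash)=(Q,+)(\iota\mu_1,\mu_3,\mu_2)$, which one may record for completeness, but it also follows from the generation argument.)

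Finally I would assemble the conclusion: given any $\sigma\in S_3$, express $\sigma$ as a word in $(12)$ and $(13)$ and apply the two results above in turn. At each application of $(12)$ the underlying group is replaced by its opposite and at each application of $(13)$ it is unchanged, but it always remains a group, so after finitely many steps $(Q,A^{\sigma})$ is a group isotope. The only real obstacle is the possible non-commutativity of $(Q,+)$, which blocks a naive "swap the arguments" argument for $(12)$; the opposite-group device handles it, and for $(13)$/$(23)$ the point to watch is merely that inverting the missing variable introduces the bijection $\iota$, which is harmless precisely because it is a permutation.
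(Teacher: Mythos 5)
Your proof is correct, but it takes a different route from the paper. The paper's argument is two lines: it invokes the known fact (cited to Belousov) that every parastrophe of a group is an isotope of that same group, i.e. $(Q,+)^{\sigma}\sim (Q,+)$, together with the compatibility of parastrophy and isotopy from Lemma \ref{L2.1} (${(AT)}^{\sigma}=A^{\sigma}T^{\sigma}$), so that $(Q,\cdot)^{\sigma}\sim (Q,+)^{\sigma}\sim (Q,+)$. You instead prove everything from scratch: you reduce a general $\sigma\in S_3$ to the generators $(12)$ and $(13)$, handle $(12)$ by passing to the opposite group, and handle $(13)$ (and $(23)$) by solving the defining identity, which introduces the inversion bijection $\iota:z\mapsto -z$; in effect you are re-deriving, in explicit coordinates, the very fact $(Q,+)^{\sigma}\sim(Q,+)$ that the paper simply cites. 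What the paper's approach buys is brevity and reuse of machinery already set up in the text; what yours buys is self-containedness and explicit isotopies, e.g. $(Q,/)=(Q,+)(\mu_3,\iota\mu_2,\mu_1)$ and $(Q,\backslash)=(Q,+)(\iota\mu_1,\mu_3,\mu_2)$, which can be useful later. One small remark: for the $(12)$ step you land on an isotope of the opposite group $(Q,\oplus)$; since $\iota$ is an isomorphism of $(Q,\oplus)$ onto $(Q,+)$, this is still an isotope of the original group, so your bookkeeping "the group is replaced by its opposite" can even be dispensed with — but as stated the conclusion "remains a group isotope" is already enough for the lemma.
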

\begin{proof}
The proof is based on the fact that any parastrophe of a  group  is an isotope of this group, i.e.  $(Q, +)^{\sigma} \sim (Q, +)$ \cite[Lemma 5.1]{VD}, \cite[p. 53]{1a}.
 If $(Q, \cdot) \sim (Q,+)$, then $(Q, \cdot)^{\sigma} \sim (Q,+)^{\sigma}\sim (Q,+)$.
\end{proof}

\subsection{Autotopy. Leakh Theorem}

\begin{lemma} \label{ISOM_AUTOT_GR} If quasigroups $(Q, \circ)$ and $(Q, \cdot)$ are isotopic
with isotopy $T$, i.e. $(Q,\circ) = (Q, \cdot)T$, then $ Avt(Q, \circ) = T^{-1} Avt  (Q, \cdot) T$ (\cite{1a},  Lemma 1.4).
\end{lemma}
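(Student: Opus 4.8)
The plan is to reduce the statement to the single observation that composition of isotopies corresponds to componentwise composition of the defining triples, after which the assertion is the usual ``conjugation'' identity. First I would write $T = (\mu_1, \mu_2, \mu_3)$, so that by definition
$x \circ y = \mu_3^{-1}(\mu_1 x \cdot \mu_2 y)$ for all $x, y \in Q$.

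Next I would record the composition rule. If $S = (\nu_1, \nu_2, \nu_3)$ is any triple of permutations of $Q$ and $(Q, \star) = (Q, \circ)S$, then
\[
x \star y \;=\; \nu_3^{-1}(\nu_1 x \circ \nu_2 y) \;=\; \nu_3^{-1}\mu_3^{-1}(\mu_1\nu_1 x \cdot \mu_2\nu_2 y) \;=\; (\mu_3\nu_3)^{-1}(\mu_1\nu_1 x \cdot \mu_2\nu_2 y),
\]
so that $(Q, \circ)S = (Q, \cdot)(TS)$, where $TS := (\mu_1\nu_1, \mu_2\nu_2, \mu_3\nu_3)$ is the componentwise product of the triples (here $\mu_i\nu_i$ denotes the usual product of maps in the left order fixed above). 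This product is associative, the triple $(\varepsilon, \varepsilon, \varepsilon)$ fixes every groupoid, and $T^{-1} := (\mu_1^{-1}, \mu_2^{-1}, \mu_3^{-1})$ satisfies $TT^{-1} = T^{-1}T = (\varepsilon, \varepsilon, \varepsilon)$; in particular $(Q, \cdot)T = (Q, \circ)$ forces $(Q, \circ)T^{-1} = (Q, \cdot)$.

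With this in hand the proof is a chain of equivalences. A triple $S$ lies in $Avt(Q, \circ)$ iff $(Q, \circ)S = (Q, \circ)$; substituting $(Q, \circ) = (Q, \cdot)T$ and using the composition rule, this is equivalent to $(Q, \cdot)(TS) = (Q, \cdot)T$; applying $T^{-1}$ on the right (legitimate precisely because of associativity of the product of triples and the existence of $T^{-1}$) it is equivalent to $(Q, \cdot)(TST^{-1}) = (Q, \cdot)$, i.e. to $TST^{-1} \in Avt(Q, \cdot)$, i.e. to $S \in T^{-1} Avt(Q, \cdot) T$. Hence $Avt(Q, \circ) = T^{-1} Avt(Q, \cdot) T$. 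There is no real obstacle here: the only points needing care are getting the order of the factors in $\mu_i\nu_i$ right and justifying the cancellation step from $(Q, \cdot)(TS) = (Q, \cdot)T$ to $(Q, \cdot)(TST^{-1}) = (Q, \cdot)$, which is exactly where the group-like structure of the componentwise operation on triples is used.
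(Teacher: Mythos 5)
Your proposal is correct and follows essentially the same route as the paper: treat isotopies as a group acting on the right of the groupoid operation (the paper's proof, repeating Belousov's Lemma 1.4 as in the isostrophy analogue Lemma \ref{ISOSTR_AUTOT_GR1}, passes from $(Q,\cdot)T=(Q,\cdot)TS$ to $(Q,\cdot)=(Q,\cdot)TST^{-1}$ and then obtains the two inclusions). Your only addition is the explicit verification of the composition and inverse rules for triples, which the paper takes as standard, and the packaging as one chain of equivalences instead of two inclusions.
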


Lemma \ref{ISOM_AUTOT_GR} allows (up to isomorphism) to reduce the study of autotopy group of a quasigroup to the study autotopy group of an LP-isotope of this quasigroup, i.e. to the study of autotopy group of a loop.

\begin{corollary} \label{COROLL_VDB_LEMMA}
If $H$ is a subgroup of the group $Avt  (Q, \cdot)$, then $T^{-1}HT$ is a subgroup of the group $ Avt(Q, \circ)$.
\end{corollary}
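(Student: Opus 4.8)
The plan is to observe that Corollary \ref{COROLL_VDB_LEMMA} follows immediately from Lemma \ref{ISOM_AUTOT_GR} together with the elementary group-theoretic fact that the image of a subgroup under a group isomorphism is again a subgroup. First I would recall from Lemma \ref{ISOM_AUTOT_GR} that $Avt(Q,\circ) = T^{-1} Avt(Q,\cdot) T$; in particular the conjugation map $\varphi \colon g \mapsto T^{-1} g T$ carries $Avt(Q,\cdot)$ bijectively onto $Avt(Q,\circ)$, so $T^{-1}HT$ is at least a well-defined subset of $Avt(Q,\circ)$.

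Next I would verify that $\varphi$ is a group homomorphism with respect to the component-wise multiplication of autotopies: for $g, h \in Avt(Q,\cdot)$ we have $\varphi(g)\varphi(h) = T^{-1} g T\, T^{-1} h T = T^{-1}(gh)T = \varphi(gh)$, and $\varphi$ fixes the identity autotopy $(\varepsilon,\varepsilon,\varepsilon)$. Combined with the bijectivity coming from Lemma \ref{ISOM_AUTOT_GR}, this shows $\varphi$ is an isomorphism of $Avt(Q,\cdot)$ onto $Avt(Q,\circ)$.

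Finally, given a subgroup $H \le Avt(Q,\cdot)$, I would check that $\varphi(H) = T^{-1}HT$ satisfies the subgroup criterion inside $Avt(Q,\circ)$: it is nonempty since it contains the identity autotopy; it is closed under products because $\varphi(h_1)\varphi(h_2) = \varphi(h_1 h_2) \in \varphi(H)$ whenever $h_1 h_2 \in H$; and it is closed under inverses because $\varphi(h)^{-1} = \varphi(h^{-1}) \in \varphi(H)$. Hence $T^{-1}HT$ is a subgroup of $Avt(Q,\circ)$. I do not expect a genuine obstacle here; the only point deserving a line of care is that the group operation on $Avt(Q,\circ)$ and on $Avt(Q,\cdot)$ is literally the same componentwise multiplication of triples of permutations, so that $\varphi$ really does transport subgroups to subgroups — and that is exactly what Lemma \ref{ISOM_AUTOT_GR} provides.
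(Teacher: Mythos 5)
Your argument is correct and matches the paper's proof, which likewise derives the corollary from Lemma \ref{ISOM_AUTOT_GR} together with standard group-theoretic facts; you have merely spelled out the routine verification that conjugation by $T$ is an isomorphism carrying subgroups of $Avt(Q,\cdot)$ to subgroups of $Avt(Q,\circ)$.
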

\begin{proof}
The proof follows from Lemma \ref{ISOM_AUTOT_GR} and standard algebraic facts \cite{KM}.
\end{proof}
\begin{lemma} \label{TWO_AUTOTOPY_COMPONENTS} If $T$ is a quasigroup  autotopy, then any its two components define the third component uniquely.
\end{lemma}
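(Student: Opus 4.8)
The plan is to use the defining equation of an autotopy and the cancellation properties of a quasigroup to pin down the remaining component. Suppose $T = (\alpha_1, \alpha_2, \alpha_3)$ is an autotopy of $(Q,\cdot)$, so that $\alpha_3(x \cdot y) = \alpha_1 x \cdot \alpha_2 y$ for all $x,y \in Q$. I will treat the three cases (which two components are given) in turn; each is a short computation.

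First suppose $\alpha_1$ and $\alpha_2$ are known and let $(\beta_1,\beta_2,\beta_3) = T$, $(\alpha_1,\alpha_2,\alpha_3') = T'$ be two autotopies with the same first two components. Then from $\alpha_3(x\cdot y) = \alpha_1 x \cdot \alpha_2 y = \alpha_3'(x\cdot y)$ for all $x,y$, and since every element of $Q$ has the form $x\cdot y$ (indeed $x\cdot y$ ranges over all of $Q$ as $x,y$ vary), we get $\alpha_3 = \alpha_3'$. Next suppose $\alpha_1$ and $\alpha_3$ are known; from $\alpha_3(x\cdot y) = \alpha_1 x \cdot \alpha_2 y$ we obtain, fixing $x$, that $L^{\cdot}_{\alpha_1 x}\,\alpha_2 y = \alpha_3 L^{\cdot}_x y$, so $\alpha_2 = (L^{\cdot}_{\alpha_1 x})^{-1}\alpha_3 L^{\cdot}_x$, which is determined by $\alpha_1$ and $\alpha_3$ (and is independent of the chosen $x$). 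The case where $\alpha_2$ and $\alpha_3$ are known is symmetric, using right translations: $\alpha_1 = (R^{\cdot}_{\alpha_2 y})^{-1}\alpha_3 R^{\cdot}_y$.

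There is no real obstacle here; the only point requiring a word of care is that the maps $\alpha_i$ in an autotopy of a quasigroup are genuine permutations (so the inverses $(L^{\cdot}_{\alpha_1 x})^{-1}$, $(R^{\cdot}_{\alpha_2 y})^{-1}$ exist), and that in a quasigroup the translations $L^{\cdot}_x$, $R^{\cdot}_y$ are bijections — both facts are available from the definitions and remarks already recorded in the excerpt. One could alternatively phrase the whole argument group-theoretically: if $(\alpha_1,\alpha_2,\alpha_3)$ and $(\alpha_1,\alpha_2,\alpha_3')$ are autotopies then their ``quotient'' $(\varepsilon,\varepsilon,\alpha_3'^{-1}\alpha_3)$ is an autotopy, and an autotopy with two identity components forces the third to be $\varepsilon$ by the displayed identity; similarly for the other two positions. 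I would present the direct computational version as the main proof and perhaps mention the group-theoretic rephrasing in a remark.
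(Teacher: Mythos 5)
Your proof is correct, and its main line is a genuinely different (more computational) route than the paper's. The paper argues inside the autotopy group: given two autotopies sharing two components, it multiplies one by the inverse of the other to obtain an autotopy with two identity components, of the form $(\alpha_1\alpha_2^{-1},\varepsilon,\varepsilon)$ or $(\varepsilon,\varepsilon,\gamma_1\gamma_2^{-1})$, and then forces the remaining component to be $\varepsilon$ by right cancellation in the first case and by substituting $y=e(x)$ in the second; this is exactly the group-theoretic rephrasing you sketch in your closing remark. Your primary argument instead solves directly for the missing component: surjectivity of the map $(x,y)\mapsto x\cdot y$ gives $\alpha_3=\alpha_3'$ when the first two components agree, while the translation identities $\alpha_2=(L^{\cdot}_{\alpha_1 x})^{-1}\alpha_3 L^{\cdot}_x$ and $\alpha_1=(R^{\cdot}_{\alpha_2 y})^{-1}\alpha_3 R^{\cdot}_y$ express the remaining component explicitly through the other two. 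Both arguments rest on the same quasigroup facts (bijectivity of translations, solvability of $x\cdot y=z$), but yours buys explicit formulas for the determined component -- close in spirit to Lemma \ref{ISOMOR OF COMP} later in the paper -- whereas the paper's quotient argument is shorter once the group structure of $Avt(Q)$ is in hand and also directly yields Corollary \ref{COROLL_OF_LEAKH_TH} (an autotopy with two identity components is the identity autotopy), which the paper reuses elsewhere.
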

\begin{proof} If $(\alpha_1, \beta, \gamma)$ and $(\alpha_2, \beta, \gamma)$ are autotopies, then
$(\alpha^{-1}_2, \beta^{-1}, \gamma^{-1})$ is an autotopy and $(\alpha_1 \alpha^{-1}_2, \beta \beta^{-1}, \gamma
\gamma^{-1}) = (\alpha_1 \alpha^{-1}_2, \varepsilon, \varepsilon)$ is an autotopy too. We can re-write the last
 autotopy in such form: $\alpha_1 \alpha^{-1}_2 x \cdot y = x\cdot y$, then $\alpha_1 = \alpha_2$.

If $(\varepsilon, \varepsilon, \gamma_1 \gamma_2)$ is an autotopy, then we have
$x\cdot y = \gamma_1\gamma^{-1}_2(x\cdot y)$. If we put in the last equality
$y=e(x)$, then we obtain $x=\gamma_1 \gamma^{-1}_2 x$ for all $x\in Q$, i.e.
$\gamma_1 = \gamma_2$.
\end{proof}

\begin{corollary} \label{COROLL_OF_LEAKH_TH}
If two components of a quasigroup autotopy are identity mappings, the the third component also is an identity mapping.
\end{corollary}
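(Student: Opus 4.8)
The plan is to read this off directly from Lemma \ref{TWO_AUTOTOPY_COMPONENTS}, using the trivial fact that the triple $(\varepsilon, \varepsilon, \varepsilon)$ is an autotopy of every groupoid $(Q,\cdot)$, since $\varepsilon x \cdot \varepsilon y = x\cdot y = \varepsilon(x\cdot y)$ for all $x,y \in Q$. So the identity autotopy is always available as a reference point against which to compare an arbitrary autotopy with two identity components.

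First I would fix an autotopy $T = (\mu_1,\mu_2,\mu_3)$ of $(Q,\cdot)$ two of whose components are $\varepsilon$, and split into three cases according to which component is the possibly non-trivial one. Consider, say, the case $T = (\varepsilon, \varepsilon, \gamma)$. Then $T$ and the identity autotopy $(\varepsilon, \varepsilon, \varepsilon)$ are both autotopies of $(Q,\cdot)$ and they share their first two components; by Lemma \ref{TWO_AUTOTOPY_COMPONENTS} any two components of a quasigroup autotopy determine the third uniquely, so the third components must agree as well, i.e. $\gamma = \varepsilon$. The cases $T = (\alpha, \varepsilon, \varepsilon)$ and $T = (\varepsilon, \beta, \varepsilon)$ are treated identically: comparing with $(\varepsilon, \varepsilon, \varepsilon)$ and applying Lemma \ref{TWO_AUTOTOPY_COMPONENTS} forces $\alpha = \varepsilon$ and $\beta = \varepsilon$ respectively.

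There is essentially no obstacle. The only point that deserves an explicit word is that $(\varepsilon, \varepsilon, \varepsilon)$ really is an autotopy, so that Lemma \ref{TWO_AUTOTOPY_COMPONENTS} is applicable; this is immediate from the definition. One could alternatively reprove the statement from scratch by repeating the short translation argument used in Lemma \ref{TWO_AUTOTOPY_COMPONENTS} (rewriting, e.g., $x\cdot y = \gamma(x\cdot y)$ and substituting $y = e(x)$ to get $\gamma = \varepsilon$), but invoking the lemma is cleaner and is the approach I would record.
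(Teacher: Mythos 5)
Your proof is correct and is essentially the paper's own argument: the paper simply cites Lemma \ref{TWO_AUTOTOPY_COMPONENTS}, and your elaboration (comparing the given autotopy with the identity autotopy $(\varepsilon,\varepsilon,\varepsilon)$ and using that any two components determine the third) is exactly the intended way to deduce the corollary from that lemma.
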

\begin{proof}
The proof follows from Lemma \ref{TWO_AUTOTOPY_COMPONENTS}.
\end{proof}

There exists \cite{IVL} more strong result than Lemma \ref{TWO_AUTOTOPY_COMPONENTS}.
I.V. Leakh proves this result using geometrical (net theory) approach in more general case than in the following

\begin{theorem} Leakh Theorem. \label{DEF_OF_COMP_BY_AUTOT_AND_EL} Any  autotopy $T = (\alpha_1, \alpha_2, \alpha_3)$  of a quasigroup $(Q, \circ)$ is uniquely defined   by any autotopy component $\alpha_i$, $i\in \{1, 2, 3 \}$,  and by element $b = \alpha_j a$, where $a$ is any fixed element of set $Q$,  $i\neq j$   \cite{IVL}.
\end{theorem}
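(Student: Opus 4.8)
The plan is to restate the theorem as a uniqueness assertion and then to work inside the group $Avt(Q,\circ)$, using only that the translations of a quasigroup are bijections. Precisely, it suffices to prove: if $T=(\alpha_1,\alpha_2,\alpha_3)$ and $T'=(\alpha_1',\alpha_2',\alpha_3')$ are autotopisms of $(Q,\circ)$ with $\alpha_i=\alpha_i'$ and $\alpha_j a=\alpha_j' a$ (for the prescribed $i\neq j$ and the fixed $a\in Q$), then $T=T'$. There are six pairs $(i,j)$, but they are interchanged by parastrophy: from the defining identity $\alpha_3(x\circ y)=\alpha_1x\circ\alpha_2 y$ one reads off at once that $(\alpha_2,\alpha_1,\alpha_3)$ is an autotopy of the $(12)$-parastrophe, $(\alpha_1,\alpha_3,\alpha_2)$ of the $(23)$-parastrophe, and $(\alpha_3,\alpha_2,\alpha_1)$ of the $(13)$-parastrophe, and every parastrophe of $(Q,\circ)$ is again a quasigroup. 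Since the transpositions generate $S_3$ on the three ``roles'' (first argument, second argument, output), any pair $(i,j)$ can be brought to the normalized case $i=1$, $j=2$ by passing to a suitable parastrophe of $(Q,\circ)$. (One may instead carry out the six cases by hand; the argument is the same each time.)

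For the normalized case put $S=(T')^{-1}T$. Since $Avt(Q,\circ)$ is a group, $S=(\varepsilon,\mu,\nu)$ is again an autotopy, with $\mu=(\alpha_2')^{-1}\alpha_2$ and $\nu=(\alpha_3')^{-1}\alpha_3$; and from $\alpha_2 a=\alpha_2' a=:b$ we get $\mu a=(\alpha_2')^{-1}b=a$, so $\mu$ fixes $a$. Writing out the autotopy identity for $S$, namely $x\circ\mu y=\nu(x\circ y)$ for all $x,y\in Q$, and specializing $y=a$ gives $x\circ a=\nu(x\circ a)$ for every $x$. As $x$ ranges over $Q$ the element $x\circ a=R^{\circ}_a x$ ranges over all of $Q$ because $R^{\circ}_a$ is a bijection, so $\nu=\varepsilon$. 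Then $(\varepsilon,\mu,\varepsilon)$ is an autotopy, hence $\mu=\varepsilon$ by Corollary \ref{COROLL_OF_LEAKH_TH} (or directly: $x\circ\mu y=x\circ y$ and $L^{\circ}_x$ is injective). Thus $S=(\varepsilon,\varepsilon,\varepsilon)$, i.e. $T=T'$, and the only data used were $\alpha_i=\alpha_i'$ and $\alpha_j a=\alpha_j' a$.

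The only genuine work is bookkeeping. First, verifying that parastrophy permutes the autotopy components exactly as claimed is a one-line computation, but it must be done for each of the three transpositions; if one prefers to bypass parastrophes, the ``awkward'' positions of the known value (when $\alpha_j a$ sits in the output slot, e.g. knowing $\alpha_1$ and $\alpha_3 a$) need a small variant: from $\nu(x\circ y)=x\circ\mu y$ and, for each $x_0$, the unique $y_0$ with $x_0\circ y_0=a$, one gets $a=x_0\circ\mu y_0$, hence $\mu y_0=y_0$; since $y_0$ ranges over all of $Q$ as $x_0$ does, this forces $\mu=\varepsilon$ first and then $\nu=\varepsilon$. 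Second, note that no circularity is introduced by citing Corollary \ref{COROLL_OF_LEAKH_TH}: that corollary and Lemma \ref{TWO_AUTOTOPY_COMPONENTS} are proved independently earlier, and in fact they are recovered from the present theorem as the degenerate instance in which $a$ is arbitrary and $b=\alpha_j a$ is already prescribed by the two given components.
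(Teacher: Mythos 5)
Your argument is correct, and its computational core is the same as the paper's: in the normalized case you specialize the autotopy identity at $y=a$ and use bijectivity of $R^{\circ}_a$ to kill the third component, then invoke Lemma \ref{TWO_AUTOTOPY_COMPONENTS} (or injectivity of $L^{\circ}_x$) for the remaining one, which is exactly the paper's Case 1; your ``awkward slot'' variant, using the unique $y_0$ with $x_0\circ y_0=a$ and the fact that $P_a$ is a permutation, is exactly the paper's Case 2. Where you differ is in the packaging: you first pass to $S=(T')^{-1}T=(\varepsilon,\mu,\nu)$ inside $Avt(Q,\circ)$, turning the comparison of two autotopies into showing a single autotopy is trivial, and, more substantially, you dispose of the remaining four $(i,j)$ configurations by the observation that the three transpositions permute autotopy components onto the parastrophes (so that $S_3$ acts simply transitively on the ordered pairs of roles), whereas the paper simply treats two cases by hand and declares the others ``proved in the similar way.'' Your parastrophy reduction buys a genuinely uniform treatment of all six cases at the cost of checking the component bookkeeping for the transpositions (which you state correctly), and your remark that citing Corollary \ref{COROLL_OF_LEAKH_TH} is not circular is accurate, since that corollary rests only on Lemma \ref{TWO_AUTOTOPY_COMPONENTS}, proved before Theorem \ref{DEF_OF_COMP_BY_AUTOT_AND_EL}.
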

\begin{proof} Case 1. $i = 1, j = 2$. If we have autotopies $(\alpha, \beta_1, \gamma_1)$ and $(\alpha, \beta_2, \gamma_2)$ such that $\beta_1 a = \beta_2 a = b$, then we have $\alpha x \circ \beta_1 a = \gamma_1(x\circ a)$ and
$\alpha x \circ \beta_2 a = \gamma_2(x\circ a)$. Since the left sides of the last equalities are equal, then we have
$\gamma_1(x\circ a) = \gamma_2(x\circ a)$, $\gamma_1R_a x = \gamma_2 R_a x$, $\gamma_1 = \gamma_2$ and by Lemma  \ref{TWO_AUTOTOPY_COMPONENTS} $\beta_1 = \beta_2$.

Case 2. $i = 1, j = 3$. Suppose there exist  autotopies $(\alpha, \beta_1, \gamma_1)$ and $(\alpha, \beta_2, \gamma_2)$ such that $\gamma_1 a = \gamma_2 a = b$ for some fixed element $a\in Q$. Since $(Q, \circ)$ is a quasigroup,  then for any element $x\in Q$ there exists a unique element  $x^{\prime} \in Q$ such that $x \circ x^{\prime} = a$. Using the concept of middle quasigroup translation  we can re-write the last equality in the form $P_a x = x^{\prime}$ and say that $P_a$ is a permutation of the set $Q$.

For all pairs $x, x^{\prime}$   we have $\alpha x \circ \beta_1 x^{\prime} = \gamma_1(x\circ x^{\prime}) = b$ and
$\alpha x \circ \beta_2 x^{\prime} = \gamma_2(x\circ x^{\prime}) = b$. Since the right  sides of the last equalities are equal  we have
$\alpha x \circ \beta_1 x^{\prime} = \alpha x \circ \beta_2 x^{\prime}$, $\beta_1 x^{\prime} = \beta_2 x^{\prime}$ for all $x^{\prime} \in Q$.  The variable $x^{\prime}$ takes all values from the set $Q$ since $P_a$ is a permutation of the set $Q$. Therefore $\beta_1 = \beta_2$ and by Lemma  \ref{TWO_AUTOTOPY_COMPONENTS} $\gamma_1 = \gamma_2$.

All other cases are proved in the similar way with Cases 1 and  2.
\end{proof}

\begin{lemma} \label{FORM_OF_AUTOTOPY}
Any autotopy $(\alpha, \beta, \gamma)$ of a loop $(Q, \cdot)$ has the form  $(R^{-1}_{\beta 1} \gamma, L^{-1}_{\alpha 1} \gamma, \gamma)$, where $\alpha 1 \cdot \beta 1 = 1$.
\end{lemma}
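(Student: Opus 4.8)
The plan is to work directly from the autotopy equation and exploit the identity element of the loop, in the same spirit as the proof of Lemma~\ref{LP_ISOTOPII} and Theorem~\ref{DEF_OF_COMP_BY_AUTOT_AND_EL}. Let $(\alpha,\beta,\gamma)$ be an autotopy of the loop $(Q,\cdot)$, so that $\alpha x \cdot \beta y = \gamma(x\cdot y)$ for all $x,y\in Q$. Denote the identity element of $(Q,\cdot)$ by $1$.

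First I would substitute $y=1$ to get $\alpha x \cdot \beta 1 = \gamma(x\cdot 1) = \gamma x$ for all $x$, i.e. $R_{\beta 1}\,\alpha = \gamma$, hence $\alpha = R_{\beta 1}^{-1}\gamma$. Symmetrically, substituting $x=1$ gives $\alpha 1 \cdot \beta y = \gamma(1\cdot y) = \gamma y$, i.e. $L_{\alpha 1}\,\beta = \gamma$, hence $\beta = L_{\alpha 1}^{-1}\gamma$. This already yields the stated form $(R_{\beta 1}^{-1}\gamma,\, L_{\alpha 1}^{-1}\gamma,\, \gamma)$.

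It remains to verify the constraint $\alpha 1\cdot\beta 1 = 1$. This follows by putting $x=y=1$ in the autotopy equation: $\alpha 1\cdot\beta 1 = \gamma(1\cdot 1) = \gamma 1$. So strictly speaking the relation is $\alpha 1 \cdot \beta 1 = \gamma 1$; to get $=1$ one notes (as is standard, cf.\ Lemma~\ref{LP_ISOTOPII}) that the two expressions $R_{\beta 1}^{-1}\gamma$ and $L_{\alpha 1}^{-1}\gamma$ for the first two components, together with $\gamma$, are forced to be consistent, which pins down $\gamma 1$: evaluating $\alpha 1 = R_{\beta 1}^{-1}\gamma 1$ we get $\alpha 1 \cdot \beta 1 = \gamma 1$ again, so the condition is simply a restatement. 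I would therefore present the lemma's displayed condition $\alpha 1\cdot\beta 1 = 1$ as the normalization that holds precisely because $\gamma 1 = 1$ whenever the autotopy is written relative to the loop's identity --- or, if $\gamma 1 \neq 1$ in general, interpret the "$1$" on the right as $\gamma 1$ and note that this is the element appearing in the formulas anyway.

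The only subtle point is the status of that side condition: one must decide whether $\gamma$ fixes the identity. Since an arbitrary autotopy need not fix $1$, the honest reading is that $\alpha 1\cdot\beta 1 = \gamma 1$, and the paper's "$=1$" is shorthand valid after composing with the translation that restores the identity, or under the convention that the loop identity is the relevant base point. I do not expect a genuine obstacle here --- the whole argument is three substitutions $y=1$, $x=1$, $x=y=1$ into the defining identity, plus one appeal to Lemma~\ref{TWO_AUTOTOPY_COMPONENTS} to see that $\gamma$ indeed determines the triple, so that the displayed form is not merely necessary but is a genuine parametrization of $Avt(Q,\cdot)$ by its last component.
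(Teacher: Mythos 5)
Your core argument is exactly the paper's proof: the substitutions $y=1$ and $x=1$ in $\alpha x\cdot\beta y=\gamma(x\cdot y)$ give $\alpha=R^{-1}_{\beta 1}\gamma$ and $\beta=L^{-1}_{\alpha 1}\gamma$, and $x=y=1$ gives the relation between $\alpha 1$ and $\beta 1$, so the proposal is correct and takes the same route. Your hesitation about the side condition is the right call, and you handle it more carefully than the paper does: $x=y=1$ yields $\alpha 1\cdot\beta 1=\gamma 1$, and a loop autotopy need not fix $1$ --- already a left A-nuclear autotopy $(L_a,\varepsilon,L_a)$ with $a\neq 1$ (e.g. in any nontrivial group) has $\alpha 1\cdot\beta 1=a=\gamma 1\neq 1$ --- so the condition in the statement (asserted without comment in the paper's proof) should read $\alpha 1\cdot\beta 1=\gamma 1$. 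This correction is harmless for the application the paper makes of the lemma in Theorem \ref{AUTOTOPY_ORDER}, where one only needs that the triple is recovered from $\gamma$ together with one of the elements $\alpha 1$, $\beta 1$. One caution about your closing remark: the displayed form does not parametrize $Avt(Q,\cdot)$ by its last component alone --- all middle A-nuclear autotopies $(R_c,L^{-1}_c,\varepsilon)$ share $\gamma=\varepsilon$ --- and Lemma \ref{TWO_AUTOTOPY_COMPONENTS} cannot supply such a claim since it needs two components; the correct uniqueness statement is Leakh's (Theorem \ref{DEF_OF_COMP_BY_AUTOT_AND_EL}): $\gamma$ together with the single value $\alpha 1$ (or $\beta 1$) determines the autotopy.
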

\begin{proof}
If we put in equality $\alpha x \cdot \beta y = \gamma(xy)$ $x=y=1$, then $\alpha 1 \cdot \beta 1 = 1$.
If $x=1$, then $\alpha 1 \cdot \beta y = \gamma y$, $\beta = L^{-1}_{\alpha 1} \gamma$.
If $y=1$, then $\alpha x \cdot \beta 1 = \gamma x$, $\alpha  = R^{-1}_{\beta 1} \gamma$.
\end{proof}

\begin{theorem}\label{AUTOTOPY_ORDER}
The order of autotopy group of a finite quasigroup $Q$ of order $n$ is a divisor of the number $n! \cdot n$.
\end{theorem}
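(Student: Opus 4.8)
The plan is to reduce to the case of a loop and then to split the autotopy group by the homomorphism onto third components.

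First I would pass to an LP-isotope. By Theorem~\ref{LP_ISOT_AND_ANALOGS} the isotope $(Q,\circ)=(Q,\cdot)(R^{-1}_a,L^{-1}_b,\varepsilon)$ is a loop on the same underlying set $Q$, hence of the same order $n$, and by Lemma~\ref{ISOM_AUTOT_GR} the group $Avt(Q,\circ)$ is conjugate to $Avt(Q,\cdot)$, in particular has the same order. So it suffices to prove the bound when $(Q,\circ)$ is a loop; write $1$ for its identity element and $S_Q$ for the symmetric group on $Q$, so $|S_Q|=n!$.

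Next, consider the map $\rho\colon Avt(Q,\circ)\to S_Q$ sending $(\alpha,\beta,\gamma)$ to $\gamma$. Since autotopisms are multiplied componentwise, $\rho$ is a group homomorphism, so $\mathrm{Im}\,\rho$ is a subgroup of $S_Q$ and $|\mathrm{Im}\,\rho|$ divides $n!$ by Lagrange's theorem. Let $N=\ker\rho$, the set of autotopisms of the form $(\alpha,\beta,\varepsilon)$; by Lemma~\ref{FORM_OF_AUTOTOPY} such an autotopy has the shape $(R^{-1}_{\beta 1},L^{-1}_{\alpha 1},\varepsilon)$ with $\alpha 1\circ\beta 1=1$, so $T\in N$ is already determined by the single element $\beta 1\in Q$ and $|N|\le n$; but for the divisibility statement I need more than this crude bound. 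Since $|Avt(Q,\circ)|=|N|\cdot|\mathrm{Im}\,\rho|$, the theorem will follow once $|N|$ is shown to divide $n$.

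For that I would let $N$ act on $Q$ by $(\alpha,\beta,\varepsilon)\cdot x=\alpha x$; this is a genuine left action because the first components of autotopisms compose, and the crucial claim is that it is semiregular (fixed-point-free). Indeed, suppose $T=(\alpha,\beta,\varepsilon)\in N$ and $\alpha x_0=x_0$ for some $x_0\in Q$. Substituting $x=x_0$ into the autotopy identity $\alpha x\circ\beta y=x\circ y$ gives $x_0\circ\beta y=x_0\circ y$ for all $y$, whence $\beta=\varepsilon$ because the left translation $L_{x_0}$ of a quasigroup is a bijection; the identity then reads $\alpha x\circ y=x\circ y$, and taking $y=1$ yields $\alpha x=x$ for all $x$, so $\alpha=\varepsilon$ and $T$ is the identity autotopy. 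Hence every nonidentity element of $N$ moves every point, so all $N$-orbits on $Q$ have size $|N|$, and therefore $|N|$ divides $n$. Combining everything, $|Avt(Q,\cdot)|=|Avt(Q,\circ)|=|N|\cdot|\mathrm{Im}\,\rho|$ divides $n\cdot n!=n!\cdot n$. The only step that is not a routine invocation of the lemmas already proved is the semiregularity of the $N$-action, which is where I expect the real work to lie.
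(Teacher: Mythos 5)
Your proof is correct, and it is worth noting where it goes beyond the paper's own argument. You share the first step with the paper: reduce to the loop case via Lemma \ref{ISOM_AUTOT_GR}. But the paper's proof then merely cites Lemma \ref{TWO_AUTOTOPY_COMPONENTS} and Lemma \ref{FORM_OF_AUTOTOPY}, i.e.\ the fact that a loop autotopy $(\alpha,\beta,\gamma)$ is determined by $\gamma$ together with the element $\beta 1$; taken literally this only gives an injection of $Avt(Q,\circ)$ into $S_Q\times Q$ and hence the bound $|Avt(Q)|\le n!\cdot n$, and the divisibility claim is left implicit. You instead split the group along the third-component homomorphism $\rho$, so that $|Avt(Q,\circ)|=|\ker\rho|\cdot|\mathrm{Im}\,\rho|$ with $|\mathrm{Im}\,\rho|$ dividing $n!$ by Lagrange, and then prove that $\ker\rho$ (which is exactly the middle A-nucleus $N^A_m$) acts freely on $Q$ through its first components, so that $|\ker\rho|$ divides $n$; your fixed-point-free argument is sound (indeed it needs only cancellation, not the identity element, so it works in any quasigroup). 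In effect you have anticipated, in the special case needed here, the mechanism the paper only establishes later in Theorem \ref{GROUP_ACTION_ON QUAS} and Corollary \ref{NUCLEI_ORDER_DIV} (semiregularity of A-nuclear components and the consequent divisibility of $|Q|$ by their orders). So your route is somewhat longer but self-contained and actually delivers the divisor statement, whereas the paper's citation-style proof trades that completeness for brevity.
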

\begin{proof}
The proof follows from Lemma \ref{ISOM_AUTOT_GR} (we can prove loop case),  Lemma \ref{TWO_AUTOTOPY_COMPONENTS} and Lemma \ref{FORM_OF_AUTOTOPY} (we can take the second and third components of loop autotopy).
\end{proof}

Also by the proving of Theorem \ref{AUTOTOPY_ORDER} it is possible to use Leakh Theorem (Theorem \ref{DEF_OF_COMP_BY_AUTOT_AND_EL}).

\begin{example}
The order of autotopy group of the group $Z_2 \times Z_2$  is equal to $4\cdot 4\cdot 6 = 4!\cdot 4$, i.e. in this case autotopy group is equal to the upper bound.
\end{example}

\begin{remark}
There exist quasigroups (loops) with identity autotopy group \cite{DER_DER_DUD}. In this case autotopy group is of minimal order.
\end{remark}

\subsection{Isostrophism}

 Isostrophy (synonymous with isostrophism) of a quasigroup  is a transformation  that is a combination
 of parastrophy and isotopy, i.e. isostrophe image of a quasigroup $(Q,A)$ is parastrophe image of its isotopic
image or, vice versa, isostrophe image of a quasigroup $(Q,A)$ is isotopic image of its parastrophe (Definition
\ref{D2.3}). Therefore, there exists a possibility to define isostrophy by at least in two ways.

If $T = (\alpha_1, \alpha_2, \alpha_3)$ is an isotopy, $\sigma$ is a parastrophy of a quasigroup $(Q, A)$, then
we shall denote by $T^{\sigma}$ the triple $(\alpha_{\sigma^{-1} 1}, \alpha_{\sigma^{-1} 2}, \alpha_{\sigma^{-1}
3})$.

In order to give some properties of quasigroup isostrophisms  we need the following

\begin{lemma} \label{L2.1} ${(AT)}^\sigma  = {A}^\sigma T^\sigma,$ $(T_1T_2)^{\sigma} = T_1^{\sigma}
T_2^{\sigma}$ \cite{VD, 2}.
\end{lemma}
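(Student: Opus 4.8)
The plan is to prove both identities by the most direct route: two binary operations on a set coincide as soon as they have the same graph $\{(x_1,x_2,x_3) : \mathrm{op}(x_1,x_2)=x_3\}\subseteq Q^{3}$, so in each case it suffices to take an arbitrary triple, unwind the two sides using only (a) the defining relation of a parastrophe, $A^{\sigma}(x_{\sigma 1},x_{\sigma 2})=x_{\sigma 3}\Leftrightarrow A(x_1,x_2)=x_3$, and (b) the defining relation of an isotope, $(AT)(x_1,x_2)=x_3\Leftrightarrow A(\mu_1 x_1,\mu_2 x_2)=\mu_3 x_3$ for $T=(\mu_1,\mu_2,\mu_3)$, and then check that the two unwound statements are literally the same.

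For ${(AT)}^{\sigma}={A}^{\sigma}T^{\sigma}$, I would fix a triple $(u_1,u_2,u_3)$ and process the left-hand side by applying (a) to the quasigroup $AT$ and then (b); this turns $(AT)^{\sigma}(u_1,u_2)=u_3$ into an equation asserting that a value of $A$ at two $\mu$-translated coordinates equals a $\mu$-translated coordinate, the three coordinates being permuted according to $\sigma$. Processing the right-hand side the other way round --- first (b) with the isotopy $T^{\sigma}$, then (a) applied to $A$ --- yields an equation of exactly the same shape, and the whole content of the computation is that the two index patterns coincide. They coincide precisely because $T^{\sigma}$ is, by definition, obtained from $T$ by permuting its three components; the definition is calibrated so that this cancellation takes place. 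Conceptually the identity is just the precise form of the remark made above that an isostrophy can be performed either as ``isotopy then parastrophe'' or as ``parastrophe then adjusted isotopy''.

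For ${(T_1T_2)}^{\sigma}=T_1^{\sigma}T_2^{\sigma}$, I would first recall that the product of two isotopies is taken componentwise, $T_1T_2=(\mu_1\nu_1,\mu_2\nu_2,\mu_3\nu_3)$ for $T_1=(\mu_1,\mu_2,\mu_3)$ and $T_2=(\nu_1,\nu_2,\nu_3)$ (this reflects $(AT_1)T_2=A(T_1T_2)$, via $\nu_3^{-1}\mu_3^{-1}A(\mu_1\nu_1 x_1,\mu_2\nu_2 x_2)$). The operator $(\,\cdot\,)^{\sigma}$ then merely relabels the three entries of a triple by a fixed permutation of $\{1,2,3\}$, and relabelling the entries of an entrywise product gives the entrywise product of the relabelled triples, so the identity is immediate; alternatively one can reduce to the two transpositions generating $S_{3}$ and read it off Table 1. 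I do not expect a genuine obstacle in either part: the only thing that demands care is the index bookkeeping --- keeping straight at each step whether $\sigma$ or $\sigma^{-1}$ is in force --- which is why the statement is quoted from \cite{VD, 2} rather than laboured here.
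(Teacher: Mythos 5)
The paper itself offers no proof of this lemma: it is simply quoted from Belousov \cite{VD, 2}, so there is no internal argument to compare yours against. Your treatment of the second identity is complete and correct: the product $T_1T_2$ is formed componentwise and $(\cdot)^{\sigma}$ merely relabels the three components, so $\bigl((T_1T_2)^{\sigma}\bigr)_i=(T_1T_2)_{\sigma^{-1}i}=(T_1)_{\sigma^{-1}i}(T_2)_{\sigma^{-1}i}=(T_1^{\sigma})_i\,(T_2^{\sigma})_i$, and this goes through verbatim for either relabelling convention mentioned in the remark after the lemma. (The aside about reading it off Table 1 is not needed and not really what Table 1 records, but that is harmless.)

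For the first identity, however, your proposal stops exactly where the mathematical content lies. You reduce both sides to an equation "a value of $A$ at two translated coordinates equals a translated coordinate" and then assert the two index patterns coincide because the definition of $T^{\sigma}$ "is calibrated so that this cancellation takes place"; that is the statement to be proved, not an argument for it, and the bookkeeping you defer is genuinely delicate. Concretely, the paper's chain of equivalences fixes the parastrophe convention $A^{\sigma}(z_1,z_2)=z_3\Leftrightarrow A(z_{\sigma^{-1}1},z_{\sigma^{-1}2})=z_{\sigma^{-1}3}$, and the main text defines $T^{\sigma}=(\alpha_{\sigma^{-1}1},\alpha_{\sigma^{-1}2},\alpha_{\sigma^{-1}3})$. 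Reading these literally (with the usual left action on indices), unwinding $(AT)^{\sigma}(y_1,y_2)=y_3$ gives $A(\alpha_1y_{\sigma^{-1}1},\alpha_2y_{\sigma^{-1}2})=\alpha_3y_{\sigma^{-1}3}$, while unwinding $(A^{\sigma}T^{\sigma})(y_1,y_2)=y_3$ gives $A(\alpha_{\sigma^{-2}1}y_{\sigma^{-1}1},\alpha_{\sigma^{-2}2}y_{\sigma^{-1}2})=\alpha_{\sigma^{-2}3}y_{\sigma^{-1}3}$: the patterns agree for transpositions but not for the $3$-cycles, and they only close for all $\sigma$ after one reconciles the conventions, e.g.\ by using $T^{\sigma}=(\alpha_{\sigma 1},\alpha_{\sigma 2},\alpha_{\sigma 3})$ (the "more standard" version in the remark following the lemma) or by letting $S_3$ act on index triples on the right, as the paper does in its formula for composing isostrophies. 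So a correct blind proof must first fix one consistent set of conventions and then actually display and match the two unwound equations; leaving that check to "calibration" (or to \cite{VD, 2}) omits the only nontrivial step, and, as the computation above shows, it is a step at which a careless choice of conventions really does break the identity.
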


\begin{remark}
It is possible to define action of a parastrophy $\sigma$  on an isotopy  $T = (\alpha_1, \alpha_2, \alpha_3)$ also in the following (more standard) way: $T^{\sigma} = (\alpha_{\sigma 1}, \alpha_{\sigma 2}, \alpha_{\sigma
3})$. In this case Lemma \ref{L2.1} also is valid.
\end{remark}

Recall, if   $(Q, A)$ is a binary  groupoid, then  $x_3 = A(x_1, x_2)$ (Definition \ref{def1}).

\begin{definition}\label{D2.3}
A quasigroup $(Q,B)$ is an isostrophic image of a quasigroup $(Q,A)$ if there exists a collection of
permutations $(\sigma, (\alpha_1, \alpha_2, \alpha_3)) = (\sigma, T)$, where $\sigma \in S_3 $, $T = (\alpha_1,
\alpha_2, \alpha_3)$ and    $\alpha_1, \alpha_2, \alpha_3$ are permutations of the set $Q$ such that  $  B(x_1,
x_2) = A(x_1, x_2)(\sigma, T) = (A^{\sigma}(x_1, x_2)) T = \alpha^{-1} _3{A}(\alpha _1 x_{\sigma^{-1} 1}, \alpha _2x_{\sigma^{-1} 2}) $ for
all $x_{1}, x_{2}\in Q$ \cite{SCERB_08_1, SCERB_09_JGLTA}.
\end{definition}

\begin{remark}
There exist  formally other but in some sense equivalent definitions of isostrophy and autostrophy. See, for example,  \cite{ks3, KEED_SCERB}.
\end{remark}

A collection of permutations $(\sigma, (\alpha_1, \alpha_2, \alpha_3)) =  (\sigma, T)$ will be called an {\it
isostrophy} of a quasigroup $(Q,A)$.

Often  an isostrophy $(\sigma, T)$ is called $\sigma$-isostrophy T or isostrophy of type $\sigma$. We can
re-write the equality from Definition \ref{D2.3} in the form ${A}^{\sigma} T = B$, where $T = (\alpha_1,
\alpha_2, \alpha_3)$. It is clear that  $\varepsilon$-isostrophy is called an \textit{isotopy}.

Probably R. Artzy was the first who has given algebraic definition of isostrophy \cite{RA_63}. For  $n$-ary quasigroups concept
of isostrophy is studied in \cite{2}. Isostrophy has clear geometrical (net) motivation \cite{HOP, BS_83}. See
also \cite{IVL}.

Let $(\sigma, T)$ and $(\tau, S)$ be some isostrophisms, $T = (\alpha_1, \alpha_2, \alpha_3)$, $S = (\beta_1, \beta_2, \beta_3)$.  Then
\begin{equation}
(\sigma, T) (\tau, S) = (\sigma\tau,T^{\tau}S)
\end{equation}

We can write the equality   $B=A(\sigma\tau,T^{\tau}S)$ in more details: (the right record of maps)
\begin{equation}\label{composition_of_isostrophisms}
\begin{split}
B(x_1, x_2, x_3) =
 A(x_{1 (\tau^{-1}\sigma^{-1})} \alpha_{1\tau^{-1}}\beta_1, x_{2 (\tau^{-1}\sigma^{-1})}\alpha_{2 \tau^{-1}}\beta_2, x_{3 ( \tau^{-1}\sigma^{-1}) }\alpha_{3 \tau^{-1}}\beta_3).
\end{split}
\end{equation}
If we shall use the left record of maps, then
  \begin{equation} \label{composition_of_isostrophisms_LEFT_REC}
 B(x_1, x_2, x_3) = A(\beta_1 \alpha_{\tau^{-1}1} x_{\sigma^{-1} (\tau^{-1} 1)},\beta_2 \alpha_{\tau^{-1}2} x_{\sigma^{-1} (\tau^{-1} 2)}, \beta_3 \alpha_{\tau^{-1}3}x_{\sigma^{-1} (\tau^{-1} 3)}).
 \end{equation}

\begin{lemma} \label{inver_AUTOSTROPHY}
If $(\sigma, T) = (\sigma, (\alpha_1, \alpha_2, \alpha_3))$ is an isostrophy, then
\begin{equation}\label{INVERSE_ISOSTROPHISM}
(\sigma, T)^{-1} = (\sigma^{-1},(T^{-1})^{\sigma^{-1}}) = (\sigma^{-1}, (\alpha^{-1}_{\sigma 1}, \alpha^{-1}_{\sigma 2}, \alpha^{-1}_{\sigma 3}))
\end{equation}
\end{lemma}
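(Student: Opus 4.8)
The plan is to verify directly that the proposed triple $(\sigma^{-1}, (\alpha^{-1}_{\sigma 1}, \alpha^{-1}_{\sigma 2}, \alpha^{-1}_{\sigma 3}))$ is a two-sided inverse of $(\sigma, T)$ in the group of isostrophisms, using the composition rule $(\sigma, T)(\tau, S) = (\sigma\tau, T^{\tau}S)$ together with Lemma \ref{L2.1}. First I would unwind the notation: by the convention fixed just before Lemma \ref{L2.1}, $(T^{-1})^{\sigma^{-1}}$ is the triple whose $k$-th component is $(\alpha^{-1})_{(\sigma^{-1})^{-1} k} = \alpha^{-1}_{\sigma k}$, so the two descriptions on the right-hand side of (\ref{INVERSE_ISOSTROPHISM}) agree; this is just a matter of chasing indices through the definition of $T^{\sigma}$.

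Next I would compute $(\sigma, T)(\sigma^{-1}, (T^{-1})^{\sigma^{-1}})$ using the composition formula with $\tau = \sigma^{-1}$ and $S = (T^{-1})^{\sigma^{-1}}$. The first coordinate is $\sigma\sigma^{-1} = \varepsilon$, which is the identity parastrophy, so the result is an isotopy. The second coordinate is $T^{\sigma^{-1}} S = T^{\sigma^{-1}} (T^{-1})^{\sigma^{-1}}$; by the second identity of Lemma \ref{L2.1}, $T^{\sigma^{-1}} (T^{-1})^{\sigma^{-1}} = (T T^{-1})^{\sigma^{-1}} = (\varepsilon,\varepsilon,\varepsilon)^{\sigma^{-1}} = (\varepsilon,\varepsilon,\varepsilon)$. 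Hence the product is $(\varepsilon, (\varepsilon,\varepsilon,\varepsilon))$, the identity isostrophy. The computation of the product in the other order, $(\sigma^{-1}, (T^{-1})^{\sigma^{-1}})(\sigma, T)$, is symmetric: the first coordinate is $\sigma^{-1}\sigma = \varepsilon$, and the second is $((T^{-1})^{\sigma^{-1}})^{\sigma} T$; since applying $\sigma^{-1}$ then $\sigma$ to the indices of $T^{-1}$ returns $T^{-1}$ (one should note $(S^{\sigma^{-1}})^{\sigma} = S$, an immediate consequence of the index-permutation definition), this equals $T^{-1} T = (\varepsilon,\varepsilon,\varepsilon)$.

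The only mild subtlety — and the step I would be most careful about — is bookkeeping with the two composition rules and the placement of $\sigma^{-1}$ versus $\sigma$ in the index of $(T^{-1})^{\sigma^{-1}}$, because the paper carries two conventions for the action of a parastrophy on an isotopy (the one before Lemma \ref{L2.1} and the ``more standard'' one in the Remark after it). I would fix the convention $T^{\sigma} = (\alpha_{\sigma^{-1} 1}, \alpha_{\sigma^{-1} 2}, \alpha_{\sigma^{-1} 3})$ used in Definition \ref{D2.3}, check that $(T^{\sigma})^{\tau} = T^{\tau\sigma}$ (so that $(S^{\sigma^{-1}})^{\sigma} = S^{\sigma\sigma^{-1}} = S$), and then the two displayed computations go through mechanically. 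No deep idea is needed beyond Lemma \ref{L2.1}; the content of the lemma is really that the map $(\sigma, T) \mapsto (\sigma^{-1}, (T^{-1})^{\sigma^{-1}})$ is forced once one knows the composition rule and that isostrophisms form a group, so it suffices to exhibit this candidate and verify it on both sides.
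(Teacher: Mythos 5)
Your proposal is correct and follows essentially the same route as the paper's own proof: unwind the convention to see that $(T^{-1})^{\sigma^{-1}} = (\alpha^{-1}_{\sigma 1}, \alpha^{-1}_{\sigma 2}, \alpha^{-1}_{\sigma 3})$, then verify the candidate is a two-sided inverse via the composition rule, using Lemma \ref{L2.1} for $T^{\sigma^{-1}}(T^{-1})^{\sigma^{-1}} = (TT^{-1})^{\sigma^{-1}} = \varepsilon$ and the index bookkeeping $((T^{-1})^{\sigma^{-1}})^{\sigma} = T^{-1}$ for the other order. Your explicit check that $(T^{\sigma})^{\tau} = T^{\tau\sigma}$ only makes precise what the paper leaves implicit in its second displayed equality.
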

\begin{proof}
Indeed,
$(\sigma, T)^{-1}  = (\sigma^{-1},(T^{-1})^{\sigma^{-1}})$,    $ (T^{-1})^{\sigma^{-1}} = (\alpha^{-1}_1, \alpha^{-1}_2, \alpha^{-1}_3)^{\sigma^{-1}} = (\alpha^{-1}_{\sigma 1}, \alpha^{-1}_{\sigma 2}, \alpha^{-1}_{\sigma 3})$.
 Then
\begin{equation*}
\begin{split}
(\sigma, T)(\sigma^{-1},(T^{-1})^{\sigma^{-1}}) = (\sigma \sigma^{-1}, T^{\sigma^{-1}} (T^{-1})^{\sigma^{-1}}) = (\varepsilon, \varepsilon), &\\
(\sigma^{-1},(T^{-1})^{\sigma^{-1}}) (\sigma, T) = (\sigma^{-1} \sigma, T^{-1} T) = (\varepsilon, \varepsilon).
\end{split}
\end{equation*}
\end{proof}

If $B=A$, then  isostrophism is called an \textit{autostrophism (autostrophy)}. Denote by $Aus\,(Q,A)$ the group of all autostrophisms of a quasigroup $(Q, A)$.

We can generalize Lemma \ref{ISOM_AUTOT_GR} and similarly prove the following

\begin{lemma} \label{ISOSTR_AUTOT_GR1} If quasigroups $(Q, \circ)$ and $(Q, \cdot)$ are isostrophic
with an isostrophy $T$, i.e. $(Q,\circ) = (Q, \cdot)T$, then $ Aus(Q, \circ) = T^{-1} Aus  (Q, \cdot) T$.
\end{lemma}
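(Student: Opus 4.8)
The plan is to treat isostrophisms as acting (from the right) on the set of all quasigroup operations on the fixed set $Q$, to recognize $Aus(Q,A)$ as the stabilizer of $(Q,A)$ under this action, and then to invoke the standard fact that stabilizers of points lying in one orbit are conjugate. Thus the proof should be a verbatim generalization of the proof of Lemma \ref{ISOM_AUTOT_GR}, with isotopisms replaced by isostrophisms and Lemma \ref{L2.1} replaced by the composition rule $(\sigma, T)(\tau, S) = (\sigma\tau, T^{\tau}S)$ together with Lemma \ref{inver_AUTOSTROPHY}.

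First I would record the associativity of the action: for any quasigroup operation $A$ on $Q$ and any isostrophisms $S_1, S_2$ of $Q$ one has $(AS_1)S_2 = A(S_1S_2)$, where $S_1S_2$ is the product introduced just before \eqref{composition_of_isostrophisms}. This is essentially the content of that composition formula; concretely it is obtained by applying \eqref{composition_of_isostrophisms_LEFT_REC} twice and comparing with a single application of $S_1S_2$. I would also recall, from Lemma \ref{inver_AUTOSTROPHY}, that every isostrophism $T$ has a two-sided inverse $T^{-1}$ with $TT^{-1} = T^{-1}T = (\varepsilon,\varepsilon)$, and that the identity isostrophism acts trivially: $A(\varepsilon,\varepsilon) = A$. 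Together these say precisely that the isostrophisms of $Q$ form a group acting on quasigroup operations on $Q$, and that $Aus(Q,A) = \{\,S : (Q,A)S = (Q,A)\,\}$ is the stabilizer subgroup of $(Q,A)$.

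The argument is then a short equivalence chain. Let $S$ be an isostrophism of $Q$. Using $(Q,\circ) = (Q,\cdot)T$ and the associativity just noted,
\begin{align*}
S \in Aus(Q,\circ) &\iff (Q,\circ)S = (Q,\circ) \\
&\iff (Q,\cdot)(TS) = (Q,\cdot)T \\
&\iff (Q,\cdot)(TST^{-1}) = (Q,\cdot) \\
&\iff TST^{-1} \in Aus(Q,\cdot) \\
&\iff S \in T^{-1}Aus(Q,\cdot)T.
\end{align*}
In the third equivalence one multiplies on the right by $T^{-1}$ (respectively by $T$), again using associativity of the action and $TT^{-1} = (\varepsilon,\varepsilon)$. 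This already gives the set equality $Aus(Q,\circ) = T^{-1}Aus(Q,\cdot)T$; no separate verification that both sides are groups is needed, since conjugation by $T$ inside the group of isostrophisms of $Q$ is an automorphism and $Aus(Q,\cdot)$ is a subgroup of that group.

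I do not expect a genuine obstacle. The one point requiring care is the compatibility statement $(AS_1)S_2 = A(S_1S_2)$, i.e. that the rule $(\sigma, T)(\tau, S) = (\sigma\tau, T^{\tau}S)$ really does implement ``first apply $S_1$, then apply $S_2$''; this is exactly why that formula was introduced, and once it is invoked everything else is formal. A secondary bookkeeping point is keeping track of the direction of the action (right versus left) and of the explicit form of $T^{-1}$ supplied by Lemma \ref{inver_AUTOSTROPHY}, but nothing deeper is involved.
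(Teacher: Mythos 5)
Your proposal is correct and follows essentially the same route as the paper: the paper's proof likewise repeats the argument of Lemma \ref{ISOM_AUTOT_GR} (Belousov's Lemma 1.4), showing $T\,Aus(Q,\circ)\,T^{-1}\subseteq Aus(Q,\cdot)$ and then the reverse inclusion by swapping the roles of the two quasigroups, which is just your equivalence chain split into two inclusions. Your explicit attention to the associativity of the action and to Lemma \ref{inver_AUTOSTROPHY} only makes precise what the paper uses implicitly.
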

\begin{proof}
The proof repeats the proof of Lemma 1.4 from \cite{1a}.
Let $S\in Aus(Q, \circ)$. Then $(Q, \cdot)T = (Q, \cdot)TS$, $(Q, \cdot) = (Q, \cdot)TST^{-1}$,
\begin{equation} \label{AUTOTOP_EQU}
T Aus(Q, \circ) T^{-1} \subseteq Aus (Q, \cdot).
\end{equation}
 If $(Q,\circ) = (Q, \cdot)T$, then $(Q,\circ) T^{-1} = (Q, \cdot)$, expression (\ref{AUTOTOP_EQU}) takes the form $T^{-1} Aus(Q, \cdot) T \subseteq Aus (Q, \circ)$,
  \begin{equation} \label{AUTOTOP_EQU_1}
T Aus(Q, \circ) T^{-1} \supseteq Aus (Q, \cdot).
\end{equation}
Comparing (\ref{AUTOTOP_EQU}) and (\ref{AUTOTOP_EQU_1}) we obtain $ Aus(Q, \circ) = T^{-1} Aus  (Q, \cdot) T$.
\end{proof}

\begin{corollary} \label{ISOSTR_AUTOT_GR} If quasigroups $(Q, \circ)$ and $(Q, \cdot)$ are isostrophic
with isostrophy $T$, i.e. $(Q,\circ) = (Q, \cdot)T$, then $ Avt(Q, \circ) = T^{-1} Avt (Q, \cdot) T$.
\end{corollary}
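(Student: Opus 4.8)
The plan is to obtain this as a direct consequence of Lemma \ref{ISOSTR_AUTOT_GR1}, which already gives $Aus(Q,\circ) = T^{-1} Aus(Q,\cdot) T$, by passing to an appropriate subgroup. The key observation is that $Avt(Q,A)$ is exactly the subgroup of $Aus(Q,A)$ consisting of those autostrophisms whose parastrophy component equals $\varepsilon$; equivalently, $Avt(Q,A)$ is the kernel of the natural homomorphism $Aus(Q,A)\to S_3$, $(\sigma,S)\mapsto\sigma$, which one reads off from the composition rule $(\sigma,T_1)(\tau,T_2) = (\sigma\tau, T_1^{\tau}T_2)$ recorded before (\ref{composition_of_isostrophisms}).

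Write $T = (\tau, U)$ for the given isostrophy. First I would check that conjugation by $T$ carries autotopies to autotopies: if $(\varepsilon, V)\in Avt(Q,\cdot)$, then by the composition formula together with the inverse formula of Lemma \ref{inver_AUTOSTROPHY} (which shows $T^{-1}$ has parastrophy component $\tau^{-1}$), the parastrophy component of $T^{-1}(\varepsilon,V)T$ is $\tau^{-1}\,\varepsilon\,\tau = \varepsilon$. Hence $T^{-1}(\varepsilon,V)T$ is again an autotopy, and since by Lemma \ref{ISOSTR_AUTOT_GR1} it lies in $Aus(Q,\circ)$, it in fact lies in $Avt(Q,\circ)$. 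This proves $T^{-1}Avt(Q,\cdot)T\subseteq Avt(Q,\circ)$.

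For the reverse inclusion I would run the same argument with the roles of the two quasigroups interchanged: from $(Q,\circ) = (Q,\cdot)T$ we get $(Q,\cdot) = (Q,\circ)T^{-1}$, so $T^{-1}$ is an isostrophy from $(Q,\circ)$ to $(Q,\cdot)$, and the previous paragraph applied to $T^{-1}$ yields $T\,Avt(Q,\circ)\,T^{-1}\subseteq Avt(Q,\cdot)$, i.e. $Avt(Q,\circ)\subseteq T^{-1}Avt(Q,\cdot)T$. Combining the two inclusions gives $Avt(Q,\circ) = T^{-1}Avt(Q,\cdot)T$.

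There is no genuine obstacle here; the only point needing a moment's care is the bookkeeping of parastrophy components under composition and inversion of isostrophisms, which is precisely what formula (\ref{composition_of_isostrophisms}) and Lemma \ref{inver_AUTOSTROPHY} were designed to handle. Alternatively, one can simply repeat verbatim the inclusion‑chasing proof of Lemma \ref{ISOSTR_AUTOT_GR1} (equations (\ref{AUTOTOP_EQU})--(\ref{AUTOTOP_EQU_1})) with $Aus$ replaced by $Avt$ throughout, the only additional remark being that a conjugate of an autotopy by an isostrophy is again an autotopy, which is exactly the parastrophy‑component computation just described.
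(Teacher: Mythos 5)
Your proposal is correct and essentially matches the paper's argument: the paper also reduces the corollary to Lemma \ref{ISOSTR_AUTOT_GR1}, with the single added remark that conjugating an autotopy by an isostrophy again yields an autotopy (i.e.\ the parastrophy component stays $\varepsilon$), which is exactly your kernel-of-$(\sigma,S)\mapsto\sigma$ observation. Your main derivation uses the lemma's conclusion directly rather than repeating its inclusion chase, but your closing alternative is verbatim the paper's route, so there is no substantive difference.
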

\begin{proof}
The proof repeats the proof of Lemma \ref{ISOSTR_AUTOT_GR1}. We only notice, if $S \in Avt(Q, \circ)$, then $T^{-1} S T \in Avt(Q, \cdot)$.
\end{proof}

\begin{corollary} \label{AUTOSTR_AUTOT_GR} If $T$ is an autostrophy of a quasigroup  $(Q, \cdot)$, i.e. $(Q,\cdot) = (Q, \cdot)T$, then $ Avt(Q, \cdot) = T^{-1} Avt (Q, \cdot) T$.
\end{corollary}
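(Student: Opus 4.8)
The plan is to obtain this corollary as the special case of Corollary \ref{ISOSTR_AUTOT_GR} in which the two isostrophic quasigroups coincide. Indeed, the hypothesis that $T$ is an autostrophy of $(Q,\cdot)$ says precisely that $(Q,\cdot) = (Q,\cdot)T$, which is exactly the situation covered by Corollary \ref{ISOSTR_AUTOT_GR} upon setting $(Q,\circ) := (Q,\cdot)$. Substituting into the conclusion $Avt(Q,\circ) = T^{-1} Avt(Q,\cdot) T$ yields $Avt(Q,\cdot) = T^{-1} Avt(Q,\cdot) T$, which is the assertion. Thus the proof can be reduced to a one-line invocation of the previous corollary.

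If one prefers a self-contained argument, I would mimic the proof of Lemma \ref{ISOSTR_AUTOT_GR1}. Let $S \in Avt(Q,\cdot)$; viewing $S$ as an $\varepsilon$-isostrophy we have $(Q,\cdot)S = (Q,\cdot)$, and from $(Q,\cdot) = (Q,\cdot)T$ we get $(Q,\cdot)T^{-1} = (Q,\cdot)$. Using Lemma \ref{L2.1} and the composition rule for isostrophisms, one computes
\[
(Q,\cdot)(T^{-1}ST) = \big((Q,\cdot)T^{-1}\big)(ST) = (Q,\cdot)(ST) = \big((Q,\cdot)S\big)T = (Q,\cdot)T = (Q,\cdot).
\]
Hence $T^{-1}ST$ is an autostrophy of $(Q,\cdot)$; moreover, if $\sigma$ denotes the type of $T$, the type of $T^{-1}ST$ is $\sigma^{-1}\varepsilon\sigma = \varepsilon$, so $T^{-1}ST$ is in fact an autotopy, i.e. $T^{-1}ST \in Avt(Q,\cdot)$. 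This gives $T^{-1} Avt(Q,\cdot) T \subseteq Avt(Q,\cdot)$; applying the same reasoning to the autostrophy $T^{-1}$ yields the reverse inclusion, and equality follows.

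The only point requiring a moment's care is the bookkeeping of isostrophy types: one must check that conjugating an autotopy (type $\varepsilon$) by an autostrophy of type $\sigma$ produces an isostrophy of type $\sigma^{-1}\varepsilon\sigma = \varepsilon$, so that the conjugate genuinely lands in $Avt(Q,\cdot)$ and not merely in $Aus(Q,\cdot)$. Everything else is the formal manipulation already carried out in Lemma \ref{ISOSTR_AUTOT_GR1} and Corollary \ref{ISOSTR_AUTOT_GR}, so I expect no real obstacle.
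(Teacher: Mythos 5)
Your proposal is correct and follows essentially the same route as the paper: the paper's proof is exactly the specialization of the conjugation result for isostrophic quasigroups (Lemma \ref{ISOSTR_AUTOT_GR1} together with Corollary \ref{ISOSTR_AUTOT_GR}) to the case $(Q,\circ)=(Q,\cdot)$. Your added self-contained verification, including the check that conjugating a type-$\varepsilon$ isostrophy by a type-$\sigma$ autostrophy again has type $\sigma^{-1}\varepsilon\sigma=\varepsilon$, is sound and merely makes explicit the point the paper leaves implicit.
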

\begin{proof}
The proof follows from Lemma \ref{ISOSTR_AUTOT_GR1}.
\end{proof}

\subsection{Group action}

We shall denote by $S_Q$ the symmetric group of all bijections (permutations) of the set $Q$.

We recall some definitions from \cite{FR, KM}.
\begin{definition}
A group $G$ acts on a set $M$ if for any pair of elements $(g,m)$, $g\in G, m\in M,$ an element $(gm)\in M$ is
defined. Moreover, $g_1(g_2 (m)) = (g_1g_2)m $ and $em=m$ for all $m\in M$, $g_1, g_2\in G$. Here $e$ is the
identity element of the group $G$.
\end{definition}

The set $Gm=\{gm \, | \, g\in G\}$ is called an orbit of element $m$.
For every $m$ in $M$, we define the stabilizer subgroup of $m$  as the set of all elements in $G$ that fix $m$:
$G_m=\{g \, | \, gm = m \} $

The orbits of any two elements of the set
$M$ coincide or are not intersected. Then the set $M$ is divided into a set of non-intersected orbits. In other
words, if we define on the set $M$ a binary relation $\sim$ as:
$$m_1 \sim m_2 \textrm{\ if \ and \ only \ if \
there \ exists \ } g\in G \textrm{\ such \ that \ }m_2 = g m_1,$$ then $\sim$ is an equivalence relation on the
set $M$.

Every orbit is an invariant subset of $M$ on which $G$ acts transitively. The action of $G$ on $M$ is transitive if and only if all elements are equivalent, meaning that there is only one orbit.

A partition $\theta$ of the set $M$ on disjoint subsets $\theta(x)$, $x\in M$ is called a partition on blocks relatively the group
$G$, if for any $\theta(a)$ and any $g\in G$ there exists a subset $\theta(b)$ such that $g \theta(a)  = \theta(b)$.
It is obviously that there exist trivial partition of the set $M$, namely, partition into one-element blocks
and partition into unique block.

If there does not exist a partition of the set $M$ into non-trivial blocks, then the group $G$ is called
primitive.

\begin{definition} \label{action}
The action of $G$ on $M$ is called:
\begin{enumerate}
\item faithful (or effective) if for any two distinct $g,  h \in G$ there exists an $x \in M$ such that $g(x) \neq  h(x)$; or equivalently, if for any $g\neq e \in G$ there exists an $x \in M$ such that $g(x) \neq x$. Intuitively, different elements of $G$ induce different permutations of $M$;

\item free (or semiregular) if for any two distinct $g, h \in G$ and all $x \in M$ we have $g(x) \neq h(x)$; or equivalently, if $g(x) = x$ for some $x$ then $g = e$;

\item regular (or simply transitive) if it is both transitive and free; this is equivalent to saying that for any two $x, y$ in $M$ there exists precisely one $g$ in $G$ such that $g(x) = y$. In this case, $M$ is known as a principal homogeneous space for $G$ or as a $G$-torsor  \cite{WIKI_3}.
\end{enumerate}
\end{definition}

Denote  the property of a set of permutations  $\{p_1, p_2, \dots , p_m\}$ of an $m$-element set $Q$ \lq\lq $p_i
p^{-1}_j$ ($i\neq j$) leaves no variable unchanged\rq\rq \cite{MANN} as the $\tau$-property.  An  $m$-tuple of
permutations $T$ can also have the $\tau$-property. We shall call the $m$-tuple $T$ as a $\tau$-m-tuple.

In \cite{MANN}, in fact, Mann proves the following
\begin{theorem}\label{THEOR_1}
A set $P = \{p_1, p_2, \dots , p_m\}$ of $m$ permutations of a finite set $Q$ of order $m$  defines Cayley table of a quasigroup if and only if $P$ has the
$\tau$-property.
\end{theorem}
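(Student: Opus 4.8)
The plan is to prove both directions of Theorem~\ref{THEOR_1}, the "only if" direction being essentially a restatement of the definition of a quasigroup in terms of translations, and the "if" direction being the substantive half.

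\textbf{The "only if" direction.} Suppose $P = \{p_1, \dots, p_m\}$ is the set of rows (say, left translations) of the Cayley table of a quasigroup $(Q, \cdot)$ of order $m$; index the permutations so that $p_i = L_{a_i}$ where $a_1, \dots, a_m$ is an enumeration of $Q$. By Definition~\ref{RIGHT_QUSIGROUP}, each left translation $L_{a_i}$ is a bijection of $Q$, so $P$ is a set of $m$ permutations, and they are distinct since $L_{a_i} = L_{a_j}$ would force $a_i \cdot x = a_j \cdot x$ for all $x$, contradicting (right) cancellativity when $i \neq j$. To see the $\tau$-property, suppose $p_i p_j^{-1}$ fixes some element $y \in Q$ with $i \neq j$. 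Writing $y = L_{a_j} x$, i.e.\ $y = a_j \cdot x$, the condition $p_i p_j^{-1} y = y$ says $L_{a_i} x = y = L_{a_j} x$, that is $a_i \cdot x = a_j \cdot x$; again right cancellativity gives $a_i = a_j$, contradiction. Hence $p_i p_j^{-1}$ leaves no variable unchanged for $i \neq j$, so $P$ has the $\tau$-property.

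\textbf{The "if" direction.} Suppose $P = \{p_1, \dots, p_m\}$ is a set of $m$ distinct permutations of $Q = \{1, \dots, m\}$ with the $\tau$-property. Build the $m \times m$ array $M$ whose $i$-th row is the sequence $(p_i(1), p_i(2), \dots, p_i(m))$. First I would check that $M$ is a Latin square. Each row is a permutation of $Q$, so each row contains every element of $Q$ exactly once — no repetition within a row. For columns: fix a column index $k$ and suppose entries in rows $i$ and $j$ agree, $p_i(k) = p_j(k)$ with $i \neq j$. Then $p_j^{-1} p_i$ fixes $k$; but applying the $\tau$-property to the pair $p_j, p_i$ (note $p_j^{-1}p_i = p_j (p_i^{-1})^{-1}$, or equivalently conjugate/relabel so that it is of the stated form $p_s p_t^{-1}$), we get that $p_j^{-1} p_i$ leaves no variable unchanged, a contradiction. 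Hence each column also contains every element of $Q$ exactly once, so $M$ is a Latin square, which is precisely the Cayley table of a quasigroup: define $i \cdot k := p_i(k)$; the equations $x \cdot k = b$ and $i \cdot y = b$ then have unique solutions because rows and columns of a Latin square are permutations.

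\textbf{Main obstacle.} The only delicate point is bookkeeping with the direction of the $\tau$-property: the definition is stated for products $p_i p_j^{-1}$, and in the column argument the natural expression is $p_j^{-1} p_i$. These are inverses of one another, and "$g$ leaves no variable unchanged" is equivalent to "$g^{-1}$ leaves no variable unchanged" (since $g(x) = x \iff g^{-1}(x) = x$), so the $\tau$-property for $\{p_i\}$ is symmetric in this sense and the argument goes through; I would state this equivalence explicitly as a one-line remark to avoid any confusion. Everything else is routine verification that a Latin square is the same object as a quasigroup Cayley table, which I would cite to the definitions in Section~1 rather than belabor.
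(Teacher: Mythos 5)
The paper itself offers no proof of Theorem \ref{THEOR_1}; it is quoted as Mann's result \cite{MANN}. Your overall route is the natural (and presumably Mann's) one: identify $P$ with the rows of an $m\times m$ array, observe that the rows are automatically permutations, and show that the $\tau$-property is exactly the statement that no column has a repeated entry, i.e.\ that the array is a Latin square, which is the same thing as the Cayley table of a quasigroup. The ``only if'' half is done correctly: from $p_ip_j^{-1}(y)=y$ you pass to $a_i\cdot x=a_j\cdot x$ with $x=p_j^{-1}(y)$ and cancel on the right, which is legitimate because $z\cdot x=b$ has a unique solution $z$ in a quasigroup.

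However, the justification of the key ``column'' step in the ``if'' direction is wrong as written, in two places. First, the identity $p_j^{-1}p_i=p_j(p_i^{-1})^{-1}$ is false: the right-hand side is $p_jp_i$. Second, $p_j^{-1}p_i$ and $p_ip_j^{-1}$ are \emph{not} inverses of one another, since $(p_ip_j^{-1})^{-1}=p_jp_i^{-1}$; so the remark that ``$g$ is fixed-point-free iff $g^{-1}$ is'' does not by itself connect $p_j^{-1}p_i$ to the $\tau$-property. The fact you need is nevertheless true and the repair is one line: either note that $p_j^{-1}p_i=p_j^{-1}\bigl(p_ip_j^{-1}\bigr)p_j$ is \emph{conjugate} to $p_ip_j^{-1}$, and conjugation preserves fixed-point-freeness, or --- cleaner --- avoid $p_j^{-1}p_i$ altogether: if $p_i(k)=p_j(k)=:y$ with $i\neq j$, then $p_ip_j^{-1}(y)=p_i(k)=y$, so $p_ip_j^{-1}$ has the fixed point $y$, contradicting the $\tau$-property directly (this mirrors exactly what you did in the ``only if'' half). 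With that correction the argument is complete; the rest (a Latin square with $x\cdot k:=p_x(k)$ gives unique solvability in both variables) is routine, as you say.
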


A permutation $\alpha$ of a finite non-empty set $Q$ which  leaves no elements of the set $Q$ unchanged will be
called a \textit{fixed point free permutation}.

\begin{definition} \label{ACTION_OF_SET_OF_PERM}
A set $M$  of  maps
of the set $Q$ into itself is called {\it simply transitive} (more precise, the set $M$ acts on the set $Q$ simply
transitively) if for any pair of elements $x, y$  of the set $Q$ there exists a unique element  $\mu_j$ of the set
$M$ such that $\mu_j(x) = y$.
\end{definition}

In Definition \ref{ACTION_OF_SET_OF_PERM} we do not suppose  that the set $M$ is a group. Notice  that concepts from Definition \ref{action} are suitable for the set  $M$.

\begin{theorem} \label{TERMANN}
A set $T = \{p_1, p_2, \dots , p_n\}$ of $n$ permutations of a finite set $Q$ of order $n$ is simply
transitive if and only if the set $T$ has the $\tau$-property \cite{MANN}.
\end{theorem}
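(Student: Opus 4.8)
The plan is to argue directly, treating the statement as essentially one cardinality observation per implication; the same reduction also makes it a one-line consequence of Theorem \ref{THEOR_1}, and I will point that out as an alternative. First I would fix a labelling $Q=\{1,\dots,n\}$ and attach to each $x\in Q$ the evaluation map $\phi_x\colon\{1,\dots,n\}\to Q$ given by $\phi_x(j)=p_j(x)$. The organising remark is that $T$ is simply transitive in the sense of Definition \ref{ACTION_OF_SET_OF_PERM} exactly when every $\phi_x$ is a bijection: surjectivity of $\phi_x$ is the existence half of simple transitivity for the pairs $(x,y)$ with $y$ ranging over $Q$, and injectivity of $\phi_x$ is the uniqueness half. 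Since both $\{1,\dots,n\}$ and $Q$ have exactly $n$ elements, $\phi_x$ is a bijection iff it is injective iff it is surjective, so only one of the two halves of the definition ever needs to be verified.

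For the implication ($\Leftarrow$) I would assume the $\tau$-property, i.e. $p_i p_j^{-1}$ is fixed-point-free whenever $i\neq j$. If $\phi_x(i)=\phi_x(j)$ for some $i\neq j$, then $p_i(x)=p_j(x)$; writing $y=p_j(x)$ we get $p_i p_j^{-1}(y)=p_i(x)=y$, so $p_i p_j^{-1}$ fixes $y$, contradicting the $\tau$-property. Hence each $\phi_x$ is injective, therefore bijective, and $T$ is simply transitive.

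For ($\Rightarrow$) I would assume $T$ simply transitive and suppose, for some $i\neq j$, that $p_i p_j^{-1}$ fixes an element $y\in Q$. Put $x=p_j^{-1}(y)$; then $p_j(x)=y$ and also $p_i(x)=p_i p_j^{-1}(y)=y$, so $\phi_x(i)=\phi_x(j)$ with $i\neq j$, contradicting the uniqueness part of simple transitivity. Thus $p_i p_j^{-1}$ is fixed-point-free for all $i\neq j$, which is the $\tau$-property.

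I do not expect a real obstacle; the only point requiring care is the finiteness step ($|Q|=n$, equal to the number of permutations) that allows injectivity and surjectivity of $\phi_x$ to be used interchangeably — this is precisely where the hypothesis ``$n$ permutations of a set of order $n$'' enters, and the equivalence fails without it. If one prefers to avoid even that small argument, the alternative is to observe that ``$\phi_x$ is a bijection for every $x$'' says exactly that the $n\times n$ array whose $i$-th row is $(p_i(1),\dots,p_i(n))$ has every column a permutation (its rows already are permutations), i.e. the array is the Cayley table of a quasigroup, and then to quote Theorem \ref{THEOR_1} verbatim.
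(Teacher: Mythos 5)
Your proof is correct. Note, however, that the paper does not prove this statement at all: Theorem \ref{TERMANN} is stated with a citation to Mann \cite{MANN}, just as Theorem \ref{THEOR_1} is, so there is no internal proof to compare against. Your argument fills that gap soundly: identifying simple transitivity (Definition \ref{ACTION_OF_SET_OF_PERM}) with bijectivity of each evaluation map $\phi_x(j)=p_j(x)$ is exactly the right reformulation, the two contrapositive computations with $p_ip_j^{-1}$ respect the paper's composition convention $(\mu\nu)(x)=\mu(\nu(x))$, and you correctly isolate the only place where the hypothesis ``$n$ permutations of an $n$-element set'' is needed, namely the pigeonhole step making injectivity and surjectivity of $\phi_x$ interchangeable. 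Your closing observation is also the intended relationship between the two cited results: ``every $\phi_x$ is a bijection'' says precisely that the columns of the $n\times n$ array with rows $p_i(1),\dots,p_i(n)$ are permutations, i.e.\ that the array is a Latin square and hence the Cayley table of a quasigroup, so Theorem \ref{TERMANN} and Theorem \ref{THEOR_1} are two readings of the same fact; either your direct two-implication argument or the reduction to Theorem \ref{THEOR_1} would serve as a legitimate proof, the direct one having the advantage of not presupposing the quasigroup/Latin-square dictionary.
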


\begin{lemma}
If $(Q, \cdot)$ is a quasigroup, then the sets $\mathbb{L,R, P}$ of all left, right, middle translations of quasigroup $(Q, \cdot)$ are simply transitive sets of permutations of the set $Q$.
\end{lemma}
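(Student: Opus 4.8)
The plan is to check the defining condition of a simply transitive set of maps (Definition \ref{ACTION_OF_SET_OF_PERM}) for each of $\mathbb{L}$, $\mathbb{R}$, $\mathbb{P}$ in turn, using nothing but the unique solvability of the equations $u\cdot a = b$ and $a\cdot u = b$ guaranteed by Definition \ref{def2}. As a preliminary step I would record that each of these three sets actually consists of permutations of $Q$: the translations $L_a\colon x\mapsto a\cdot x$ and $R_a\colon x\mapsto x\cdot a$ are bijections because the equations $a\cdot u = b$ and $u\cdot a = b$ have a unique solution for every $b$, while $P_a$ is well defined and bijective because for each $x$ the equation $x\cdot z = a$ has a unique solution $z$ (this gives single-valuedness and surjectivity of $P_a$) and $x_1\cdot z = a = x_2\cdot z$ forces $x_1 = x_2$ (injectivity).

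For $\mathbb{L}$: fix $x,y\in Q$. For $a\in Q$ one has $L_a x = y$ if and only if $a\cdot x = y$, and by Definition \ref{def2} this equation in the unknown $a$ has exactly one solution; hence there is exactly one left translation carrying $x$ to $y$. For $\mathbb{R}$ the argument is the mirror image: $R_a x = y$ if and only if $x\cdot a = y$, and this equation in the unknown $a$ again has a unique solution.

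For $\mathbb{P}$: fix $x,y\in Q$ and recall $P_a$ is defined by $x\cdot P_a x = a$. I would show that $P_a x = y$ holds precisely when $a = x\cdot y$. If $P_a x = y$, then $a = x\cdot P_a x = x\cdot y$; conversely, if $a = x\cdot y$, then $P_a x$ is by definition the unique $z$ with $x\cdot z = a$, and $z = y$ satisfies this equation, so $P_a x = y$. Thus $a = x\cdot y$ is the one and only index with $P_a x = y$, which is exactly simple transitivity of $\mathbb{P}$. Alternatively, by Table 1 the middle translations of $(Q,\cdot)$ coincide with the right translations of its $(23)$-parastrophe, which is again a quasigroup, so the $\mathbb{P}$-case can also be deduced from the $\mathbb{R}$-case applied to that parastrophe.

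I do not anticipate a genuine obstacle: the statement is essentially a reformulation of the quasigroup axioms together with the definition of the middle translation, and in the finite case it is already contained in Theorem \ref{TERMANN}. The only place that rewards a little care is the middle-translation step, where one must keep separate the existence of the index $a = x\cdot y$ and the uniqueness of the solution $z$ of $x\cdot z = a$ used to evaluate $P_a x$; conflating those two uniqueness statements is the one easy slip to avoid.
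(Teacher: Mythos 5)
Your proposal is correct and follows essentially the same route as the paper: for a fixed pair $x,y$ the quasigroup equations (and the defining relation $x\cdot P_a x = a$) yield exactly one index $a$ with $L_a x = y$, $R_a x = y$, $P_a x = y$ respectively, which is precisely simple transitivity of $\mathbb{L}$, $\mathbb{R}$, $\mathbb{P}$. Your extra care with the middle translation (separating the uniqueness of the index $a = x\cdot y$ from the uniqueness of the solution $z$ of $x\cdot z = a$) and the parastrophe alternative are harmless additions to the same argument.
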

\begin{proof}
 Indeed, if $a, b$ are fixed elements of the set $Q$, then there exists an unique element $x\in Q$ such that $x\cdot a = b$,    there exists an unique element $y\in Q$ such that $a \cdot y = b$, there exists an unique element $z\in Q$ such that $a\cdot b = z$. We establish the following bijections between elements of the sets $Q$ and  elements of the sets $\mathbb{L}(Q, \cdot), \mathbb{R}(Q, \cdot), \mathbb{P}(Q, \cdot)$: $\varphi_1: x \leftrightarrow L_x $, $\varphi_2: x \leftrightarrow R_x $, $\varphi_3: x \leftrightarrow P_x $. Therefore  there exists an unique element $L_x\in {\cal L}(Q, \cdot)$ such that $L_x a = b$,  there exists an unique element $R_y\in {\cal R}(Q, \cdot)$ such that $R_y a = b$, there exists an unique element $P_z\in {\cal P}(Q, \cdot)$ such that $P_z a = b$.
\end{proof}

\begin{definition} \label{CENTRALIZER_DEF}
The centralizer of a set   $S$ of a group $G$ (written as $C_G(S)$) is the set of elements of $G$ which commute with any element of $S$; in other words, $C_G(S) = \{x \in  G \, \mid \,  xa = ax \textrm{  for all } a\in S \} $ \cite{KM, HALL}.
\end{definition}

\begin{definition}
The normalizer of a set $S$ in a group $G$, written as $N_G(S)$,   is defined as $N_G(S) = \{x \in  G \, \mid \, xS = Sx\}$.
\end{definition}

The following theorem is called NC-theorem \cite{WIKI_5}.
\begin{theorem}
$C(S)$ is always a normal subgroup of $N(S)$ \cite{DRAP_JEDL, WIKI_5}.
\end{theorem}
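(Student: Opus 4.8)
The plan is to carry out the classical three-step verification: first that $C(S)$ and $N(S)$ are genuine subgroups of $G$, then that $C(S)\subseteq N(S)$, and finally that $C(S)$ is stable under conjugation by elements of $N(S)$. Throughout I would keep in mind that $S$ is only assumed to be a subset of $G$, not a subgroup, so that all manipulations with $S$ are done with the \emph{set} equalities $xa=ax$ and $xS=Sx$ rather than with inverses taken inside $S$.

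First I would check that both $C(S)$ and $N(S)$ are subgroups. For $C(S)$: clearly $e\in C(S)$; if $x,y\in C(S)$ then $(xy)a=x(ya)=x(ay)=(xa)y=(ax)y=a(xy)$ for every $a\in S$, and from $xa=ax$ one gets $ax^{-1}=x^{-1}a$, so $x^{-1}\in C(S)$. For $N(S)$: $e\in N(S)$; if $xS=Sx$ and $yS=Sy$ then $(xy)S=x(yS)=x(Sy)=(xS)y=(Sx)y=S(xy)$, and from $xS=Sx$ we obtain $S=x^{-1}Sx$, hence $x^{-1}S=Sx^{-1}$, so $x^{-1}\in N(S)$.

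Next, the inclusion $C(S)\subseteq N(S)$: if $x\in C(S)$, then $xS=\{xa:a\in S\}=\{ax:a\in S\}=Sx$, so $x\in N(S)$. Thus $C(S)$ is a subgroup of $N(S)$, and it remains only to prove normality. Take $n\in N(S)$ and $c\in C(S)$; the goal is $ncn^{-1}\in C(S)$, i.e.\ that $ncn^{-1}$ commutes with every $a\in S$. The key observation is that $n\in N(S)$ yields $n^{-1}Sn=S$, so for each $a\in S$ the element $n^{-1}an$ again lies in $S$; since $c\in C(S)$, this gives $c\,n^{-1}an=n^{-1}an\,c$. Left-multiplying by $n$ and right-multiplying by $n^{-1}$ transforms this into $ncn^{-1}\,a=a\,ncn^{-1}$, which is exactly the required commutation relation. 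Hence $nC(S)n^{-1}\subseteq C(S)$ for every $n\in N(S)$, so $C(S)\trianglelefteq N(S)$.

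I do not expect any real obstacle: the argument is entirely routine group theory, cited here from \cite{DRAP_JEDL, WIKI_5}. The only point deserving a moment's care is precisely the extraction of $n^{-1}Sn=S$ from the definition of $N(S)$ when $S$ is not a subgroup — once that consequence is in hand, normality is a one-line conjugation computation.
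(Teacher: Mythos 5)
Your proof is correct and is essentially the same argument as the paper's: conjugate an arbitrary $a\in S$ into $S$ using the normalizer property, let $c$ commute with the conjugated element, and conjugate back (the paper writes this for $n^{-1}cn$ with $t=nsn^{-1}$, you for $ncn^{-1}$ with $n^{-1}an$, which is the same computation mirrored). The additional verifications that $C(S)$ and $N(S)$ are subgroups and that $C(S)\subseteq N(S)$ are fine but not part of what the paper proves.
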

\begin{proof}
The proof is taken from \cite{WIKI_5}.
If $c$ is in $C(S)$ and $n$ is in $N(S)$, we have to show that $n^{-1}cn $ is in $C(S)$.
To that end, pick $s$ in $S$ and let $t = nsn^{-1}$.
Then $t$ is in $S$, so therefore $ct = tc$. Then note that $ns = tn$; and $n^{-1}t = sn^{-1}$. So
$(n^{-1}cn)s = (n^{-1}c)tn = n^{-1}(tc)n = (sn^{-1})cn = s(n^{-1}cn)$
which is what we needed.
\end{proof}

\section{Garrison's nuclei and  A-nuclei}

\subsection{Definitions of  nuclei and  A-nuclei}

We recall standard Garrison's \cite{GARRISON} definition of quasigroup nuclei.
\begin{definition}\label{GARRISON_NUCL_DEF}
Let $(Q,\circ)$ be a quasigroup. Then $N_l = \{a\in Q \, \mid \, (a\circ x)\circ y = a\circ (x\circ y)\}$,  $N_r = \{a\in Q \, \mid \, (x\circ y)\circ a = x\circ (y\circ a)\}$
 and $N_m = \{a\in Q \, \mid \, (x\circ a)\circ y = x\circ (a\circ y)\}$
are  respectively its  left, right and middle nuclei \cite{GARRISON, VD, HOP}.
\end{definition}
\begin{remark}
Garrison names an element  of a middle quasigroup  nucleus as a  \textit{center element} \cite{GARRISON}.
\end{remark}

\begin{definition} \label{BRUCK_NUCLEUS}  Let $(Q,\cdot)$ be a quasigroup.  Nucleus is given by $N= N_l\cap N_r\cap N_m$ \cite{HOP}.
\end{definition}

The importance of Garrison's quasigroup nuclei is in the fact that $N_l, N_r$ and $N_m$ all are subgroups
of a quasigroup $(Q, \cdot)$ \cite{GARRISON}. 

The weakness of Garrrison's definition is in the fact that, if a quasigroup $(Q,\cdot)$  has  a non-trivial left nucleus, then $(Q,\cdot)$ is a left loop, i.e. $(Q,\cdot)$ has a left identity element; if a quasigroup $(Q,\cdot)$  has  a non-trivial right nucleus, then $(Q,\cdot)$ is a right loop, i.e. $(Q,\cdot)$ has a right identity element;
if a quasigroup $(Q,\cdot)$  has  a non-trivial middle nucleus, then $(Q,\cdot)$ is a loop, i.e. $(Q,\cdot)$ has an  identity element (\cite{GARRISON}; \cite{HOP}, I.3.4. Theorem).

It is well known connection between autotopies and nuclei \cite{vdb0, vdb_63, VD, kepka71}.

 Namely  the set of autotopies of the form $(L_a, \varepsilon, L_a)$ of a quasigroup $(Q,\circ)$  corresponds to  left nucleus of $(Q,\circ)$ and vice versa.

 Similarly, the set of autotopies of the form $(\varepsilon, R_a,  R_a)$ of a quasigroup $(Q,\circ)$  corresponds to  right  nucleus of $(Q,\circ)$, the set of autotopies of the form $(R_a, L^{-1}_a, \varepsilon)$ of a quasigroup $(Q,\circ)$  corresponds to middle  nucleus of $(Q,\circ)$.

It is easy to see that from Garrison definition of left nucleus of a loop $(Q, \cdot)$ it follows that $R^{-1}_{xy}R_yR_x a = a$ for all $x, y \in Q$ and all $a\in N_l$. Permutations of the form $R^{-1}_{xy}R_yR_x$ generate right multiplication group of $(Q, \cdot)$. It is clear that any element of left nucleus is invariant relatively to any element of the group $\left< R^{-1}_{xy}R_yR_x \, | \, x, y \in Q \right>$.

Similarly  from Garrison definition of right  nucleus of a loop $(Q, \cdot)$ it follows that $L^{-1}_{xy}L_xL_y a = a$ for all $x, y \in Q$ and all $a\in N_l$. Permutations of the form $L^{-1}_{xy}L_xL_y$ generate right multiplication group of $(Q, \cdot)$. It is clear that any element of left nucleus is invariant relatively to any element of the group $\left< L^{-1}_{xy} L_x L_y \, | \, x, y \in Q \right>$.

For middle nucleus situation is slightly other and of course Garrison was right when called elements of middle loop nucleus as central elements.

P.I.~Gramma \cite{GRAMMA_PI}, M.D.~Kitaroag\u a \cite{MDK},  and G.B.~Belyavskaya \cite{gbb1, GBB2, Bel_94, gbb} generalized on quasigroup case concepts of nuclei and center using namely this nuclear property.  G.B.~Be\-lyav\-s\-kaya obtained in this direction the most general results.

In \cite{KEED_SCERB} the following definition is given.

\begin{definition} \label{A_NUCLEI} The set of all autotopisms of the form $(\alpha , \varepsilon , \gamma )$
 of a quasigroup $(Q,\circ )$, where $\varepsilon $ is the identity mapping, is called the {\it left
  autotopy nucleus} (left $A$-nucleus) of  quasigroup $(Q,\circ)$.

  Similarly, the sets  of
  autotopisms of the forms $(\alpha ,\beta
  ,\varepsilon )$ and $(\varepsilon ,\beta ,\gamma )$ form the \textit{middle and right $A$-nuclei} of $(Q,\circ)$.
We shall denote these three sets  of mappings by $N^A_l$, $N^A_m$ and $N^A_r$ respectively.
\end{definition}

Using Definition \ref{A_NUCLEI} we can say that to the elements of left Kitaroag\u a nucleus of a quasigroup $(Q, \cdot)$ correspond to autotopies of the form $(L_aL^{-1}_h, \varepsilon, L_aL^{-1}_h)$, where $a\in Q$  (in fact $a\in N_l$ in Kitaroag\u a sense), $h$ is a fixed element of the set $Q$ \cite{MDK}.

  Autotopies of the form $(L_aL^{-1}_h, \varepsilon, L_{a\cdot e(h)} L^{-1}_h)$, where $a\in Q$ (in fact $a\in N_l$ in Belyavskaya  sense) correspond to the elements of left Belyavskaya nucleus \cite{gbb1}.

\begin{remark}
Often, by a generalization of some objects or concepts, we not only win in generality, but also lose some important properties of generalized objects or concepts.

The weakness of Definition \ref{A_NUCLEI} is in the fact that it is not easy  to define A-nucleus similarly to quasigroup nucleus as an intersection of left, right and middle  quasigroup A-nuclei.
\end{remark}

\begin{lemma} \cite{KEED_SCERB}.\label{COMPONENTS}
\begin{enumerate}
\item The first components of the autotopisms of any subgroup $K(N^A_l)$ of $N^A_l$ themselves
 form a group $K_1(N^A_l)$.
\item The third  components of the autotopisms of any subgroup $K(N^A_l)$ of $N^A_l$ themselves
 form a group $K_3(N^A_l)$.
\item The first components of the autotopisms of any subgroup $K(N^A_m)$ of $N^A_m$ themselves
 form a group $K_1(N^A_m)$.
\item The second  components of the autotopisms of any subgroup $K(N^A_m)$ of $N^A_m$ themselves
 form a group $K_2(N^A_l)$.
\item The second  components of the autotopisms of any subgroup $K(N^A_r)$ of $N^A_r$ themselves
 form a group $K_2(N^A_r)$.
\item The third components of the autotopisms of any subgroup $K(N^A_r)$ of $N^A_r$ themselves
 form a group $K_3(N^A_r)$.
\end{enumerate}
 \end{lemma}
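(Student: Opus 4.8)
The plan is to exploit the single structural fact that autotopisms of a quasigroup multiply component-wise: if $(\alpha_1, \beta_1, \gamma_1)$ and $(\alpha_2, \beta_2, \gamma_2)$ lie in $Avt(Q, \circ)$, then so does $(\alpha_1\alpha_2, \beta_1\beta_2, \gamma_1\gamma_2)$, and $(\alpha_1, \beta_1, \gamma_1)^{-1} = (\alpha_1^{-1}, \beta_1^{-1}, \gamma_1^{-1})$. Consequently each of the three coordinate projections $\pi_i : Avt(Q, \circ) \to S_Q$, $i \in \{1,2,3\}$, sending an autotopy to its $i$-th component, is a group homomorphism into the symmetric group $S_Q$; and a homomorphic image of a group is a group.

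I would treat the left $A$-nucleus first. Every element of $N^A_l$ has the shape $(\alpha, \varepsilon, \gamma)$, and by the component-wise rule the product and the inverse of such triples again have middle entry $\varepsilon$; hence $N^A_l$, and likewise any subgroup $K(N^A_l)$, is a subgroup of $Avt(Q, \circ)$ consisting of triples with second component $\varepsilon$. Restricting $\pi_1$ to $K(N^A_l)$ gives a homomorphism $K(N^A_l) \to S_Q$ whose image is precisely the set $K_1(N^A_l)$ of first components; being a homomorphic image of a group, it is a group (a subgroup of $S_Q$). The same argument applied to $\pi_3$ restricted to $K(N^A_l)$ shows $K_3(N^A_l)$ is a group. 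This settles items 1 and 2.

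Items 3--6 are obtained verbatim by the same reasoning: elements of $N^A_m$ have the form $(\alpha, \beta, \varepsilon)$, so one restricts $\pi_1$ and $\pi_2$ to $K(N^A_m)$, concluding that $K_1(N^A_m)$ and $K_2(N^A_m)$ are groups; elements of $N^A_r$ have the form $(\varepsilon, \beta, \gamma)$, so restricting $\pi_2$ and $\pi_3$ to $K(N^A_r)$ gives that $K_2(N^A_r)$ and $K_3(N^A_r)$ are groups. (In item 4 the group meant is visibly $K_2(N^A_m)$, despite the label.)

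There is essentially no obstacle here; the only point that deserves a line of care is to confirm that on the projected components the relevant operation is ordinary composition of permutations — immediate from the component-wise multiplication of autotopisms — and that $K(\cdot)$, being a subgroup, contains the identity autotopy $(\varepsilon,\varepsilon,\varepsilon)$, so each projected set contains $\varepsilon$ and is nonempty. One may add, invoking Lemma \ref{TWO_AUTOTOPY_COMPONENTS}, that since two components of an autotopy determine the third, each of these projections is in fact injective on $K(\cdot)$, so each component group is isomorphic to $K(\cdot)$ itself; but this refinement is not required for the statement.
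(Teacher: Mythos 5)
Your proof is correct and rests on the same underlying fact as the paper's: autotopisms multiply and invert component-wise, so the set of $i$-th components of a subgroup of $N^A_l$ (resp.\ $N^A_m$, $N^A_r$) is closed under composition and inversion and contains $\varepsilon$. The only difference is presentational — the paper verifies this closure by direct computation from the defining equality $\alpha_1 x\circ y=\gamma_1(x\circ y)$ (deriving $(\alpha_1\alpha_2,\varepsilon,\gamma_1\gamma_2)$ and $(\alpha_1^{-1},\varepsilon,\gamma_1^{-1})$ explicitly), whereas you package it as the image of the subgroup under the coordinate projection homomorphism $\pi_i\colon Avt(Q,\circ)\to S_Q$; your remarks on the misprint $K_2(N^A_l)$ versus $K_2(N^A_m)$ in item 4 and on injectivity of the projections are also sound.
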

\begin{proof} Case 1.  Let $(\alpha _1, \varepsilon , \gamma _1), (\alpha _2, \varepsilon , \gamma _2)\in K(N^A_l)$. Then $\alpha _1 x \circ y =
\gamma _1 (x\circ y)$ and $\alpha _2 x \circ y =  \gamma _2 (x\circ y)$ for all $x, y \in Q$. From the first    equation, $\alpha _1(\alpha _2 x) \circ y = \gamma _1 (\alpha _2 x \circ y) = \gamma _1\gamma _2(x\circ y)$ by virtue of the second  equation.
 Thus, $(\alpha _1\alpha _2, \varepsilon , \gamma _1\gamma _2)\in K(N^A_l)$.

 Let
 $x = \alpha ^{-1}_1 u$. Then, $\alpha _1 x \circ y =
\gamma _1 (x\circ y) \Rightarrow u\circ y = \gamma _1 (\alpha ^{-1}_1 u \circ y) \Rightarrow \alpha ^{-1}_1 u \circ y =
\gamma ^{-1}_1 (u\circ y)$ so $(\alpha ^{-1}_1, \varepsilon , \gamma ^{-1}_1)\in K(N^A_l)$.

 Clearly, $(\varepsilon , \varepsilon , \varepsilon )\in K(N^A_l)$. Hence, $\varepsilon $ is a first component and, if $\alpha _1$
 and $\alpha _2$ are first components so are $\alpha _1\alpha _2$ and $\alpha ^{-1}_1$.

Other cases are proved in similar way with Case 1.
\end{proof}

 From Lemma \ref{COMPONENTS} it follows that the first, second and third components of $N^A_l$, $N^A_m$ and $N^A_r$
each form groups. For brevity, we shall denote these nine groups by $_1N^A_l$, $_2N^A_l$, $_3N^A_l$, $_1N^A_m$,
$_2N^A_m$, $_3N^A_m$, $_1N^A_r$, $_2N^A_r$ and $_3N^A_r$.

Next two lemmas demonstrate  that A-nuclei have some advantages in comparison with Garrison's nuclei.

\begin{lemma}\label{NORMALITY_OF_A_NUCL}
In any quasigroup $Q$ its left, right, middle A-nucleus is normal subgroup of the group Avt(Q).
\end{lemma}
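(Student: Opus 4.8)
The plan is to show that each of the three $A$-nuclei $N^A_l$, $N^A_m$, $N^A_r$ is invariant under conjugation by an arbitrary autotopy $T\in Avt(Q)$. Since each $A$-nucleus is already a subgroup of $Avt(Q)$ by definition (the set of autotopies of a prescribed shape is closed under composition and inverses — essentially the argument in Lemma \ref{COMPONENTS}), it suffices to check the normality condition $T\,N^A_\star\,T^{-1}\subseteq N^A_\star$ for $\star\in\{l,m,r\}$. I will treat the left $A$-nucleus in detail; the middle and right cases are handled by the same computation with the relevant indices permuted.

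First I would take a generic element $S=(\alpha,\varepsilon,\gamma)\in N^A_l$ and a generic $T=(\mu_1,\mu_2,\mu_3)\in Avt(Q)$, and compute the conjugate $T S T^{-1}$ componentwise: it equals $(\mu_1\alpha\mu_1^{-1},\ \mu_2\varepsilon\mu_2^{-1},\ \mu_3\gamma\mu_3^{-1}) = (\mu_1\alpha\mu_1^{-1},\ \varepsilon,\ \mu_3\gamma\mu_3^{-1})$. The crucial point is that the \emph{middle} component of the conjugate is $\mu_2\varepsilon\mu_2^{-1}=\varepsilon$, so the conjugate again has the shape $(\,\cdot\,,\varepsilon,\,\cdot\,)$ characterizing $N^A_l$. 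Since $Avt(Q)$ is a group, $TST^{-1}$ is again an autotopy, and hence $TST^{-1}\in N^A_l$. This shows $T N^A_l T^{-1}\subseteq N^A_l$; applying the same with $T^{-1}$ in place of $T$ gives the reverse inclusion, so $T N^A_l T^{-1}=N^A_l$, i.e. $N^A_l\trianglelefteq Avt(Q)$. For $N^A_m$ one uses that the third component of the conjugate of $(\alpha,\beta,\varepsilon)$ is $\varepsilon$, and for $N^A_r$ that the first component of the conjugate of $(\varepsilon,\beta,\gamma)$ is $\varepsilon$; in every case the distinguished identity slot is preserved under conjugation because $\mu\varepsilon\mu^{-1}=\varepsilon$.

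I do not expect a serious obstacle here — the statement is essentially the observation that the three shapes of autotopies singled out in Definition \ref{A_NUCLEI} are each stable under componentwise conjugation, which is immediate once one writes out $TST^{-1}$. The only point deserving a word of care is that one should confirm $N^A_l$, $N^A_m$, $N^A_r$ are genuinely subgroups of $Avt(Q)$ (not merely subsets closed under the group operations of their component groups); this follows from the componentwise group structure of $Avt(Q)$ together with the fact that the identity-slot condition is preserved under products and inverses, exactly as in the proof of Lemma \ref{COMPONENTS}, Case 1. With that in hand, the normality is a one-line conjugation check.
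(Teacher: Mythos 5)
Your proposal is correct and matches the paper's own argument: the paper proves the lemma by exactly the same componentwise conjugation check, namely $(\alpha^{-1},\beta^{-1},\gamma^{-1})(\mu,\varepsilon,\nu)(\alpha,\beta,\gamma)=(\alpha^{-1}\mu\alpha,\varepsilon,\gamma^{-1}\nu\gamma)\in N^A_l$, with the right and middle cases handled analogously. Your additional remark that the $A$-nuclei are subgroups (via the argument of Lemma \ref{COMPONENTS}) is a sensible point of care but does not change the route.
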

\begin{proof}
Let $(\mu, \varepsilon, \nu) \in N^A_l$, $(\alpha, \beta, \gamma) \in Avt(Q)$. Then
\begin{equation*}
\begin{split}
(\alpha^{-1}, \beta^{-1}, \gamma^{-1})(\mu, \varepsilon, \nu)(\alpha, \beta, \gamma) = (\alpha^{-1}\mu
\alpha,\varepsilon ,\gamma^{-1}\nu \gamma)\in N^A_l
\end{split}
\end{equation*}
The  proofs for right and middle nucleus are similar.
\end{proof}

\begin{lemma} \label{ISOTOPIC_COMPONENTS}
If  quasigroups $(Q, \cdot)$ and $(Q, \circ)$ are isotopic, then these quasigroups   have isomorphic autotopy nuclei and isomorphic components of the autotopy nuclei, i.e. $N_l^A(Q, \cdot) \cong N_l^A(Q, \circ)$, ${}_1N_l^A(Q, \cdot) \cong {}_1N_l^A(Q, \circ)$, ${}_3N_l^A(Q, \cdot) \cong {}_3N_l^A(Q, \circ)$, and so on.
\end{lemma}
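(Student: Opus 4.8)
The statement to prove is Lemma~\ref{ISOTOPIC_COMPONENTS}: isotopic quasigroups have isomorphic A-nuclei and isomorphic components of A-nuclei. The natural approach is to combine Lemma~\ref{ISOM_AUTOT_GR} (conjugation by the isotopy $T$ is an isomorphism $Avt(Q,\cdot)\to Avt(Q,\circ)$) with the observation that this conjugation, when $T$ has a suitable form, respects the ``shape'' $(\alpha,\varepsilon,\gamma)$ of the autotopies lying in a left A-nucleus. First I would reduce to the LP-isotope case: by Lemma~\ref{ISOM_AUTOT_GR} it suffices to treat the case where $(Q,\circ)$ is a loop obtained from $(Q,\cdot)$ by an LP-isotopy $T=(R_a^{-1},L_b^{-1},\varepsilon)$, since any isotopy factors appropriately and the relation ``isomorphic'' is transitive; alternatively one can just work with a general isotopy $T=(\mu_1,\mu_2,\mu_3)$ and be slightly more careful.

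The key computation is this: if $(\mu,\varepsilon,\nu)\in N_l^A(Q,\circ)$ and $(Q,\circ)=(Q,\cdot)T$ with $T=(\mu_1,\mu_2,\mu_3)$, then by Lemma~\ref{ISOM_AUTOT_GR} the triple $T(\mu,\varepsilon,\nu)T^{-1}=(\mu_1\mu\mu_1^{-1},\,\mu_2\varepsilon\mu_2^{-1},\,\mu_3\nu\mu_3^{-1})=(\mu_1\mu\mu_1^{-1},\,\varepsilon,\,\mu_3\nu\mu_3^{-1})$ is an autotopy of $(Q,\cdot)$, and it manifestly again has middle component $\varepsilon$, hence lies in $N_l^A(Q,\cdot)$. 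Conversely $T^{-1}(\cdot)T$ sends $N_l^A(Q,\cdot)$ into $N_l^A(Q,\circ)$ by the same argument. Thus $\varphi\colon (\mu,\varepsilon,\nu)\mapsto T^{-1}(\mu,\varepsilon,\nu)T$ restricts to a group isomorphism $N_l^A(Q,\cdot)\to N_l^A(Q,\circ)$. For the component groups, note that $\varphi$ acts on first components by $\mu\mapsto \mu_1^{-1}\mu\mu_1$ and on third components by $\nu\mapsto\mu_3^{-1}\nu\mu_3$; these are conjugations by fixed permutations, hence group isomorphisms $S_Q\to S_Q$ restricting to isomorphisms ${}_1N_l^A(Q,\cdot)\cong {}_1N_l^A(Q,\circ)$ and ${}_3N_l^A(Q,\cdot)\cong {}_3N_l^A(Q,\circ)$ by Lemma~\ref{COMPONENTS} (which guarantees the component sets are in fact groups). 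The right and middle A-nuclei, whose defining shapes are $(\varepsilon,\beta,\gamma)$ and $(\alpha,\beta,\varepsilon)$, are handled identically: conjugation by $T$ preserves which coordinate equals $\varepsilon$, so it preserves each of $N_r^A$, $N_m^A$ and the corresponding component groups.

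The only genuine point requiring care — and the step I'd expect to be the mild obstacle — is verifying that $\varphi$ is a well-defined \emph{bijection between the nuclei} and not merely an injection; this is exactly what the two-sided inclusion argument (mirroring the proof of Lemma~\ref{ISOM_AUTOT_GR}) supplies, together with the fact that conjugation preserves the $\varepsilon$-coordinate in both directions. One should also remark that the isomorphism $N_l^A(Q,\cdot)\cong N_l^A(Q,\circ)$ is compatible with the projections onto first and third components, so that the diagram relating a nucleus to its component groups is respected; this follows automatically because $\varphi$ is componentwise conjugation. I would close by noting that combining this lemma with Lemma~\ref{NORMALITY_OF_A_NUCL} shows these isomorphisms are in fact between normal subgroups of the respective autotopy groups, though that is not needed for the statement itself.
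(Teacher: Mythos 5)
Your proposal is correct and follows essentially the same route as the paper, which simply invokes Lemma~\ref{ISOM_AUTOT_GR} together with Lemma~\ref{COMPONENTS}: conjugation by the isotopy $T$ acts componentwise on autotopy triples, hence preserves which coordinate is $\varepsilon$ and restricts to isomorphisms of the A-nuclei and of their component groups. Your explicit verification of the two-sided inclusions and of the compatibility with the component projections just spells out what the paper leaves implicit.
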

\begin{proof}
We can use Lemmas \ref{ISOM_AUTOT_GR} and \ref{COMPONENTS}.
\end{proof}

\begin{corollary} If a quasigroup $(Q, \cdot)$ is an  isotope of a group $(Q, +)$, then all its A-nuclei and components of A-nuclei are isomorphic with the group $(Q, +)$.
\end{corollary}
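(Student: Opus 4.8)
The plan is to reduce the statement to the case where the quasigroup is itself the group $(Q,+)$, and then to compute the three A-nuclei by hand. By Lemma \ref{ISOTOPIC_COMPONENTS}, isotopic quasigroups have isomorphic A-nuclei and isomorphic component groups, and $(Q,\cdot)$ is by hypothesis isotopic to $(Q,+)$; so it suffices to show that $N_l^A(Q,+)$, $N_r^A(Q,+)$, $N_m^A(Q,+)$ and the nine associated component groups of Lemma \ref{COMPONENTS} are each isomorphic to $(Q,+)$.

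For the left A-nucleus I would start from the defining relation: $(\alpha,\varepsilon,\gamma)$ is an autotopy of $(Q,+)$ exactly when $\alpha x + y = \gamma(x+y)$ for all $x,y\in Q$. Substituting $y=0$ gives $\alpha=\gamma$, and substituting $x=0$ gives $\alpha y = (\alpha 0)+y$, so $\alpha = L_{\alpha 0}$ is the left translation of $(Q,+)$ by $\alpha 0$; conversely every triple $(L_a,\varepsilon,L_a)$ is plainly an autotopy. Hence $N_l^A(Q,+)=\{(L_a,\varepsilon,L_a)\mid a\in Q\}$, and since $L_aL_b=L_{a+b}$ the map $a\mapsto(L_a,\varepsilon,L_a)$ is a bijective homomorphism; thus $N_l^A(Q,+)\cong(Q,+)$, while ${}_1N_l^A(Q,+)={}_3N_l^A(Q,+)=\{L_a\mid a\in Q\}$ is the left regular representation of $(Q,+)$, again isomorphic to it. The right and middle A-nuclei I would handle identically: for $N_r^A$ one gets $(\varepsilon,\beta,\gamma)$ with $\beta=\gamma=R_a$, $a=\beta 0$; for $N_m^A$ one gets $(\alpha,\beta,\varepsilon)$ with $\beta=L_b$ and $\alpha=R_b^{-1}$, $b=\beta 0$.

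The one point requiring care --- and the place where a careless computation would go wrong --- is the non-commutative bookkeeping for the right and middle nuclei. There $R_aR_b=R_{b+a}$, so the naive parametrization $a\mapsto R_a$ is an \emph{anti}-isomorphism; the fix is to use $a\mapsto R_{-a}$ (equivalently, to invoke the fact that every group is isomorphic to its opposite), which yields honest isomorphisms $(Q,+)\to N_r^A(Q,+)$ and $(Q,+)\to N_m^A(Q,+)$ (for the latter via $b\mapsto(R_b^{-1},L_b,\varepsilon)$, using $R_b^{-1}R_c^{-1}=(R_cR_b)^{-1}=R_{b+c}^{-1}$ and $L_bL_c=L_{b+c}$), and correspondingly identifies every component group of Lemma \ref{COMPONENTS} with the left or right regular representation of $(Q,+)$ up to the substitution $b\mapsto -b$. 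As a shortcut one may also quote Lemma \ref{FORM_OF_AUTOTOPY}, since a group is in particular a loop, to read off the forms of the autotopies directly.
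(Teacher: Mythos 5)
Your proposal is correct and follows the same route as the paper: reduce to the group $(Q,+)$ via Lemma \ref{ISOTOPIC_COMPONENTS} and then observe that the A-nuclei and their components of a group are isomorphic to the group itself. The only difference is that the paper dismisses the group case as clear, while you supply the explicit computation of $N^A_l$, $N^A_r$, $N^A_m$ for $(Q,+)$ (including the correct handling of the anti-isomorphism $a\mapsto R_a$ via $a\mapsto R_{-a}$), which is a sound and welcome filling-in of that step.
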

\begin{proof}
It is clear that in any group all its A-nuclei and components of A-nuclei are isomorphic with the group $(Q, +)$.
Further we can apply Lemma \ref{ISOTOPIC_COMPONENTS}.
\end{proof}

\begin{lemma} \label{ISOTOPIC_COMPONENTS_PERM}
Isomorphic components of A-nuclei of a quasigroup $(Q, \cdot)$ act on the set $Q$ in such manner that the numbers and lengths of orbits by these actions are equal.
\end{lemma}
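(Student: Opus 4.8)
The plan is to reduce the statement to the elementary fact that conjugating a permutation group by a fixed permutation of the underlying set changes neither the number of its orbits nor the multiset of their lengths. So the core of the argument is to verify that each isomorphism between components of an A-nucleus furnished by Lemma \ref{COMPONENTS} (first and third component of $N^A_l$, first and second of $N^A_m$, second and third of $N^A_r$), as well as each isomorphism between corresponding components of isotopic quasigroups coming from Lemma \ref{ISOM_AUTOT_GR}, is induced by conjugation by one and the same permutation of $Q$.

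First I would extract the relevant conjugacy relations. If $(\alpha,\varepsilon,\gamma)\in N^A_l$, then $\alpha x\cdot y=\gamma(x\cdot y)$ for all $x,y\in Q$; fixing $y$ and letting $x$ run over $Q$, the element $w=x\cdot y$ runs over all of $Q$ because $R_y$ is a bijection, and the identity rewrites as $\gamma=R_y\alpha R_y^{-1}$. Since this holds for every $y$, fixing one element $a$ gives ${}_3N^A_l=R_a\,{}_1N^A_l\,R_a^{-1}$ inside $S_Q$. Likewise, from $(\varepsilon,\beta,\gamma)\in N^A_r$, i.e. $x\cdot\beta y=\gamma(x\cdot y)$, fixing $x$ yields $\gamma=L_x\beta L_x^{-1}$ and hence ${}_3N^A_r=L_a\,{}_2N^A_r\,L_a^{-1}$; and from $(\alpha,\beta,\varepsilon)\in N^A_m$, i.e. $\alpha x\cdot\beta y=x\cdot y$, running over all pairs with $x\cdot y=z$ (that is, $y=P_z x$) gives $\beta P_z=P_z\alpha$, whence ${}_2N^A_m=P_a\,{}_1N^A_m\,P_a^{-1}$. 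In each case the projection isomorphism of Lemma \ref{COMPONENTS} is precisely conjugation by the relevant fixed translation $R_a$, $L_a$ or $P_a$ of $(Q,\cdot)$.

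Then I would invoke the orbit-preservation remark: for $H\le S_Q$ and $\rho\in S_Q$ the relation $(\rho h\rho^{-1})(\rho x)=\rho(h x)$ shows that $O\mapsto\rho O$ is a length-preserving bijection from the orbits of $H$ on $Q$ onto those of $\rho H\rho^{-1}$ on $Q$, so the two actions have the same number of orbits and the same multiset of orbit lengths. Combined with the three conjugacy relations above, this proves the statement for the components of a single quasigroup; taking $\rho$ to be the appropriate component of the isotopy $T$ in Lemma \ref{ISOM_AUTOT_GR} (which sends the corresponding A-nucleus of one quasigroup onto that of the other, since the trivial middle/first component is preserved), it also covers the isomorphic components of isotopic quasigroups of Lemma \ref{ISOTOPIC_COMPONENTS}. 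I do not anticipate a genuine obstacle here; the only delicate point is the bookkeeping of the previous paragraph — making sure that in each A-nucleus one freezes the variable whose companion argument of $\cdot$ then sweeps out all of $Q$, which is exactly where the quasigroup axioms (bijectivity of $R_y$, $L_x$, $P_z$) are used.
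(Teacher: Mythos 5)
Your proof is correct and takes essentially the same route as the paper: the paper argues that isomorphic components are \emph{similar} permutation groups, i.e.\ their actions on $Q$ are intertwined by a bijection $\varphi$ of $Q$, and concludes that the number and lengths of orbits coincide. You merely make the intertwining bijection explicit (conjugation by $R_a$, $L_a$, $P_a$ inside one quasigroup, or by the relevant component of the isotopy $T$ between isotopic quasigroups), which is exactly the content the paper records later in its lemma on isomorphisms of A-nuclear components, and this explicitness in fact supplies the justification the paper's wording glosses over, since an abstract isomorphism of permutation groups need not by itself be a similarity.
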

\begin{proof}
In Lemma \ref{ISOTOPIC_COMPONENTS} it is established  that A-nuclei and the same components of A-nuclei are isomorphic in pairs. Notice all A-nuclei of a  quasigroup $(Q, \cdot)$ are subgroups of  the group $S_Q\times S_Q\times S_Q$. Components of these A-nuclei are subgroups of the group $S_Q$.

If two components of A-nuclei, say $B$ and $C$ are  isomorphic, then this isomorphism is an isomorphism of permutation groups which  act on the set $Q$. Isomorphic permutation groups are called \textit{similar} \cite[p. 111]{KM}.

Let $\psi B = C$, where $B, C$ are  isomorphic components of A-nuclei, $\psi$ is an isomorphism. Then  we can establish  a bijection $\varphi$ of the set $Q$ such that $\psi(g) (\varphi (m)) = \varphi (g(m))$ for all $m\in Q, g\in B$ \cite[p. 111]{KM}.
\end{proof}

\subsection{A-nuclei and isosotrophy}

In this section we find   connections between components of A-nuclei of a quasigroup $(Q,\cdot)$  and its isostrophic images of the form $(Q, \circ) = (Q,\cdot) ((\sigma)(\alpha, \beta, \gamma))$, where $\sigma \in S_3$, $\alpha, \beta, \gamma \in S_Q$.

We omit the symbol of autotopy nuclei (the symbol $A$) and put on its place the symbols of binary operations "$\circ$" and "$\cdot$" respectively. Denote isostrophy  $(\sigma, (\alpha, \beta, \gamma))$ by $(\sigma, T)$ for all cases.

\begin{lemma} \label{ISOSTR_AUTOT_GR_1vsp} If quasigroup $(Q, \circ)$ is  isostrophic image of quasigroup $(Q, \cdot)$
with an isostrophy $S = ((12), T)$, i.e. $(Q, \circ) = (Q, \cdot)S$,   then $ N_l(Q, \circ) = S^{-1} N_r(Q, \cdot)S$, $_1N_l(Q, \circ) = \alpha^{-1}\,_2N_r(Q, \cdot) \alpha$, $\,{}_3N_l(Q, \circ) = \gamma^{-1} \, {}_3N_r(Q, \cdot)\gamma$.
\end{lemma}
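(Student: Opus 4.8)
The plan is to combine Corollary~\ref{ISOSTR_AUTOT_GR} (or rather its proof via Lemma~\ref{ISOSTR_AUTOT_GR1}) with the explicit bookkeeping of how the components of an autotopy are permuted under a $(12)$-parastrophy. First I would recall that by Corollary~\ref{ISOSTR_AUTOT_GR}, since $(Q,\circ) = (Q,\cdot)S$ with $S = ((12),T)$, we have $Avt(Q,\circ) = S^{-1}Avt(Q,\cdot)S$. The key point is to track what happens to a subgroup of $Avt(Q,\cdot)$ of a prescribed shape, namely $N_r(Q,\cdot)$, which consists of the autotopies of the form $(\varepsilon, \beta, \gamma)$.

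The main computation is to conjugate such an autotopy by the isostrophy $S = ((12), (\alpha,\beta,\gamma))$ and to verify that the result has the shape $(\alpha', \varepsilon, \gamma')$, i.e. lands in $N_l(Q,\circ)$. Using the composition rule for isostrophisms and the formula for the inverse of an isostrophy from Lemma~\ref{inver_AUTOSTROPHY}, one computes $S^{-1}(\varepsilon,\beta_0,\gamma_0)S$ where $(\varepsilon, \beta_0, \gamma_0)$ ranges over $N_r(Q,\cdot)$. The action of the parastrophy $(12)$ on the components swaps the first and second slots; concretely, if one conjugates the $\varepsilon$-isostrophy (plain autotopy) $(\varepsilon,\beta_0,\gamma_0)$ by the $(12)$-parastrophy, its middle $\varepsilon$ moves into the first slot and $\beta_0$ moves into the middle slot, so the bare parastrophic conjugate has shape $(\varepsilon, \beta_0, \gamma_0)^{(12)} = (\beta_0, \varepsilon, \gamma_0)$. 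Then folding in the isotopy part $T = (\alpha,\beta,\gamma)$ via the formula $(\sigma,T)^{-1}(\tau,S)(\sigma,T)$ one gets first component $\alpha^{-1}\beta_0\alpha$, middle component $\beta^{-1}\varepsilon\beta = \varepsilon$, third component $\gamma^{-1}\gamma_0\gamma$. This simultaneously establishes the containment $S^{-1}N_r(Q,\cdot)S \subseteq N_l(Q,\circ)$ and reads off the component formulas $_1N_l(Q,\circ) = \alpha^{-1}\,{}_2N_r(Q,\cdot)\alpha$ and $_3N_l(Q,\circ) = \gamma^{-1}\,{}_3N_r(Q,\cdot)\gamma$. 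Running the same argument with $S$ and $S^{-1}$ interchanged (using that $((12))^{-1} = (12)$) gives the reverse containment, hence equality $N_l(Q,\circ) = S^{-1}N_r(Q,\cdot)S$. The component claims then follow from Lemma~\ref{COMPONENTS}, which guarantees that the first, second, and third components of each A-nucleus genuinely form groups, so that conjugating the whole A-nucleus by $S$ conjugates each component group by the corresponding component of $S$.

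The step I expect to be the main obstacle — or at least the one demanding the most care — is getting the indices right in the parastrophy action: one must be consistent about whether $T^\sigma$ means $(\alpha_{\sigma^{-1}1},\alpha_{\sigma^{-1}2},\alpha_{\sigma^{-1}3})$ or $(\alpha_{\sigma 1},\alpha_{\sigma 2},\alpha_{\sigma 3})$ (the remark after Lemma~\ref{L2.1} notes both conventions are available), and one must verify that the $(12)$-parastrophe of an autotopy of $N_r$ really produces the asserted shape and not, say, an autotopy of $N_m$. Since $(12) = (12)^{-1}$ both index conventions agree here, which is a small simplification. A secondary check is that the object $(\beta_0,\varepsilon,\gamma_0)$ obtained is indeed an autotopy of the $(12)$-parastrophe $(Q,\ast)$ of $(Q,\cdot)$ — this is immediate from Definition~\ref{D2.3} applied to $\sigma = (12)$ — and that composing further with the isotopy $T$ keeps everything inside $Avt(Q,\circ)$, which is exactly Lemma~\ref{ISOSTR_AUTOT_GR1}. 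Everything else is routine component-wise multiplication of triples.
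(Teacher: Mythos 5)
Your proposal is correct and follows essentially the same route as the paper: conjugating $N_r(Q,\cdot)$ by $S=((12),(\alpha,\beta,\gamma))$ using the composition rule and Lemma \ref{inver_AUTOSTROPHY}, obtaining exactly the triple $(\alpha^{-1}\beta_0\alpha,\varepsilon,\gamma^{-1}\gamma_0\gamma)$ that appears in the paper's displayed computation, and then getting the reverse inclusion by the symmetric conjugation to conclude equality and read off the component formulas. The only blemish is the garbled phrase about the ``middle $\varepsilon$'' (the $\varepsilon$ in $(\varepsilon,\beta_0,\gamma_0)$ is the first component), but your stated parastrophic image $(\beta_0,\varepsilon,\gamma_0)$ and all subsequent formulas are correct.
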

\begin{proof}
The proof repeats the proof of Lemma \ref{ISOSTR_AUTOT_GR1}.
Let $K\in N_l(Q, \circ)$. Then $(Q, \cdot)S = (Q, \cdot)SK$, $(Q, \cdot) = (Q, \cdot)SKS^{-1}$,
\begin{equation} \label{EQUT_NUCLEI}
S N_l(Q, \circ) S^{-1} \subseteq N_r(Q, \cdot),
\end{equation}
since  \[
((12), (\alpha, \beta, \gamma))\,  (\varepsilon, (\,_1N_l^{\circ}, \varepsilon, \,_3N_l^{\circ}))\: ((12), (\beta^{-1}, \alpha^{-1}, \gamma^{-1})) = (\varepsilon, (\varepsilon, \alpha\,_1N_l^{\circ}\alpha^{-1}, \gamma \,_3N_l^{\circ} \gamma^{-1})).
\]

If $(Q,\circ) = (Q, \cdot)S$, then $(Q,\circ) S^{-1} = (Q, \cdot)$, expression (\ref{EQUT_NUCLEI}) takes the form
\begin{equation} \label{VKLYUCH_YADER}
S^{-1} N_r(Q, \cdot) S \subseteq N_l(Q, \circ).
\end{equation}
Indeed, \[
((12), (\beta^{-1}, \alpha^{-1}, \gamma^{-1}))\,  (\varepsilon, (\varepsilon, \,_2N_r^{\cdot},  \,_3N_r^{\cdot}))\: ((12), (\alpha, \beta, \gamma)) = (\varepsilon, (\alpha^{-1}\,{}_2N_r^{\cdot}\alpha, \; \varepsilon, \; \gamma^{-1} \,_3N_r^{\cdot} \gamma)).
\]

We can rewrite expression (\ref{VKLYUCH_YADER}) in such form
  \begin{equation} \label{AUTOTOP_EQU_2}
N_r(Q, \cdot)  \subseteq SN_l(Q, \circ)S^{-1}.
\end{equation}
Comparing (\ref{EQUT_NUCLEI}) and (\ref{AUTOTOP_EQU_2}) we obtain $N_r(Q, \cdot) = SN_l(Q, \circ)S^{-1}$.
Therefore
  $\alpha\,{}_1N_l^{\circ}\alpha^{-1} =  \, {}_2N^{\cdot}_r$,  $\,{}_1N_l^{\circ} = \alpha^{-1} \, {}_2N^{\cdot}_r\alpha$, $\,{}_3N_l^{\circ} = \gamma^{-1} \, {}_3N^{\cdot}_r\gamma$.
\end{proof}

\smallskip

All other analogs of Lemma \ref{ISOSTR_AUTOT_GR_1vsp} are proved in the similar way.

\smallskip

In Table 3    connections between components of A-nuclei of a quasigroup $(Q,\cdot)$  and its isostrophic images of the form $(Q, \circ) = (Q,\cdot) ((\sigma)(\alpha, \beta, \gamma))$, where $\sigma \in S_3$, $\alpha, \beta, \gamma \in S_Q$  are collected.

{\hfill Table 3 \label{Table_3}}
\[
\large{
\begin{array}{|c||c| c| c| c| c| c|}
\hline
  & (\varepsilon, T)  & ((12), T) & ((13), T) & ((23), T) & ((132), T) & ((123), T) \\
\hline\hline
_1N_l^{\circ}  & \alpha^{-1}{}_1N_l^{\cdot} \alpha & \alpha^{-1}{}_2N_r^{\cdot} \alpha & \alpha^{-1}{} _3N_l^{\cdot} \alpha & \alpha^{-1}{}_1N_m^{\cdot} \alpha & \alpha^{-1}{}_2N_m^{\cdot} \alpha & \alpha^{-1}{}_3N_r^{\cdot} \alpha\\
\hline
_3N_l^{\circ}  & \gamma^{-1}{}_3N_l^{\cdot}\gamma & \gamma^{-1}{}_3N_r^{\cdot}\gamma & \gamma^{-1}{}_1N_l^{\cdot} \gamma & \gamma^{-1}{} _2N_m^{\cdot} \gamma &   \gamma^{-1}{} _1N_m^{\cdot} \gamma & \gamma^{-1}{}_2N_r^{\cdot} \gamma \\
\hline
_2N_r^{\circ}  & \beta^{-1}{}_2N_r^{\cdot}\beta  & \beta^{-1}{}_1N_l^{\cdot}\beta  &  \beta^{-1}{}_2N_m^{\cdot} \beta & \beta^{-1}{}_3N_r^{\cdot} \beta &  \beta^{-1}{}_3N_l^{\cdot} \beta     & \beta^{-1}{}_1N_m^{\cdot} \beta \\
\hline
_3N_r^{\circ} &  \gamma^{-1}{}_3N_r^{\cdot} \gamma & \gamma^{-1}{}_3N_l^{\cdot} \gamma & \gamma^{-1}{}_1N_m^{\cdot} \gamma & \gamma^{-1}{}_2N_r^{\cdot}\gamma &   \gamma^{-1}{}_1N_l^{\cdot} \gamma  &  \gamma^{-1}{}_2N_m^{\cdot} \gamma\\
\hline
_1N_m^{\circ}  & \alpha^{-1}{}_1N_m^{\cdot} \alpha & \alpha^{-1}{}_2N_m^{\cdot} \alpha &  \alpha^{-1}{}_3N_r^{\cdot} \alpha & \alpha^{-1}{}_1N_l^{\cdot} \alpha &  \alpha^{-1}{}_2N_r^{\cdot}\alpha    & \alpha^{-1}{}_3N_l^{\cdot} \alpha \\
\hline
_2N_m^{\circ}  &  \beta^{-1}{}_2N_m^{\cdot} \beta & \beta^{-1}{}_1N_m^{\cdot} \beta &  \beta^{-1}{}_2N_r^{\cdot} \beta & \beta^{-1}{}_3N_l^{\cdot} \beta &  \beta^{-1}{}_3N_r^{\cdot} \beta     & \beta^{-1}{}_1N_l^{\cdot} \beta \\
\hline
\end{array}}
\]


If  $(Q, \circ) = (Q,\cdot) ((123)(\alpha, \beta, \gamma))$, then from  Table 3 we have   $_2N_r^A(Q, \circ) = \beta^{-1}{} _1N_m^A \beta (Q,\cdot)$.

\begin{theorem}
Isostrophic quasigroups have isomorphic A-nuclei.
\end{theorem}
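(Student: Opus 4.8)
The plan is to deduce the theorem from the building blocks already in hand: Lemma~\ref{ISOTOPIC_COMPONENTS} disposes of isostrophies with $\sigma=\varepsilon$ (isotopies), Lemma~\ref{ISOSTR_AUTOT_GR_1vsp} disposes of $\sigma=(12)$, and its announced analogs --- whose outcomes are recorded in Table 3 --- dispose of the remaining $\sigma\in S_3$; a general isostrophy then reduces to these by composition, and ``is isomorphic to'' is transitive. So the only real work is to isolate the common mechanism behind all the special cases.

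Here is that mechanism. Suppose $(Q,\circ)=(Q,\cdot)S$ with $S=(\sigma,(\alpha,\beta,\gamma))$. By Corollary~\ref{ISOSTR_AUTOT_GR} the conjugation map $\phi\colon K\mapsto S^{-1}KS$ is a group isomorphism $Avt(Q,\cdot)\to Avt(Q,\circ)$. Applying the composition rule for isostrophisms (Lemma~\ref{L2.1} and formula~(\ref{composition_of_isostrophisms_LEFT_REC})) to an autotopy $(\alpha_1,\alpha_2,\alpha_3)$ one sees that $\phi$ permutes its components according to $\sigma$ while conjugating each resulting component by the corresponding component of $S$; in particular a component equal to $\varepsilon$ in position $i$ is carried to a component equal to $\varepsilon$ in position $\sigma i$. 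Since by Definition~\ref{A_NUCLEI} the nuclei $N^A_l$, $N^A_m$, $N^A_r$ are exactly the autotopies with $\varepsilon$ in the second, third and first component respectively, $\phi$ carries the A-nucleus of $(Q,\cdot)$ attached to position $i$ \emph{into} the A-nucleus of $(Q,\circ)$ attached to position $\sigma i$; applying the same computation to $S^{-1}$ (whose parastrophe part is $\sigma^{-1}$, Lemma~\ref{inver_AUTOSTROPHY}) gives the reverse inclusion, so $\phi$ restricts to an isomorphism between these two A-nuclei. By Lemma~\ref{COMPONENTS} the matching component groups ${}_jN^A_x$ are then conjugate inside $S_Q$, hence similar as permutation groups; this is the stronger componentwise statement of Table 3.

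To finish, I would observe that every isostrophy is a product of an isotopy with a parastrophe, and every parastrophe is a product of transposition-parastrophes, so a general isostrophy is a product of the cases above. Composing the corresponding isomorphisms $\phi$ and tracking the permutation of $\{l,m,r\}$ induced by $\sigma$ at each stage --- the identity for $\sigma=\varepsilon$; the transpositions $l\leftrightarrow r$, $m\leftrightarrow r$, $l\leftrightarrow m$ for $\sigma=(12),(13),(23)$; and the two $3$-cycles for $\sigma=(123),(132)$ --- yields, for any $(Q,\circ)=(Q,\cdot)(\sigma,T)$, a permutation $\pi=\pi(\sigma)$ of $\{l,m,r\}$ together with isomorphisms $N^A_x(Q,\cdot)\cong N^A_{\pi(x)}(Q,\circ)$ for every $x\in\{l,m,r\}$. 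In particular isostrophic quasigroups have isomorphic A-nuclei, the three A-nuclei of one being isomorphic, in the prescribed pairing, to the three A-nuclei of the other.

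The step I expect to be the main obstacle is the bookkeeping in the second paragraph. One must be scrupulous about whether the components are permuted by $\sigma$ or by $\sigma^{-1}$ --- this hinges on the left/right convention for composing maps, cf.\ the Remark following Lemma~\ref{L2.1} --- and one must verify that $\phi$ sends each A-nucleus \emph{onto} the target A-nucleus rather than merely \emph{into} it; the latter is precisely the two-sided inclusion argument carried out in the proof of Lemma~\ref{ISOSTR_AUTOT_GR_1vsp}, and it has to be rerun, with the appropriate $\sigma$, in each of the five remaining columns of Table 3.
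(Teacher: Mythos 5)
Your proposal is correct and follows essentially the paper's own route: conjugation by the isostrophy $S=(\sigma,T)$ maps $Avt(Q,\cdot)$ onto $Avt(Q,\circ)$, permutes the positions of the autotopy components according to the parastrophe (so the $\varepsilon$-component of a nuclear autotopy lands in the position prescribed by $\sigma$), and the two-sided inclusion argument of Lemma~\ref{ISOSTR_AUTOT_GR_1vsp}, repeated for each $\sigma\in S_3$, is exactly what Table~3 records and what the paper invokes. Your extra reduction of a general isostrophy to products of isotopies and transposition-parastrophes is harmless but not needed, since the table already covers all six values of $\sigma$ directly.
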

\begin{proof}
The proof follows from Table 3.
\end{proof}

\begin{remark}
From  the proof of Lemma \ref{ISOSTR_AUTOT_GR_1vsp} it follows that Table 3 can be used for the finding of connections between concrete components of  A-nuclei. For example, if  $(Q, \circ) = (Q,\cdot) ((23)(\alpha, \beta, \gamma))$, then from  Table 3 we have   $_2N_r^A(Q, \circ) = \beta^{-1}{} _3N_r^A \beta (Q,\cdot)$. Therefore, if $\rho \in {} _3N_r^A (Q,\cdot)$, then there exists an element $\mu \in {}_2N_r^A(Q, \circ)$ such that $\mu = \beta^{-1} \rho \beta$ and vice versa, if $\rho \in {}_2N_r^A(Q, \circ)$, then there exists an element $\mu \in {} _3N_r^A (Q,\cdot)$ such that $\rho = \beta^{-1} \mu \beta$.
\end{remark}

\subsection{Components of A-nuclei and identity elements}

There exist some connection between components of A-nuclei and local identity elements in any quasigroup.

\begin{lemma} \label{QUASIGR_A_NUCL_AND_ID}
\begin{enumerate}
  \item If $(Q,\cdot)$ is a quasigroup and  $(\alpha, \varepsilon, \gamma )\in Avt(Q,\cdot)$, then  $\alpha f(x) \cdot x = \gamma x$, $\alpha x \cdot e(x) = \gamma x$, $\alpha x \cdot x = \gamma s (x)$ for all $x\in Q$.
  \item If $(Q,\cdot)$ is a quasigroup and  $(\varepsilon, \beta,  \gamma )\in Avt(Q,\cdot)$, then  $f(x) \cdot \beta x = \gamma x$, $x \cdot \beta e(x) = \gamma x$, $ x \cdot \beta x = \gamma s (x)$ for all $x\in Q$.
  \item If $(Q,\cdot)$ is a quasigroup and  $(\alpha, \beta,  \varepsilon)\in Avt(Q,\cdot)$, then $\alpha f(x) \cdot \beta x = x$, $\alpha x \cdot \beta e(x) = x$, $\alpha x \cdot \beta x = s (x)$ for all $x\in Q$.
\end{enumerate}
\end{lemma}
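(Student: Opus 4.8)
The plan is to exploit the single defining equation of an autotopy together with the three pointwise identities $f(x)\cdot x = x$, $x\cdot e(x) = x$, $s(x) = x\cdot x$ introduced just before Remark \ref{LEFT_RIGHT_LOOP}. Recall that $(\alpha,\beta,\gamma)\in Avt(Q,\cdot)$ means precisely $\alpha u \cdot \beta v = \gamma(u\cdot v)$ for all $u,v\in Q$; in each of the three cases one of the three components is $\varepsilon$, so this identity simplifies accordingly, and everything reduces to choosing the right pair of arguments to substitute.

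For Case 1, where $(\alpha,\varepsilon,\gamma)\in Avt(Q,\cdot)$, the governing identity is $\alpha u \cdot v = \gamma(u\cdot v)$. I would substitute $(u,v) = (f(x),x)$ to obtain $\alpha f(x)\cdot x = \gamma(f(x)\cdot x) = \gamma x$; then $(u,v) = (x,e(x))$ to obtain $\alpha x\cdot e(x) = \gamma(x\cdot e(x)) = \gamma x$; and finally $(u,v) = (x,x)$ to obtain $\alpha x \cdot x = \gamma(x\cdot x) = \gamma s(x)$. Case 2 is handled symmetrically: with $(\varepsilon,\beta,\gamma)\in Avt(Q,\cdot)$ the identity reads $u\cdot\beta v = \gamma(u\cdot v)$, and the same three substitutions $(f(x),x)$, $(x,e(x))$, $(x,x)$ give the claimed equalities $f(x)\cdot\beta x = \gamma x$, $x\cdot\beta e(x) = \gamma x$, $x\cdot\beta x = \gamma s(x)$.

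For Case 3, where $(\alpha,\beta,\varepsilon)\in Avt(Q,\cdot)$, the identity becomes $\alpha u\cdot\beta v = u\cdot v$, and once more the substitutions $(f(x),x)$, $(x,e(x))$, $(x,x)$ yield $\alpha f(x)\cdot\beta x = f(x)\cdot x = x$, $\alpha x\cdot\beta e(x) = x\cdot e(x) = x$, and $\alpha x\cdot\beta x = x\cdot x = s(x)$. There is essentially no obstacle here: the only point requiring care is to read $f$, $e$, $s$ as the maps $x\mapsto f(x)$, $x\mapsto e(x)$, $x\mapsto s(x)$ rather than as fixed local identity elements, so that the substitutions are legitimate simultaneously for every $x\in Q$. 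One could also remark that the three groups of equalities are simply the pointwise reformulation of the phenomenon recorded, in loop language, in Lemma \ref{FORM_OF_AUTOTOPY}.
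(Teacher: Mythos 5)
Your proposal is correct and matches the paper's own argument: the paper likewise substitutes $x=f(y)$, $y=e(x)$, and $x=y$ into the autotopy identity and uses $f(x)\cdot x = x$, $x\cdot e(x)=x$, $x\cdot x = s(x)$, handling Cases 2 and 3 "similarly". Nothing further is needed.
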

\begin{proof}
Case 1. If we put $x=f(y)$, then we obtain $\alpha f(y) \cdot y = \gamma y$.
If we put $y=e(x)$, then we obtain $\alpha (x) \cdot e(x) = \gamma x$. If we put $x=y$, then we obtain $\alpha x  \cdot x = \gamma s(x)$.

Cases 2 and 3 are proved similarly.
\end{proof}

\begin{corollary} \label{ID_QUASIGR_A_NUCL_AND_ID}
\begin{enumerate}
  \item If $(Q,\cdot)$ is an idempotent  quasigroup and  $(\alpha, \varepsilon, \gamma )\in Avt(Q,\cdot)$, then  $\alpha x \cdot x = \gamma x$, $\alpha x= P^{-1}_{\gamma x}$ for all $x\in Q$.
      \item If $(Q,\cdot)$ is an idempotent quasigroup and  $(\varepsilon, \beta,  \gamma )\in Avt(Q,\cdot)$, then  $x \cdot \beta x = \gamma x$, $\beta x = P_{\gamma x} x$ for all $x\in Q$.
  \item  If $(Q,\cdot)$ is an idempotent quasigroup and  $(\alpha, \beta,  \varepsilon)\in Avt(Q,\cdot)$, then $\alpha x \cdot \beta x = x$, $P_{x} \alpha x  =  \beta x$ for all $x\in Q$.
\end{enumerate}
\end{corollary}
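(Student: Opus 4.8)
The plan is to deduce all three statements directly from Lemma~\ref{QUASIGR_A_NUCL_AND_ID} by specializing to the idempotent situation, and then to rewrite the resulting identities in terms of the middle translation $P$.

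First I would invoke Remark~\ref{IDEMPOT_QUAS}: in an idempotent quasigroup the maps $e$, $f$, $s$ are the identity permutation of $Q$, so $f(x)=e(x)=s(x)=x$ for every $x\in Q$. Substituting this into the three equalities of Lemma~\ref{QUASIGR_A_NUCL_AND_ID}(1) makes them all collapse to the single identity $\alpha x\cdot x=\gamma x$; in the same way part~(2) collapses to $x\cdot\beta x=\gamma x$, and part~(3) to $\alpha x\cdot\beta x=x$. This already establishes the first assertion in each of the three items.

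Next I would translate each identity into the language of middle translations, using the defining property $y\cdot P_a y=a$ for all $y\in Q$ together with the fact that, in a quasigroup (Definition~\ref{def2}), the equation $u\cdot v=w$ has a unique solution $v$ once $u,w$ are fixed. For item~(1), apply the definition of $P$ with $a=\gamma x$ and $y=\alpha x$ to get $\alpha x\cdot P_{\gamma x}(\alpha x)=\gamma x$; comparing with $\alpha x\cdot x=\gamma x$ and cancelling the common left factor yields $P_{\gamma x}(\alpha x)=x$, i.e. $\alpha x=P^{-1}_{\gamma x}x$. For item~(2), apply the definition with $a=\gamma x$ and $y=x$ to get $x\cdot P_{\gamma x}x=\gamma x$; comparing with $x\cdot\beta x=\gamma x$ gives $\beta x=P_{\gamma x}x$. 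For item~(3), apply the definition with $a=x$ and $y=\alpha x$ to get $\alpha x\cdot P_x(\alpha x)=x$; comparing with $\alpha x\cdot\beta x=x$ gives $\beta x=P_x\alpha x$.

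There is no genuine obstacle here: the statement is a routine corollary of Lemma~\ref{QUASIGR_A_NUCL_AND_ID}. The only points that need a little care are keeping track of which argument the middle translation is applied to (it is $\alpha x$, not $x$, in items~(1) and~(3)), and noting that the cancellation step is legitimate precisely because of the quasigroup axiom. I would also remark in passing that in item~(1) the display ought to read $\alpha x=P^{-1}_{\gamma x}x$, the argument $x$ having apparently been omitted.
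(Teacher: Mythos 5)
Your proposal is correct and follows essentially the same route as the paper: specialize Lemma~\ref{QUASIGR_A_NUCL_AND_ID} using Remark~\ref{IDEMPOT_QUAS} and then rewrite each collapsed identity via the definition of the middle translation, which is exactly how the paper argues. Your parenthetical observation that the display in item~(1) should read $\alpha x = P^{-1}_{\gamma x}x$ is also accurate; the missing argument is a typographical slip in the statement.
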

\begin{proof}
The proof follows from Lemma \ref{QUASIGR_A_NUCL_AND_ID} and the fact that in an idempotent quasigroup $(Q, \cdot)$ the maps $f, e, s$  all are equal to identity permutation of the set $Q$.
Further we have   $P_{\gamma x}\alpha x  =  x$, $\alpha x= P^{-1}_{\gamma x}$ (Case 1). From  $x \cdot \beta x = \gamma x$ we have $P_{\gamma x} x  =  \beta x$ in Case 2. From  $\alpha x \cdot \beta x = x$ we have $P_{x} \alpha x  =  \beta x$ in Case 3.
\end{proof}

\begin{remark}
 We can rewrite also:

 equality $\alpha x \cdot x = \gamma x$  in the form $x \cdot \alpha^{-1} x = \gamma \alpha^{-1} x$,   $\alpha \gamma^{-1} x \cdot \gamma^{-1} x =  x$ (Case 1);

equality  $x \cdot \beta x = \gamma x$  in the form  $\beta^{-1} x \cdot  x = \gamma \beta^{-1}x$, $\gamma^{-1} x \cdot \beta \gamma^{-1} x = x$ (Case 2);

equality $\alpha x \cdot \beta x = x$  in the form $ x \cdot \beta \alpha^{-1} x = \alpha^{-1} x$, $\alpha \beta^{-1} x \cdot  x = \beta^{-1} x$ (Case 3).
\end{remark}

\begin{corollary} \label{LEFT_RIGHT_MIDDLE_AUTOT}
\begin{enumerate}
\item Let  $(Q,\cdot)$ be a left loop.
\begin{enumerate}
  \item  If  $(\alpha, \varepsilon, \gamma )\in Avt(Q,\cdot)$, then  $\alpha 1 \cdot x = \gamma x$ for all $x\in Q$, i.e. $\gamma = L_{\alpha 1}$.
  \item If   $(\varepsilon, \beta,  \gamma )\in Avt(Q,\cdot)$, then  $\beta  = \gamma $.
  \item If   $(\alpha, \beta,  \varepsilon)\in Avt(Q,\cdot)$, then $\alpha 1 \cdot \beta x = x$ for all $x\in Q$, i.e. $\beta = L^{-1}_{\alpha 1}$.
\end{enumerate}
 \item Let  $(Q,\cdot)$ be a right loop.
\begin{enumerate}
  \item If  $(\alpha, \varepsilon, \gamma )\in Avt(Q,\cdot)$, then   $\alpha   = \gamma$.
  \item If  $(\varepsilon, \beta,  \gamma )\in Avt(Q,\cdot)$, then   $x \cdot \beta 1 = \gamma x$  for all $x\in Q$, i.e. $\gamma = R_{\beta 1}$.
  \item If   $(\alpha, \beta,  \varepsilon)\in Avt(Q,\cdot)$, then  $\alpha x \cdot \beta 1 = x$ for all $x\in Q$, i.e. $\alpha = R^{-1}_{\beta 1}$.
\end{enumerate}
\item Let $(Q,\cdot)$ be  an unipotent  quasigroup.
\begin{enumerate}
  \item If $(\alpha, \varepsilon, \gamma )\in Avt(Q,\cdot)$, then   $\alpha x \cdot x = \gamma 1$ for all $x\in Q$, i.e. $\alpha = P^{-1}_{\gamma 1}$.
  \item If   $(\varepsilon, \beta,  \gamma )\in Avt(Q,\cdot)$, then  $ x \cdot \beta x = \gamma 1$ for all $x\in Q$, i.e. $\beta = P_{\gamma 1}$.
  \item If  $(\alpha, \beta,  \varepsilon)\in Avt(Q,\cdot)$, then  $  \alpha = \beta$.
\end{enumerate}
\end{enumerate}
\end{corollary}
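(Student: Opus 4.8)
The plan is to obtain every one of the nine sub-statements as a direct specialization of Lemma \ref{QUASIGR_A_NUCL_AND_ID}. The only extra ingredient required is Remark \ref{LEFT_RIGHT_LOOP}: in a left loop the map $f$ is the constant map with value the left identity $1$, in a right loop the map $e$ is the constant map with value the right identity $1$, and in an unipotent quasigroup the map $s$ is the constant map with value the middle identity $1$. So in each case I would take the relevant identity produced by Lemma \ref{QUASIGR_A_NUCL_AND_ID}, replace $f(x)$, $e(x)$ or $s(x)$ by the constant $1$, read off the resulting equality as the equality of a translation with the remaining autotopy component, and then use uniqueness of quasigroup solutions to solve for that component.

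Concretely, for Case 1 (left loop) I would start from the three identities of Lemma \ref{QUASIGR_A_NUCL_AND_ID} that involve $f$: from $\alpha f(x)\cdot x = \gamma x$ and $f(x)=1$ I get $\alpha 1\cdot x = \gamma x$, i.e. $L_{\alpha 1}x = \gamma x$ for all $x$, hence $\gamma = L_{\alpha 1}$; from $f(x)\cdot \beta x = \gamma x$ and $1\cdot z = z$ I get $\beta x = \gamma x$, i.e. $\beta = \gamma$; and from $\alpha f(x)\cdot \beta x = x$ I get $\alpha 1 \cdot \beta x = x$, i.e. $L_{\alpha 1}\beta = \varepsilon$, that is $\beta = L^{-1}_{\alpha 1}$. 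Case 2 (right loop) is handled symmetrically, this time using the three identities of Lemma \ref{QUASIGR_A_NUCL_AND_ID} that involve $e$, together with $e(x)=1$ and $z\cdot 1 = z$, which yield $\alpha=\gamma$, $\gamma = R_{\beta 1}$, and $\alpha = R^{-1}_{\beta 1}$ respectively.

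For Case 3 (unipotent quasigroup) I would use the three identities involving $s$, with $s(x)=1$. From $\alpha x\cdot x = \gamma s(x) = \gamma 1$, writing $y=\alpha x$, the defining property $y\cdot P_{\gamma 1}y = \gamma 1$ of the middle translation gives $x = P_{\gamma 1}\alpha x$ for all $x$, hence $\alpha = P^{-1}_{\gamma 1}$; from $x\cdot \beta x = \gamma 1$ the definition of $P_{\gamma 1}$ gives $\beta x = P_{\gamma 1}x$, hence $\beta = P_{\gamma 1}$. The last sub-case needs one small additional observation: in an unipotent quasigroup $1 = x\cdot x$ for every $x$, so the middle translation $P_1$ is the identity permutation; then from $\alpha x\cdot\beta x = s(x) = 1 = \alpha x\cdot \alpha x$, uniqueness of the solution of the equation $\alpha x\cdot u = 1$ forces $\beta x = \alpha x$, i.e. $\alpha = \beta$.

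There is essentially no genuine obstacle here; the statement is a bookkeeping corollary of Lemma \ref{QUASIGR_A_NUCL_AND_ID} and Remark \ref{LEFT_RIGHT_LOOP}. The one spot that requires a little care is Case 3, where one must keep straight the direction of the middle-translation convention $x\cdot P_a x = a$ — so that it is $\alpha$ (not $\gamma$) that gets pinned down in 3(a) and $\beta$ in 3(b) — and must notice the auxiliary fact that unipotency makes $P_1$ the identity permutation, which is exactly what collapses 3(c) to $\alpha = \beta$.
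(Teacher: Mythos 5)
Your proposal is correct and follows essentially the same route as the paper: the paper also derives all nine sub-statements by specializing Lemma \ref{QUASIGR_A_NUCL_AND_ID} via Remark \ref{LEFT_RIGHT_LOOP}, and handles Case 3(c) by noting $\beta = P_1\alpha$ with $P_1=\varepsilon$ in an unipotent quasigroup, which is exactly your observation phrased through the middle translation. No gaps.
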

\begin{proof}
The proof follows from Lemma \ref{QUASIGR_A_NUCL_AND_ID} and Remarks \ref{LEFT_RIGHT_LOOP} and
 \ref{IDEMPOT_QUAS}.

 In Case 3(c) we have.  If $(Q,\cdot)$ is an unipotent quasigroup and  $(\alpha, \beta,  \varepsilon)\in Avt(Q,\cdot)$, then  $\alpha x \cdot \beta x = 1$ for all $x\in Q$, i.e. $\beta = P_{1}\alpha $. But in unipotent quasigroup $P_{1} = \varepsilon $.
\end{proof}

\begin{theorem} \label{A-NUCLEI_OF_LOOPS}
\begin{enumerate}
  \item
  Let  $(Q,\cdot)$ be a  loop.
   \begin{enumerate}
\item    If  $(\alpha, \varepsilon, \gamma )\in Avt(Q,\cdot)$, then $\alpha = \gamma = L_{\alpha 1}$.
  \item If  $(\varepsilon, \beta,  \gamma )\in Avt(Q,\cdot)$, then  $\beta  = \gamma = R_{\beta 1} $.
  \item If  $(\alpha, \beta,  \varepsilon)\in Avt(Q,\cdot)$, then  $\alpha = R_{\beta 1}$,   $\beta = L^{-1}_{\beta 1}$.
\end{enumerate}
  \item Let  $(Q,\cdot)$ be an unipotent left loop.
    \begin{enumerate}
\item
    If   $(\alpha, \varepsilon, \gamma )\in Avt(Q,\cdot)$, then    $\alpha = P_{\alpha 1}$, $\gamma = L_{\alpha 1}$.
  \item If   $(\varepsilon, \beta,  \gamma )\in Avt(Q,\cdot)$, then  $\beta = \gamma = P_{\gamma 1}$.
  \item If   $(\alpha, \beta,  \varepsilon)\in Avt(Q,\cdot)$, then  $\alpha = \beta = L^{-1}_{\alpha 1}$.
\end{enumerate}
\item Let  $(Q,\cdot)$ be  an unipotent right loop.
\begin{enumerate}
\item If   $(\alpha, \varepsilon, \gamma )\in Avt(Q,\cdot)$, then    $\alpha = \gamma  = P^{-1}_{\gamma 1}$.
  \item If   $(\varepsilon, \beta,  \gamma )\in Avt(Q,\cdot)$, then   $\beta = P^{-1}_{\beta 1}$, $\gamma = R_{\beta 1}$.
  \item If  $(\alpha, \beta,  \varepsilon)\in Avt(Q,\cdot)$, then  $\alpha = \beta = R^{-1}_{\beta 1}$.
\end{enumerate}
\end{enumerate}
\end{theorem}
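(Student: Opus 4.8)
The plan is to derive all nine assertions from Corollary~\ref{LEFT_RIGHT_MIDDLE_AUTOT} by exploiting the fact that each of the three ambient structures carries, simultaneously, two of the three kinds of local identity treated there. Indeed, a loop has $f=e=1$, hence is at once a left loop and a right loop; a unipotent left loop has $f=s=1$ (from $f\cdot f=s$ and $f\cdot f=f$), hence is at once a left loop and a unipotent quasigroup, and moreover $x\cdot x=1$ for all $x$, i.e.\ $P_1=\varepsilon$; dually, a unipotent right loop has $e=s=1$, hence is at once a right loop and a unipotent quasigroup, and again $P_1=\varepsilon$.

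First I would settle Case~1. For an autotopy $(\alpha,\varepsilon,\gamma)$, Corollary~\ref{LEFT_RIGHT_MIDDLE_AUTOT} applied with the left-loop structure gives $\gamma=L_{\alpha 1}$, and applied with the right-loop structure gives $\alpha=\gamma$; so $\alpha=\gamma=L_{\alpha 1}$. For $(\varepsilon,\beta,\gamma)$ the left-loop part gives $\beta=\gamma$ and the right-loop part gives $\gamma=R_{\beta 1}$, whence $\beta=\gamma=R_{\beta 1}$. For $(\alpha,\beta,\varepsilon)$ the left-loop part gives $\beta=L^{-1}_{\alpha 1}$ and the right-loop part gives $\alpha=R^{-1}_{\beta 1}$; combining these with the relation $\alpha 1\cdot\beta 1=1$ (obtained by setting $x=y=1$ in $\alpha x\cdot\beta y=x\cdot y$) puts them into the asserted form. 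Cases~2 and~3 follow by exactly the same recipe, one component being pinned down by the ``loop half'' of the structure and the other by the ``unipotent half'': for instance in Case~2(a) the left-loop half gives $\gamma=L_{\alpha 1}$ and the unipotent half gives $\alpha=P^{-1}_{\gamma 1}$, in Case~2(b) the left-loop half gives $\beta=\gamma$ and the unipotent half gives $\beta=P_{\gamma 1}$, and similarly for the remaining sub-cases of Cases~2 and~3.

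What then remains is purely computational: rewriting each pair of raw relations into the single uniform shape the statement records. For this one substitutes the value of $\gamma 1$ (respectively $\alpha 1$ or $\beta 1$) forced by the loop half of the structure — for example $\gamma 1=L_{\alpha 1}1=\alpha 1\cdot 1$ in Case~2(a) — and then manipulates the defining relations $x\cdot P_a x=a$, $L_a x=a\cdot x$, $R_a x=x\cdot a$ together with the unipotent identity $x\cdot x=1$ (i.e.\ $P_1=\varepsilon$) and the cancellation laws. Lemma~\ref{TWO_AUTOTOPY_COMPONENTS} (any two components of an autotopy determine the third) guarantees that the two formulas produced by the two halves of the structure are automatically compatible, so beyond this rewriting there is nothing to check.

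I expect the only real friction to be the bookkeeping of inverses and of ``which local identity is which'', not any conceptual difficulty; Table~1 (behaviour of translations under parastrophy) and Table~2 (behaviour of local identities under parastrophy) are the natural references. There is also a more conceptual alternative: reading Table~2, a unipotent left loop is a $(23)$-parastrophe of a loop and a unipotent right loop a $(13)$-parastrophe of a loop, so once Case~1 is established, Lemma~\ref{FORM_OF_AUTOTOPY} applied to that underlying loop and transported through Table~1 already delivers Cases~2 and~3.
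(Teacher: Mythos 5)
Your handling of the six sub-cases 1(a), 1(b), 2(b), 2(c), 3(a), 3(c) is correct and coincides with what the paper does implicitly: each follows at once from Corollary \ref{LEFT_RIGHT_MIDDLE_AUTOT} by reading the structure simultaneously through its two local identities. The gap is in the three remaining sub-cases 1(c), 2(a), 3(b), which are exactly where the paper's proof does its real work. There the Corollary yields only the raw forms $\alpha=R^{-1}_{\beta 1}$, $\beta=L^{-1}_{\alpha 1}$ (loop), $\alpha=P^{-1}_{\gamma 1}=P^{-1}_{\alpha 1\cdot 1}$ (unipotent left loop), $\beta=P_{\gamma 1}=P_{1\cdot \beta 1}$ (unipotent right loop), and passing from these to the asserted closed forms is not ``purely computational rewriting'' with $x\cdot P_a x=a$, $x\cdot x=1$ and cancellation. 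For instance, $P^{-1}_{a\cdot 1}=P_a$ for all $a$ is equivalent to the identity $(x\cdot y)\cdot 1=y\cdot x$, which fails in general unipotent left loops (an order-$5$ counterexample is easy to write down), and in a nonassociative loop $R^{-1}_{\beta 1}$ need not coincide with any right translation at all: turning it into one amounts to a right-inverse-type property at the element $\beta 1$. So the two raw component formulas together with $\alpha 1\cdot\beta 1=1$ simply do not imply the stated forms. Lemma \ref{TWO_AUTOTOPY_COMPONENTS} does not bridge this: it guarantees uniqueness of a third component, not that two differently written expressions for one and the same component can be reconciled by syntactic manipulation.

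What is actually needed, and what the paper supplies, is a second use of the autotopy property itself, i.e.\ the group structure of the nuclear components: the inverse triple $(\alpha^{-1},\varepsilon,\gamma^{-1})$ (resp.\ $(\alpha^{-1},\beta^{-1},\varepsilon)$) is again a nuclear autotopy, and from this the paper computes products and inverses of the nuclear translations, obtaining $R^{-1}_a=R_{a^{-1}}$, $L^{-1}_a=L_{a^{-1}}$ and $P^{-1}_a=P_{a\cdot 1}$ for nuclear indices $a$; these relations are precisely what convert $R^{-1}_{\beta 1}$ and $P^{-1}_{\alpha 1\cdot 1}$ into the stated forms, and Case 3(b) is then obtained by a $(12)$-parastrophy transfer from 2(a). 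A shorter repair inside your own framework is possible but again goes beyond rewriting: in Case 2(a) substitute $y=\alpha x$ into $\alpha x\cdot y=\gamma(x\cdot y)$; unipotency gives $x\cdot\alpha x=\gamma^{-1}1$ for all $x$, and $x=1$ gives $\gamma^{-1}1=\alpha 1$, hence $\alpha=P_{\alpha 1}$ --- an extra use of the autotopy equation, not of the two raw component formulas. Finally, your parastrophy alternative for Cases 2 and 3 is viable in principle (the paper itself uses it for 3(b)), but it presupposes a complete Case 1, and 1(c) is one of the sub-cases your argument leaves open.
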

\begin{proof}
Case 1(c) is well known. See for example  \cite{KEED_SCERB}.  From Corollary \ref{LEFT_RIGHT_MIDDLE_AUTOT} it follows that in loop case $\alpha = R^{-1}_{\beta 1}$ and  $\beta = L^{-1}_{\alpha 1}$ for any autotopy  of the form $(\alpha, \beta, \varepsilon)$. If $R^{-1}_a\in {}_1N_m^A(Q,\cdot)$, then $R_a\in {}_1N_m^A(Q,\cdot)$, since ${}_1N_m^A(Q,\cdot)$ is a group. Therefore in loop case any autotopy of the kind $(\alpha, \beta, \varepsilon)$ takes the following form
\begin{equation}  \label{Autotopy_form_MIDDLE_NUCLEUS_LOOP}
(R_{\beta 1}, L_{\alpha 1}, \varepsilon)
\end{equation}
Taking into consideration equality  (\ref{Autotopy_form_MIDDLE_NUCLEUS_LOOP}) and the fact that  ${}_1N_m^A(Q,\cdot)$ is a group  we have:  if $R_a, R_b \in {}_1N_m^A(Q,\cdot)$, then $R_aR_b = R_c$. Thus $R_aR_b 1= R_c 1$, $c = b\cdot a$.

   We find now the form of inverse element to the   element  $R_a\in {}_1N_m^A(Q,\cdot)$.
      Notice,  if $R^{-1}_a = R_b$ for some $b\in Q$, then $R_aR_b = R_{b\cdot a}= \varepsilon = R_1$. Then $b\cdot a = 1$ and since any right (left, middle) quasigroup translation  is defined in an unique way by its index element we have $b={}^{-1} a$. From the other side $R_b R_a = R_{a\cdot b}= \varepsilon = R_1$, $b = a^{-1}$.
       Therefore in this situation $a^{-1} = {}^{-1}a$ for any suitable element $a\in Q$, $R^{-1}_a = R_{(^{-1}a)} = R_{a^{-1}}$.

Similarly we can obtain that $L^{-1}_a = L_{(^{-1}a)} = L_{a^{-1}}$ for any $L_a \in {}_2N_m^A(Q,\cdot)$, where $(Q, \cdot)$ is a loop.

In   loop $(Q,\cdot)$ from  equality
$\alpha x \cdot \beta y = x\cdot y$
by $x=y = 1$  we have $\alpha 1 \cdot \beta 1 =  1$. Then $\beta 1  = (\alpha 1)^{-1}$, $\alpha 1 = {}^{-1}(\beta 1)$.
Moreover, we have that $\beta 1  = {}^{-1}(\alpha 1)$, $\alpha 1 = (\beta 1)^{-1}$, $R_{\beta 1} = R^{-1}_{\alpha 1}$.

Finally we obtain that any element of middle loop A-nucleus has the form
\begin{equation}  \label{Autotopy_form_MIDDLE_NUCLEUS_LOOP_1}
(R_{b}, L^{-1}_{b}, \varepsilon).
\end{equation}

It is easy to see that $(R_{\alpha 1}, L^{-1}_{\alpha 1}, \varepsilon )\in N_m^A \Leftrightarrow \alpha 1 \in N_m$.

Case 2(a).   From Corollary \ref{LEFT_RIGHT_MIDDLE_AUTOT} it follows that in unipotent left loop  $\alpha = P^{-1}_{\gamma 1} = P^{-1}_{\alpha 1\cdot 1}$, $\gamma = L_{\alpha 1}$.

In unipotent loop any left A-nucleus autotopy  $(\alpha, \varepsilon, \gamma)$ takes the following form
\begin{equation}  \label{Autotopy_form_LEFT_NUCLEUS_UNI_LOOP}
(P^{-1}_{\gamma 1}, \varepsilon, L_{\alpha 1})
\end{equation}

It is clear, if  $(P^{-1}_{\gamma 1}, \varepsilon, L_{\alpha 1})\in N_l^A$, then any element of the group $N_l^A$ has the form $(P_{\gamma 1}, \varepsilon, L^{-1}_{\alpha 1})$.
   We find now the form of inverse element to the  element  $P_a\in {}_1N_l^A(Q,\cdot)$.

At first we have:  if $P_a, P_b \in {}_1N_l^A(Q,\cdot)$, then $P_aP_b = P_c$ for some element $c \in Q$. Thus $P_aP_b 1= P_c 1$. If  $P_c 1 = d$, then $1\cdot d = c$, $c=d$. Therefore $P_a b = c$, $b\cdot c = a$,  $c = b\backslash a$.

      Notice,  if $P^{-1}_a = P_b$ for some $b\in Q$, then $P_aP_b = P_{b\backslash  a}= \varepsilon = P_1$ since $(Q,\cdot)$ is unipotent quasigroup.  Since any right (left, middle) quasigroup translation  is defined in an unique way by its index element, then $b\backslash  a = 1$,  $a = b\cdot 1$, $b = a\slash 1$. From the other side $P_b P_a = P_{a\backslash b} = \varepsilon = P_1$, $a\backslash b = 1$, $b = a \cdot 1$.
       Thus, if  $(Q, \cdot)$ is a left unipotent loop, then  $a\slash 1  = a \cdot 1$ for any  element $a\in Q$ such that $P_a \in {}_1N_l^A(Q, \cdot)$. Then   $P^{-1}_a = P_{a\slash 1} = P_{a \cdot 1}$, $P_a = P^{-1}_{a \cdot 1}$, $\alpha = P^{-1}_{\alpha 1\cdot 1} = P_{\alpha 1}$.
And  we obtain that any element of  left A-nucleus of an unipotent left loop has the form
\begin{equation}  \label{Autotopy_form_LEFT_NUCLEUS_UNI_LOOP_FIN}
(P_{\alpha 1}, \varepsilon, L_{\alpha 1})
\end{equation}

Case 3(b).   From Corollary \ref{LEFT_RIGHT_MIDDLE_AUTOT} it follows that in unipotent right  loop  $\beta = P_{1\cdot \beta 1}$, $\gamma = R_{\beta 1}$. Further we can do as in Case 2(a) but we shall use  parastrophic ideas.

It is easy to see that any unipotent right  loop $(Q, \circ)$ is $(12)$-parastrophic image  of a unipotent left loop $(Q, \cdot)$ (Table 2). In this case ${}_1N_l^A (Q,\circ) = {}_2N_r^A (Q,\cdot)$, ${}_3N_l^A (Q,\circ) = {}_3N_r^A (Q,\cdot)$ (Table 3).

From formula (\ref{Autotopy_form_LEFT_NUCLEUS_UNI_LOOP_FIN}) it follows that  any element of  unipotent right loop right A-nucleus has the form
\begin{equation}  \label{Autotopy_form_RIGHT_NUCLEUS_RIGHT_UNI_LOOP}
(\varepsilon, P^{\cdot}_{\beta 1},  L^{\cdot}_{\beta 1})
\end{equation}

Finally, the use of Table 1 gives us that in terms of translations of unipotent right loop $(Q, \cdot)$ we have
that  any element of   right A-nucleus of an unipotent right loop $(Q,\circ)$ has the form
\begin{equation}  \label{Autotopy_form_RIGHT_NUCLEUS_RIGHT_UNI_LOOP_FINAL}
(\varepsilon, P^{-1}_{\beta 1},  R_{\beta 1})
\end{equation}
\end{proof}

\begin{remark}
In fact from Theorem \ref{A-NUCLEI_OF_LOOPS} it follows that in loop case Belousov's concept of regular permutations   coincides with the  concepts of left, right, and middle A-nucleus \cite{VD, 1a, HOP}.
\end{remark}

\subsection{A-nuclei of loops by isostrophy}

Taking into consideration the importance of loops  in the class of all quasigroups we give some information on A-nuclear components of isostrophic images of a loop.
From Table 2 it follows the following
\begin{remark} \label{REM_LOOP_PARASTR}
The  $(12)$-parastrophe of a loop $(Q, \cdot)$ is a loop, $(13)$-parastrophe of a loop $(Q, \cdot)$ is an unipotent right  loop, $(23)$-parastrophe of a loop $(Q, \cdot)$ is an unipotent left  loop, $(123)$-parastrophe of a loop $(Q, \cdot)$ is an unipotent left  loop, $(132)$-parastrophe of a loop $(Q, \cdot)$ is an unipotent right  loop.
\end{remark}

Taking into consideration Theorem \ref{A-NUCLEI_OF_LOOPS}, Tables 1 and 3 we can give more detailed  connections between components of A-nuclei of a loop  $(Q,\cdot)$  and its isostrophic images of the form $(Q, \circ) = (Q,\cdot) ((\sigma)(\alpha, \beta, \gamma))$, where $\sigma \in S_3$, $\alpha, \beta, \gamma \in S_Q$.

\begin{example}
   If  $(Q, \circ) = (Q,\cdot) ((123)(\alpha, \beta, \gamma))$, then from  Table 3 we have   $_2N_r^A(Q, \circ) = \beta^{-1}{} _1N_m^A \beta (Q,\cdot)$. By Theorem \ref{A-NUCLEI_OF_LOOPS} any element of the group $_1N_m^A (Q,\cdot)$ is some right translation $R_a$  of the loop $(Q,\cdot)$.

If, additionally,  $\beta = \varepsilon$, then using Table 1 and Remark \ref{REM_LOOP_PARASTR} we can say that in the unipotent left loop $(Q, \circ)$ any element of the group $_2N_r^A(Q, \circ)$ is middle translation $P^{-1}_a$.
\end{example}

In Table 4 $(Q, \circ) = (Q,\cdot) ((\sigma)(\alpha, \beta, \gamma))$, where $(Q, \cdot)$ is a loop. We suppose that elements of left, right and middle  loop nucleus have the following forms $(L_a, \varepsilon, L_a)$, $(\varepsilon, R_b, R_b)$, and $(R_c, L^{-1}_c, \varepsilon)$, respectively.

{\hfill Table 4 \label{Table_4}}
\[
\large{
\begin{array}{|c||c| c| c| c| c| c|}
\hline
  & (\varepsilon, T)  & ((12), T) & ((13), T) & ((23), T) & ((132), T) & ((123), T) \\
\hline\hline
_1N_l^{\circ}  & \alpha^{-1}{}L_a^{\cdot} \alpha & \alpha^{-1}{}R_b^{\cdot} \alpha & \alpha^{-1}L_a^{\cdot} \alpha & \alpha^{-1}R_c^{\cdot} \alpha & \alpha^{-1}(L_c^{\cdot})^{-1} \alpha & \alpha^{-1}{}R_b^{\cdot} \alpha\\
\hline
_3N_l^{\circ}  & \gamma^{-1}L_a^{\cdot}\gamma & \gamma^{-1} R_b^{\cdot}\gamma & \gamma^{-1}{}L_a^{\cdot} \gamma & \gamma^{-1}{} L^{-1}_c{\cdot} \gamma &   \gamma^{-1}{} R_c{\cdot} \gamma & \gamma^{-1}{}R_b{\cdot} \gamma \\
\hline
_2N_r^{\circ}  & \beta^{-1}{}R_b^{\cdot}\beta  & \beta^{-1}{}L_a^{\cdot}\beta  &  \beta^{-1}(L^{-1}_c)^{\cdot} \beta & \beta^{-1}R_b^{\cdot} \beta &  \beta^{-1}L_a^{\cdot} \beta     & \beta^{-1}{}R_c^{\cdot} \beta \\
\hline
_3N_r^{\circ} &  \gamma^{-1}R_b^{\cdot} \gamma & \gamma^{-1}{}L_a^{\cdot} \gamma & \gamma^{-1}R_c^{\cdot} \gamma & \gamma^{-1}R_b^{\cdot}\gamma &   \gamma^{-1}L_a^{\cdot} \gamma  &  \gamma^{-1}(L^{-1}_c)^{\cdot} \gamma\\
\hline
_1N_m^{\circ}  & \alpha^{-1}R_c^{\cdot} \alpha & \alpha^{-1}{}(L^{-1}_c)^{\cdot} \alpha &  \alpha^{-1}R_b^{\cdot} \alpha & \alpha^{-1}L_a^{\cdot} \alpha &  \alpha^{-1}R_b^{\cdot}\alpha    & \alpha^{-1}L_a^{\cdot} \alpha \\
\hline
_2N_m^{\circ}  &  \beta^{-1}(L^{-1}_c)^{\cdot} \beta & \beta^{-1}R_c^{\cdot} \beta &  \beta^{-1}R_b^{\cdot} \beta & \beta^{-1}L_a^{\cdot} \beta &  \beta^{-1}R_b^{\cdot} \beta   & \beta^{-1}L_a^{\cdot} \beta \\
\hline
\end{array}}
\]

\subsection{Isomorphisms of A-nuclei}

\begin{lemma} \label{ISOMOR OF COMP}
In any quasigroup $(Q,\circ)$ we have
\begin{equation} \label{LEFT_NUCLEUS_EQ}
 N^A_l = \{ (\alpha, \varepsilon, R_c\alpha R^{-1}_c ) \, \mid \,  \, \textrm{for all} \:   c\in Q \} = \{ (R^{-1}_c\gamma R_c, \varepsilon, \gamma) \, \mid \,  \, \textrm{for all} \:    c\in Q \}
 \end{equation}
\begin{equation} \label{RIGHT_NUCLEUS_EQ}
 N^A_r = \{ (\varepsilon, \beta,  L_c\beta L^{-1}_c ) \, \mid \,  \, \textrm{for all} \: c\in Q \} =   \{ (\varepsilon, L^{-1}_c\gamma L_c,  \gamma ) \, \mid \,  \, \textrm{for all} \:   c\in Q \}
 \end{equation}
\begin{equation} \label{MIDDLE_NUCLEUS_EQ}
 N^A_m = \{ (\alpha, P_c \alpha P^{-1}_c,  \varepsilon) \, \mid \,  \, \textrm{for all} \:  c\in Q \} = \{ (P^{-1}_c \beta P_c,  \beta,  \varepsilon) \, \mid \,  \, \textrm{for all} \:    c\in Q \}
 \end{equation}
\end{lemma}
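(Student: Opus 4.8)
The plan is, in each of the three cases, to unfold the defining autotopy identity and then specialise its ``free'' argument to an arbitrary constant $c\in Q$: this turns the functional identity into a conjugation relation between the two non-identity components, and, conversely, the whole family of such relations (as $c$ runs over $Q$) reassembles into the autotopy identity. Solving the conjugation relation for the third component gives the first displayed description of each A-nucleus, and solving it for the first (resp.\ second) component gives the second one.

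\textbf{Left and right A-nuclei.} By definition $(\alpha,\varepsilon,\gamma)\in N^A_l$ means $\alpha x\circ y=\gamma(x\circ y)$ for all $x,y\in Q$. Putting $y=c$ rewrites this as $R_c\alpha x=\gamma R_c x$ for all $x$, i.e.\ $R_c\alpha=\gamma R_c$, so $\gamma=R_c\alpha R_c^{-1}$ and $\alpha=R_c^{-1}\gamma R_c$; conversely, if $R_c\alpha=\gamma R_c$ for \emph{every} $c$ then reading the equalities backwards recovers $\alpha x\circ y=\gamma(x\circ y)$ for all $x,y$, so $(\alpha,\varepsilon,\gamma)\in N^A_l$. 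This yields (\ref{LEFT_NUCLEUS_EQ}). The right case is the mirror image: for $(\varepsilon,\beta,\gamma)\in N^A_r$ one fixes $x=c$ in $x\circ\beta y=\gamma(x\circ y)$ to get $L_c\beta=\gamma L_c$, and (\ref{RIGHT_NUCLEUS_EQ}) follows in the same way. These two cases are essentially the ``translated form'' bookkeeping already used in Lemma~\ref{TRANSLATIONS}.

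\textbf{Middle A-nucleus.} This is where an extra idea is needed and is the main obstacle. For $(\alpha,\beta,\varepsilon)\in N^A_m$ the identity is $\alpha x\circ\beta y=x\circ y$, whose right-hand side involves \emph{both} free variables, so fixing one argument is not enough. Instead I would substitute $y=P_c x$, where $P_c$ is the middle translation of $(Q,\circ)$ (a permutation of $Q$, since $(Q,\circ)$ is a quasigroup), so that $x\circ y=c$ becomes constant; this gives $\alpha x\circ\beta P_c x=c$. On the other hand the defining property of the middle translation, applied to the element $\alpha x$, reads $\alpha x\circ P_c(\alpha x)=c$; comparing the two and left-cancelling the common factor $\alpha x$ yields $\beta P_c x=P_c\alpha x$ for all $x$, i.e.\ $\beta P_c=P_c\alpha$, whence $\beta=P_c\alpha P_c^{-1}$ and $\alpha=P_c^{-1}\beta P_c$. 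For the converse, assume $\beta P_c=P_c\alpha$ for all $c$; given arbitrary $x,y$ set $c=x\circ y$ (so $y=P_c x$), then $\beta y=P_c\alpha x$ and $\alpha x\circ\beta y=\alpha x\circ P_c(\alpha x)=c=x\circ y$, so $(\alpha,\beta,\varepsilon)\in N^A_m$. This is (\ref{MIDDLE_NUCLEUS_EQ}).

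Two points deserve care. First, the relations must be imposed for \emph{all} $c\in Q$: a single value of $c$ already produces the conjugation identity in the forward direction, but the converse reassembly of the autotopy identity genuinely uses every $c$. Second, in the middle case one must check that $y=P_c x$ is a bona fide change of variable, which it is precisely because $P_c$ is a permutation in a quasigroup. Everything else is the routine passage between the functional and translated forms of an autotopy, as in Lemmas~\ref{TRANSLATIONS} and~\ref{COMPONENTS}.
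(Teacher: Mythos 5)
Your proposal is correct and follows essentially the same route as the paper: for the left and right A-nuclei it specializes one argument to rewrite the autotopy identity as $R_c\alpha=\gamma R_c$ (resp.\ $L_c\beta=\gamma L_c$), and for the middle A-nucleus it substitutes $y=P_c x$ and compares with $\alpha x\circ P_c(\alpha x)=c$ to get $\beta=P_c\alpha P^{-1}_c$, exactly as in the paper's argument. Your explicit converse checks and the remark that the relations must hold for all $c\in Q$ only make the (easily reversible) steps of the paper's proof more complete.
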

\begin{proof}
 Re-write left A-nuclear equality  $\alpha x \circ y = \gamma (x\circ y)$ in the following form $R_y \alpha x  = \gamma R_y x$, $\gamma = R_y \alpha R^{-1}_y$ for all $y\in Q$.

Re-write right A-nuclear equality  $x \circ \beta y = \gamma (x\circ y)$ in the following form $L_x \beta y  = \gamma L_x y$, $\gamma = L_x \beta L^{-1}_x$ for all $x\in Q$.

In middle A-nuclear equality  $\alpha x \circ \beta y = x\circ y$  we put $y = x^{\prime}$, where   $x \circ x^{\prime} = a$ and $a$ is a fixed element of the set $Q$, i.e.  $x^{\prime} = P_a x$, where  $P_a$ is a permutation of the set $Q$.

Then $\alpha x \circ \beta x^{\prime}  = x\circ x^{\prime} = a$ for all $x \in Q$, $P_a \alpha x = \beta x^{\prime} = \beta P_a x$. Therefore $\beta = P_a \alpha P^{-1}_a$ for all $a\in Q$.
\end{proof}

\begin{lemma} \label{ISOMORPHISMS OF COMPONENTS}
If $K$ is a subgroup of the group $_1N^A_l$, then   $K\cong H \subseteq \, _3N^A_l$, moreover $H = R_x K R^{-1}_x$ for any  $x \in Q$.

If $K$ is a subgroup of the group $_2N^A_r$, then   $K\cong H \subseteq \, _3N^A_r$, moreover $H = L_x K L^{-1}_x$ for any  $x \in Q$.

If $K$ is a subgroup of the group $_1N^A_m$, then   $K\cong H \subseteq \, _2N^A_m$, moreover $H = P_x K P^{-1}_x$ for any  $x \in Q$.
\end{lemma}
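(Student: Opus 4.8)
The plan is to obtain all three isomorphisms at once from the two descriptions of each A-nucleus recorded in Lemma~\ref{ISOMOR OF COMP}. I will spell out the left-nucleus case; the right- and middle-nucleus cases are handled identically after replacing $R$ by $L$, respectively $P$, and shifting the relevant component as in~(\ref{RIGHT_NUCLEUS_EQ}) and~(\ref{MIDDLE_NUCLEUS_EQ}).

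First I would attach to every $\alpha\in{}_1N^A_l$ its companion third component: by Definition~\ref{A_NUCLEI} there is some $\gamma$ with $(\alpha,\varepsilon,\gamma)\in N^A_l$, and by Lemma~\ref{TWO_AUTOTOPY_COMPONENTS} this $\gamma$ is unique, so $\varphi\colon\alpha\mapsto\gamma$ is a well-defined map ${}_1N^A_l\to{}_3N^A_l$. Rewriting the defining equality $\alpha x\circ y=\gamma(x\circ y)$ as $R_y\alpha=\gamma R_y$ exactly as in the proof of Lemma~\ref{ISOMOR OF COMP} gives $\varphi(\alpha)=R_y\alpha R_y^{-1}$ for \emph{every} $y\in Q$; in particular the conjugate $R_y\alpha R_y^{-1}$ does not depend on the choice of $y$. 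By the computation in the proof of Lemma~\ref{COMPONENTS} (Case~1), $\varphi$ carries products to products, inverses to inverses and $\varepsilon$ to $\varepsilon$, hence is a group homomorphism, and it is injective since $\alpha$ is recovered from $(\alpha,\varepsilon,\varphi(\alpha))$. Therefore $\varphi$ restricts to an isomorphism of $K$ onto $H:=\varphi(K)=R_xKR_x^{-1}$, and $H$ is independent of the chosen $x$ by the remark just made. Finally $H\subseteq{}_3N^A_l$ because, again by Lemma~\ref{COMPONENTS}, the third components of the subgroup $\{(\alpha,\varepsilon,\varphi(\alpha))\mid\alpha\in K\}$ of $N^A_l$ form a subgroup of ${}_3N^A_l$, and that subgroup is precisely $H$. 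This proves the first assertion.

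For the right nucleus one repeats the argument with the identity $\gamma=L_x\beta L_x^{-1}$ supplied by~(\ref{RIGHT_NUCLEUS_EQ}), obtaining $K\cong H=L_xKL_x^{-1}\subseteq{}_3N^A_r$; for the middle nucleus one uses $\beta=P_x\alpha P_x^{-1}$ from~(\ref{MIDDLE_NUCLEUS_EQ}), obtaining $K\cong H=P_xKP_x^{-1}\subseteq{}_2N^A_m$. I do not expect a genuine obstacle: the only slightly delicate point is verifying that the conjugating translation may be chosen arbitrarily — that is, that $R_x\alpha R_x^{-1}$, $L_x\beta L_x^{-1}$, $P_x\alpha P_x^{-1}$ are independent of $x$ — and this is forced by the uniqueness of the missing autotopy component from Lemma~\ref{TWO_AUTOTOPY_COMPONENTS}; the rest is the standard fact that conjugation in $S_Q$ is an injective homomorphism.
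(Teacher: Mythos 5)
Your proposal is correct and follows essentially the same route as the paper: the paper derives the lemma directly from Lemma \ref{ISOMOR OF COMP} (the rewritings $\gamma=R_y\alpha R_y^{-1}$, $\gamma=L_x\beta L_x^{-1}$, $\beta=P_a\alpha P_a^{-1}$ valid for every translation), and you use exactly these identities, merely filling in explicitly the routine points (uniqueness of the companion component via Lemma \ref{TWO_AUTOTOPY_COMPONENTS}, and that conjugation is an injective homomorphism) that the paper leaves implicit.
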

\begin{proof}
The proof follows from Lemma \ref{ISOMOR OF COMP}.
\end{proof}

\begin{lemma} \label{ISOMORPHISMS OF COMPONENTS_1} In any quasigroup $(Q,\circ)$ the groups $N^A_l$, $_1N^A_l$ and  $_3N^A_l$; $N^A_m$, $_1N^A_m$ and
$_2N^A_m$; $N^A_r$,  $_2N^A_r$ and $_3N^A_r$ are isomorphic in pairs.
 \end{lemma}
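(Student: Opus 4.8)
The plan is to realize the three isomorphisms in each triple as coordinate projections of autotopies, using Lemma~\ref{TWO_AUTOTOPY_COMPONENTS} to get injectivity and Lemma~\ref{COMPONENTS} to identify the images. Take first the left A-nucleus. Define $\pi_1\colon N^A_l\to S_Q$ by $\pi_1(\alpha,\varepsilon,\gamma)=\alpha$ and $\pi_3\colon N^A_l\to S_Q$ by $\pi_3(\alpha,\varepsilon,\gamma)=\gamma$. Since autotopies multiply componentwise, both $\pi_1$ and $\pi_3$ are group homomorphisms, and by Lemma~\ref{COMPONENTS} their images are exactly the groups ${}_1N^A_l$ and ${}_3N^A_l$, so $\pi_1$ and $\pi_3$ are surjective onto these component groups.

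Next I would check injectivity. If $\pi_1(\alpha,\varepsilon,\gamma)=\varepsilon$, then $(\varepsilon,\varepsilon,\gamma)\in Avt(Q,\circ)$, so by Lemma~\ref{TWO_AUTOTOPY_COMPONENTS} (equivalently Corollary~\ref{COROLL_OF_LEAKH_TH}) we get $\gamma=\varepsilon$; hence $\ker\pi_1$ is trivial. The same argument applied to $(\alpha,\varepsilon,\varepsilon)$ gives $\ker\pi_3$ trivial. Therefore $N^A_l\cong{}_1N^A_l$ and $N^A_l\cong{}_3N^A_l$, whence ${}_1N^A_l\cong{}_3N^A_l$ as well; explicitly this last isomorphism is $\pi_3\circ\pi_1^{-1}$, which by Lemma~\ref{ISOMOR OF COMP} is conjugation $\alpha\mapsto R_c\alpha R_c^{-1}$ by any fixed right translation, recovering Lemma~\ref{ISOMORPHISMS OF COMPONENTS}.

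Finally, the middle and right cases go through verbatim: for $N^A_m$ one projects $(\alpha,\beta,\varepsilon)$ onto its first and its second component, and for $N^A_r$ one projects $(\varepsilon,\beta,\gamma)$ onto its second and its third component; in each case componentwise multiplication gives homomorphisms, Lemma~\ref{COMPONENTS} identifies the images with the claimed component groups, and Lemma~\ref{TWO_AUTOTOPY_COMPONENTS} forces the kernels to be trivial because that lemma applies to \emph{any} two components, including the pair (first, second). I do not expect any real obstacle here: the only point requiring a moment's attention is that injectivity of the ``first-component'' projection on $N^A_m$ must be deduced from the pair (first, second) rather than from a pair involving the vanishing third component, but Lemma~\ref{TWO_AUTOTOPY_COMPONENTS} already covers this, so the argument is pure bookkeeping on top of Lemmas~\ref{TWO_AUTOTOPY_COMPONENTS}, \ref{COMPONENTS} and \ref{ISOMOR OF COMP}.
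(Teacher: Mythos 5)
Your proof is correct. The paper itself disposes of this lemma in one line by citing Lemma~\ref{ISOMORPHISMS OF COMPONENTS}, i.e.\ it leans on the explicit formulas of Lemma~\ref{ISOMOR OF COMP} ($\gamma=R_c\alpha R_c^{-1}$ for $N^A_l$, $\gamma=L_c\beta L_c^{-1}$ for $N^A_r$, $\beta=P_c\alpha P_c^{-1}$ for $N^A_m$), so that the component groups are seen to be conjugate subgroups of $S_Q$ and the nucleus itself is identified with either component because one component determines the other through that conjugation. You instead treat the coordinate projections $N^A_l\to{}_1N^A_l$, $N^A_l\to{}_3N^A_l$ (and the analogues for $N^A_m$, $N^A_r$) as group homomorphisms, get surjectivity from Lemma~\ref{COMPONENTS}, and kill the kernels with Lemma~\ref{TWO_AUTOTOPY_COMPONENTS} (Corollary~\ref{COROLL_OF_LEAKH_TH}); only afterwards do you invoke Lemma~\ref{ISOMOR OF COMP} to recognize $\pi_3\circ\pi_1^{-1}$ as conjugation by $R_c$. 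What your route buys is that the isomorphisms $N^A_l\cong{}_1N^A_l\cong{}_3N^A_l$ need nothing beyond the uniqueness-of-the-third-component lemma, so the explicit translation formulas are an optional add-on rather than the engine of the proof; what the paper's route buys is the concrete description of the isomorphism (conjugation by a translation), which is then reused heavily later (Theorem~\ref{COINCIDENCE NUCLEI_COMPONENTS}, Corollary~\ref{ISOMORPHISMS OF COMPONENTS_LOOPS}). One small slip in your last sentence: for the first-component projection on $N^A_m$ the kernel element is $(\varepsilon,\beta,\varepsilon)$, so the two known components are the first and the \emph{third} (both identities), and it is exactly the pair involving the vanishing third component that forces $\beta=\varepsilon$; your parenthetical about the pair (first, second) mislabels this, but since Lemma~\ref{TWO_AUTOTOPY_COMPONENTS} applies to every pair the argument is unaffected.
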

\begin{proof}
The proof follows from Lemma \ref{ISOMORPHISMS OF COMPONENTS}.
\end{proof}

%

\begin{theorem} \label{COINCIDENCE NUCLEI_COMPONENTS} In any quasigroup:
\begin{enumerate}
  \item $_1 N_l^A =  \, _3 N_l^A$ if and only if  $RM \subseteq N_{S_Q}(_1N_l^A)$;
  \item $_1 N_l^A =  \, _3 N_l^A$ if and only if  $RM \subseteq N_{S_Q}(_3N_l^A)$;
  \item $_2 N_r^A =  \, _3 N_r^A$ if and only if  $LM \subseteq N_{S_Q}(_2N_r^A)$;
  \item $_2 N_r^A =  \, _3 N_r^A$ if and only if  $LM \subseteq N_{S_Q}(_3N_r^A)$;
  \item $_1 N_m^A =  \, _2 N_m^A$ if and only if  $PM \subseteq N_{S_Q}(_1N_m^A)$;
  \item $_1 N_m^A =  \, _2 N_m^A$ if and only if  $PM \subseteq N_{S_Q}(_2N_m^A)$.
\end{enumerate}
 \end{theorem}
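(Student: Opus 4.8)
The plan is to reduce all six equivalences to one conjugation identity and then invoke the elementary fact that a normalizer is a subgroup. First I would record, directly from Lemma \ref{ISOMOR OF COMP}, that for \emph{every} $c\in Q$ one has
\[
R_c \, {}_1N_l^A \, R_c^{-1} = {}_3N_l^A, \qquad L_c \, {}_2N_r^A \, L_c^{-1} = {}_3N_r^A, \qquad P_c \, {}_1N_m^A \, P_c^{-1} = {}_2N_m^A .
\]
Indeed, rewriting the left A-nuclear equation $\alpha x \circ y = \gamma(x\circ y)$ as $R_y\alpha x = \gamma R_y x$ gives $R_y \alpha R_y^{-1} = \gamma$ for all $y\in Q$; so $\alpha \in {}_1N_l^A$ forces $R_c\alpha R_c^{-1} \in {}_3N_l^A$, and the two displayed descriptions of $N^A_l$ in Lemma \ref{ISOMOR OF COMP} show the map $\alpha \mapsto R_c\alpha R_c^{-1}$ maps ${}_1N_l^A$ onto ${}_3N_l^A$. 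The middle and right cases are identical, using $P_c$ resp. $L_c$ and the corresponding parts of Lemma \ref{ISOMOR OF COMP}.

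Next, for statement 1: if $RM \subseteq N_{S_Q}({}_1N_l^A)$, then in particular each right translation $R_c$ normalizes ${}_1N_l^A$, so $R_c\,{}_1N_l^A\,R_c^{-1} = {}_1N_l^A$; comparing with the identity above yields ${}_1N_l^A = {}_3N_l^A$. Conversely, if ${}_1N_l^A = {}_3N_l^A$, the identity above reads $R_c\,{}_1N_l^A\,R_c^{-1} = {}_1N_l^A$ for every $c$, i.e. every right translation lies in $N_{S_Q}({}_1N_l^A)$; since the normalizer is a subgroup of $S_Q$ and $RM$ is generated by the right translations, $RM \subseteq N_{S_Q}({}_1N_l^A)$. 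Statement 2 follows the same way after noting that $R_c\,{}_1N_l^A\,R_c^{-1} = {}_3N_l^A$ is equivalent to $R_c^{-1}\,{}_3N_l^A\,R_c = {}_1N_l^A$, so ${}_1N_l^A = {}_3N_l^A$ is likewise equivalent to every $R_c$ normalizing ${}_3N_l^A$. Statements 3--4 repeat this argument verbatim with $LM$ and the identity $L_c\,{}_2N_r^A\,L_c^{-1} = {}_3N_r^A$, and statements 5--6 with $PM$ and $P_c\,{}_1N_m^A\,P_c^{-1} = {}_2N_m^A$.

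The only point requiring attention — and the "main obstacle" such as it is — is the universal quantifier on $c$: the conjugation identities must be known to hold for \emph{all} $c\in Q$ at once (which is exactly what Lemma \ref{ISOMOR OF COMP} supplies), so that a single translation $R_c$ already carries ${}_1N_l^A$ onto ${}_3N_l^A$. Once that is in hand, closure of the normalizer under products and inverses finishes the proof, and no further computation with the quasigroup operation is needed.
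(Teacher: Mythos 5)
Your proposal is correct and follows essentially the same route as the paper: both rest on the conjugation relations supplied by Lemma \ref{ISOMOR OF COMP} (equivalently Lemma \ref{ISOMORPHISMS OF COMPONENTS}), namely $R_c\,{}_1N_l^A\,R_c^{-1}={}_3N_l^A$, $L_c\,{}_2N_r^A\,L_c^{-1}={}_3N_r^A$, $P_c\,{}_1N_m^A\,P_c^{-1}={}_2N_m^A$ for all $c\in Q$, combined with the fact that $N_{S_Q}(\cdot)$ is a subgroup containing the generating translations of $RM$, $LM$, $PM$. Your packaging of both inclusions into a single conjugation identity is merely a tidier presentation of the paper's argument, not a different method.
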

\begin{proof}
Case 1. If $_1 N_l^A =  \, _3 N_l^A$, then from Lemma  \ref{ISOMORPHISMS OF COMPONENTS} it follows that $R_c \in N_{S_Q}(_1N_l^A)$ for any $c\in Q$.  Since $N_{S_Q}(_1N_l^A)$ is a group, further we have that $RM \subseteq N_{S_Q}(_1N_l^A)$.

Converse. Let  $\alpha \in \,_1N_l^A$. Then by Lemma \ref{ISOMORPHISMS OF COMPONENTS} $R_c\alpha R^{-1}_c \in \,_3N_l^A$. But $RM \subseteq N_{S_Q}(_1N_l^A)$. Then  $R_c\alpha R^{-1}_c = \beta \in \,_1N_l^A$. Thus  $_3N_l^A \subseteq \, _1N_l^A$.

Let  $\gamma \in \,_3N_l^A$. Then by Lemma \ref{ISOMORPHISMS OF COMPONENTS} $R^{-1}_c\gamma R_c \in \,_1N_l^A$. But $RM \subseteq N_{S_Q}(_1N_l^A)$.  Then  $R_c R^{-1}_c\gamma R_c R^{-1}_c = \gamma \in \,_1N_l^A$.
  Thus  $_3N_l^A \subseteq \, _1N_l^A$.
 Therefore  $_1 N_l^A =  \, _3 N_l^A$.

Cases 2--6 are proved in the similar way.
\end{proof}

\medskip

Moreover we can specify Theorem \ref{COINCIDENCE NUCLEI_COMPONENTS}.

\begin{theorem} \label{CENTRALIZRS OF AUTOTOPY_COMPONENTS}
\begin{enumerate}
  \item  In the group
$N^A_l$ of a  quasigroup $(Q,\circ)$ the first and third components of any element $T\in N^A_l$  coincide if and only if (i)  $RM(Q,\circ) \subseteq C_{S_Q} ({}_1N_l^A)$.
  \item  In the group
$N^A_l$ of a  quasigroup $(Q,\circ)$ the first and third components of any element $T\in N^A_l$  coincide if and only if $RM(Q,\circ) \subseteq C_{S_Q} ({}_3N_l^A)$.
  \item  In the group
$N^A_r$ of a  quasigroup $(Q,\circ)$ the second  and third components of any element $T\in N^A_r$  coincide if and only if $LM(Q,\circ) \subseteq C_{S_Q} ({}_2N_r^A)$.
  \item  In the group
$N^A_r$ of a  quasigroup $(Q,\circ)$ the second  and third components of any element $T\in N^A_r$  coincide if and only if $LM(Q,\circ) \subseteq C_{S_Q} ({}_3N_r^A)$.
  \item  In the group
$N^A_m$ of a  quasigroup $(Q,\circ)$ the first   and second components of any element $T\in N^A_m$  coincide if and only if $PM(Q,\circ) \subseteq C_{S_Q} ({}_1N_m^A)$.
  \item  In the group
$N^A_m$ of a  quasigroup $(Q,\circ)$ the first   and second components of any element $T\in N^A_m$  coincide if and only if $PM(Q,\circ) \subseteq C_{S_Q} ({}_2N_m^A)$.
\end{enumerate}
 \end{theorem}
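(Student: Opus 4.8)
The plan is to derive all six equivalences from Lemma~\ref{ISOMOR OF COMP}, which already gives an explicit description of each A-nucleus; I would write out Case~1 in full and then note that Cases~2--6 are obtained by the same argument applied to the appropriate one of the two representations in Lemma~\ref{ISOMOR OF COMP}, with $\mathbb{R}$ and $RM$ replaced by $\mathbb{L}, LM$ (for $N^A_r$) or by $\mathbb{P}, PM$ (for $N^A_m$).

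For Case~1, recall from Lemma~\ref{ISOMOR OF COMP} that $N^A_l = \{(\alpha, \varepsilon, R_c\alpha R^{-1}_c) \mid c\in Q\}$, where the crucial point is that if $(\alpha, \varepsilon, \gamma)\in N^A_l$ then $\gamma = R_c\alpha R^{-1}_c$ holds \emph{for every} $c\in Q$ simultaneously. Hence, for $\alpha\in {}_1N_l^A$, the third component of the corresponding autotopy equals $\alpha$ precisely when $R_c\alpha R^{-1}_c = \alpha$, i.e. $R_c\alpha = \alpha R_c$, for all $c\in Q$. So ``the first and third components of every $T\in N^A_l$ coincide'' is equivalent to ``$R_c$ commutes with every element of ${}_1N_l^A$, for every $c\in Q$'', that is, to $\mathbb{R}(Q,\circ)\subseteq C_{S_Q}({}_1N_l^A)$. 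Since $C_{S_Q}({}_1N_l^A)$ is a subgroup of $S_Q$, this containment holds if and only if the subgroup generated by $\mathbb{R}(Q,\circ)$, namely $RM(Q,\circ)$, is contained in $C_{S_Q}({}_1N_l^A)$; this is exactly statement~(1). Statement~(2) follows the same way from the second representation $N^A_l = \{(R^{-1}_c\gamma R_c, \varepsilon, \gamma) \mid c\in Q\}$ of Lemma~\ref{ISOMOR OF COMP}; one may also simply observe that when the first and third components of every element coincide, ${}_1N_l^A$ and ${}_3N_l^A$ are the same subgroup of $S_Q$, so (1) and (2) assert the same thing.

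Cases~3 and~4 repeat the argument for $N^A_r$ using $N^A_r = \{(\varepsilon, \beta, L_c\beta L^{-1}_c)\mid c\in Q\} = \{(\varepsilon, L^{-1}_c\gamma L_c, \gamma)\mid c\in Q\}$, and Cases~5 and~6 for $N^A_m$ using $N^A_m = \{(\alpha, P_c\alpha P^{-1}_c, \varepsilon)\mid c\in Q\} = \{(P^{-1}_c\beta P_c, \beta, \varepsilon)\mid c\in Q\}$. The proof is elementary and I do not expect a genuine obstacle; the one subtlety worth flagging is the universal quantifier in Lemma~\ref{ISOMOR OF COMP} --- it is the fact that the nuclear identity $\gamma = R_c\alpha R^{-1}_c$ holds for \emph{all} $c$ at once that turns the pointwise coincidence of components into membership in the centralizer of the whole set of right translations, and hence, after passing to the generated subgroup, into the stated condition on $RM$. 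This is also what explains the appearance of the centralizer $C_{S_Q}$ here, strictly stronger than the normalizer $N_{S_Q}$ in Theorem~\ref{COINCIDENCE NUCLEI_COMPONENTS}: there only the equality of the component groups ${}_1N_l^A = {}_3N_l^A$ was required, whereas here the coincidence is demanded term by term.
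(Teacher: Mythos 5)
Your proposal is correct and is essentially the paper's own argument: the paper simply cites Lemma \ref{ISOMORPHISMS OF COMPONENTS}, whose content is exactly the relation $\gamma = R_c\alpha R^{-1}_c$ (for all $c$) from Lemma \ref{ISOMOR OF COMP} that you exploit, and your passage from $\mathbb{R}\subseteq C_{S_Q}({}_1N_l^A)$ to $RM\subseteq C_{S_Q}({}_1N_l^A)$ via the subgroup property of the centralizer is the intended (unstated) detail. Your closing remark correctly identifies why the centralizer appears here versus the normalizer in Theorem \ref{COINCIDENCE NUCLEI_COMPONENTS}.
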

\begin{proof}

The proof follows from Lemma \ref{ISOMORPHISMS OF COMPONENTS}.
\end{proof}

\begin{corollary} \label{ISOMORPHISMS OF COMPONENTS_LOOPS}
\begin{enumerate}
  \item If  $(Q,\cdot)$ is a right loop,  then  first and third components of any element $T\in N^A_l$  coincide and  ${}_1N_l^A ={}_3N_l^A = C_{S_Q}(\mathbb R) = C_{S_Q}(RM)$.
  \item If  $(Q,\cdot)$ is a left loop, then second  and third components of any element $T\in N^A_r$  coincide and  ${}_2N_r^A ={}_3N_r^A = C_{S_Q}(\mathbb L) = C_{S_Q}(LM)$.
  \item If  $(Q,\cdot)$ is an unipotent quasigroup,  then the first and second  components of any element $T\in N^A_m$  coincide and   ${}_1N_m^A ={}_2N_m^A = C_{S_Q}(\mathbb P)  = C_{S_Q}(PM)$.
   \item
If  $(Q, \cdot)$ is  a  loop, then ${}_1N_l^A = {}_3N_l^A$ $ = C_{S_Q}(\mathbb R) $ $= C_{S_Q}(RM) = C_{M}(RM)$ $= C_{FM}(RM)$;  ${}_2N_r^A = {}_3N_r^A$ $ = C_{S_Q}(\mathbb L) $ $= C_{S_Q}(LM) = C_{M}(LM)$ $= C_{FM}(LM)$.
  \item
If  $(Q, \cdot)$ is  an  unipotent left  loop, then ${}_2N_r^A = {}_3N_r^A$ $ = C_{S_Q}(\mathbb L) $ $= C_{S_Q}(LM) = C_{PLM}(LM)$ $= C_{FM}(LM)$;  ${}_1N_m^A = {}_2N_m^A$ $ = C_{S_Q}(\mathbb P) $ $= C_{S_Q}(PM) = C_{PLM}(PM) = C_{FM}(PM)$.
  \item
If $(Q, \cdot)$ is  an unipotent right loop, then  ${}_1N_l^A = {}_3N_l^A$ $ = C_{S_Q}(\mathbb R) $ $= C_{S_Q}(RM) = C_{PRM}(RM)$ $= C_{FM}(RM)$; ${}_1N_m^A = {}_2N_m^A$ $ = C_{S_Q}(\mathbb P) $ $= C_{S_Q}(PM) = C_{PRM}(PM) = C_{FM}(PM)$.
 \end{enumerate}
 \end{corollary}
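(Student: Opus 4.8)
The plan is to establish parts 1--3 directly and then read off parts 4--6 from them together with Theorem \ref{A-NUCLEI_OF_LOOPS}.

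For part 1 I would proceed as follows. In a right loop Corollary \ref{LEFT_RIGHT_MIDDLE_AUTOT}, case 2(a), gives that every autotopy of the form $(\alpha,\varepsilon,\gamma)$ of $(Q,\cdot)$ has $\alpha=\gamma$, so the first and third components of each $T\in N^A_l$ coincide and, as sets of permutations, ${}_1N_l^A={}_3N_l^A$. To identify this common group with $C_{S_Q}(\mathbb R)$ I would use Lemma \ref{ISOMOR OF COMP}: every $\alpha\in{}_1N_l^A$ is the first component of an autotopy $(\alpha,\varepsilon,R_c\alpha R_c^{-1})$ for each $c\in Q$, and since its third component is again $\alpha$ we obtain $R_c\alpha R_c^{-1}=\alpha$, that is, $\alpha\in C_{S_Q}(\mathbb R)$; conversely, if $\alpha$ commutes with every $R_y$ then $\alpha x\cdot y=\alpha(x\cdot y)$ for all $x,y$, so $(\alpha,\varepsilon,\alpha)$ is an autotopy and $\alpha\in{}_1N_l^A$ (this inclusion needs no hypothesis on $(Q,\cdot)$). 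Finally $C_{S_Q}(\mathbb R)=C_{S_Q}(RM)$ because $RM$ is generated by the set $\mathbb R$ and a permutation centralizing a generating set centralizes the whole group; alternatively the first half can be packaged through Theorem \ref{CENTRALIZRS OF AUTOTOPY_COMPONENTS}, case 1, and the symmetry of the centralizer relation. Parts 2 and 3 I would prove verbatim, replacing $\mathbb R$, $R_c$, $RM$ by $\mathbb L$, $L_c$, $LM$ and invoking Corollary \ref{LEFT_RIGHT_MIDDLE_AUTOT}, case 1(b), for a left loop, and by $\mathbb P$, $P_c$, $PM$ and Corollary \ref{LEFT_RIGHT_MIDDLE_AUTOT}, case 3(c), for an unipotent quasigroup, together with the corresponding rows of Lemma \ref{ISOMOR OF COMP} (for the converse in the middle case one writes $y=P_c x$ with $x\cdot y=c$).

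For parts 4--6 I would note that a loop is at once a left loop and a right loop, an unipotent left loop is a left loop and an unipotent quasigroup, and an unipotent right loop is a right loop and an unipotent quasigroup; hence in each asserted instance the equalities down to $C_{S_Q}(RM)$, to $C_{S_Q}(LM)$, and to $C_{S_Q}(PM)$ come directly from parts 1--3. To pass from $C_{S_Q}$ to $C_M$, $C_{FM}$, $C_{PLM}$, $C_{PRM}$ I would use that $C_G(S)=G\cap C_{S_Q}(S)$ for any subgroup $G\le S_Q$, so it suffices to know that the $S_Q$-centralizer in question---which by parts 1--3 is one of the nuclear components---is contained in $G$. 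This is exactly what Theorem \ref{A-NUCLEI_OF_LOOPS} supplies: in a loop ${}_1N_l^A$ consists of left translations and ${}_2N_r^A$ of right translations, so both lie in $M\subseteq FM$; in an unipotent left loop ${}_2N_r^A$ consists of middle translations and ${}_1N_m^A$ of inverses of left translations, so both lie in $PLM\subseteq FM$ ($PLM$ being generated by the middle and the left translations); in an unipotent right loop ${}_1N_l^A$ consists of inverses of middle translations and ${}_1N_m^A$ of inverses of right translations, so both lie in $PRM\subseteq FM$. Intersecting with the relevant $G$ then changes nothing, which yields all the remaining equalities.

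The point I expect to be the main obstacle is not a computation but keeping the directions of inclusion straight: the inclusion $C_{S_Q}(\mathbb R)\subseteq{}_1N_l^A$ and its analogues are easy one-line autotopy checks valid in any quasigroup, whereas the opposite inclusions are where the right-loop (respectively left-loop, unipotent) hypothesis is actually used, via Corollary \ref{LEFT_RIGHT_MIDDLE_AUTOT}, to force the two relevant autotopy components to coincide. Once that is in place, parts 4--6 are bookkeeping, the one non-mechanical ingredient being the identification (through Theorem \ref{A-NUCLEI_OF_LOOPS}) of the nuclear components as single translations or their inverses, which is what drops them into the smaller groups $M$, $PLM$, $PRM$ and $FM$.
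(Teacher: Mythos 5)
Your proposal is correct and follows essentially the same route as the paper: parts 1--3 are proved by combining the conjugation formulas of Lemma \ref{ISOMOR OF COMP} with the component-coincidence forced by the identity element (Corollary \ref{LEFT_RIGHT_MIDDLE_AUTOT}), plus the easy converse autotopy check and $\left<\mathbb R\right>=RM$ etc., and parts 4--6 are then obtained exactly as in the paper by using Theorem \ref{A-NUCLEI_OF_LOOPS} to place the nuclear components (single translations or their inverses) inside $M$, $PLM$, $PRM$, $FM$ so that restricting the centralizer changes nothing. Your remark about which inclusion actually uses the right-loop/left-loop/unipotency hypothesis is accurate and matches the structure of the paper's argument.
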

\begin{proof}
Case 1. The proof follows from Lemma \ref{ISOMORPHISMS OF COMPONENTS}. In a right loop $(Q, \circ)$ there is right identity element $1$. Then $R_c = \varepsilon$, if $c=1$. Thus $\gamma = R_c\alpha R^{-1}_c = \alpha$.    See also Corollaries \ref{LEFT_RIGHT_MIDDLE_AUTOT} and \ref{A-NUCLEI_OF_LOOPS}. From equality $R_c\alpha  = \alpha R_c$ that is true for all $c\in Q$ and for all $\alpha \in {}_1N_l^A$ we  conclude that
${}_1N_l^A \subseteq C_{S_Q}({\mathbb R})$.

Prove that  $C_{S_Q}(\mathbb R)    \subseteq {}_1N_l^A$.
We rewrite the proof from (\cite{DRAP_JEDL}, Lemma 2.6).
 Any element $\psi $ of the group $C_{S_Q}(\mathbb R)$ fulfils equality  $\psi R_y x = R_y \psi  x$, i.e.
$\psi (xy) =   \psi (x)\cdot y$, i.e. $(\psi, \varepsilon, \psi)  \in N_l^A$.  Therefore $C_{S_Q}(\mathbb R)    \subseteq {}_1N_l^A$,  ${}_1N_l^A = C_{S_Q}(\mathbb R)$. Equality $C_{S_Q}(\mathbb R) = C_{S_Q}(RM)$ is true  since $\left< \mathbb R\right> =  RM$.

Case 2 is mirror of Case 1 and we omit its proof.

Case 3.
From equality $P_c\alpha  = \alpha P_c$ (Lemma \ref{ISOMORPHISMS OF COMPONENTS}) that is true for all $c\in Q$ and for all $\alpha \in {}_1N_m^A$ we  conclude that
${}_1N_m^A \subseteq C_{S_Q}({\mathbb P})$.

Any element $\psi $ of the group $C_{S_Q}(\mathbb P)$ fulfils equality  $\psi P_y x = P_y \psi  x = z$. If $\psi P_y x = z$, then  $  x \cdot \psi^{-1} z = y$. If $P_y \psi  x = z$, then $\psi x \cdot z = y$. Then $  x \cdot \psi^{-1} z = \psi x \cdot z$,  $(\psi,\psi,  \varepsilon)  \in N_m^A$.  Therefore $C_{S_Q}({\mathbb P}) \subseteq {}_1N_m^A$.
Thus
${}_1N_m^A = C_{S_Q}({\mathbb P})$. Equality $C_{S_Q}(\mathbb P) = C_{S_Q}(PM)$ is true  since $\left< \mathbb P\right> =  PM$.

 Case 4. Equality $C_{S_Q}(\mathbb R)= C_{S_Q}(RM)$ follows from the fact that $\left<\mathbb R\right> = RM$.

Equality $C_{S_Q}(RM) = C_{M}(RM)$ follows from the fact that  ${}_1N_l^A\subseteq M$ and $RM\subseteq M$ since in loop case any element of the group $N_l^A$ has the form $(L_c, \varepsilon, L_c)$
(Corollary \ref{A-NUCLEI_OF_LOOPS}).

Equality $C_{M}(RM)= C_{FM}(RM)$ follows from the fact that $C_{M}(RM)= C_{S_Q}(RM)$,  $M\subseteq FM \subseteq S_Q$,   ${}_1N_l^A\subseteq FM$ and $RM\subseteq FM$.

The proofs of  Cases 5 and 6 are similar to the proof of Case 4.
\end{proof}
\begin{remark}
\begin{enumerate}
  \item  From equality $R_c\alpha  = \alpha R_c$ (Corollary \ref{ISOMORPHISMS OF COMPONENTS_LOOPS}, Case 1) that is true for all $c\in Q$ and for all $\alpha \in {}_1N_l^A$ we can do some additional conclusions:
 $ \mathbb{R} \subseteq C_{S_Q}(\alpha)$;
  $ \mathbb{R} \subseteq C_{S_Q}({}_1N_l^A)$;
 $ RM(Q) \subseteq C_{S_Q}(\alpha)$;
  $ RM(Q) \subseteq C_{S_Q}({}_1N_l^A)$.
  \item  From equality $L_c\beta  = \beta L_c$ (Corollary \ref{ISOMORPHISMS OF COMPONENTS_LOOPS}, Case 2) that is true for all $c\in Q$ and for all $\beta \in {}_2N_r^A$ we can do some additional conclusions, namely:
 $ \mathbb{L} \subseteq C_{S_Q}(\beta)$;
  $ \mathbb{L} \subseteq C_{S_Q}({}_2N_r^A)$;
 $ LM(Q) \subseteq C_{S_Q}(\beta)$;
  $ LM(Q) \subseteq C_{S_Q}({}_2N_r^A)$.
  \item  From equality $P_c\alpha  = \alpha P_c$ (Corollary \ref{ISOMORPHISMS OF COMPONENTS_LOOPS}, Case 3) that is true for all $c\in Q$ and for all $\alpha \in {}_1N_m^A$ we can do some additional conclusions, namely:
 $ \mathbb{P} \subseteq C_{S_Q}(\alpha)$;
  $ \mathbb{P} \subseteq C_{S_Q}({}_1N_m^A)$;
 $ PM(Q) \subseteq C_{S_Q}(\alpha)$;
  $ PM(Q) \subseteq C_{S_Q}({}_1N_m^A)$.
\end{enumerate}
\end{remark}

\subsection{A-nuclei by some isotopisms}

Mainly  we shall use Lemma \ref{ISOM_AUTOT_GR} \cite[Lemma 1.4]{1a}:  if quasigroups $(Q, \circ)$ and $(Q, \cdot)$ are isotopic with isotopy $T$,
i.e. $(Q,\circ) = (Q, \cdot)T$, then $ Avt(Q, \circ) = T^{-1} Avt  (Q, \cdot) T$.

\begin{lemma} \label{COINCIDENCE_OF_NUCLEI_COMP}
For any quasigroup $(Q, \cdot)$ there exists its isotopic  image $(Q, \circ)$ such that:
\begin{enumerate}
\item
any autotopy of the form $(\alpha, \varepsilon, \gamma)$ of quasigroup $(Q, \cdot)$ is transformed in autotopy of the form $(\gamma, \varepsilon, \gamma)$ of quasigroup $(Q, \circ)$;
\item
any autotopy of the form $(\alpha, \varepsilon, \gamma)$ of quasigroup $(Q, \cdot)$ is transformed in autotopy of the form $(\alpha, \varepsilon, \alpha)$ of quasigroup $(Q, \circ)$;
\item
any autotopy of the form $(\varepsilon, \beta, \gamma)$ of quasigroup $(Q, \cdot)$ is transformed in autotopy of the form $(\varepsilon, \beta, \beta)$ of quasigroup $(Q, \circ)$;
\item
any autotopy of the form $(\varepsilon, \beta, \gamma)$ of quasigroup $(Q, \cdot)$ is transformed in autotopy of the form $(\varepsilon, \gamma, \gamma)$ of quasigroup $(Q, \circ)$;
\item
any autotopy of the form $(\alpha, \beta, \varepsilon)$ of quasigroup $(Q, \cdot)$ is transformed in autotopy of the form $(\alpha, \alpha, \varepsilon)$ of quasigroup $(Q, \circ)$;
\item
any autotopy of the form $(\alpha, \beta, \varepsilon)$ of quasigroup $(Q, \cdot)$ is transformed in autotopy of the form $(\beta, \beta, \varepsilon)$ of quasigroup $(Q, \circ)$.
\end{enumerate}
\end{lemma}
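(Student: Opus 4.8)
The plan is to deduce all six items from Lemma \ref{ISOM_AUTOT_GR} together with the commutation relations for nuclear autotopies recorded in Lemma \ref{ISOMOR OF COMP}. Recall that if $(Q,\circ) = (Q,\cdot)T$ with $T=(\mu_1,\mu_2,\mu_3)$, then $Avt(Q,\circ) = T^{-1}Avt(Q,\cdot)T$; since the autotopy group is a subgroup of $S_Q\times S_Q\times S_Q$ with componentwise multiplication, conjugation by $T$ sends $(\alpha,\beta,\gamma)$ to $(\mu_1^{-1}\alpha\mu_1,\, \mu_2^{-1}\beta\mu_2,\, \mu_3^{-1}\gamma\mu_3)$. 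In particular, a slot in which $T$ has $\varepsilon$ leaves the corresponding autotopy component untouched, so a one-component isotopy carrying $\varepsilon$ in the position where a nuclear autotopy has its identity component keeps that autotopy inside the same A-nucleus. Thus it suffices, for each of the three A-nuclei and each of the two possible "surviving" components, to exhibit one explicit isotope $(Q,\circ)$ for which the conjugation collapses exactly the named pair of components.

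For the left A-nucleus Lemma \ref{ISOMOR OF COMP} gives $\gamma = R^{\cdot}_c\alpha (R^{\cdot}_c)^{-1}$ for every $(\alpha,\varepsilon,\gamma)\in N^A_l(Q,\cdot)$ and every $c\in Q$; fix $a\in Q$. For item (1) take $T = ((R^{\cdot}_a)^{-1},\varepsilon,\varepsilon)$, so that $T^{-1}(\alpha,\varepsilon,\gamma)T = (R^{\cdot}_a\alpha(R^{\cdot}_a)^{-1},\varepsilon,\gamma) = (\gamma,\varepsilon,\gamma)$; for item (2) take $T = (\varepsilon,\varepsilon,R^{\cdot}_a)$, so that $T^{-1}(\alpha,\varepsilon,\gamma)T = (\alpha,\varepsilon,(R^{\cdot}_a)^{-1}\gamma R^{\cdot}_a) = (\alpha,\varepsilon,\alpha)$. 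The right A-nucleus is handled identically using $\gamma = L^{\cdot}_c\beta(L^{\cdot}_c)^{-1}$: for item (3) take $T = (\varepsilon,\varepsilon,L^{\cdot}_a)$, producing $(\varepsilon,\beta,\beta)$, and for item (4) take $T = (\varepsilon,(L^{\cdot}_a)^{-1},\varepsilon)$, producing $(\varepsilon,\gamma,\gamma)$. For the middle A-nucleus Lemma \ref{ISOMOR OF COMP} gives $\beta = P^{\cdot}_c\alpha(P^{\cdot}_c)^{-1}$: for item (5) take $T = (\varepsilon,P^{\cdot}_a,\varepsilon)$, yielding $(\alpha,\alpha,\varepsilon)$, and for item (6) take $T = ((P^{\cdot}_a)^{-1},\varepsilon,\varepsilon)$, yielding $(\beta,\beta,\varepsilon)$. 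All six isotopies are one-component (two entries equal $\varepsilon$), so the isotopes $(Q,\circ)$ are concrete and immediately written down. (One may equally quote Lemma \ref{ISOMORPHISMS OF COMPONENTS} in place of Lemma \ref{ISOMOR OF COMP}.)

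As a consistency check, by Corollary \ref{ONE_COMPONT_ISOTOPY} the six isotopes above are respectively a right loop (items (1), (2)), a left loop (items (3), (4)) and an unipotent quasigroup (items (5), (6)); Corollary \ref{LEFT_RIGHT_MIDDLE_AUTOT} already asserts that in such quasigroups the relevant two components of a nuclear autotopy coincide, which matches the forms obtained, with the conjugation computation additionally identifying which of the two original components is the one preserved. There is essentially no obstacle beyond bookkeeping: the only point requiring care is to insert the right translation ($R_a$ for the left nucleus, $L_a$ for the right nucleus, $P_a$ for the middle nucleus) in the correct slot and with the correct exponent, so that the componentwise conjugation telescopes precisely to the pair of equal components named in each item; the commutation relations of Lemma \ref{ISOMOR OF COMP} then finish each case in one line.
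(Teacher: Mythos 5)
Your proof is correct and takes essentially the same route as the paper: it uses exactly the same six one-component isotopies $(R^{-1}_a,\varepsilon,\varepsilon)$, $(\varepsilon,\varepsilon,R_a)$, $(\varepsilon,\varepsilon,L_a)$, $(\varepsilon,L^{-1}_a,\varepsilon)$, $(\varepsilon,P_a,\varepsilon)$, $(P^{-1}_a,\varepsilon,\varepsilon)$ together with the conjugation formula of Lemma \ref{ISOM_AUTOT_GR}. The only cosmetic difference is that you identify the collapsed component directly from the relations of Lemma \ref{ISOMOR OF COMP}, whereas the paper appeals to Corollary \ref{ONE_COMPONT_ISOTOPY} and Corollary \ref{LEFT_RIGHT_MIDDLE_AUTOT} for the resulting right loop, left loop or unipotent isotope; the two justifications are interchangeable.
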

\begin{proof}
Case 1.
If $(Q,\circ) = (Q,\cdot)(R^{-1}_a, \varepsilon, \varepsilon)$, where  element $a$ is a fixed element of the set $Q$, then  $(Q,\circ)$ is a right loop (Corollary \ref{ONE_COMPONT_ISOTOPY}).

By Lemma \ref{ISOM_AUTOT_GR}, if quasigroups $(Q, \circ)$ and $(Q, \cdot)$ are isotopic with isotopy, then $ Avt(Q, \circ) = T^{-1} Avt  (Q, \cdot) T$.
Then autotopy of the form $(\alpha, \varepsilon, \gamma)$ of $(Q, \cdot)$ passes in autotopy of the form $(R_a\alpha R^{-1}_a, \varepsilon, \gamma)$.

In any right loop  any element of the group  $N^A_l (Q, \circ)$ has equal  the first and the third components, i.e. it has the form $(\gamma, \varepsilon, \gamma)$   (Corollary \ref{LEFT_RIGHT_MIDDLE_AUTOT}).

Case 2.
We can take the following isotopy $(Q,\circ) = (Q,\cdot)(\varepsilon, \varepsilon, R_a)$. In this case   $(Q,\circ)$ is a right loop.

Case 3. We can take the following isotopy $(Q,\circ) = (Q,\cdot)(\varepsilon, \varepsilon, L_a)$. In this case   $(Q,\circ)$ is a left loop.

Case 4. We can take the following isotopy  $(Q,\circ) = (Q,\cdot)(\varepsilon, L^{-1}_a, \varepsilon)$. In this case $(Q,\circ)$ is a left loop.

Case 5. We can take the following isotopy  $(Q,\circ) = (Q,\cdot)(\varepsilon, P_a,  \varepsilon)$. In this case   $(Q,\circ)$ is an unipotent quasigroup.

Case 6. We can take the following isotopy  $(Q,\circ) = (Q,\cdot)( P^{-1}_a, \varepsilon, \varepsilon)$. In this case $(Q,\circ)$ is an unipotent quasigroup.
\end{proof}

\begin{corollary} \label{COINCIDENCE_OF_NUCLEI_COMP_1}
For any quasigroup $(Q, \cdot)$ there exists its isotopic  image $(Q, \circ)$ such that:
\begin{enumerate}
\item $_1N^A_l (Q, \circ)  = \, _3N^A_l (Q, \circ) = \, _3N^A_l (Q, \cdot)$;
\item
$_1N^A_l (Q, \circ)  = \, _3N^A_l (Q, \circ)  = \, _1N^A_l (Q, \cdot)$.
\item
$_2N^A_r (Q, \circ)  = \, _3N^A_r (Q, \circ) = \, _2N^A_r (Q, \cdot)$;
\item $_2N^A_r (Q, \circ)  = \, _3N^A_r (Q, \circ) = \, _3N^A_r (Q, \cdot)$;
\item
 $_1N^A_m (Q, \circ)  = \, _2N^A_m (Q, \circ) = {}_1N^A_m (Q, \cdot)$;
\item $_1N^A_m (Q, \circ)  = \, _2N^A_m (Q, \circ) = \, _2N^A_m (Q, \cdot)$.
\end{enumerate}
\end{corollary}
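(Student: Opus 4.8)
The plan is to obtain each of the six equalities by combining Lemma~\ref{COINCIDENCE_OF_NUCLEI_COMP} with the one-sided component-coincidence statements already recorded in Corollary~\ref{LEFT_RIGHT_MIDDLE_AUTOT} (equivalently Corollary~\ref{ISOMORPHISMS OF COMPONENTS_LOOPS}). The unifying observation, implicit in the proof of Lemma~\ref{COINCIDENCE_OF_NUCLEI_COMP}, is that each isotopy $T$ used there is of ``one-component'' shape $(\ast,\varepsilon,\varepsilon)$, $(\varepsilon,\ast,\varepsilon)$ or $(\varepsilon,\varepsilon,\ast)$ with $\ast$ a single translation, so that under the conjugation $S\mapsto T^{-1}ST$ of Lemma~\ref{ISOM_AUTOT_GR} exactly one component of every A-nuclear autotopy of the relevant type is carried over verbatim, while the other two are merely conjugated. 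Consequently the set of values taken by that one component — i.e.\ the corresponding component group — is literally preserved in passing from $(Q,\cdot)$ to the chosen isotope $(Q,\circ)$, and this preserved component is precisely the one named on the right-hand side of the stated equality.

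For Case~1 I would take $(Q,\circ)=(Q,\cdot)(R^{-1}_a,\varepsilon,\varepsilon)$ as in Case~1 of Lemma~\ref{COINCIDENCE_OF_NUCLEI_COMP}. Conjugation by $T=(R^{-1}_a,\varepsilon,\varepsilon)$ sends $(\alpha,\varepsilon,\gamma)\in N^A_l(Q,\cdot)$ to $(R_a\alpha R^{-1}_a,\varepsilon,\gamma)\in N^A_l(Q,\circ)$, so the third component is unchanged; as $\gamma$ runs over ${}_3N^A_l(Q,\cdot)$ it runs over ${}_3N^A_l(Q,\circ)$, giving ${}_3N^A_l(Q,\circ)={}_3N^A_l(Q,\cdot)$. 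Since $(Q,\circ)$ is a right loop, Corollary~\ref{LEFT_RIGHT_MIDDLE_AUTOT} (Case 2(a)) gives ${}_1N^A_l(Q,\circ)={}_3N^A_l(Q,\circ)$, and the two equalities together yield Case~1.

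The remaining five cases are handled identically, each time with the isotope prescribed in the corresponding case of Lemma~\ref{COINCIDENCE_OF_NUCLEI_COMP} and with the matching one-sided coincidence from Corollary~\ref{LEFT_RIGHT_MIDDLE_AUTOT}. For Case~2 use $(Q,\circ)=(Q,\cdot)(\varepsilon,\varepsilon,R_a)$: the first component of $(\alpha,\varepsilon,\gamma)$ is untouched, so ${}_1N^A_l(Q,\circ)={}_1N^A_l(Q,\cdot)$, and $(Q,\circ)$ is again a right loop. Cases~3 and~4 use $(Q,\circ)=(Q,\cdot)(\varepsilon,\varepsilon,L_a)$ and $(Q,\circ)=(Q,\cdot)(\varepsilon,L^{-1}_a,\varepsilon)$ respectively, producing a left loop in which ${}_2N^A_r={}_3N^A_r$, the second component being preserved in Case~3 and the third in Case~4. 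Cases~5 and~6 use $(Q,\circ)=(Q,\cdot)(\varepsilon,P_a,\varepsilon)$ and $(Q,\circ)=(Q,\cdot)(P^{-1}_a,\varepsilon,\varepsilon)$, producing a unipotent quasigroup in which ${}_1N^A_m={}_2N^A_m$, the first component being preserved in Case~5 and the second in Case~6.

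There is no genuine obstacle here: the corollary is essentially a reorganisation of Lemma~\ref{COINCIDENCE_OF_NUCLEI_COMP}. The only point needing a moment's care is verifying, in each case, that the conjugating isotopy is of the one-component form with $\ast$ a single translation, so that conjugation fixes the claimed component exactly and not merely up to conjugacy; this is immediate from Lemma~\ref{TRANSLATIONS} and the explicit isotopies listed in the proof of Lemma~\ref{COINCIDENCE_OF_NUCLEI_COMP}, together with Corollary~\ref{ONE_COMPONT_ISOTOPY} to confirm that each $(Q,\circ)$ is indeed the one-sided loop or unipotent quasigroup asserted.
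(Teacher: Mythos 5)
Your proposal is correct and follows exactly the paper's route: the paper proves this corollary by citing Lemma \ref{COINCIDENCE_OF_NUCLEI_COMP}, whose proof uses precisely the six one-component isotopies you list, with the preserved component and the one-sided coincidence from Corollary \ref{LEFT_RIGHT_MIDDLE_AUTOT} giving each equality. You have merely spelled out the details the paper leaves implicit, so there is nothing to correct.
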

\begin{proof}
The proof follows from Lemma \ref{COINCIDENCE_OF_NUCLEI_COMP}.
\end{proof}

Isotopy of the form $(R^{-1}_x, L^{-1}_y, \varepsilon)$  is called LP-isotopy (Definition \ref{LP_ISOt_def}).

\begin{lemma}
\begin{enumerate}
\item Let $(Q, \cdot)$ be a loop.
If $(Q, \circ) = (Q, \cdot)(R^{-1}_a, L^{-1}_b, \varepsilon)$,   then
\begin{itemize}
\item[(i)] $N^A_l (Q, \circ) = N^A_l (Q, \cdot)$,
\item[(ii)] $N^A_r (Q, \circ) = N^A_r (Q, \cdot)$.
\end{itemize}
\item Let $(Q, \cdot)$ be an unipotent right loop.
If $(Q, \circ) = (Q, \cdot) (\varepsilon, P_{a}, R_{b})$,   then
\begin{itemize}
\item[(i)] $N^A_l (Q, \circ) = N^A_l (Q, \cdot)$,
\item[(ii)] $N^A_m (Q, \circ) = N^A_m (Q, \cdot)$.
\end{itemize}
\item Let $(Q, \cdot)$ be an unipotent left loop.
If $(Q, \circ) = (Q, \cdot)(P^{-1}_{a}, \varepsilon, L_b)$,   then
\begin{itemize}
\item[(i)] $N^A_r (Q, \circ) = N^A_r (Q, \cdot)$,
\item[(ii)] $N^A_m (Q, \circ) = N^A_m (Q, \cdot)$.
\end{itemize}
\end{enumerate}
\end{lemma}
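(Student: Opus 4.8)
The plan is to deduce all six equalities from the conjugation identity $Avt(Q,\circ)=T^{-1}\,Avt(Q,\cdot)\,T$ of Lemma \ref{ISOM_AUTOT_GR}, applied with the isotopy at hand: $T=(R^{-1}_a,L^{-1}_b,\varepsilon)$ in item~1, $T=(\varepsilon,P_a,R_b)$ in item~2, and $T=(P^{-1}_a,\varepsilon,L_b)$ in item~3. Since autotopies multiply componentwise, $T^{-1}(\alpha,\beta,\gamma)T$ is obtained by conjugating each component of $(\alpha,\beta,\gamma)$ by the corresponding component of $T$; in particular a given component of the conjugate equals $\varepsilon$ precisely when the same component of $(\alpha,\beta,\gamma)$ does. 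Hence conjugation by $T$ restricts to a bijection from the set of autotopies of $(Q,\cdot)$ having, say, middle component $\varepsilon$ onto the analogous set for $(Q,\circ)$, so that $N^A_l(Q,\circ)=T^{-1}N^A_l(Q,\cdot)T$ in items~1 and~2, and similarly $N^A_r(Q,\circ)=T^{-1}N^A_r(Q,\cdot)T$ and $N^A_m(Q,\circ)=T^{-1}N^A_m(Q,\cdot)T$ in the relevant items. It then remains to show that this conjugation acts as the identity on the A-nucleus of $(Q,\cdot)$, which upgrades the isomorphism to an equality of subsets of $S_Q\times S_Q\times S_Q$.

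For that step I would invoke the explicit descriptions of A-nuclei of loops and of unipotent loops in Theorem \ref{A-NUCLEI_OF_LOOPS} and Corollary \ref{ISOMORPHISMS OF COMPONENTS_LOOPS}. Item~1 is the template: conjugation by $T=(R^{-1}_a,L^{-1}_b,\varepsilon)$ sends $(\alpha,\varepsilon,\gamma)\mapsto(R_a\alpha R^{-1}_a,\varepsilon,\gamma)$ and $(\varepsilon,\beta,\gamma)\mapsto(\varepsilon,L_b\beta L^{-1}_b,\gamma)$, and since $(Q,\cdot)$ is a loop we have $\alpha\in{}_1N^A_l(Q,\cdot)=C_{S_Q}(RM(Q,\cdot))$ and $\beta\in{}_2N^A_r(Q,\cdot)=C_{S_Q}(LM(Q,\cdot))$, while $R_a\in RM(Q,\cdot)$ and $L_b\in LM(Q,\cdot)$; thus $R_a\alpha R^{-1}_a=\alpha$, $L_b\beta L^{-1}_b=\beta$, each triple is fixed, and 1(i), 1(ii) follow. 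Items~2 and~3 run the same way with the appropriate centralizers: for a unipotent right loop ${}_3N^A_l=C_{S_Q}(RM)$ and ${}_2N^A_m=C_{S_Q}(PM)$, and conjugation by $(\varepsilon,P_a,R_b)$ fixes the triples of $N^A_l$ and of $N^A_m$ because $R_b\in RM$ and $P_a\in PM$; for a unipotent left loop ${}_3N^A_r=C_{S_Q}(LM)$ and ${}_1N^A_m=C_{S_Q}(PM)$, and conjugation by $(P^{-1}_a,\varepsilon,L_b)$ fixes the triples of $N^A_r$ and of $N^A_m$ because $L_b\in LM$ and $P_a\in PM$.

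All the componentwise conjugations above are one-line checks, so the real work is bookkeeping: matching each of the six assertions with the correct centralizer identity among the several variants collected in Corollary \ref{ISOMORPHISMS OF COMPONENTS_LOOPS}, and verifying that the non-identity entry of $T$ which actually conjugates a non-identity component of a nuclear element lies in exactly the translation group ($RM$, $LM$ or $PM$) whose centralizer contains that component. That matching is the only place I anticipate any friction; once it is fixed, each equality is immediate from the elementary fact that a permutation is unchanged under conjugation by any permutation commuting with it.
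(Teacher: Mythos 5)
Your proposal is correct and follows essentially the same route as the paper: conjugate by the isotopy via Lemma \ref{ISOM_AUTOT_GR} and then observe that the nuclear components commute with the relevant translations ($R_a\in RM$, $L_b\in LM$, $P_a\in PM$) by Theorem \ref{A-NUCLEI_OF_LOOPS} and Corollary \ref{ISOMORPHISMS OF COMPONENTS_LOOPS}, so the conjugation fixes each nuclear triple. The only cosmetic difference is that the paper writes out Case 1(i) with the explicit form $(L_c,\varepsilon,L_c)$ and disposes of Cases 2 and 3 as parastrophic images of Case 1, whereas you verify all six equalities directly from the centralizer identities.
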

\begin{proof}
In this case $(Q, \circ)$ is a loop.

Case 1, (i). Let   $(L_c, \varepsilon, L_c) \in N^A_l (Q, \cdot)$.
By the  isotopy any element of the group $N^A_l (Q, \circ)$ has the form $(R_aL_c R^{-1}_a, \varepsilon, L_c)$. Since $(Q, \circ)$ (or $(Q, \cdot)$) is a loop, then $R_aL_c R^{-1}_a = L_c$, $N^A_l (Q, \cdot) \subseteq N^A_l (Q, \circ)$. Inverse inclusion is proved in the similar way.
Therefore $N^A_l (Q, \cdot) = N^A_l (Q, \circ)$.

Case 1, (ii) is proved in the similar way with Case 1.

Case 2  is "$(13)$-parastrophe image" of Case 1.
In this case $(Q, \circ)$ is an unipotent right loop (Theorem \ref{LP_ISOT_AND_ANALOGS}).
The forms of components of left nucleus and middle nucleus of unipotent right loop are given in Theorem \ref{A-NUCLEI_OF_LOOPS}.

Further proof of Cases 2, (i) and 2, (ii) are similar with the proof of Case 1, (i) and we omit it.

Case 3  is "$(23)$-parastrophe image" of Case 1.
\end{proof}

Notice, also it is possible to see on Case 3  as on  "$(12)$-parastrophe image" of Case 2.

\subsection{Quasigroup bundle and nuclei}

\begin{definition}
Let $(Q,\circ)$ be a quasigroup.

Denote the set of all elements of the form $L_cL^{-1}_d$, where $L_c, L_d$ are left translations of $(Q,\circ)$, by $\cal M_L$.

Denote the set of all elements of the form $R_cR^{-1}_d$, where $R_c, R_d$ are right translations of $(Q,\circ)$, by $\cal M_R$.

Denote the set of all elements of the form $P_cP^{-1}_d$, where $P_c, P_d$ are middle  translations of $(Q,\circ)$,  by $\cal M_P$.

Further denote the following sets   ${\cal M}^{\ast}_{{\cal L}} =\{ L^{-1}_c L_d \, \mid \, a, b \in Q \} $,  ${\cal M}^{\ast}_{{\cal R}} =\{ R^{-1}_cR_d \, \mid \, a, b \in Q \} $,  ${\cal M}^{\ast}_{{\cal P}} =\{ P^{-1}_c P_d \, \mid \, a, b \in Q \} $.
\end{definition}

It is clear that in any quasigroup   ${\cal M_L} \subseteq  LM$, ${\cal M}^{\ast}_{{\cal L}}  \subseteq  LM$, ${\cal M_R} \subseteq  RM$, ${\cal M}^{\ast}_{{\cal R}}  \subseteq  RM$, ${\cal M_P} \subseteq  PM$, ${\cal M}^{\ast}_{{\cal P}}  \subseteq  PM$.

Notice that $(L_cL^{-1}_d)^{-1} = L_dL^{-1}_c$. Therefore $({\cal M_L})^{-1} = {\cal M_L}$, $({\cal M_R})^{-1} = {\cal M_R}$ and so on.

\begin{remark} \label{INVERSE_PERM_CENTRAL}
It is clear that ${\cal M_L} \subseteq LM(Q,\cdot)$. Then from Definition \ref{CENTRALIZER_DEF} it follows that $C_{S_Q}({\cal M_L}) \supseteq C_{S_Q}LM(Q, \cdot)$ and so on.
\end{remark}

In \cite{12} the set of all LP-isotopes  of a fixed quasigroup $(Q,\cdot)$ is called a bundle  of a quasigroup $(Q,\cdot)$.

\begin{remark}
If $|Q| = n$, then by Lemma \ref{LOOP_TRANSLATIONS} any of the sets $\cal M_L$,  $\cal M_R$,  $\cal M_P$, ${\cal M}^{\ast}_{{\cal L}}$, ${\cal M}^{\ast}_{{\cal R}}$, and ${\cal M}^{\ast}_{{\cal P}}$ can  contain  $n$  (not necessary different)  simply  transitive subsets  of order $n$, that correspond to  the sets of left,  right and middle translations of corresponding loops,  unipotent left loops and unipotent right loops of a quasigroup $(Q,\cdot)$.
\end{remark}

\begin{theorem} In any quasigroup $(Q, \cdot)$:
\[
\begin{array}{llllllll}
              \left(1\right)   &   {}_1N^A_l(Q,\cdot) & \subseteq & C_{S_Q}({\cal M^{\ast}_R}) & \left(2\right) & {}_3N^A_l(Q,\cdot) & \subseteq & C_{S_Q}({\cal M_R}) \\
\left(3\right) & {}_2N^A_r(Q,\cdot) & \subseteq & C_{S_Q}({\cal M^{\ast}_L}) & \left(4\right) & {}_3N^A_r(Q,\cdot) & \subseteq  & C_{S_Q}({\cal M_L}) \\
 \left(5\right) & {}_1N^A_m(Q,\cdot) & \subseteq & C_{S_Q}({\cal M^{\ast}_P}) & \left(6\right) & {}_2N^A_m(Q,\cdot) & \subseteq  & C_{S_Q}({\cal M_P})\\
\end{array}
\]
\end{theorem}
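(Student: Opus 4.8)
The plan is to read off all six containments directly from the conjugation formulas for A-nuclear components established in Lemma~\ref{ISOMOR OF COMP} (more precisely, from the intermediate identities in its proof). The key point is that the relevant component equality there holds not for one translation but \emph{uniformly in the translation index}; comparing that equality at two different indices forces a commutation relation with an element of the corresponding bundle set, and this is exactly what membership in the respective centralizer means.

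First I would treat Case~$(1)$. If $(\alpha,\varepsilon,\gamma)\in N^A_l$, then the left A-nuclear identity $\alpha x\circ y=\gamma(x\circ y)$ rewrites as $R_y\alpha x=\gamma R_y x$, so $\gamma=R_y\alpha R^{-1}_y$ for \emph{every} $y\in Q$. Equating the right-hand side at two indices gives $R_y\alpha R^{-1}_y=R_z\alpha R^{-1}_z$, i.e. $(R^{-1}_zR_y)\,\alpha=\alpha\,(R^{-1}_zR_y)$. Since every element of ${\cal M^{\ast}_R}$ has the shape $R^{-1}_zR_y$, this says precisely $\alpha\in C_{S_Q}({\cal M^{\ast}_R})$, hence ${}_1N^A_l\subseteq C_{S_Q}({\cal M^{\ast}_R})$. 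For Case~$(2)$ I would instead solve the same relation for $\alpha$, namely $\alpha=R^{-1}_y\gamma R_y$ for all $y$; equating over two indices now yields $(R_zR^{-1}_y)\,\gamma=\gamma\,(R_zR^{-1}_y)$, so $\gamma$ commutes with every element of ${\cal M_R}$, giving ${}_3N^A_l\subseteq C_{S_Q}({\cal M_R})$.

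The four remaining cases are completely parallel, replacing $R$ by $L$ and by $P$. For the right A-nucleus one uses $x\circ\beta y=\gamma(x\circ y)$, i.e. $\gamma=L_x\beta L^{-1}_x$ for all $x\in Q$, to obtain $\beta\in C_{S_Q}({\cal M^{\ast}_L})$ (keeping $\beta$ on the inside) and $\gamma\in C_{S_Q}({\cal M_L})$ (solving for $\beta$). For the middle A-nucleus one uses $\alpha x\circ\beta y=x\circ y$ with the substitution $y=P_a x$, exactly as in the proof of Lemma~\ref{ISOMOR OF COMP}, to get $\beta=P_a\alpha P^{-1}_a$ for all $a\in Q$, whence $\alpha\in C_{S_Q}({\cal M^{\ast}_P})$ and $\beta\in C_{S_Q}({\cal M_P})$. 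In each case the starred bundle set appears when the conjugated permutation is kept inside the relation, and the unstarred one appears when we solve for the conjugating permutation.

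I do not foresee a genuine obstacle: the whole argument is the rearrangement of a conjugation identity that is valid for all translation indices at once, followed by matching the resulting commutator against the definitions of ${\cal M_R},{\cal M^{\ast}_R},{\cal M_L},{\cal M^{\ast}_L},{\cal M_P},{\cal M^{\ast}_P}$. The one point needing a little care is the three middle-nucleus cases, where substituting $y=P_a x$ is legitimate only because $P_a$ is a permutation of $Q$; but this was already noted in the proof of Lemma~\ref{ISOMOR OF COMP} (and underlies Leakh's Theorem, Theorem~\ref{DEF_OF_COMP_BY_AUTOT_AND_EL}), so it may simply be cited rather than reproved.
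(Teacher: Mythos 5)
Your proposal is correct and follows essentially the same route as the paper: both proofs use the identity $\gamma=R_y\alpha R^{-1}_y$ (resp. $\gamma=L_x\beta L^{-1}_x$, $\beta=P_a\alpha P^{-1}_a$) from Lemma \ref{ISOMOR OF COMP}, valid for all translation indices, and equate it at two indices to obtain commutation with $R^{-1}_cR_d$, $R_cR^{-1}_d$, etc. The only cosmetic difference is that the paper additionally cites Lemma \ref{TWO_AUTOTOPY_COMPONENTS}, which your argument makes unnecessary since the conjugation formula already holds uniformly in the index.
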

\begin{proof}
Case 1.  From Lemma \ref{TWO_AUTOTOPY_COMPONENTS}  it follows that in any autotopy $(\alpha, \varepsilon, \gamma)$  of a quasigroup $(Q, \circ)$   permutations $\alpha$ and $\varepsilon$   uniquely determine permutation $\gamma$. Therefore from Lemma \ref{ISOMORPHISMS OF COMPONENTS} in this case
$R_c\alpha R^{-1}_c = R_d\alpha R^{-1}_d$, $\alpha  = R^{-1}_cR_d\alpha R^{-1}_dR_c = R^{-1}_cR_d\alpha (R^{-1}_cR_d)^{-1}$  for all $c, d \in Q$.

Case 2 is proved in the similar way.

Case 3.
From Lemma \ref{TWO_AUTOTOPY_COMPONENTS}  it follows that in any autotopy $(\varepsilon, \beta,  \gamma)$  of a quasigroup $(Q, \circ)$  permutations $\varepsilon$ and $\beta$     uniquely determine permutation $\gamma$. Therefore from Lemma \ref{ISOMORPHISMS OF COMPONENTS}  in this case
$L_c\beta L^{-1}_c = L_d\beta L^{-1}_d$, $\beta  = L^{-1}_cL_d\beta L^{-1}_d L_c = L^{-1}_c L_d\beta (L^{-1}_c L_d)^{-1}$  for all $c, d \in Q$.

Case 4 is proved similarly.

Case 5.
From Lemma \ref{TWO_AUTOTOPY_COMPONENTS}  it follows that in any autotopy $(\alpha, \beta,  \varepsilon)$  of a quasigroup $(Q, \circ)$  permutations   $\beta$ and $\varepsilon$   uniquely determine permutation $\alpha$. Therefore from Lemma \ref{ISOMORPHISMS OF COMPONENTS}  in this case
$P_c\alpha  P^{-1}_c = P_d\alpha  P^{-1}_d$, $\alpha  = P^{-1}_cP_d\alpha  P^{-1}_d P_c = P^{-1}_c P_d\alpha  (P^{-1}_c P_d)^{-1}$  for all $c, d \in Q$.

Case 6 is proved similarly.
\end{proof}

\begin{corollary} \label{COROLL_FROM_BUNDLE_QUAS}
\begin{enumerate}
\item In any right loop  $(Q, \cdot)$
\begin{enumerate}
                  \item  ${}_1N^A_l(Q,\cdot) \subseteq  C_{S_Q}RM(Q,\cdot)$,
                  \item  $ {}_3N^A_l(Q,\cdot) \subseteq  C_{S_Q}RM(Q,\cdot)$.
                 \end{enumerate}
\item In any left loop  $(Q, \cdot)$
\begin{enumerate}
                   \item  $ {}_2N^A_r(Q,\cdot) \subseteq C_{S_Q}LM(Q,\cdot)$,
                   \item  $ {}_3N^A_r(Q,\cdot) \subseteq C_{S_Q}LM(Q,\cdot)$.
                 \end{enumerate}
\item In any unipotent quasigroup  $(Q, \cdot)$
\begin{enumerate}
                   \item   $ {}_1N^A_m(Q,\cdot) \subseteq  C_{S_Q}PM(Q,\cdot)$,
                   \item  $ {}_2N^A_m(Q,\cdot) \subseteq  C_{S_Q}PM(Q,\cdot)$.
                 \end{enumerate}
\end{enumerate}
\end{corollary}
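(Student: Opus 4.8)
The plan is to combine the six inclusions of the previous Theorem with the elementary observation that, in each of the three situations, the relevant "bundle" set of permutations both contains the set of single translations and is contained in the group they generate, so that passing to it does not change the centralizer.

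First I would treat Case 1. In a right loop $(Q,\cdot)$ there is a right identity element $1$, so $R_1 = \varepsilon$. Hence for every $d\in Q$ we have $R^{-1}_1 R_d = R_d \in {\cal M}^{\ast}_{{\cal R}}$ and $R_d R^{-1}_1 = R_d \in {\cal M_R}$, so that ${\mathbb R}\subseteq {\cal M_R}$ and ${\mathbb R}\subseteq {\cal M}^{\ast}_{{\cal R}}$; on the other hand ${\cal M_R}, {\cal M}^{\ast}_{{\cal R}} \subseteq RM$ holds in any quasigroup. Since $A\subseteq B$ implies $C_{S_Q}(B)\subseteq C_{S_Q}(A)$, the chain ${\mathbb R}\subseteq {\cal M}^{\ast}_{{\cal R}} \subseteq RM$ yields $C_{S_Q}(RM)\subseteq C_{S_Q}({\cal M}^{\ast}_{{\cal R}})\subseteq C_{S_Q}({\mathbb R})$, and the two outer terms coincide because a permutation commutes with a set of permutations exactly when it commutes with the group they generate, and $\langle {\mathbb R}\rangle = RM$. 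Therefore $C_{S_Q}({\cal M}^{\ast}_{{\cal R}}) = C_{S_Q}(RM)$, and likewise $C_{S_Q}({\cal M_R}) = C_{S_Q}(RM)$. Now apply parts $(1)$ and $(2)$ of the previous Theorem: ${}_1N^A_l(Q,\cdot)\subseteq C_{S_Q}({\cal M}^{\ast}_{{\cal R}}) = C_{S_Q}(RM)$ and ${}_3N^A_l(Q,\cdot)\subseteq C_{S_Q}({\cal M_R}) = C_{S_Q}(RM)$, which are exactly $1$(a) and $1$(b).

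Case 2 is the mirror image: in a left loop the left identity element $f$ gives $L_f = \varepsilon$, whence ${\mathbb L}\subseteq {\cal M_L}\cap {\cal M}^{\ast}_{{\cal L}}\subseteq LM$ and, as above, $C_{S_Q}({\cal M_L}) = C_{S_Q}({\cal M}^{\ast}_{{\cal L}}) = C_{S_Q}(LM)$; then parts $(3)$ and $(4)$ of the Theorem give $2$(a) and $2$(b). For Case 3, in a unipotent quasigroup the middle identity element $s$ satisfies $x\cdot x = s$ for all $x$, i.e. $P_s = \varepsilon$; hence ${\mathbb P}\subseteq {\cal M_P}\cap {\cal M}^{\ast}_{{\cal P}}\subseteq PM$, so $C_{S_Q}({\cal M_P}) = C_{S_Q}({\cal M}^{\ast}_{{\cal P}}) = C_{S_Q}(PM)$, and parts $(5)$ and $(6)$ of the Theorem give $3$(a) and $3$(b).

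There is essentially no serious obstacle here; the only point requiring a little care is keeping the containments of centralizers pointing the right way (the centralizer is inclusion-reversing) and recalling, as in Remark \ref{INVERSE_PERM_CENTRAL}, that $C_{S_Q}(S) = C_{S_Q}(\langle S\rangle)$, so that replacing the generating translations by the full multiplication group leaves the centralizer unchanged. Everything else is a direct substitution into the six inclusions already established.
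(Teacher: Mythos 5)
Your proposal is correct and follows essentially the same route as the paper: in each case the relevant identity element makes one translation equal to $\varepsilon$, so the bundle sets ${\cal M}$, ${\cal M}^{\ast}$ are squeezed between the set of translations and the multiplication group they generate, and since $C_{S_Q}(S)=C_{S_Q}(\langle S\rangle)$ the centralizers all collapse to $C_{S_Q}(RM)$ (resp.\ $C_{S_Q}(LM)$, $C_{S_Q}(PM)$), after which the six inclusions of the preceding theorem give the claim. The paper only writes out Case 1(a) and declares the rest similar, so your explicit treatment of the left-loop and unipotent cases (via $L_f=\varepsilon$ and $P_s=\varepsilon$) is just a fuller rendering of the same argument.
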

\begin{proof} Case 1, (a).
Since any right  loop $(Q,\cdot)$ has right identity element $1$, then $R_1$ is identity permutation of the set $Q$. Therefore the set $\cal M^{\ast}_R$ contains any right translation of this right  loop $(Q,\cdot)$. It is clear that in this case $C_{S_Q} \mathbb{R} \supseteq  C_{S_Q}({\cal M_R}) \supseteq  C_{S_Q} RM$. But $C_{S_Q} \mathbb{R} = C_{S_Q} RM$ since $\left< \mathbb{R} \right> = RM$.   Then $C_{S_Q}({\cal M^{\ast}_R}) = C_{S_Q}RM$.

All other cases are proved in the similar way.
\end{proof}

\begin{corollary} Let  $x\circ y = R^{-1}_a x \cdot  y$ for all $x, y \in Q$. Then
                 ${}_1N^A_l(Q,\cdot) \subseteq  C_{S_Q}(\mathbb R(Q, \circ))$;
                 $ {}_3N^A_l(Q,\cdot) \subseteq  C_{S_Q}(\mathbb R(Q, \circ))$.

Let  $x\circ y =  x \cdot L^{-1}_b y$ for all $x, y \in Q$. Then
                 ${}_2N^A_r(Q,\cdot) \subseteq  C_{S_Q}(\mathbb L(Q, \circ))$;
                 $ {}_3N^A_r(Q,\cdot) \subseteq  C_{S_Q}(\mathbb L(Q, \circ))$.
\end{corollary}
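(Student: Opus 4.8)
The plan is to realize $(Q,\circ)$ as a one-sided loop isotope of $(Q,\cdot)$ and then read off the inclusions from the Theorem immediately preceding this corollary (which identifies ${}_1N^A_l(Q,\cdot)$ with a subgroup of $C_{S_Q}({\cal M^{\ast}_R})$ and ${}_3N^A_l(Q,\cdot)$ with a subgroup of $C_{S_Q}({\cal M_R})$) together with Corollary~\ref{ISOMORPHISMS OF COMPONENTS_LOOPS} (or Corollary~\ref{COROLL_FROM_BUNDLE_QUAS}) for the loop case. For the right-hand statements, put $T=(R^{-1}_a,\varepsilon,\varepsilon)$, so $(Q,\circ)=(Q,\cdot)T$ is exactly the quasigroup with $x\circ y=R^{-1}_a x\cdot y$; by Corollary~\ref{ONE_COMPONT_ISOTOPY} it is a right loop (with right identity $a$), and by Lemma~\ref{TRANSLATIONS}(1) its right translations are $R^{\circ}_x=R^{\,\cdot}_x(R^{\,\cdot}_a)^{-1}$, whence $\mathbb R(Q,\circ)\subseteq{\cal M_R}(Q,\cdot)$ and in fact $\langle\mathbb R(Q,\circ)\rangle=\langle{\cal M_R}(Q,\cdot)\rangle$. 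For the first component one uses instead the ``dual'' one-sided isotope $(\varepsilon,\varepsilon,R_a)$, for which Lemma~\ref{TRANSLATIONS}(3) gives $R^{\circ}_x=(R^{\,\cdot}_a)^{-1}R^{\,\cdot}_x$, so that $\langle\mathbb R(Q,\circ)\rangle=\langle{\cal M^{\ast}_R}(Q,\cdot)\rangle$.

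With these identifications the conclusion is immediate from monotonicity of the centralizer ($A\subseteq B\Rightarrow C_{S_Q}(B)\subseteq C_{S_Q}(A)$) and $C_{S_Q}(S)=C_{S_Q}(\langle S\rangle)$: from ${}_3N^A_l(Q,\cdot)\subseteq C_{S_Q}({\cal M_R}(Q,\cdot))$ and $\langle\mathbb R(Q,\circ)\rangle=\langle{\cal M_R}(Q,\cdot)\rangle$ we get ${}_3N^A_l(Q,\cdot)\subseteq C_{S_Q}(\mathbb R(Q,\circ))$, and symmetrically ${}_1N^A_l(Q,\cdot)\subseteq C_{S_Q}({\cal M^{\ast}_R}(Q,\cdot))=C_{S_Q}(\mathbb R(Q,\circ))$ for the dual right-loop isotope. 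I would also record the direct verification, which is short and illuminating: if $(\alpha,\varepsilon,\gamma)\in Avt(Q,\cdot)$, then setting $y=a$ in $\alpha x\cdot y=\gamma(x\cdot y)$ yields $\gamma=R_a\alpha R^{-1}_a$, and substituting $x=R^{-1}_a u$, $y=z$ into the autotopy identity gives $\gamma R^{\circ}_z u=\alpha R^{-1}_a u\cdot z=R^{-1}_a\gamma u\cdot z=R^{\circ}_z\gamma u$ for all $u,z$, i.e. $\gamma\in C_{S_Q}(\mathbb R(Q,\circ))$; the analogous computation handles $\alpha$ for the dual isotope.

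The second group of inclusions, for $x\circ y=x\cdot L^{-1}_b y$, is the exact left--right mirror. Here $(Q,\circ)=(Q,\cdot)(\varepsilon,L^{-1}_b,\varepsilon)$ is a left loop by Corollary~\ref{ONE_COMPONT_ISOTOPY}; Lemma~\ref{TRANSLATIONS}(2) gives $L^{\circ}_x=L^{\,\cdot}_x(L^{\,\cdot}_b)^{-1}$ (and the dual isotope $(\varepsilon,\varepsilon,L_b)$ gives $L^{\circ}_x=(L^{\,\cdot}_b)^{-1}L^{\,\cdot}_x$), so $\langle\mathbb L(Q,\circ)\rangle$ equals $\langle{\cal M_L}(Q,\cdot)\rangle$ (resp. $\langle{\cal M^{\ast}_L}(Q,\cdot)\rangle$), and the left-hand entries ${}_3N^A_r(Q,\cdot)\subseteq C_{S_Q}({\cal M_L})$, ${}_2N^A_r(Q,\cdot)\subseteq C_{S_Q}({\cal M^{\ast}_L})$ of the preceding Theorem finish the proof exactly as above. (Alternatively, one can transfer along the isotopy via Lemma~\ref{ISOM_AUTOT_GR}, noting that the component fixed by the chosen one-sided isotope is literally preserved, and then apply the right/left-loop case of Corollary~\ref{ISOMORPHISMS OF COMPONENTS_LOOPS}; and Lemma~\ref{ISOMORPHISMS OF COMPONENTS} identifies ${}_1N^A_l$ with ${}_3N^A_l$ up to conjugation by a right translation, so the two one-sided choices feed the two components.)

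The only delicate point — more bookkeeping than obstacle — is pairing each nuclear component with the correctly handed one-sided loop isotope: Lemma~\ref{TRANSLATIONS} gives the right translations of the isotope as $R^{\,\cdot}_x(R^{\,\cdot}_a)^{-1}$ in one case and $(R^{\,\cdot}_a)^{-1}R^{\,\cdot}_x$ in the other, and one must send ${}_3N^A_l$ to the bundle set ${\cal M_R}$ and ${}_1N^A_l$ to ${\cal M^{\ast}_R}$ (not the reverse) so that the containments of the preceding Theorem line up with the translation set of $(Q,\circ)$. Once that matching is in place, everything reduces to the elementary centralizer inequalities and the already-established right-loop (resp. left-loop) formulas.
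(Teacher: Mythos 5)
Your treatment of the third components is correct and is exactly the intended derivation: with $x\circ y=R^{-1}_a x\cdot y$, Lemma \ref{TRANSLATIONS} gives $R^{\circ}_y=R^{\,\cdot}_yR^{-1}_a\in\mathcal{M}_R(Q,\cdot)$, so the preceding theorem's inclusion ${}_3N^A_l(Q,\cdot)\subseteq C_{S_Q}(\mathcal{M}_R)$ together with monotonicity of centralizers yields ${}_3N^A_l(Q,\cdot)\subseteq C_{S_Q}(\mathbb{R}(Q,\circ))$, and your direct verification of this fact, as well as the mirror argument giving ${}_3N^A_r(Q,\cdot)\subseteq C_{S_Q}(\mathbb{L}(Q,\circ))$ for $x\circ y=x\cdot L^{-1}_b y$, is sound.

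The gap is in the first-component claims. The corollary asserts ${}_1N^A_l(Q,\cdot)\subseteq C_{S_Q}(\mathbb{R}(Q,\circ))$ for the \emph{same} quasigroup $(Q,\circ)$ with $x\circ y=R^{-1}_ax\cdot y$, whereas you prove it for the different isotope $(\varepsilon,\varepsilon,R_a)$, i.e. for $x\circ' y=R^{-1}_a(x\cdot y)$, whose right translations lie in $\mathcal{M}^{\ast}_R$ rather than $\mathcal{M}_R$. This ``handedness matching'' is not bookkeeping: it changes the statement, and the statement as printed cannot be recovered, because all one knows is ${}_1N^A_l\subseteq C_{S_Q}(\mathcal{M}^{\ast}_R)$, while $\mathbb{R}(Q,\circ)$ sits in (and generates the same group as) $\mathcal{M}_R$, and these centralizers differ in general. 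Concretely, on $Q=\mathbb{Z}_4$ put $x\cdot y=\sigma(x)+y$ with $\sigma=(0\,1)$; then $(\sigma L^{+}_1\sigma,\varepsilon,L^{+}_1)$ is a left A-nuclear autotopy, so $\alpha=\sigma L^{+}_1\sigma\in{}_1N^A_l(Q,\cdot)$, where $\alpha$ sends $0\mapsto2$, $1\mapsto0$, $2\mapsto3$, $3\mapsto1$. For any $a$ the isotope is $x\circ y=x-a+y$, so $\mathbb{R}(Q,\circ)$ consists of the translations $R^{+}_t$ of $\mathbb{Z}_4$, and $\alpha$ does not commute with $R^{+}_1$ (compare $\alpha R^{+}_1(0)=0$ with $R^{+}_1\alpha(0)=3$); the mirror example $x\cdot y=x+\sigma(y)$ likewise refutes ${}_2N^A_r(Q,\cdot)\subseteq C_{S_Q}(\mathbb{L}(Q,\circ))$. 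So the mismatch you relegated to a closing remark is in fact an error in the statement itself: what your argument actually establishes is ${}_3N^A_l(Q,\cdot)\subseteq C_{S_Q}(\mathbb{R}(Q,\circ))$ for $x\circ y=R^{-1}_ax\cdot y$ and ${}_1N^A_l(Q,\cdot)\subseteq C_{S_Q}(\mathbb{R}(Q,\circ'))$ for $x\circ'y=R^{-1}_a(x\cdot y)$ (equivalently, ${}_1N^A_l(Q,\circ)=R_a\,{}_1N^A_l(Q,\cdot)R^{-1}_a\subseteq C_{S_Q}(\mathbb{R}(Q,\circ))$, by Lemma \ref{ISOMORPHISMS OF COMPONENTS} and Corollary \ref{COROLL_FROM_BUNDLE_QUAS}), and this discrepancy with the printed claim should be flagged explicitly rather than absorbed into the proof.
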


\subsection{Action of A-nuclei}

\begin{theorem} \label{GROUP_ACTION_ON QUAS}
Any of the groups $_1N^A_l$,  $_3N^A_l$, $_1N^A_m$,
$_2N^A_m$,   $_2N^A_r$ and $_3N^A_r$ of a quasigroup $(Q, \cdot)$ acts  the set $Q$ free (semiregular) and in such manner that stabilizer of any element $x\in Q$ by this action is the identity group, i.e. $|\, _1N^A_l{}_{x}| = |\,_3N^A_l{}_{x}| = |\, _2N^A_r{}_{x} | =  |\, _3N^A_r{}_{x} | = |\, _1N^A_m{}_{x} | = |\, _3N^A_m{}_{x} | = 1$.
\end{theorem}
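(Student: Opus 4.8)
The plan is to show that for each of the six listed groups the stabilizer of an arbitrary element $x\in Q$ is trivial; by Definition \ref{action} this is exactly what ``free (semiregular)'' means, so the theorem follows. In every case I start from a permutation lying in the group together with an autotopy of $(Q,\cdot)$ witnessing its membership in the relevant A-nucleus, I assume it fixes some element $a$, and I then feed a carefully chosen argument into the autotopy identity. This forces one of the two potentially nontrivial components of the autotopy to be $\varepsilon$; Corollary \ref{COROLL_OF_LEAKH_TH} (two identity components force the third) then forces the permutation itself to be $\varepsilon$.

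Take first $\alpha\in {}_1N^A_l$, so $(\alpha,\varepsilon,\gamma)\in Avt(Q,\cdot)$ for some $\gamma$, i.e. $\alpha x\cdot y=\gamma(x\cdot y)$ for all $x,y\in Q$. If $\alpha a=a$, substituting $x=a$ gives $a\cdot y=\gamma(a\cdot y)$; as $y$ runs over $Q$ the element $a\cdot y$ runs over all of $Q$ because $L_a$ is a bijection, so $\gamma=\varepsilon$, and then $(\alpha,\varepsilon,\varepsilon)\in Avt(Q,\cdot)$ gives $\alpha=\varepsilon$ by Corollary \ref{COROLL_OF_LEAKH_TH}. Dually, take $\gamma\in {}_3N^A_l$ with $(\alpha,\varepsilon,\gamma)\in Avt(Q,\cdot)$ and $\gamma a=a$. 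For each $x\in Q$ put $y=P_a x$, so that $x\cdot y=a$; then $\alpha x\cdot P_a x=\gamma a=a=x\cdot P_a x$, and cancelling $P_a x$ on the right yields $\alpha x=x$ for all $x$, i.e. $\alpha=\varepsilon$, whence $\gamma=\varepsilon$ by Corollary \ref{COROLL_OF_LEAKH_TH}.

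The four remaining cases are handled by exactly the same recipe (or, alternatively, obtained from the two cases above through the parastrophy correspondences collected in Table 3). For the middle A-nucleus one uses an autotopy $(\alpha,\beta,\varepsilon)$ with $\alpha x\cdot\beta y=x\cdot y$: if $\alpha a=a$ the substitution $x=a$ and left cancellation give $\beta=\varepsilon$, hence $\alpha=\varepsilon$; if $\beta a=a$ the substitution $y=a$ and right cancellation give $\alpha=\varepsilon$, hence $\beta=\varepsilon$ (and ${}_3N^A_m=\{\varepsilon\}$ needs no argument). For the right A-nucleus one uses $(\varepsilon,\beta,\gamma)$ with $x\cdot\beta y=\gamma(x\cdot y)$: if $\beta a=a$ the substitution $y=a$ together with the bijectivity of $R_a$ gives $\gamma=\varepsilon$, hence $\beta=\varepsilon$; if $\gamma a=a$ one lets $x=P_a^{-1}y$ range over the solutions of $x\cdot y=a$ and left-cancels to get $\beta=\varepsilon$, hence $\gamma=\varepsilon$. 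In each instance the stabilizer of $x$ is the one-element group, proving the theorem. The computations are routine; the only mild point of care is that in the ``third-component'' cases (${}_3N^A_l$ and ${}_3N^A_r$) the substitution is allowed to sweep out the whole fibre $\{(x,y):x\cdot y=a\}$, which is legitimate precisely because $P_a$ is a permutation of $Q$.
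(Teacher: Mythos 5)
Your proof is correct: in every case an element of a stabilizer gives an autotopy with one identity component and a fixed point, and your substitutions legitimately force a second component to be $\varepsilon$, after which Corollary \ref{COROLL_OF_LEAKH_TH} kills the remaining one; the cancellations and the use of the bijectivity of $L_a$, $R_a$, $P_a$ are all valid, and treating ${}_3N^A_m$ as trivially $\{\varepsilon\}$ quietly repairs the misprint in the statement. The route, however, differs from the paper's. The paper invokes Leakh's Theorem (Theorem \ref{DEF_OF_COMP_BY_AUTOT_AND_EL}) wholesale: since any $T\in N^A_l$ has second component $\varepsilon$, the autotopy is already determined by the value $\alpha a$ of its first component at one point, so two elements of ${}_1N^A_l$ agreeing at a point coincide, and the other five groups are dispatched by the same one-line appeal. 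Your argument instead inlines, case by case, exactly the substitutions that appear inside the proof of Leakh's Theorem (the $x=a$ and $y=a$ specializations, and the sweep over the fibre $\{(x,y):x\cdot y=a\}$ via $P_a$), relying only on the much weaker Lemma \ref{TWO_AUTOTOPY_COMPONENTS}/Corollary \ref{COROLL_OF_LEAKH_TH}. What the paper's approach buys is brevity and uniformity across the six groups; what yours buys is a self-contained, elementary verification that does not presuppose the full strength of Leakh's result and makes explicit, for each component group separately, why a fixed point trivializes the whole autotopy.
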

\begin{proof}
By Theorem  \ref{DEF_OF_COMP_BY_AUTOT_AND_EL} any autotopy $(\alpha, \beta, \gamma)$ ($(\alpha, \beta, \gamma) \equiv (\alpha_1, \alpha_2,\alpha_3)$ ) is uniquely defined   by any autotopy component $\alpha_i$, $i\in \{1, 2, 3 \}$,  and by element $b = \alpha_j a$, $i\neq j$. By definition of $_1N^A_l$ any element $\alpha$ of the group $_1N^A_l$ is the first component of an autotopy of the form $(\alpha, \varepsilon, R_c\alpha R^{-1}_c)$ (Lemma \ref{ISOMORPHISMS OF COMPONENTS}).

In order to apply Theorem  \ref{DEF_OF_COMP_BY_AUTOT_AND_EL} to the group $N^A_l$ we take $i =2$ (in this case $\beta = \varepsilon$ for any $T\in N^A_l$). And in fact any left nuclear autotopy $T$ is defined by the image $\alpha a$ of a fixed element $a\in Q$ by action of the permutation $\alpha \in {}_1N^A_l$.

Therefore, if $\alpha_1, \alpha_2 \in \, _1N^A_l$, then   $\alpha_1 x \neq  \alpha_2 x$ for any $x\in Q$, i.e. $_1N^A_l$ acts on the set $Q$ free (fixed point free), or equivalently, if $\alpha (x) = x$ for some $x\in Q$ then $\alpha  = \varepsilon$.

If $\alpha a = \beta a$ for some $\alpha, \beta \in \, _1N^A_l$, $a\in Q$, then $\beta^{-1}\alpha x =  x$, $\beta^{-1}\alpha = \varepsilon$. Therefore $|\, _1N^A_l{}_{x}| = |\,_3N^A_l{}_{x}| = |\, _2N^A_r{}_{x} | =  |\, _3N^A_r{}_{x} | = |\, _1N^A_m{}_{x} | = |\, _3N^A_m{}_{x} | = 1$.
 \end{proof}

\begin{corollary} \label{SUBGR_FREE_ACT_GR}
If $H$ is a subgroup of the group $_1N^A_l$ ($_3N^A_l$, $_1N^A_m$,
$_2N^A_m$,   $_2N^A_r$ and $_3N^A_r$), then the group $H$ acts on the set $Q$
free and in such manner that stabilizer of any element $x\in Q$ by this action is the identity group.
\end{corollary}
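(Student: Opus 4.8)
The plan is to deduce this immediately from Theorem \ref{GROUP_ACTION_ON QUAS}, using the elementary fact that both freeness of a group action and triviality of its point stabilizers are properties inherited by every subgroup. First I would recall that, by Theorem \ref{GROUP_ACTION_ON QUAS}, each of the six groups $_1N^A_l$, $_3N^A_l$, $_1N^A_m$, $_2N^A_m$, $_2N^A_r$, $_3N^A_r$ acts on the set $Q$ freely; that is, whenever $g(x) = x$ for some $x\in Q$ one has $g = \varepsilon$, and consequently the stabilizer of every point of $Q$ under that action is the identity group.

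Next, let $G$ denote whichever of these six groups is under consideration and let $H$ be a subgroup of $G$. The action of $G$ on $Q$ restricts to an action of $H$ on $Q$, since the action axioms $h_1(h_2(x)) = (h_1h_2)(x)$ and $\varepsilon x = x$ hold a fortiori for elements of $H$. If $h\in H$ and $h(x) = x$ for some $x\in Q$, then, viewing $h$ as an element of $G$ and using that $G$ acts freely, we get $h = \varepsilon$; hence $H$ acts freely on $Q$. Equivalently, for each $x\in Q$ we have $H_x = \{h\in H \mid h(x) = x\} \subseteq \{g\in G \mid g(x) = x\} = G_x = \{\varepsilon\}$, so $H_x = \{\varepsilon\}$ and $|H_x| = 1$, as required.

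There is essentially no obstacle: the only real content was already supplied by Theorem \ref{GROUP_ACTION_ON QUAS}, and the subgroup statement is purely formal. If one wished to argue without routing through that theorem, one could instead invoke Leakh's Theorem (Theorem \ref{DEF_OF_COMP_BY_AUTOT_AND_EL}) together with Lemma \ref{ISOMORPHISMS OF COMPONENTS}: every element of $H$ remains the distinguished component of an autotopy of the appropriate nuclear type and is therefore determined by the image of one fixed element of $Q$, whence two elements of $H$ agreeing at a point coincide. This, however, merely repeats the proof of Theorem \ref{GROUP_ACTION_ON QUAS} and adds nothing, so the one-line deduction above is the natural proof.
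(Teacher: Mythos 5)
Your proposal is correct and follows the same route as the paper, whose proof of this corollary is exactly the one-line deduction from Theorem \ref{GROUP_ACTION_ON QUAS}; you merely spell out the (trivial) inheritance of freeness and trivial stabilizers by subgroups. Nothing further is needed.
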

\begin{proof}
The proof follows from Theorem \ref{GROUP_ACTION_ON QUAS}.
\end{proof}

\begin{corollary} \label{ORDERS_OF ORBIT_IN_NUCLEI}
In any finite quasigroup $(Q, \cdot)$ the  orders of orbits of the group $\,_1N^A_l$ ($_3N^A_l$, $_1N^A_m$,
$_2N^A_m$,  $_2N^A_r$ and $_3N^A_r$) by its action on quasigroup $(Q, \cdot)$ are equal   with the order of the group $\,_1N^A_l$ ($_3N^A_l$, $_1N^A_m$,
$_2N^A_m$,  $_2N^A_r$ and $_3N^A_r$), respectively.
\end{corollary}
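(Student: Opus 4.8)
The plan is to deduce the statement directly from Theorem~\ref{GROUP_ACTION_ON QUAS} together with the orbit--stabilizer theorem. First I would record that each of the six groups $\,_1N^A_l$, $_3N^A_l$, $_1N^A_m$, $_2N^A_m$, $_2N^A_r$, $_3N^A_r$ is finite: by Lemma~\ref{COMPONENTS} they are subgroups of the symmetric group $S_Q$, and $Q$ is finite, so $|S_Q| = |Q|!$ bounds their order. Hence for any such group $G$ acting on $Q$ and any $x\in Q$ the numbers $|Gx|$ and $|G_x|$ are finite and satisfy $|G| = |Gx|\cdot |G_x|$.

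Next I would invoke Theorem~\ref{GROUP_ACTION_ON QUAS}, which asserts precisely that each of these groups acts on $Q$ freely (semiregularly) and that the stabilizer $G_x$ of every element $x\in Q$ is the identity group, i.e. $|G_x| = 1$. Substituting $|G_x| = 1$ into the orbit--stabilizer identity $|G| = |Gx|\cdot|G_x|$ gives $|Gx| = |G|$ for every $x\in Q$; that is, every orbit has the same cardinality, equal to the order of the group. Applying this in turn to $G = \,_1N^A_l$, $_3N^A_l$, $_1N^A_m$, $_2N^A_m$, $_2N^A_r$, $_3N^A_r$ yields the six asserted equalities simultaneously.

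I do not expect a genuine obstacle here: the corollary is essentially a reformulation of the freeness statement of Theorem~\ref{GROUP_ACTION_ON QUAS} in the language of orbit sizes, and the only ingredient beyond that theorem is the elementary orbit--stabilizer counting, which is available because everything in sight is finite. If one prefers to avoid citing orbit--stabilizer explicitly, the same conclusion follows from the observation already made inside the proof of Theorem~\ref{GROUP_ACTION_ON QUAS}: the map $g\mapsto g(x)$ from $G$ to the orbit $Gx$ is a bijection, since $g_1(x) = g_2(x)$ forces $g_2^{-1}g_1 = \varepsilon$. One may also remark that Corollary~\ref{SUBGR_FREE_ACT_GR} extends the conclusion verbatim to every subgroup of each of these six groups.
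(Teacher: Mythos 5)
Your argument is correct and follows essentially the same route as the paper: the paper's proof simply cites Theorem~\ref{GROUP_ACTION_ON QUAS} to get trivial stabilizers, with the orbit--stabilizer counting left implicit, exactly as you make explicit. Nothing further is needed.
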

\begin{proof}
By Theorem \ref{GROUP_ACTION_ON QUAS} $|\, _1N^A_l{}_{x}| = 1$ for any $x$ in $Q$.
\end{proof}

\begin{remark} \label{ORDERS_OF ORBIT_IN_NUCLEI_BIJECTIONS}
For infinite case we can re-formulate Corollary \ref{ORDERS_OF ORBIT_IN_NUCLEI} in the language of bijections. For example, there exists a bijection between any orbit of the  group $\,_1N^A_l$ by its action on quasigroup $(Q, \cdot)$ and the group $\,_1N^A_l$. And so on.
\end{remark}

\begin{corollary} \label{NUCLEI_ORDER_DIV}
In any finite quasigroup $(Q, \cdot)$ the order of any subgroup   of the group $N^A_l$ ($N^A_m$, $N^A_r$,  $_1N^A_l$,  $_3N^A_l$, $_1N^A_m$, $_2N^A_m$,  $_2N^A_r$ and $_3N^A_r$) divide the order of the set $Q$.
\end{corollary}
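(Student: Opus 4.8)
The plan is to reduce everything to the free-action statement already proved in Corollary \ref{SUBGR_FREE_ACT_GR} and then apply the orbit--stabilizer theorem. First I would dispose of the six ``component'' groups $_1N^A_l$, $_3N^A_l$, $_1N^A_m$, $_2N^A_m$, $_2N^A_r$, $_3N^A_r$. Let $H$ be a subgroup of one of them, say $H \subseteq {}_1N^A_l$. By Corollary \ref{SUBGR_FREE_ACT_GR}, $H$ acts on the finite set $Q$ freely and with trivial stabilizer $H_x = \{\varepsilon\}$ for every $x\in Q$. Hence, by the orbit--stabilizer theorem, each orbit $Hx$ satisfies $|Hx| = |H|/|H_x| = |H|$.

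Next I would use the fact that the orbits of the action of $H$ on $Q$ form a partition of $Q$ (this is recalled in the ``Group action'' subsection). Writing $k$ for the number of distinct orbits, we get $|Q| = \sum_{i=1}^{k} |H x_i| = k\,|H|$, so $|H|$ divides $|Q|$. (This also matches Corollary \ref{ORDERS_OF ORBIT_IN_NUCLEI}, which records that each such orbit has order exactly $|H|$.)

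Finally, for the three ``full'' A-nuclei $N^A_l$, $N^A_m$, $N^A_r$: by Lemma \ref{ISOMORPHISMS OF COMPONENTS_1} each of them is isomorphic to one of the component groups handled above, namely $N^A_l \cong {}_1N^A_l$, $N^A_m \cong {}_1N^A_m$, $N^A_r \cong {}_2N^A_r$ (with the other pairwise isomorphisms also available). Consequently any subgroup $K$ of $N^A_l$ is carried by such an isomorphism onto a subgroup $H$ of $_1N^A_l$ with $|K| = |H|$, and the previous paragraph gives $|K| = |H| \mid |Q|$; likewise for $N^A_m$ and $N^A_r$. This covers all nine groups listed in the statement.

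I do not expect a serious obstacle here: the result is essentially a packaging of Corollary \ref{SUBGR_FREE_ACT_GR} with the elementary counting of orbit sizes, together with the isomorphisms of Lemma \ref{ISOMORPHISMS OF COMPONENTS_1} to transfer the conclusion from the component groups to the full A-nuclei. The only point requiring a word of care is that the transfer to $N^A_l$, $N^A_m$, $N^A_r$ is needed because these groups, as sets of triples of permutations, do not literally act on $Q$; but since only the orders of subgroups are at issue, the abstract group isomorphism suffices.
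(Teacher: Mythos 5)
Your argument is correct and follows essentially the same route as the paper: the paper's proof simply cites Corollary \ref{SUBGR_FREE_ACT_GR}, Corollary \ref{ORDERS_OF ORBIT_IN_NUCLEI} and Lagrange's theorem, which is exactly the free-action-plus-orbit-counting reasoning you carry out. Your extra step transferring the statement to the full A-nuclei $N^A_l$, $N^A_m$, $N^A_r$ via Lemma \ref{ISOMORPHISMS OF COMPONENTS_1} just makes explicit a point the paper leaves implicit, and is a welcome clarification rather than a different method.
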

\begin{proof}
We can use Corollaries  \ref{SUBGR_FREE_ACT_GR}, \ref{ORDERS_OF ORBIT_IN_NUCLEI} and Lagrange Theorem \cite{KM} about order of  subgroup of any finite group.
\end{proof}

\begin{corollary} \label{ORDERS}
In any finite quasigroup $(Q, \cdot)$ we have $|N^A_l| \leqslant |Q|$,  $|N^A_m| \leqslant |Q|$, $|N^A_r| \leqslant |Q|$,  $|_1N^A_l|\leqslant|Q|$,  $|_3N^A_l| \leqslant |Q|$, $|_1N^A_m|\leqslant |Q|$,
$|_2N^A_m|\leqslant |Q|$,  $|_2N^A_r| \leqslant |Q|$ and $|_3N^A_r|\leqslant |Q|$.
\end{corollary}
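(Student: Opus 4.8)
The plan is to read this off directly from the machinery already assembled, since the substantive work has been done in the preceding corollaries. By Corollary \ref{NUCLEI_ORDER_DIV}, in a finite quasigroup $(Q,\cdot)$ the order of each of the groups $N^A_l$, $N^A_m$, $N^A_r$, $_1N^A_l$, $_3N^A_l$, $_1N^A_m$, $_2N^A_m$, $_2N^A_r$, $_3N^A_r$ — each being trivially a subgroup of itself — divides $|Q|$. A positive divisor of the natural number $|Q|$ cannot exceed $|Q|$, so every one of these nine orders is at most $|Q|$, which is exactly the assertion. So the proof is essentially a one-line deduction from Corollary \ref{NUCLEI_ORDER_DIV}.

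For completeness I would also point out that the bound for the six component groups can be obtained more transparently straight from Theorem \ref{GROUP_ACTION_ON QUAS}: each of $_1N^A_l$, $_3N^A_l$, $_1N^A_m$, $_2N^A_m$, $_2N^A_r$, $_3N^A_r$ acts freely on $Q$ with trivial stabilizers, so by the orbit count (Corollary \ref{ORDERS_OF ORBIT_IN_NUCLEI}, and Remark \ref{ORDERS_OF ORBIT_IN_NUCLEI_BIJECTIONS} in the infinite case) each orbit is in bijection with the whole group; an orbit is a subset of $Q$, hence the group order is at most $|Q|$. The bounds for $N^A_l$, $N^A_m$, $N^A_r$ then follow because, by Lemma \ref{ISOMORPHISMS OF COMPONENTS_1}, each of these three nuclei is isomorphic to one of its own component groups ($N^A_l\cong{}_1N^A_l$, $N^A_m\cong{}_1N^A_m$, $N^A_r\cong{}_2N^A_r$), so it inherits the same bound.

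There is essentially no obstacle: Leakh's Theorem (Theorem \ref{DEF_OF_COMP_BY_AUTOT_AND_EL}), which underlies the freeness of the component actions, and the isomorphisms between nuclei and their components have already been established. The only thing to verify is the elementary fact that a divisor of $|Q|$ is $\leqslant |Q|$, plus the bookkeeping that all nine groups listed in the statement are covered. In the finite setting the two routes above are equivalent, so I would present the short deduction from Corollary \ref{NUCLEI_ORDER_DIV} as the proof and leave the free-action argument as an accompanying remark.
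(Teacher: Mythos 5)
Your proof is correct and follows essentially the same route as the paper, which also deduces the corollary directly from Corollary \ref{NUCLEI_ORDER_DIV} (each of the nine groups is a subgroup of itself, so its order divides, hence does not exceed, $|Q|$). The alternative free-action argument you sketch is just the content already packed into Corollary \ref{NUCLEI_ORDER_DIV}, so nothing further is needed.
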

\begin{proof}
The proof follows from Corollary  \ref{NUCLEI_ORDER_DIV}.
\end{proof}

\begin{lemma} \label{NUCLEAR_QUAS}
If the order of a quasigroup $(Q, \cdot)$ is equal to the order of the group $N^A_l$, then the orders of all
groups $N^A_m$, $N^A_r$,  $_1N^A_l$,  $_3N^A_l$, $_1N^A_m$,
$_2N^A_m$,  $_2N^A_r$ and $_3N^A_r$ are equal to $|Q|$.
\end{lemma}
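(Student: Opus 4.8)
The plan is to reduce the statement to the case of a group by means of an LP-isotopy, and then read off the conclusion from the isomorphism relations between the components of the A-nuclei that are already available.

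First I would record two cheap reductions. By Corollary~\ref{ORDERS} each of the nine groups $N^A_l$, $N^A_m$, $N^A_r$, $_1N^A_l$, $_3N^A_l$, $_1N^A_m$, $_2N^A_m$, $_2N^A_r$, $_3N^A_r$ has order at most $|Q|$, so it is enough to bound each of them below by $|Q|$. By Lemma~\ref{ISOMORPHISMS OF COMPONENTS_1} we have $N^A_l\cong{}_1N^A_l\cong{}_3N^A_l$, so the hypothesis $|N^A_l|=|Q|$ already forces $|{}_1N^A_l|=|{}_3N^A_l|=|Q|$; and since the same lemma gives $N^A_m\cong{}_1N^A_m\cong{}_2N^A_m$ and $N^A_r\cong{}_2N^A_r\cong{}_3N^A_r$, it remains only to prove $|N^A_m|=|Q|$ and $|N^A_r|=|Q|$.

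Next I would pass to an LP-isotope. Fix elements $a,b\in Q$ and set $(Q,\ast)=(Q,\cdot)(R^{-1}_a,L^{-1}_b,\varepsilon)$; by Theorem~\ref{LP_ISOT_AND_ANALOGS} this is a loop, and by Lemma~\ref{ISOTOPIC_COMPONENTS} its left A-nucleus is isomorphic to that of $(Q,\cdot)$, so $|N^A_l(Q,\ast)|=|Q|$. By Theorem~\ref{A-NUCLEI_OF_LOOPS}, Case~1(a), every element of $N^A_l(Q,\ast)$ has the form $(L_c,\varepsilon,L_c)$ with $c=\alpha 1$, and distinct elements yield distinct $c$; since $N^A_l(Q,\ast)$ has $|Q|$ elements and $Q$ is finite, this map onto a subset of $Q$ must be a bijection, so $(L_c,\varepsilon,L_c)\in Avt(Q,\ast)$ for every $c\in Q$. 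Spelled out, this says $c\ast(x\ast y)=(c\ast x)\ast y$ for all $c,x,y$, i.e. $(Q,\ast)$ is associative and hence a group (equivalently: its left Garrison nucleus, a subloop by Garrison's theorem, has order $|Q|$, so it is all of $Q$).

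Finally, $(Q,\cdot)$ is isotopic to the group $(Q,\ast)$, hence a group isotope. In the group $(Q,\ast)$ the middle and right A-nuclei are exactly $\{(R_c,L^{-1}_c,\varepsilon)\mid c\in Q\}$ and $\{(\varepsilon,R_c,R_c)\mid c\in Q\}$ (use Theorem~\ref{A-NUCLEI_OF_LOOPS}, Cases~1(c) and 1(b), together with $N_m(Q,\ast)=N_r(Q,\ast)=Q$), each of order $|Q|$; so by Lemma~\ref{ISOTOPIC_COMPONENTS} $|N^A_m(Q,\cdot)|=|N^A_m(Q,\ast)|=|Q|$ and $|N^A_r(Q,\cdot)|=|N^A_r(Q,\ast)|=|Q|$. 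Together with the reductions of the first step, all nine groups have order $|Q|$. The only genuinely delicate point is the cardinality argument in the third paragraph — that $|Q|$ pairwise distinct first components among a pool of size $|Q|$ exhaust all left translations of $(Q,\ast)$, equivalently that a subloop of the same order as a finite loop is the whole loop; this is exactly where finiteness of $Q$ enters, which is consistent with the setting of the surrounding statements (for instance Corollary~\ref{ORDERS}). Everything else is routine bookkeeping with Lemmas~\ref{ISOTOPIC_COMPONENTS} and \ref{ISOMORPHISMS OF COMPONENTS_1} and Theorem~\ref{A-NUCLEI_OF_LOOPS}.
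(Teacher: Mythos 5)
Your argument is correct in the finite case, which is the setting the surrounding results work in (compare Corollary \ref{ORDERS}), but it is not the route the paper takes for this lemma. The paper's proof is a parastrophy argument: a parastrophe of $(Q,\cdot)$ has the same underlying set, and by Table 3 the left A-nucleus is carried onto the right, respectively middle, A-nucleus under the $(12)$-, respectively $(23)$-, parastrophe, so $|N^A_l|=|N^A_r|=|N^A_m|=|Q|$ at once; the nine component groups are then handled by Lemma \ref{ISOMORPHISMS OF COMPONENTS}. You instead pass to an LP-isotope loop, use Theorem \ref{A-NUCLEI_OF_LOOPS} to write its left A-nucleus as $\{(L_c,\varepsilon,L_c)\}$, and a counting argument to show that every $c\in Q$ occurs, so the loop is associative and hence a group, after which Lemma \ref{ISOTOPIC_COMPONENTS} transfers the orders back. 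This is essentially the paper's own second proof (Lemma \ref{NUCLEAR_QUAS_I}), where the hypothesis upgrades the semiregular action of Theorem \ref{GROUP_ACTION_ON QUAS} to a regular one and Theorem \ref{QUAS_NUCL_TR_TH} then yields that $(Q,\cdot)$ is a group isotope -- except that you reprove the needed direction of Theorem \ref{QUAS_NUCL_TR_TH} by hand in the loop case rather than citing it (you could also finish by quoting the corollary after Lemma \ref{ISOTOPIC_COMPONENTS}, that all A-nuclei of a group isotope are isomorphic to the group). What each approach buys: the parastrophy proof is shorter, needs no isotopy to a loop, and makes no explicit appeal to finiteness; your proof exposes the real mechanism -- the hypothesis forces $(Q,\cdot)$ to be a group isotope, which is stronger information than the stated conclusion -- but the step ``an injection of a set of cardinality $|Q|$ into $Q$ is onto'' genuinely requires $Q$ finite, exactly as in Lemma \ref{NUCLEAR_QUAS_I}, so for infinite order one would have to argue differently (cf.\ Remark \ref{ORDERS_OF ORBIT_IN_NUCLEI_BIJECTIONS}).
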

\begin{proof}
It is clear that the order of a quasigroup  $(Q, \cdot)$  is invariant relative to the parastrophy. I.e. $|(Q, \cdot)| = |(Q, \cdot)^{\sigma}|$, for any  $\sigma \in S_3$. From Table 3 it follows that $\left( N^A_l \right)^{(12)} = N^A_r$,
$\left( N^A_l \right)^{(23)} = N^A_m$. Therefore $|N^A_l| = |N^A_r| = |N^A_m| = |Q|$. Further we can apply Lemma \ref{ISOMORPHISMS OF COMPONENTS}.
\end{proof}

\begin{remark} \label{NUCLEAR_QUAS_EQUALITY}
Analogs of Lemma \ref{NUCLEAR_QUAS} are true for any group  from the following list: $N^A_m$, $N^A_r$,  $_1N^A_l$,  $_3N^A_l$, $_1N^A_m$, $_2N^A_m$,  $_2N^A_r$ and $_3N^A_r$.
\end{remark}

\begin{corollary} \label{ISOMORPHISMS OF COMPONENT_LOOPS}
\begin{enumerate}
  \item If  $(Q,\cdot)$ is finite  right loop,  then $|C_{S_Q}(FM)| \leq |C_{S_Q}(M)| \leq|C_{S_Q}(RM)| \leq |Q|$.
  \item If  $(Q,\cdot)$ is finite  left loop,  then $|C_{S_Q}(FM)| \leq |C_{S_Q}(M)|\leq |C_{S_Q}(LM)| \leq |Q|$.
  \item If  $(Q,\cdot)$ is finite  unipotent quasigroup, then $|C_{S_Q}(FM)| \leq |C_{S_Q}(PM)| \leq |Q|$.
\end{enumerate}
 \end{corollary}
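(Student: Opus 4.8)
The plan is to reduce every inequality to two facts already in hand: each of the three outer centralizers occurring in the statement coincides with a component of an A-nucleus (Corollary \ref{ISOMORPHISMS OF COMPONENTS_LOOPS}), and every such component has order at most $|Q|$ (Corollary \ref{ORDERS}). The remaining inequalities are then obtained from the elementary fact that $C_{S_Q}(\cdot)$ reverses inclusions, applied to the standard chains of subgroups of $S_Q$ attached to a quasigroup.

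First I would record the monotonicity principle: if $S_1 \subseteq S_2 \subseteq S_Q$, then $C_{S_Q}(S_2) \subseteq C_{S_Q}(S_1)$, hence $|C_{S_Q}(S_2)| \leqslant |C_{S_Q}(S_1)|$, all the sets being finite since $Q$ is. Next I would note the inclusions that hold in every quasigroup by the very definitions of these groups: $\mathbb{R} \subseteq RM \subseteq M \subseteq FM \subseteq S_Q$, $\mathbb{L} \subseteq LM \subseteq M \subseteq FM \subseteq S_Q$, and $\mathbb{P} \subseteq PM \subseteq FM \subseteq S_Q$ (recall $\langle \mathbb{R}\rangle = RM$, $\langle \mathbb{L}\rangle = LM$, $\langle \mathbb{P}\rangle = PM$, $M = \langle \mathbb{L},\mathbb{R}\rangle$, $FM = \langle \mathbb{L},\mathbb{R},\mathbb{P}\rangle$). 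Combining these with the monotonicity principle yields $|C_{S_Q}(FM)| \leqslant |C_{S_Q}(M)| \leqslant |C_{S_Q}(RM)|$ and the analogous chains for $LM$ and for $PM$.

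For Case 1, since $(Q,\cdot)$ is a finite right loop, Corollary \ref{ISOMORPHISMS OF COMPONENTS_LOOPS}(1) gives ${}_1N_l^A = C_{S_Q}(RM)$, and Corollary \ref{ORDERS} gives $|{}_1N_l^A| \leqslant |Q|$; hence $|C_{S_Q}(RM)| \leqslant |Q|$, and the chain $|C_{S_Q}(FM)| \leqslant |C_{S_Q}(M)| \leqslant |C_{S_Q}(RM)|$ completes the proof. Case 2 is the mirror image, using Corollary \ref{ISOMORPHISMS OF COMPONENTS_LOOPS}(2) to identify $C_{S_Q}(LM)$ with ${}_2N_r^A$ and Corollary \ref{ORDERS} for the bound $|{}_2N_r^A| \leqslant |Q|$. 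Case 3 uses Corollary \ref{ISOMORPHISMS OF COMPONENTS_LOOPS}(3), which for a finite unipotent quasigroup gives $C_{S_Q}(PM) = {}_1N_m^A$, together with $|{}_1N_m^A| \leqslant |Q|$ from Corollary \ref{ORDERS}, and then only the shorter chain $|C_{S_Q}(FM)| \leqslant |C_{S_Q}(PM)|$.

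The argument is essentially bookkeeping, so there is no real obstacle; the one point requiring a moment's care is that in the unipotent case one must \emph{not} insert the term $|C_{S_Q}(M)|$, because middle translations need not lie in the multiplication group $M$, so $PM \subseteq M$ fails in general and only $PM \subseteq FM$ is available. This is precisely why item (3) of the statement omits $|C_{S_Q}(M)|$.
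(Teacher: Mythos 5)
Your proposal is correct and follows essentially the same route as the paper: the chains $RM \subseteq M \subseteq FM$ (resp. $LM \subseteq M \subseteq FM$, $PM \subseteq FM$) together with the anti-monotonicity of $C_{S_Q}(\cdot)$ give the first inequalities, and the final bound $|C_{S_Q}(RM)| \leq |Q|$ (resp. $LM$, $PM$) comes from identifying this centralizer with an A-nuclear component via Corollary \ref{ISOMORPHISMS OF COMPONENTS_LOOPS} and applying Corollary \ref{ORDERS}. Your closing remark about why $|C_{S_Q}(M)|$ cannot be inserted in the unipotent case is a sensible clarification but not needed for the proof itself.
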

\begin{proof}
Case 1. Inclusions  $FM \supseteq M \supseteq RM$ follow from definitions of these groups. Therefore $|C_{S_Q}(FM)| \leq |C_{S_Q}(M)| \leq|C_{S_Q}(RM)|$. Inequality  $|C_{S_Q}(RM)| \leq |Q|$  follows from Corollaries \ref{ISOMORPHISMS OF COMPONENTS_LOOPS} and \ref{ORDERS}.

Cases 2 and 3 are proved similarly.
\end{proof}

\begin{example} \label{MIDDLE_NUCLEUS_OF_LOOP}
We give an example of a loop with $N_m = \{ 0, 1 \}$. Notice this nucleus is not a normal subloop of loop $(Q, \ast)$. Indeed, $5 \{ 0 , 1 \} = \{ 5, 3 \}$, $\{ 0 , 1
\} 5  = \{ 5, 4 \}$.

Then  ${}_1N^A_m = \{\varepsilon, R_1\}$, where $R_1 = ( \, 0 \, 1 \, ) \, (\, 2\, 4\, )\,(\, 3\, 5\, )$, ${}_2N^A_m = \{\varepsilon, L^{-1}_1\}$, where $L_1 = L^{-1}_1 = ( \, 0 \, 1 \, ) \, (\, 2\, 3\, )\,(\, 4\, 5\,
)$. The group ${}_1N^A_m$ by  action on the set $Q$ has the following set of orbits $\{ \{0, 1\}, \{2, 4\}, \{ 3, 5\} \}$, and the group ${}_2N^A_m$
has the following set $\{ \{0, 1\}, \{2, 3\}, \{ 4, 5\} \}$.

 \centering
\begin{tabular}{r|rrrrrr}
$\ast$ & 0 & 1 & 2 & 3 & 4 & 5\\
\hline
    0 & 0 & 1 & 2 & 3 & 4 & 5 \\
    1 & 1 & 0 & 3 & 2 & 5 & 4 \\
    2 & 2 & 4 & 1 & 5 & 3 & 0 \\
    3 & 3 & 5 & 4 & 0 & 2 & 1 \\
    4 & 4 & 2 & 5 & 1 & 0 & 3 \\
    5 & 5 & 3 & 0 & 4 & 1 & 2
\end{tabular} \hspace{.5cm}

\end{example}

\subsection{A-nuclear  quasigroups}

We start from  theorem which is a little generalization of Belousov Regular  Theorem \cite[Theorem 2.2]{VD}.
In \cite{kepka71, kepka75} Kepka studied  regular mappings of groupoids including   $n$-ary case.

\begin{theorem} \label{SUBQUAS_SUB_NUCL_TH}
\begin{enumerate}
  \item If an orbit $K$ by the action of a subgroup $H$ of the  group $_1N^A_l$ on a quasigroup $(Q, \cdot)$ is a subquasigroup of   $(Q, \cdot)$, then $(K, \cdot)$ is an isotope of the group $H$.
  \item If an orbit $K$ by the action of a subgroup $H$ of the group $_3N^A_l$ on a quasigroup $(Q, \cdot)$ is a subquasigroup of   $(Q, \cdot)$, then $(K, \cdot)$ is an isotope of the group $H$.
  \item If an orbit $K$ by the action of a subgroup $H$ of the group $_2N^A_r$ on a quasigroup $(Q, \cdot)$ is a subquasigroup of   $(Q, \cdot)$, then $(K, \cdot)$ is an isotope of the group $H$.
  \item If an orbit $K$ by the action of a subgroup $H$ of the group $_3N^A_r$ on a quasigroup $(Q, \cdot)$ is a subquasigroup of   $(Q, \cdot)$, then $(K, \cdot)$ is an isotope of the group $H$.
  \item If an orbit $K$ by the action of a subgroup $H$ of the group $_1N^A_m$ on a quasigroup $(Q, \cdot)$ is a subquasigroup of   $(Q, \cdot)$, then $(K, \cdot)$ is an isotope of the group $H$.
  \item If an orbit $K$ by the action of a subgroup $H$ of the group $_2N^A_m$ on a quasigroup $(Q, \cdot)$ is a subquasigroup of   $(Q, \cdot)$, then $(K, \cdot)$ is an isotope of the group $H$.
\end{enumerate}
\end{theorem}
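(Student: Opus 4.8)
The plan is to prove Case 1 directly and then obtain Cases 2--6 from it by parastrophy. Fix $H\le {}_1N^A_l(Q,\cdot)$ and the orbit $K=Ha$ for some $a\in Q$, and assume $(K,\cdot)$ is a subquasigroup. Each $\alpha\in H$ is the first component of an autotopy $(\alpha,\varepsilon,\gamma_\alpha)$ of $(Q,\cdot)$ with $\gamma_\alpha=R_c\alpha R^{-1}_c$ for every $c\in Q$ (Lemma \ref{ISOMOR OF COMP}), and $\alpha\mapsto\gamma_\alpha$ is a homomorphism by uniqueness of autotopy components (Lemma \ref{TWO_AUTOTOPY_COMPONENTS}). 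First I would check that $\alpha$ and $\gamma_\alpha$ both map $K$ onto $K$: $\alpha(Ha)=Ha$ since $H$ is a group, and from $\alpha x\cdot y=\gamma_\alpha(x\cdot y)$ with $x,y\in K$ one gets $\gamma_\alpha(K)\subseteq K$ (the products $x\cdot y$ exhaust $K$), while the reverse inclusion follows from the same applied to $\alpha^{-1}$ together with $\gamma_{\alpha^{-1}}=\gamma_\alpha^{-1}$. Hence every $(\alpha,\varepsilon,\gamma_\alpha)$ restricts to an autotopy of $(K,\cdot)$, so $\alpha|_K\in {}_1N^A_l(K,\cdot)$; restriction is injective on $H$ because $H$ acts freely on $Q$ (Corollary \ref{SUBGR_FREE_ACT_GR}), so $H\cong H|_K\le {}_1N^A_l(K,\cdot)$.

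Next I would show $H|_K={}_1N^A_l(K,\cdot)$: the subgroup $H|_K$ is transitive on $K$ (it is an $H$-orbit), while ${}_1N^A_l(K,\cdot)$ acts freely on $K$ (Theorem \ref{GROUP_ACTION_ON QUAS}), and a free overgroup of a transitive subgroup coincides with it (if $\psi$ lies in the overgroup, then $\psi a=ha$ for some $h\in H|_K$, so $h^{-1}\psi$ fixes $a$ and is therefore $\varepsilon$). Now pass to a principal loop isotope $(K,\circ)=(K,\cdot)(R^{-1}_a,L^{-1}_b,\varepsilon)$, $a,b\in K$ (Theorem \ref{LP_ISOT_AND_ANALOGS}). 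By Lemmas \ref{ISOTOPIC_COMPONENTS} and \ref{ISOTOPIC_COMPONENTS_PERM}, ${}_1N^A_l(K,\circ)\cong H$ and acts on $K$ with a single orbit; being also free (Theorem \ref{GROUP_ACTION_ON QUAS}), it is sharply transitive. By Theorem \ref{A-NUCLEI_OF_LOOPS}, Case 1(a), ${}_1N^A_l(K,\circ)=\{L^\circ_c\mid c\in N_l(K,\circ)\}$ with $N_l$ Garrison's left nucleus; since $L^\circ_c$ sends the loop identity $1$ to $c$, transitivity of this set forces $N_l(K,\circ)=K$, i.e.\ $(K,\circ)$ is associative and hence a group. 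Finally $c\mapsto L^\circ_c$ is an isomorphism $(K,\circ)\to {}_1N^A_l(K,\circ)\cong H$, so $(K,\cdot)\sim(K,\circ)\cong H$, which is Case 1.

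For Cases 2--6 I would use that both "being an isotope of $H$" and "being a subquasigroup" are preserved under parastrophy (Lemma \ref{PARASTR_ISOT_GR_ISOT}, whose proof also supplies that every parastrophe of a group is an isotope of it). Reading Table 3 with trivial isotopy part, the pure parastrophes $(13),(12),(123),(23),(132)$ carry ${}_1N^A_l$ of $(Q,\cdot)^{\sigma}$ exactly onto ${}_3N^A_l$, ${}_2N^A_r$, ${}_3N^A_r$, ${}_1N^A_m$, ${}_2N^A_m$ of $(Q,\cdot)$ respectively, as sets of permutations with the identical action on $Q$; so in each case the given $H$ is a subgroup of ${}_1N^A_l$ of the corresponding parastrophe $(Q,\cdot)^\sigma$, $K$ is still the corresponding orbit and a subquasigroup of $(Q,\cdot)^\sigma$, Case 1 gives that $(K,\cdot)^\sigma$ is an isotope of $H$, and pulling back along $\sigma^{-1}$ yields the statement for $(K,\cdot)$. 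The main obstacle is the core of Case 1 --- identifying the restricted component nucleus with $H$ exactly (where freeness meets the single-orbit hypothesis) and then pushing $N_l(K,\circ)$ up to all of $K$ via the loop-translation description; I would phrase the orbit-versus-group comparisons in the bijection language of Remark \ref{ORDERS_OF ORBIT_IN_NUCLEI_BIJECTIONS} so the argument remains valid in the infinite case.
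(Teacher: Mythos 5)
Your proposal is correct, but it follows a genuinely different route from the paper's. The paper runs Belousov's regular-permutation scheme: it replaces $(Q,\cdot)$ by the right-loop isotope $(Q,\star)=(Q,\cdot)(\varepsilon,\varepsilon,R_k)$, $k\in K$, so that every $\alpha\in H$ occurs in an autotopy $(\alpha,\varepsilon,\alpha)$, transports the operation onto $H$ itself by $\delta\circ\lambda=\mu \Leftrightarrow \delta k\star\lambda k=\mu k$, proves $(H,\circ)\cong(K,\star)$, and extracts the functional equation $\alpha(\lambda\circ\mu)=(\alpha\lambda)\circ\mu$, which at $\lambda=\varepsilon$ exhibits $(H,\circ)$ as an isotope of the permutation group $(H,\ast)$; Cases 3 and 5 are then treated by their own functional equations and a parastrophe argument, not by a formal reduction. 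You instead stay on $K$: you show each autotopy $(\alpha,\varepsilon,\gamma_\alpha)$ preserves $K$ componentwise and restricts to an autotopy of $(K,\cdot)$, use the free action (Theorem \ref{GROUP_ACTION_ON QUAS}) to identify $H|_K$ with the whole of ${}_1N^A_l(K,\cdot)$, pass to an LP-isotope $(K,\circ)$, and combine Theorem \ref{A-NUCLEI_OF_LOOPS} with the correspondence $(L_c,\varepsilon,L_c)\leftrightarrow c\in N_l$ to force $N_l(K,\circ)=K$, so $(K,\circ)$ is a group isomorphic to $H$ via $c\mapsto L^{\circ}_c$; Cases 2--6 then follow uniformly from Table 3 with trivial isotopy part and Lemma \ref{PARASTR_ISOT_GR_ISOT}. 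Your route yields the sharper intermediate fact $H|_K={}_1N^A_l(K,\cdot)$ and a cleaner, uniform treatment of Cases 2--6, at the price of invoking the loop nucleus structure theorem; the paper's route needs only one isotope of the ambient quasigroup and produces the group isotopy explicitly. Two points worth making explicit in a write-up: the transitivity of ${}_1N^A_l(K,\circ)$ is most directly read off from the conjugation ${}_1N^A_l(K,\circ)=R_a\,{}_1N^A_l(K,\cdot)\,R^{-1}_a$ given by Lemma \ref{ISOM_AUTOT_GR} (Lemma \ref{ISOTOPIC_COMPONENTS_PERM} as stated compares components within a single quasigroup), and a subquasigroup of $(Q,\cdot)$ is indeed a subquasigroup of each parastrophe because its local divisions coincide with those of $Q$ by uniqueness of solutions --- your reduction of Cases 2--6 quietly uses this.
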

\begin{proof} Sometimes in the proof  we shall denote by the symbol $\ast$ operation  of composition of permutations (of bijections) of the set $Q$.

Case 1.  Let $k\in K$.
 By Lemma  \ref{COINCIDENCE_OF_NUCLEI_COMP} for any quasigroup $(Q, \cdot)$ there exists its isotopic  image $(Q, \star) = (Q, \cdot)(\varepsilon, \varepsilon, R_k)$ such that:
$_1N^A_l (Q, \star)  = \, _3N^A_l (Q, \star)  = \, _1N^A_l (Q, \cdot)$. Notice by isotopy of such form ($k\in K$) subquasigroup $(K, \cdot)$  passes in subquasigroup  $(K, \star)$ of quasigroup $(Q, \star)$.

Moreover in right loop $(Q, \star)$ any autotopy of the form $(\alpha, \varepsilon, \gamma)$ of quasigroup $(Q, \cdot)$ takes the form $(\alpha, \varepsilon, \alpha)$. Therefore $H \subseteq \,{}_1N^A_l(Q, \star)$.

 We follow \cite[Theorem 2.2]{VD}. In this part of the proof we "are"  in right loop $(Q, \star)$.

 Any element  $l\in K$ it is possible to present in the form $l = \delta k$, where $\delta \in H \subseteq {}_1N^A_l$. Notice $k = \varepsilon k$.
  If $l = \delta k$, $r=\lambda k$, then $\delta k \star \lambda k \in K$ since $(K, \star)$ is a subquasigroup of quasigroup $(Q, \star)$. Thus there exists $\mu \in \ H$ such that $\delta k \star \lambda k = \mu k$ since $Hk=K$.

On the set $H$ we can define operation $\circ$ in the following way: $ \delta  \circ \lambda  = \mu $ if and only if $\delta k \star \lambda k = \mu k$.

Prove  that
\begin{equation} \label{ISOM_LEFT_nucl}
\, (H, \circ) \cong (K, \star)
\end{equation}
Define the  map $\varphi$ in the following way:   $\varphi:  \lambda \mapsto \lambda k$,  $\varphi (\delta \circ \lambda) = \varphi(\delta) \star \varphi (\lambda) =   \delta k \star \lambda k$. The  map $\varphi$ is  bijective, since  action of the group of permutations  $H\subseteq \, {}_1N^A_l$ on the set $K$ is regular (i.e. it is simply transitive). Therefore $(H, \circ) \cong (K, \star)$.

Let $\alpha, \lambda, \mu \, \in H$. Notice,  corresponding permutation to the permutation $\alpha$ from the set $_3N^A_l(Q,\star)$ also is permutation $\alpha$ (Lemma \ref{COINCIDENCE_OF_NUCLEI_COMP}). From definition of the set $K$ we have $\alpha K = K$. Indeed $\alpha K = \alpha (H k) = (\alpha H) k  = Hk = K$.

Then restriction of the action of the triple $(\alpha, \varepsilon, \alpha)\in N^A_l(Q,\star)$  on subquasigroup $(K, \star)\subseteq (Q, \star)$ is an autotopy of subquasigroup $(K, \star)$.
We have $\alpha ( (\lambda \circ \mu)k)  = \alpha( \lambda k  \star \mu k) = (\alpha  \lambda) k  \star \mu k = ((\alpha  \lambda)   \circ \mu) k$.

Therefore we obtain
\begin{equation} \label{nucl_IS_GR}
\alpha  (\lambda \circ \mu) = (\alpha \lambda) \circ \mu
\end{equation}

If we put in equality (\ref{nucl_IS_GR}) $\lambda = \varepsilon$, then we obtain
$\alpha(\varepsilon \circ \mu) =  \alpha \circ \mu.$

Since $(H, \circ)$ is a quasigroup, $\varepsilon \in H$,  then the mapping $L^{\circ}_{\varepsilon}$,  $L^{\circ}_{\varepsilon} \mu =  \varepsilon \circ \mu$ for all $\mu \in H$ is a permutation of the set $H$.
Then
\begin{equation} \label{GROUP_QUAS_ISOT}
 \alpha  \ast L^{\circ}_{\varepsilon} \mu  = \alpha \circ \mu
\end{equation}
where $\ast$ is  operation of the group $H$, i.e. it is composition of bijections of the set $Q$.

Then from  equality (\ref{GROUP_QUAS_ISOT}) we conclude  that quasigroup $(H, \circ)$ is isotopic to the group $(H, \ast)$, i.e. $(H, \circ) \sim (H, \ast)$, $(H, \circ) = (H, \ast)(\varepsilon, L^{\circ}_{\varepsilon}, \varepsilon)$.

Further we have
\begin{equation} \label{ISOM_LEFT_nl}
\, (H, \ast) \sim \, (H, \circ) \cong (K, \star) \sim (K, \cdot)
\end{equation}

Case 2. The proof of Case 2 is  similar to the proof of Case 1 and we omit it.

Case 3. In the proof of Case 3 instead of equality  (\ref{nucl_IS_GR})
we obtain the following equality
\begin{equation} \label{nucl_IS_GR_R}
\beta \ast (\lambda \circ \mu) = \lambda \circ (\beta \ast \mu)
\end{equation}
where $\beta, \lambda, \mu  \in H \subseteq \,_2N^A_r$. If we put $\mu = \varepsilon$, then we have $\beta \ast \lambda^{\, \prime}  = \lambda \circ \beta$. Notice, since $(H, \ast)$ is a group, then its $(12)$-parastrophe $(H, \overset{(12)}{\ast})$ also is a group. Therefore quasigroup $(H, \circ )$ is isotopic to a group.

Case 4. The proof of this case is  similar to the proof of Case 3.

Case 5. In this case equality  (\ref{nucl_IS_GR}) is transformed in the following equality
\begin{equation} \label{nucl_IS_GR_M}
(\alpha \ast \lambda) \circ (\alpha \ast \mu) = \lambda \circ \mu
\end{equation}
In this case  $H \subseteq \,_1N^A_m$. If we put in equality (\ref{nucl_IS_GR_M}) $\mu = \varepsilon$, then  we have $(\alpha \ast \lambda) \circ \alpha = \lambda^{\, \prime}$, and using operation of left division of quasigroup $(H, \circ)$  further we obtain
$\lambda^{\, \prime} \slash   \alpha = \alpha \ast \lambda $.

If we take $(12)$-parastrophe of the operation $\slash$, then we obtain  $\alpha \overset{(12)}{\slash}   \lambda^{\, \prime} = \alpha \ast \lambda$.
In fact the operation $\overset{(12)}{\slash}$ is $(132)$-parastrophe of the operation $\circ$.

Therefore quasigroup $(H, \overset{(132)}{\circ})$ is a group isotope. Then quasigroup $(H, \circ)$ also is group isotope as a parastrophe of a group isotope  (Lemma \ref{PARASTR_ISOT_GR_ISOT}).

Case 6. The proof of this case  is  similar to the proof of Case 5.
\end{proof}

From Theorem \ref{SUBQUAS_SUB_NUCL_TH} we obtain  the following

\begin{theorem} \label{SUBQUAS_NUCL_TH}
  If an orbit $K$ by the action of the group $_1N^A_l$ ($_3N^A_l$, $_2N^A_r$, $_3N^A_r$, $_1N^A_m$, $_2N^A_m$) on a quasigroup $(Q, \cdot)$ is a subquasigroup of   $(Q, \cdot)$, then $(K, \cdot)$ is an isotope of the group $_1N^A_l$ ($_3N^A_l$, $_2N^A_r$, $_3N^A_r$, $_1N^A_m$, $_2N^A_m$), respectively.
\end{theorem}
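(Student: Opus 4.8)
This statement is the special case of Theorem \ref{SUBQUAS_SUB_NUCL_TH} obtained by taking the subgroup $H$ to be the whole component group in question. The plan is therefore to check that the full groups $_1N^A_l$, $_3N^A_l$, $_2N^A_r$, $_3N^A_r$, $_1N^A_m$, $_2N^A_m$ each satisfy the hypotheses needed to invoke Theorem \ref{SUBQUAS_SUB_NUCL_TH} with $H$ equal to that group itself, and then simply quote the conclusion.

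First I would recall that, by Theorem \ref{GROUP_ACTION_ON QUAS}, each of these six groups acts on the set $Q$ freely (semiregularly), with the stabilizer of every point being trivial; consequently, for any point $k\in Q$, the orbit $Gk$ (where $G$ denotes the chosen component group) is a principal homogeneous space for $G$, i.e. the action of $G$ on that orbit is regular. This is precisely the property that the proof of Theorem \ref{SUBQUAS_SUB_NUCL_TH} uses about the subgroup $H$ (there it is supplied by Corollary \ref{SUBGR_FREE_ACT_GR}, which is itself derived from Theorem \ref{GROUP_ACTION_ON QUAS}). Nothing in that proof requires $H$ to be a proper subgroup: the arguments producing the isotopy $(H,\circ)=(H,\ast)(\varepsilon,L^{\circ}_{\varepsilon},\varepsilon)$ in Case~1, and the corresponding parastrophic manipulations in Cases~3 and~5 (where Lemma \ref{PARASTR_ISOT_GR_ISOT} is used to pass from a parastrophe being a group isotope to the operation itself being one), all go through verbatim with $H=G$.

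Hence I would argue as follows. Suppose the orbit $K$ under $G\in\{{}_1N^A_l, {}_3N^A_l, {}_2N^A_r, {}_3N^A_r, {}_1N^A_m, {}_2N^A_m\}$ is a subquasigroup of $(Q,\cdot)$. Apply Theorem \ref{SUBQUAS_SUB_NUCL_TH} to the subgroup $H=G$: its hypothesis ``an orbit $K$ by the action of a subgroup $H$ of $G$ is a subquasigroup'' is met, and its conclusion is that $(K,\cdot)$ is an isotope of $H=G$. This is exactly the claim, for each of the six listed groups in turn.

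The only point requiring care — and it is bookkeeping rather than a genuine obstacle — is to be sure that the permitted choices of $G$ match exactly the six cases of Theorem \ref{SUBQUAS_SUB_NUCL_TH} (the ``good'' component groups on which Theorem \ref{GROUP_ACTION_ON QUAS} guarantees a free action), and that the orbit appearing in the hypothesis is taken under the full group $G$, not under a proper subgroup; once this is noted, the theorem follows immediately with no further computation.
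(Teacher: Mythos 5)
Your proposal is correct and is exactly the paper's own argument: the theorem is stated there as the special case of Theorem \ref{SUBQUAS_SUB_NUCL_TH} with $H$ taken to be the full component group (e.g. $H={}_1N^A_l$ in Case 1), and your additional remarks about the free action from Theorem \ref{GROUP_ACTION_ON QUAS} merely spell out why this specialization is legitimate.
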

\begin{proof}
This theorem is partial case of Theorem \ref{SUBQUAS_SUB_NUCL_TH}. In Case 1 $H = \,{}_1N^A_l$ and so on.
\end{proof}

\begin{theorem}\label{QUAS_NUCL_TR_TH}
 The  group $_1N^A_l$ ($_3N^A_l$, $_2N^A_r$, $_3N^A_r$, $_1N^A_m$, $_2N^A_m$) of a quasigroup $(Q, \cdot)$ acts on the set $Q$ simply transitively if and only if  quasigroup $(Q, \cdot)$ is a group isotope.
 \end{theorem}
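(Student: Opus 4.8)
The plan is to prove the equivalence first for $H={}_1N^A_l$ and then reduce the other five groups to this case.

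\textbf{Forward direction.} I would argue as follows: if ${}_1N^A_l(Q,\cdot)$ acts on $Q$ simply transitively, then in particular it acts transitively, so $Q$ is a single orbit of the action. Since $Q$ is trivially a subquasigroup of $(Q,\cdot)$, Theorem~\ref{SUBQUAS_NUCL_TH} applied with the orbit $K=Q$ gives at once that $(Q,\cdot)$ is an isotope of the group ${}_1N^A_l$ (which is a group by Lemma~\ref{COMPONENTS}), hence a group isotope. This half is essentially immediate once Theorem~\ref{SUBQUAS_NUCL_TH} is in hand.

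\textbf{Converse direction.} Suppose $(Q,\cdot)=(Q,+)T$ with $(Q,+)$ a group and $T=(\mu_1,\mu_2,\mu_3)$. First I would compute ${}_1N^A_l(Q,+)$ by hand: in the identity $\alpha x+y=\gamma(x+y)$ the substitution $y=0$ forces $\gamma=\alpha$, and then $x=0$ forces $\alpha=L_{\alpha 0}$, while conversely every triple $(L_c,\varepsilon,L_c)$ is an autotopy of $(Q,+)$; hence the first-component group ${}_1N^A_l(Q,+)$ is exactly $\mathbb{L}(Q,+)$, which acts on $Q$ simply transitively. Then, by Lemma~\ref{ISOM_AUTOT_GR}, $Avt(Q,\cdot)=T^{-1}Avt(Q,+)T$, and reading off the first components of the autotopies of the form $(\alpha,\varepsilon,\gamma)$ gives ${}_1N^A_l(Q,\cdot)=\mu_1^{-1}\mathbb{L}(Q,+)\mu_1$. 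Since conjugation by a bijection of $Q$ sends a simply transitive set of permutations to a simply transitive one, ${}_1N^A_l(Q,\cdot)$ acts on $Q$ simply transitively. (Equivalently: by Theorem~\ref{GROUP_ACTION_ON QUAS} this action is automatically free, so it is only transitivity that needs to be transported along the relabelling $\mu_1$.)

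\textbf{The other five groups and the main obstacle.} For ${}_3N^A_l$, ${}_3N^A_r$ and ${}_2N^A_m$ I would use Lemma~\ref{ISOMORPHISMS OF COMPONENTS}, which exhibits each of them as a conjugate inside $S_Q$ of ${}_1N^A_l$, ${}_2N^A_r$, ${}_1N^A_m$ respectively (by a right, left, or middle translation), so simple transitivity passes between them. For ${}_2N^A_r$ and ${}_1N^A_m$ I would pass to parastrophes: by Table~3 the component ${}_2N^A_r$ of $(Q,\cdot)^{(12)}$ coincides with ${}_1N^A_l$ of $(Q,\cdot)$, and the component ${}_1N^A_m$ of $(Q,\cdot)^{(23)}$ coincides with ${}_1N^A_l$ of $(Q,\cdot)$; since being a group isotope is parastrophy-invariant (Lemma~\ref{PARASTR_ISOT_GR_ISOT}), these cases reduce to the case of ${}_1N^A_l$ already treated. (Alternatively, each of the six cases can be run directly: the forward half is Theorem~\ref{SUBQUAS_NUCL_TH} verbatim, and in the converse the relevant component group of $(Q,+)$ turns out to be $\mathbb{L}(Q,+)$ or, for the middle-nucleus components, $\mathbb{R}(Q,+)$, after which the same conjugation argument applies.) The one genuinely delicate point is the converse in the possibly infinite case: freeness of the action together with $|{}_1N^A_l|=|Q|$ does not by itself force transitivity, so one really does need the observation that ${}_1N^A_l(Q,\cdot)$ is a bijective conjugate of the simply transitive group $\mathbb{L}(Q,+)$; everything else is bookkeeping.
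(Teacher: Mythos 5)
Your proposal is correct and follows essentially the same route as the paper: the forward half is Theorem~\ref{SUBQUAS_NUCL_TH} with the single orbit $K=Q$, and the converse transports the simply transitive group $\mathbb{L}(Q,+)={}_1N^A_l(Q,+)$ through the conjugation of Lemma~\ref{ISOM_AUTOT_GR}, with freeness supplied by Theorem~\ref{GROUP_ACTION_ON QUAS}. Your explicit reduction of the remaining five components via Lemma~\ref{ISOMORPHISMS OF COMPONENTS} and the parastrophy argument (Table~3 together with Lemma~\ref{PARASTR_ISOT_GR_ISOT}) merely spells out what the paper dismisses with ``other cases are proved in the similar way.''
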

\begin{proof} If the group $_1N^A_l$ of a quasigroup $(Q, \cdot)$ acts on the set $Q$ simply transitively, then by Theorem \ref{SUBQUAS_NUCL_TH} $(Q, \cdot)\sim \, _1N^A_l$.

Let $(Q, \cdot)\sim (Q,+)$.
Prove, that in this case the group  $\, _1N^A_l (Q, \cdot)$ acts on the set $Q$ simply transitively.
Any element of left A-nucleus of the group $(Q,+)$ has the form $(L^+_a, \varepsilon, L^+_a)$ for all $a\in Q$. The group  $\, _1N^A_l (Q, +)$ acts on the set $Q$ simply transitively.

If $(Q, \cdot)  = (Q,+) (\alpha, \beta, \gamma)$, then $\, _1N^A_l (Q, \cdot) = \alpha^{-1} \, _1N^A_l (Q, +) \alpha$ (Lemma \ref{ISOM_AUTOT_GR}).

Let $a, b \in Q$. Prove that there exists permutation $\psi \in \, _1N^A_l (Q, \cdot)$ such that $\psi a = b$. We can write permutation $\psi$ in the form $\alpha^{-1} L^+_x \alpha$.
Since the group $\, _1N^A_l (Q, +)$ is transitive on the set $Q$, we can take element $x$ such that $L^+_x \alpha a = \alpha b$.
Then $\psi a = \alpha^{-1} L^+_x \alpha a = \alpha^{-1} \alpha b = b$.

The fact that action of the group $\, _1N^A_l$ on the set $Q$ is semiregular follows from Theorem \ref{GROUP_ACTION_ON QUAS}.

Other cases  are proved in the similar way.
\end{proof}

\begin{definition} \label{TRANSITIVITY_DEF}
   \textit{A-nuclear quasigroup}  is a quasigroup with transitive  action of at least one from its components of A-nuclei.
\end{definition}

Definition \ref{TRANSITIVITY_DEF}  is a generalization of  corresponding definition from  \cite{VD}.

\begin{corollary}
If at least one component of a quasigroup  A-nucleus is transitive, then all  components of quasigroup A-nuclei are transitive.
\end{corollary}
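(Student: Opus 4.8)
The plan is to derive the statement directly from Theorem~\ref{QUAS_NUCL_TR_TH}, which already contains all the substantive work: it asserts, for each of the six groups $_1N^A_l$, $_3N^A_l$, $_2N^A_r$, $_3N^A_r$, $_1N^A_m$, $_2N^A_m$, that its action on $Q$ is simply transitive \emph{if and only if} $(Q,\cdot)$ is a group isotope. Thus the whole argument amounts to bridging between ``transitive'' and ``simply transitive'' and then transporting the group-isotope property back to every component.

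First I would dispose of the three remaining components. By the definitions of $N^A_l$, $N^A_r$ and $N^A_m$ (Definition~\ref{A_NUCLEI}), the second component of every element of $N^A_l$, the first component of every element of $N^A_r$ and the third component of every element of $N^A_m$ equal $\varepsilon$; hence $_2N^A_l$, $_1N^A_r$ and $_3N^A_m$ are the one-element group, which acts transitively on $Q$ only when $|Q|=1$, a case in which the claim is vacuous. So we may assume the transitive component is one of the six groups listed above; denote it by $G$. By Theorem~\ref{GROUP_ACTION_ON QUAS} the action of $G$ on $Q$ is free (semiregular), and an action that is simultaneously transitive and free is regular, i.e. simply transitive (Definition~\ref{action}). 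Consequently $G$ acts on $Q$ simply transitively, and Theorem~\ref{QUAS_NUCL_TR_TH} (the ``only if'' direction applied to $G$) yields that $(Q,\cdot)$ is an isotope of a group.

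It then remains to run the converse direction of Theorem~\ref{QUAS_NUCL_TR_TH} once for each of $_1N^A_l$, $_3N^A_l$, $_2N^A_r$, $_3N^A_r$, $_1N^A_m$, $_2N^A_m$: since $(Q,\cdot)$ is a group isotope, each of these groups acts on $Q$ simply transitively, and in particular transitively, which is the desired conclusion (the statement being read, as it must be, as referring to the six components that can act non-trivially on $Q$). I do not anticipate any genuine obstacle here: all the real content is packaged in Theorems~\ref{GROUP_ACTION_ON QUAS} and \ref{QUAS_NUCL_TR_TH}, and the only points needing a sentence of care are the trivial status of the three ``$\varepsilon$-components'' and the elementary equivalence between regularity and (transitive plus free), which is what allows Theorem~\ref{QUAS_NUCL_TR_TH} to be invoked starting from mere transitivity.
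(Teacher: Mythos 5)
Your proposal is correct and follows essentially the same route as the paper, whose proof is simply the observation that the claim follows from Theorem \ref{QUAS_NUCL_TR_TH}. Your additional care — dismissing the identically-$\varepsilon$ components and upgrading mere transitivity to simple transitivity via the freeness of the action (Theorem \ref{GROUP_ACTION_ON QUAS}) — just makes explicit details the paper leaves implicit.
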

\begin{proof}
The proof follows from Theorem \ref{QUAS_NUCL_TR_TH}.
\end{proof}

We can reformulate Theorem \ref{QUAS_NUCL_TR_TH} in the following form.
\begin{theorem} \label{group_isotope_trans}
A quasigroup is A-nuclear  if and only if it is group isotope.
\end{theorem}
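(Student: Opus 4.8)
The plan is to read the statement simply as a packaging of Theorem \ref{QUAS_NUCL_TR_TH} together with Definition \ref{TRANSITIVITY_DEF}, using Theorem \ref{GROUP_ACTION_ON QUAS} to bridge the gap between ``transitive'' and ``simply transitive''. First I would unwind the definition: by Definition \ref{TRANSITIVITY_DEF}, the quasigroup $(Q,\cdot)$ is A-nuclear precisely when at least one of the six groups $_1N^A_l$, $_3N^A_l$, $_2N^A_r$, $_3N^A_r$, $_1N^A_m$, $_2N^A_m$ acts transitively on $Q$.

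The key observation is that for these particular components transitivity already forces simple transitivity. Indeed, by Theorem \ref{GROUP_ACTION_ON QUAS} every one of the six groups acts on $Q$ freely (semiregularly), with trivial stabilizers. A free action that is also transitive is, by Definition \ref{action}(3), regular, i.e.\ simply transitive. Hence the phrase ``transitive action of at least one of these components'' in Definition \ref{TRANSITIVITY_DEF} is equivalent to ``simply transitive action of at least one of these components''. Therefore $(Q,\cdot)$ is A-nuclear if and only if at least one of $_1N^A_l$, $_3N^A_l$, $_2N^A_r$, $_3N^A_r$, $_1N^A_m$, $_2N^A_m$ acts on $Q$ simply transitively.

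Now I would invoke Theorem \ref{QUAS_NUCL_TR_TH} in both directions. If some component acts simply transitively, then by that theorem $(Q,\cdot)$ is a group isotope, which gives the forward implication. Conversely, if $(Q,\cdot)$ is a group isotope, then Theorem \ref{QUAS_NUCL_TR_TH} tells us that \emph{each} of the six components acts on $Q$ simply transitively, in particular transitively, so $(Q,\cdot)$ is A-nuclear by Definition \ref{TRANSITIVITY_DEF}. This completes the equivalence, and also re-proves the Corollary preceding the statement (transitivity of one component propagates to all). I do not expect any real obstacle here; the only point that needs to be made carefully is the upgrade from transitivity to simple transitivity via the freeness assertion of Theorem \ref{GROUP_ACTION_ON QUAS}, after which the theorem is a direct restatement of Theorem \ref{QUAS_NUCL_TR_TH}.
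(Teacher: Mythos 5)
Your proposal is correct and follows essentially the same route as the paper, which presents this theorem simply as a reformulation of Theorem \ref{QUAS_NUCL_TR_TH} in light of Definition \ref{TRANSITIVITY_DEF}. Your explicit upgrade from ``transitive'' to ``simply transitive'' via the freeness assertion of Theorem \ref{GROUP_ACTION_ON QUAS} is exactly the bridge the paper relies on implicitly (the semiregularity is already invoked inside the proof of Theorem \ref{QUAS_NUCL_TR_TH}), so there is nothing to correct.
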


We give slightly other proof of Lemma \ref{NUCLEAR_QUAS}.
\begin{lemma} \label{NUCLEAR_QUAS_I}
If the order of a finite quasigroup $(Q, \cdot)$ is equal to the order of the group ${}_1N^A_l$, then the orders of
groups $N^A_m$, $N^A_r$,  $N^A_l$,  $_3N^A_l$, $_1N^A_m$,
$_2N^A_m$,  $_2N^A_r$ and $_3N^A_r$ are equal to $|Q|$.
\end{lemma}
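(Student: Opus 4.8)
The plan is to reduce everything to Theorem~\ref{QUAS_NUCL_TR_TH} by first upgrading the numerical hypothesis $|Q| = |{}_1N^A_l|$ to transitivity of the action of ${}_1N^A_l$ on $Q$. First I would recall from Theorem~\ref{GROUP_ACTION_ON QUAS} that ${}_1N^A_l$ acts on $Q$ freely, so all of its orbits have the same size, namely $|{}_1N^A_l|$; this is exactly the content of Corollary~\ref{ORDERS_OF ORBIT_IN_NUCLEI}. Since by hypothesis $|{}_1N^A_l| = |Q|$, the orbits partition $Q$ into blocks of size $|Q|$, hence there is a single orbit, i.e. the action is transitive; being also free, it is simply transitive.

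Next, Theorem~\ref{QUAS_NUCL_TR_TH} (equivalently Theorem~\ref{group_isotope_trans}) says that simple transitivity of the action of ${}_1N^A_l$ on $Q$ is equivalent to $(Q,\cdot)$ being a group isotope, so $(Q,\cdot)$ is a group isotope. Applying the converse direction of Theorem~\ref{QUAS_NUCL_TR_TH} to each of the remaining five groups ${}_3N^A_l$, ${}_2N^A_r$, ${}_3N^A_r$, ${}_1N^A_m$, ${}_2N^A_m$, I obtain that each of them also acts simply transitively on $Q$. A group that acts simply transitively on a finite set of cardinality $n$ necessarily has order $n$ (alternatively, invoke Corollary~\ref{ORDERS_OF ORBIT_IN_NUCLEI} once more: the unique orbit has size $|Q|$ and equals the group order); hence all six component groups have order $|Q|$.

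Finally, to pass from the components to the full A-nuclei $N^A_l$, $N^A_m$, $N^A_r$, I would invoke Lemma~\ref{ISOMORPHISMS OF COMPONENTS_1}, which gives $N^A_l \cong {}_1N^A_l$, $N^A_m \cong {}_1N^A_m$ and $N^A_r \cong {}_2N^A_r$; combined with the previous paragraph this yields $|N^A_l| = |N^A_m| = |N^A_r| = |Q|$, which together with the six component equalities is the full claim. The only step carrying any content is the first one, converting the numerical hypothesis into transitivity, and the mild point to be careful about there is that one genuinely needs the freeness of the action (Theorem~\ref{GROUP_ACTION_ON QUAS}) to know the orbits are equinumerous, so that a single orbit of full size must exhaust $Q$; everything after that is a direct citation of Theorem~\ref{QUAS_NUCL_TR_TH} and Lemma~\ref{ISOMORPHISMS OF COMPONENTS_1}.
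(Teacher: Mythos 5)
Your proposal is correct and follows essentially the same route as the paper: freeness of the action (Theorem \ref{GROUP_ACTION_ON QUAS}) together with the hypothesis $|{}_1N^A_l|=|Q|$ forces simple transitivity, after which Theorem \ref{QUAS_NUCL_TR_TH} (in both directions) and the componentwise isomorphisms give all nine orders equal to $|Q|$. You merely spell out the steps (equal orbit sizes, propagation to the other components, Lemma \ref{ISOMORPHISMS OF COMPONENTS_1}) that the paper's terse proof leaves implicit.
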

\begin{proof}
By Theorem \ref{GROUP_ACTION_ON QUAS} the group $N^A_l$ acts on the set $Q$ free (semiregular). From condition of the lemma it follows that the group $N^A_l$ acts on the set $Q$ regular (simply transitively).  Further we can apply  Theorem \ref{QUAS_NUCL_TR_TH}.
\end{proof}

\subsection{Identities with permutation  and group isotopes}

Conditions when a quasigroup is a group isotope were studied in classical article of V.D.~Belousov \cite{vdb1}.
Functional equations on quasigroups  are studied in \cite{FOUR_QUAS_TH, KRAPEZ_80, SOH_05_1, Koval_05, KRAPEZ_10}.
Linearity, one-sided linearity, anti-linearity and one sided anti-linearity of group and abelian group isotopes is studied  in the articles of V.D.~Belousov \cite{vdb1}, T.~Kepka and P.~Nemec \cite{pntk, tkpn}, G.B. Belyavskaya and A.Kh. Tabarov \cite{gbb_TAB_91, Bel_TAB_92, Bel_TAB_94, gbb, TABAR, TABAR_2000}, F.M. Sokhatskii \cite{SOH_99}, J.D.H. Smith \cite{SM_00} and many other mathematicians.

\begin{definition} \label{PERM_IDENTITY}
Let $(Q, \Omega)$ be an algebra. We shall name an identity of algebra  $(Q, \Omega)$ with incorporated in this identity fixed  permutations of the set $Q$  as identity with permutations (permutation identity) \cite{KEED_SCERB}.
\end{definition}

Identity with permutations can be  obtained from "usual" quasigroup identity  by rewriting these identities using quasigroup translations. Notice any permutation of the set $Q$ can be viewed as special kind of unary operation.

Permutation identities in explicit or implicit form are in works of V.D.~Belousov, G.B.~Be\-lyav\-s\-kaya, A.A.~Gvaramiya, A.D.~Keedwell, A.~Krapez,  F.N. Sokhatsky,  A.Kh. Tabarov and many other mathematicians that study  quasigroup identities.


We give the following based on Theorem \ref{group_isotope_trans} procedure to answer  the following question: is
a quasigroup with an identity  a group isotope?

\begin{procedure} \label{PROCEDURE}\begin{enumerate}
\item If we can write a permutation  identity  of a quasigroup $(Q, \cdot)$ in the form $A x \cdot y = \Gamma (x\cdot y)$, where $A, \Gamma$ are the sets of permutations of the set $Q$,  and can prove that the set $A$ or the set  $\Gamma$ acts on the set $Q$ transitively, then quasigroup $(Q, \cdot)$ is a group isotope.
\item If we can write a permutation  identity  of a quasigroup $(Q, \cdot)$ in the form $ x \cdot B y = \Gamma (x\cdot y)$, where $B, \Gamma $ are the sets of permutations of the set $Q$, and can prove that the set $B$ or the set $\Gamma$ acts on the set $Q$ transitively, then quasigroup $(Q, \cdot)$ is a group isotope.
\item If we can write a permutation   identity  of a quasigroup $(Q, \cdot)$ in the form $A x \cdot B y = x\cdot y$, where $A, B$ are the sets of permutations of the set $Q$,  and can prove that the set $A$ or the set $B$ acts on the set $Q$ transitively, then quasigroup $(Q, \cdot)$ is a group isotope.
\end{enumerate}
\end{procedure}

A procedure similar to Procedure \ref{PROCEDURE} is given in \cite{SOH_95_I, SOH_95_II, SOH_96_III}.

\begin{remark}
Procedure \ref{PROCEDURE} shows that it is possible to generalize  Definition \ref{PERM_IDENTITY} changing the words "fixed  permutations of the set $Q$" by the words "fixed sets of permutations of the set $Q$".
\end{remark}

We give Belousov criteria, when a group isotope is an abelian group isotope.

\begin{lemma} \label{COMMUT_VDB} Belousov criteria.
If in a group $(Q, +)$  the equality $\alpha x + \beta y = \gamma y + \delta x$ holds for all $x, y \in Q$,
where $\alpha,  \beta, \gamma, \delta$ are some fixed permutations of $Q$, then $(Q, +)$ is an abelian group
\cite{vdb1}.
\end{lemma}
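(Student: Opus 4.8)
The plan is to normalize the identity by plugging in the neutral element of $(Q,+)$, and then to squeeze out commutativity from the fact that two of the four permutations are bijections.

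First I would let $0$ denote the neutral element of $(Q,+)$ and specialize the hypothesis $\alpha x + \beta y = \gamma y + \delta x$ at $y=0$ and at $x=0$. The first specialization gives $\alpha x = \gamma 0 + \delta x - \beta 0$, the second gives $\beta y = -\alpha 0 + \gamma y + \delta 0$, where $-a$ denotes the inverse in $(Q,+)$ and all sums are read strictly left to right, with no commutativity assumed anywhere. Substituting both expressions back into the original identity yields, for all $x,y \in Q$, an equality whose left-hand side has the shape $k_x + \gamma y + \delta 0$ with $k_x := \gamma 0 + \delta x - \beta 0 - \alpha 0$ depending only on $x$, and whose right-hand side is $\gamma y + \delta x$.

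Next, since $\gamma$ is a permutation of $Q$, as $y$ ranges over $Q$ the element $w := \gamma y$ ranges over all of $Q$; hence $k_x + w + \delta 0 = w + \delta x$ for every $w \in Q$. Taking $w = 0$ identifies $k_x = \delta x - \delta 0$, and feeding this back into the displayed relation and cancelling $\delta 0$ on the right of both sides gives $(\delta x - \delta 0) + w = w + (\delta x - \delta 0)$ for all $w \in Q$; that is, $\delta x - \delta 0$ lies in the center of $(Q,+)$. Finally, since $\delta$ is a permutation, $\delta x$ runs over all of $Q$, so $\delta x - \delta 0$ (the image of $\delta x$ under right translation by $-\delta 0$, again a bijection) also runs over all of $Q$; therefore every element of $Q$ is central and $(Q,+)$ is abelian.

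The only delicate point is bookkeeping: every rearrangement must be carried out without ever transposing two summands, since commutativity is precisely what is to be proved; the two genuinely productive moves are the substitutions of $0$ (which pin down $\alpha$ and $\beta$ in terms of $\gamma$, $\delta$, and constants) and the two appeals to surjectivity of $\gamma$ and of $\delta$ (which upgrade statements about images of these maps to statements valid on all of $Q$). I do not expect any structural obstacle beyond this; the proof is essentially Belousov's.
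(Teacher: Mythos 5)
Your proof is correct and follows essentially the same route as the paper: specialize the identity at the neutral element to express $\alpha$ and $\beta$ through $\gamma$, $\delta$ and constants, substitute back, and exploit surjectivity of the permutations. The only cosmetic difference is the finish: the paper first replaces $x,y$ by $\delta^{-1}x,\gamma^{-1}y$ so the identity reads $R^{+}_{d}x + L^{+}_{k}y = y + x$ and concludes from $d+k=0$, whereas you deduce that $\delta x - \delta 0$ is central and then use surjectivity of $\delta$ — the same Belousov normalization either way.
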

\begin{proof}
From equality $\alpha x + \beta y = \gamma y + \delta x$ we have
\begin{equation} \label{BEL_CRITERIA}
\alpha \delta^{-1} x + \beta \gamma^{-1} y =  y +  x
\end{equation}
 If we put in equality (\ref{BEL_CRITERIA}) $x=0$, then $\beta \gamma^{-1} = L^+_{k}$, where $k = - \alpha \delta^{-1} 0$.

  If we put in equality (\ref{BEL_CRITERIA}) $y=0$, then $\alpha \delta^{-1} = R^+_{d}$, where $d = - \beta \gamma^{-1} 0$.

  We can rewrite equality (\ref{BEL_CRITERIA}) in the form
\begin{equation} \label{BEL_CRITERIA_1}
R^+_{d} x + L^+_{k}y =  y +  x
\end{equation}
If we put $x = y = 0$ in equality (\ref{BEL_CRITERIA_1}), then $d + k = 0$ and equality (\ref{BEL_CRITERIA_1}) takes the form $ x + y =  y +  x$.
\end{proof}

There exists  also  the following corollary from results of F.N.~So\-kha\-ts\-kii
(\cite{SOH_07}, Theorem 6.7.2). Notice up to isomorphism any isotope is principal.

\begin{corollary} \label{SOHA_REZ}
If in a principal group isotope $(Q, \cdot)$ of a group $(Q,+)$    the equality $\alpha x \cdot \beta y = \gamma
y \cdot \delta x$ holds for all $x, y \in Q$, where $\alpha,  \beta, \gamma, \delta$ are some fixed permutations
of $Q$, then $(Q, +)$ is an abelian group.
\end{corollary}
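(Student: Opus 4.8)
The plan is to reduce the claim directly to Belousov's criteria, Lemma~\ref{COMMUT_VDB}. Since $(Q,\cdot)$ is a \emph{principal} isotope of the group $(Q,+)$, by definition there exist permutations $\mu_1,\mu_2$ of $Q$ such that $x\cdot y = \mu_1 x + \mu_2 y$ for all $x,y\in Q$; ``principal'' means precisely that the third component of the isotopy is the identity map, so that no outer permutation appears on the right-hand side. (As noted just before the statement, up to isomorphism every isotope is principal, so restricting to this case loses no generality.)

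Next I would substitute this expression for the operation $\cdot$ into the hypothesis $\alpha x\cdot\beta y = \gamma y\cdot\delta x$. The left-hand side becomes $\mu_1(\alpha x)+\mu_2(\beta y)$ and the right-hand side becomes $\mu_1(\gamma y)+\mu_2(\delta x)$, so the identity rewrites as
\[
(\mu_1\alpha)\,x + (\mu_2\beta)\,y = (\mu_1\gamma)\,y + (\mu_2\delta)\,x \qquad \text{for all } x,y\in Q .
\]
Because a composition of permutations of $Q$ is again a permutation of $Q$, the four maps $\mu_1\alpha$, $\mu_2\beta$, $\mu_1\gamma$, $\mu_2\delta$ are permutations of $Q$, and the displayed equality is exactly of the shape $\alpha' x + \beta' y = \gamma' y + \delta' x$ considered in Lemma~\ref{COMMUT_VDB}.

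Applying Lemma~\ref{COMMUT_VDB} with $\alpha'=\mu_1\alpha$, $\beta'=\mu_2\beta$, $\gamma'=\mu_1\gamma$, $\delta'=\mu_2\delta$ then yields that $(Q,+)$ is an abelian group, which is the assertion. I do not expect a genuine obstacle here; the only points requiring a moment's care are to invoke ``principal'' in the form that produces the two-summand expression $x\cdot y=\mu_1 x+\mu_2 y$, and the elementary observation that the relevant compositions are permutations, so that the hypotheses of Belousov's criteria are actually met. (One could alternatively cite Sokhatsky's Theorem~6.7.2 directly, but the reduction to Lemma~\ref{COMMUT_VDB} is self-contained within this paper.)
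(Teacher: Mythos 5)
Your proposal is correct and coincides with the paper's own argument: writing the principal isotope as $x\cdot y=\xi x+\chi y$, substituting into $\alpha x\cdot\beta y=\gamma y\cdot\delta x$, and applying Belousov's criteria (Lemma \ref{COMMUT_VDB}) to the resulting group identity is exactly what the paper does. No gaps; the remarks about compositions of permutations and the meaning of "principal" are the same routine checks the paper leaves implicit.
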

\begin{proof}
If $x\cdot y = \xi x + \chi y$, then we can re-write the equality $\alpha x \cdot \beta y = \gamma y \cdot
\delta x$ in the form $\xi \alpha x +\chi  \beta y = \xi \gamma y + \chi \delta x$. Now we can apply Belousov
criteria (Lemma \ref{COMMUT_VDB}).
\end{proof}

\begin{lemma}
A quasigroup $(Q, \cdot)$ with identity \begin{equation} \alpha_1 x\cdot \alpha_2 (y \cdot  z) = y\cdot \alpha_3(x \cdot \alpha_4 z), \label{(8)}\end{equation}
where $\alpha_1, \dots, \alpha_4$ are permutations of the set $Q$, is abelian group isotope.
\end{lemma}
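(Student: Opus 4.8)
The plan is to argue in two stages: first show that $(Q,\cdot)$ is a group isotope by bringing identity (\ref{(8)}) into one of the shapes of Procedure \ref{PROCEDURE} (equivalently, applying Theorem \ref{group_isotope_trans}), and then show that the underlying group is abelian by Belousov's criterion in the form of Corollary \ref{SOHA_REZ}.

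\textbf{Stage 1 (group isotope).} I would fix an element $a\in Q$ and substitute $x=a$ in (\ref{(8)}). The nested term $\alpha_3(a\cdot\alpha_4 z)$ then collapses to $h_a(z)$, where $h_a:=\alpha_3 L_a\alpha_4\in S_Q$, while the left-hand side becomes $\Gamma_a(y\cdot z)$ with $\Gamma_a:=L_{\alpha_1 a}\alpha_2\in S_Q$. Thus (\ref{(8)}) rewrites as the permutation identity $y\cdot h_a(z)=\Gamma_a(y\cdot z)$ for all $y,z\in Q$ and all $a\in Q$, i.e. $(\varepsilon,h_a,\Gamma_a)\in Avt(Q,\cdot)$ for every $a$, which is exactly case (2) of Procedure \ref{PROCEDURE} with the sets $B=\{h_a\mid a\in Q\}$ and $\Gamma=\{\Gamma_a\mid a\in Q\}$. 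I would then check that $\Gamma$ acts transitively (indeed simply transitively) on $Q$: given $p,q\in Q$, the equation $\Gamma_a p=\alpha_1 a\cdot\alpha_2 p=q$ has the unique solution $a=\alpha_1^{-1}(q/\alpha_2 p)$. By Procedure \ref{PROCEDURE}(2) (equivalently, since the component ${}_3N^A_r$ is thereby transitive, by Theorem \ref{group_isotope_trans}), $(Q,\cdot)$ is a group isotope; write $(Q,\cdot)\sim(Q,+)$ with $(Q,+)$ a group. Since abelianness of $(Q,+)$ is invariant under isomorphism, I may further assume the isotopy is principal.

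\textbf{Stage 2 (abelian).} Next I would fix $c\in Q$ and substitute $z=c$ in (\ref{(8)}). Using $y\cdot c=R_c y$ and $x\cdot\alpha_4 c=R_{\alpha_4 c}x$, the identity becomes
\[
\alpha_1 x\cdot(\alpha_2 R_c)y \;=\; \varepsilon y\cdot(\alpha_3 R_{\alpha_4 c})x
\]
for all $x,y\in Q$. This is precisely an equality of the form $\alpha x\cdot\beta y=\gamma y\cdot\delta x$ treated in Corollary \ref{SOHA_REZ} (take $\alpha=\alpha_1$, $\beta=\alpha_2 R_c$, $\gamma=\varepsilon$, $\delta=\alpha_3 R_{\alpha_4 c}$), so $(Q,+)$ is abelian; hence $(Q,\cdot)$ is an abelian group isotope. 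Alternatively, writing $x\cdot y=\xi x+\chi y$ for the principal isotopy and substituting, the displayed equality turns into $\xi\alpha_1 x+\chi\alpha_2 R_c y=\xi y+\chi\alpha_3 R_{\alpha_4 c}x$, to which Lemma \ref{COMMUT_VDB} applies directly.

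The computations are routine translation bookkeeping; the two points needing care are, in Stage 1, verifying that the set $\Gamma$ genuinely acts transitively on $Q$ so that Procedure \ref{PROCEDURE} is applicable, and, in Stage 2, passing to a principal isotope of $(Q,+)$ before invoking Corollary \ref{SOHA_REZ}. I do not anticipate a real obstacle: the lemma is essentially a packaging of Theorem \ref{group_isotope_trans}, Procedure \ref{PROCEDURE}, and Corollary \ref{SOHA_REZ}, the only mild subtlety being the choice of which variable to specialize (fixing $x$ for Stage 1, fixing $z$ for Stage 2) so that the nested product becomes a single permutation.
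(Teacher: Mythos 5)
Your proposal is correct and follows essentially the same route as the paper: rewrite (\ref{(8)}) with one variable fixed to exhibit an autotopy of the form $(\varepsilon,\beta,\gamma)$ and invoke Procedure \ref{PROCEDURE}/Theorem \ref{group_isotope_trans} for group isotopy, then fix $z$ and apply Corollary \ref{SOHA_REZ} for abelianness. The only (immaterial) difference is that you verify transitivity of the set $\{L_{\alpha_1 a}\alpha_2\}$ while the paper uses the set $\{\alpha_3 L_x \alpha_4\}$; both fall under the same case of the procedure.
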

\begin{proof}
We can re-write identity (\ref{(8)}) in the form $L_{\alpha_1 x}\alpha_2(y \cdot z) = y\cdot \alpha_3 L_x \alpha_4 z $. Since $(Q,\cdot)$  is a quasigroup, then for any fixed elements $a, b \in Q$ the exists an element $t$ such that $ta=b$, i.e.  $L_t a = b$.  Then the set of translations $\{ L_x \, | \, x\in Q  \}$ acts on the set $Q$ transitively.
Therefore the set of permutations of the form $\alpha_3 L_x \alpha_4$ also acts transitively on $Q$.
By Theorem \ref{group_isotope_trans} quasigroup $(Q, \cdot)$ is group isotope.

We can re-write identity (\ref{(8)}) in the form $\alpha_1 x\cdot \alpha_2 R_z y  = y\cdot \alpha_3 R_{\alpha_4 z} x $. From  Corollary \ref{SOHA_REZ} if follows that quasigroup $(Q, \cdot)$ is abelian group isotope.
\end{proof}

\begin{definition}
Let $(Q, \cdot)$ be a groupoid.  Identity of the form
\begin{equation} \label{PERM_MED_ID}
\alpha_1(\alpha_2 x\cdot \alpha_3 y) \cdot \alpha_4 (\alpha_5 u \cdot \alpha_6 v) = \alpha_7(xu) \cdot \alpha_8(yv)
 \end{equation}
 where $\alpha_1, \dots , \alpha_8$ are fixed permutations of the set $Q$, we shall name permutation medial identity.

Identity of the form
\begin{equation}\label{PERM_PARAMED_ID}
\alpha_1(\alpha_2 x\cdot \alpha_3 y) \cdot \alpha_4 (\alpha_5 u \cdot \alpha_6 v) = \alpha_7(vy) \cdot \alpha_8(ux)
 \end{equation}
 where $\alpha_1, \dots , \alpha_8$ are fixed permutations of the set $Q$, we shall name permutation paramedial identity.
\end{definition}

\begin{theorem} \label{PERM_MED_ID_TH}
\begin{enumerate}
\item Permutation medial quasigroup $(Q, \cdot)$ is an abelian group isotope.
\item Permutation paramedial quasigroup $(Q, \cdot)$ is an abelian group isotope.
\end{enumerate}
\end{theorem}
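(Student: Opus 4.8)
The plan is to reduce the permutation medial (resp. paramedial) identity to a form covered by Procedure \ref{PROCEDURE} and Corollary \ref{SOHA_REZ}, exactly as in the proof of the previous lemma about identity (\ref{(8)}). First I would treat the medial case. Starting from (\ref{PERM_MED_ID}), I would freeze three of the four variables and read off a relation in the remaining one by rewriting products as translations. Concretely, fixing $y$, $u$, $v$ and letting $x$ vary, the left-hand side becomes $L_{\alpha_1(\alpha_2 x \cdot \alpha_3 y)}\,\alpha_4(\alpha_5 u \cdot \alpha_6 v)$ — but a cleaner choice is to fix $y,u,v$ and rewrite so that $x$ appears only inside translations on each side; then one side has the shape $\Gamma(x \cdot w)$ and the other has $A x \cdot w'$ for suitable sets of permutations $A$, $\Gamma$ (indexed by the frozen variables). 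Since the set of left (or right) translations of a quasigroup acts transitively on $Q$, the corresponding set $A$ or $\Gamma$ acts transitively, and Procedure \ref{PROCEDURE} (equivalently Theorem \ref{group_isotope_trans}) yields that $(Q,\cdot)$ is a group isotope, say $(Q,\cdot)\sim(Q,+)$.

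Once $(Q,\cdot)$ is known to be a group isotope, it remains to upgrade "group isotope" to "abelian group isotope." Here I would freeze $x$ and $v$ (or an appropriate pair) and let $y$ and $u$ vary, rewriting (\ref{PERM_MED_ID}) in the form $\alpha x \cdot \beta y = \gamma y \cdot \delta x$ after absorbing the frozen variables and the outer permutations $\alpha_1,\dots,\alpha_8$ into translations — the point being that on the right-hand side the roles of the "$y$-type" and "$u$-type" variables are swapped relative to the left-hand side, which is precisely the swap that the medial law encodes. Applying Corollary \ref{SOHA_REZ} (Belousov's criterion, via the principal isotope normalization noted just before it) then forces $(Q,+)$ to be abelian, so $(Q,\cdot)$ is an abelian group isotope.

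For the paramedial case (\ref{PERM_PARAMED_ID}) the argument is the same two-step pattern: first specialize variables to exhibit a transitive set of permutations on one side of an equation of the form $Ax\cdot y = \Gamma(x\cdot y)$ or $x\cdot By=\Gamma(x\cdot y)$, invoking Theorem \ref{group_isotope_trans} to get a group isotope; then specialize the other pair of variables to reach an equation $\alpha x\cdot\beta y=\gamma y\cdot\delta x$ and invoke Corollary \ref{SOHA_REZ}. The right-hand side $\alpha_7(vy)\cdot\alpha_8(ux)$ of the paramedial identity already has the two inner variables in "reversed blocks," which is even more directly suited to producing the Belousov-type relation.

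The main obstacle is bookkeeping rather than conceptual: choosing the specializations of the frozen variables so that (a) the resulting one-variable or two-variable relation genuinely has the required shape (a single free variable confined to the arguments of a product, or the swapped-variable pattern $\alpha x\cdot\beta y=\gamma y\cdot\delta x$), and (b) the composite maps that play the roles of $A$, $\Gamma$, $\alpha$, $\beta$, $\gamma$, $\delta$ really are permutations of $Q$ — this uses that each $\alpha_i$ is a permutation and that quasigroup translations are bijections, so the compositions are bijections. I expect no difficulty here, since every ingredient (translations are bijective, the translation sets act transitively, Belousov's criterion) is already available in the excerpt; the verification is routine once the right substitutions are written down.
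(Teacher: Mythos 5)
Your second half is exactly the paper's argument and is fine: freezing $x$ and $v$ in (\ref{PERM_MED_ID}) gives an equality of the form $\gamma_1 y\cdot\gamma_2 u=\gamma_3 u\cdot\gamma_4 y$ (with $\gamma_3=\alpha_7L_x$, $\gamma_4=\alpha_8R_v$, etc.), and Corollary \ref{SOHA_REZ} then forces the underlying group to be abelian. The genuine gap is in your first step, the one that is supposed to produce ``group isotope''. Freezing the three variables $y,u,v$ and letting only $x$ vary yields an identity in a single free variable, i.e.\ merely an equality of two permutations of $Q$ for each choice of the frozen elements; neither Procedure \ref{PROCEDURE} nor Theorem \ref{group_isotope_trans} applies to such a relation, since both need an autotopy-type identity in \emph{two} free variables (each choice of frozen variables must produce a triple acting on both factors and on the product). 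Likewise the target shapes you name --- one side $\Gamma(x\cdot w)$, the other $Ax\cdot w'$ with $w,w'$ built from frozen variables --- match none of the three cases of Procedure \ref{PROCEDURE}, each of which requires one slot to be a clean free variable or the clean product $x\cdot y$; in the medial identity the second factor of the left-hand side is $\alpha_4(\alpha_5u\cdot\alpha_6v)$ and is never a bare free variable. So, as written, this step would not go through, and your remark that only ``a single free variable confined to the arguments of a product'' is needed reflects the same misunderstanding.

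The reduction that works (and is the paper's) freezes exactly two variables, $y$ and $u$, keeping $x$ and $v$ free: (\ref{PERM_MED_ID}) becomes $\beta_1x\cdot\beta_2v=\beta_3x\cdot\beta_4v$ with $\beta_1=\alpha_1R_{\alpha_3 y}\alpha_2$, $\beta_2=\alpha_4L_{\alpha_5 u}\alpha_6$, $\beta_3=\alpha_7R_u$, $\beta_4=\alpha_8L_y$. Substituting $x\mapsto\beta_3^{-1}x$, $v\mapsto\beta_4^{-1}v$ shows that $(\beta_1\beta_3^{-1},\beta_2\beta_4^{-1},\varepsilon)$ is an autotopy, so $\beta_1\beta_3^{-1}\in{}_1N^A_m$. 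Fixing $u=a$ and letting $y$ run over $Q$, the permutations $\beta_1\beta_3^{-1}=\alpha_1R_{\alpha_3 y}\alpha_2R_a^{-1}\alpha_7^{-1}$ form a transitive set (right translations act transitively, and composing with fixed bijections preserves transitivity), hence ${}_1N^A_m$ acts transitively and Theorem \ref{group_isotope_trans} gives a group isotope; your Belousov step then upgrades this to an abelian group isotope, and the paramedial case runs the same way with the variable pairing adjusted. In short, the skeleton of your plan coincides with the paper's, but the choice of which two variables remain free is not mere bookkeeping: it is precisely the point where the A-nuclear (autotopy) structure is created, and your stated choice destroys it.
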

\begin{proof}
Case 1. \label{MED_EX}
 Using language of translations we can rewrite  identity (\ref{PERM_MED_ID}) in the form $\beta_1 x  \cdot \beta_2 v = \beta_3 x \cdot \beta_4 v$, where $\beta_1 = \alpha_1 R_{\alpha_3 y} \alpha_2$, $\beta_2 = \alpha_4 L_{\alpha_5 u} \alpha_6$, $\beta_3 = \alpha_7 R_u$, $\beta_4 = \alpha_8 L_y $.

Then $\beta_1 \beta^{-1}_3 \in \, _1N^A_m$ and the set of permutations of the form $\beta_1 \beta^{-1}_3$ acts on the set $Q$ transitively (we can take $u = a$, where the element $a$ is a fixed element of the set $Q$). Therefore by Theorem
\ref{group_isotope_trans} medial quasigroup is a group isotope.

 We can write identity (\ref{PERM_MED_ID})  in the form   $\gamma_1 y\cdot \gamma_2 u = \gamma_3 u\cdot \gamma_4 y$, where $\gamma_1 = \alpha_1 L_{\alpha_2 x} \alpha_3$, $\gamma_2 = \alpha_4 R_{\alpha_6 v} \alpha_5$, $\gamma_3 = \alpha_7 L_x$, $\gamma_4 = \alpha_8 R_v$.
From   Corollary \ref{SOHA_REZ} it follows that any permutation medial quasigroup is an isotope of abelian group.

Case 2 is proved similarly to Case 1.
\end{proof}

\begin{corollary}
 Medial quasigroup $(Q, \cdot)$ is an abelian group isotope \cite[p.33]{VD}.
 Paramedial quasigroup $(Q, \circ)$ is an abelian group isotope  \cite{pntk}.
\end{corollary}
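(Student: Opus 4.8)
The plan is to recognize that both assertions are immediate specializations of Theorem \ref{PERM_MED_ID_TH}, obtained by taking all the incorporated permutations to be the identity map $\varepsilon$ of $Q$.

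First I would recall that, by definition, a medial quasigroup $(Q,\cdot)$ is a quasigroup satisfying the medial identity
\[
(x\cdot y)\cdot(u\cdot v) = (x\cdot u)\cdot(y\cdot v)
\]
for all $x,y,u,v\in Q$. This is exactly identity (\ref{PERM_MED_ID}) in the case $\alpha_1=\alpha_2=\dots=\alpha_8=\varepsilon$. Hence every medial quasigroup is a permutation medial quasigroup, and Case~1 of Theorem \ref{PERM_MED_ID_TH} applies directly to give that $(Q,\cdot)$ is an abelian group isotope. Likewise, a paramedial quasigroup $(Q,\circ)$ is by definition a quasigroup satisfying
\[
(x\circ y)\circ(u\circ v) = (v\circ y)\circ(u\circ x),
\]
which is identity (\ref{PERM_PARAMED_ID}) with all $\alpha_i=\varepsilon$; so every paramedial quasigroup is a permutation paramedial quasigroup, and Case~2 of Theorem \ref{PERM_MED_ID_TH} yields that $(Q,\circ)$ is an abelian group isotope.

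There is essentially no obstacle: the only point to verify is that the classical defining identities of medial and paramedial quasigroups literally occur among the permutation identities (\ref{PERM_MED_ID}) and (\ref{PERM_PARAMED_ID}) once the permutations are trivial, which is immediate. If one preferred a self-contained argument, one could instead re-run the translation-form computation used in the proof of Theorem \ref{PERM_MED_ID_TH}: rewrite $(x\cdot y)\cdot(u\cdot v)=(x\cdot u)\cdot(y\cdot v)$ as $R_{\,\cdot\,} R_{\,\cdot\,}$- and $L_{\,\cdot\,}$-type expressions to extract a transitively acting family of permutations lying in ${}_1N^A_m$, conclude via Theorem \ref{group_isotope_trans} that $(Q,\cdot)$ is a group isotope, and then apply Corollary \ref{SOHA_REZ} to upgrade "group isotope" to "abelian group isotope"; the paramedial case is handled the same way. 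Invoking Theorem \ref{PERM_MED_ID_TH} directly is, however, the shortest route, and that is the proof I would record.
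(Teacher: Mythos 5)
Your proposal is correct and coincides with the paper's own argument: the paper proves this corollary precisely by noting that the medial and paramedial identities are the special cases of the permutation identities (\ref{PERM_MED_ID}) and (\ref{PERM_PARAMED_ID}) with all $\alpha_i=\varepsilon$, and then invoking Theorem \ref{PERM_MED_ID_TH}. The alternative self-contained route you sketch is just the proof of that theorem re-run in the trivial-permutation case, so nothing further is needed.
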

\begin{proof}
The proof follows from Theorem \ref{PERM_MED_ID_TH}.
\end{proof}

Any permutation identity of the form
\begin{equation}
\alpha_1 (\alpha_2(\alpha_3 x \cdot \alpha_4 y) \cdot \alpha_5 z) = \alpha_6 x  \cdot \alpha_7 (\alpha_8 y \cdot \alpha_9 z)
\end{equation}
on a quasigroup $(Q, \cdot)$, where all $\alpha_i$ are some fixed permutations of the set $Q$,  it is possible to reduce to the following identity \cite{BEL_2007}
\begin{equation} \label{GB_IDENT}
\alpha_1 (\alpha_2(x \cdot y) \cdot z) = \alpha_3 x  \cdot (\alpha_4 y \cdot \alpha_5 z)
\end{equation}

In \cite{BEL_2007} using famous Four quasigroups theorem \cite{AC_BEL_HOS, 2} it is proved:
if a quasigroup $(Q, \cdot)$ satisfies identity  (\ref{GB_IDENT})
then this quasigroup is a group isotope and vice versa.

We can rewrite equality (\ref{GB_IDENT}) in the form
\begin{equation}
\alpha_1 \alpha_2 R_z (x \cdot y) = \alpha_3 x  \cdot R_{\alpha_5 z} \alpha_4 y\end{equation}
If $\alpha_3 = \varepsilon$, then by Procedure \ref{PROCEDURE} Case 2 quasigroup $(Q, \cdot)$ with equality $\alpha_1 \alpha_2 R_z (x \cdot y) =  x  \cdot R_{\alpha_5 z} \alpha_4 y$ is a group isotope.

\section{A-centers of a quasigroup}

\subsection{Classical definitions of center of a loop and a quasigroup}

 R.H.~Bruck defined a center of a loop $(Q,\cdot)$ in the following way  \cite{RHB}.
\begin{definition} \label{BRUCK_DEF_CENTER}
Let $(Q, \cdot)$ be a loop. Then center $Z$ of loop $(Q, \cdot)$ is the following set $Z (Q,\cdot) = N \cap C$, where $C = \{a \in Q \, \mid \, a\cdot x = x\cdot a \,\, \forall x \in Q\}$ \cite{RHB, HOP}.
\end{definition}

We follow \cite{HOP} in the denoting of loop center by the letter $Z$.

Notice,  that in  a loop (even in  a groupoid)
 $Z = N_l \cap N_r \cap C = N_l \cap N_m \cap C = N_m \cap N_r \cap C$  \cite{SCERB_03}.

It is well known that in loop case $Z(Q)$ is normal abelian subgroup of the loop $Q$ \cite{RHB}.

J.D.H.~Smith \cite{SM, JDH_86,  CPS, JDH_2007}
has given definition of center of quasigroup  in the language of universal algebra (Congruence Theory). J.D.H.~Smith  defined central congruence in a quasigroup. Center of a quasigroup is a coset class of central congruence. Notice that any congruence of quasigroup $Q$ defines a subquasigroup of $Q^{\,2}$ \cite{SM}.
G.B.~Belyavskaya and J.D.H.~Smith definitions are close.

A quasigroup $(Q,\cdot)$ is a T-quasigroup if and only if there exists an abelian group $(Q,+)$,  its automorphisms $\varphi$ and $\psi$  and a fixed element $a\in Q$ such that $x\cdot y = \varphi x + \psi y + a$ for all $x, y \in Q$ \cite{pntk}.

A quasigroup is central, if it coincides with its center.

\begin{theorem} \label{GBEL_CENTRAL_TH}
(Be\-lyav\-s\-kaya Theorem). A quasigroup is central (in  Belyavskaya and Smith sense) if and only if it is a T-quasigroup \cite{Bel_94}.
\end{theorem}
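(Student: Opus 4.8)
The plan is to run the equivalence through the commutator–theoretic notion of an \emph{abelian} algebra and the fundamental theorem on affine algebras in Mal'cev varieties, which is precisely the framework in which Smith's centre is defined. Recall that the centre of a quasigroup $(Q,\cdot)$ (Smith) is a class of the central congruence $\zeta_Q$, the largest congruence $\alpha$ with $[\alpha,\nabla_Q]=\Delta_Q$; hence $(Q,\cdot)$ is central exactly when $\zeta_Q=\nabla_Q$, i.e. when $[\nabla_Q,\nabla_Q]=\Delta_Q$, i.e. when the algebra $(Q,\cdot,\backslash,/)$ is abelian. Since quasigroups form a congruence–permutable (hence congruence–modular) variety — a Mal'cev term being $p(x,y,z)=(x/(y\backslash y))\cdot(y\backslash z)$, for which one checks $p(x,x,z)=z$ and $p(x,z,z)=x$ from identities (\ref{(1)})--(\ref{(4)}) — the Herrmann--Gumm theorem applies: an abelian algebra in such a variety is affine, i.e. polynomially equivalent to a module over a ring. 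So it suffices to prove: $(Q,\cdot)$ is affine over some abelian group $\iff$ $(Q,\cdot)$ is a $T$-quasigroup.

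For the implication $(\Leftarrow)$, suppose $x\cdot y=\varphi x+\psi y+a$ with $(Q,+)$ an abelian group and $\varphi,\psi\in\operatorname{Aut}(Q,+)$. First I would record the division operations: $x/y=\varphi^{-1}x-\varphi^{-1}\psi y-\varphi^{-1}a$ and $x\backslash y=\psi^{-1}y-\psi^{-1}\varphi x-\psi^{-1}a$, which are again affine over $(Q,+)$. Thus every basic operation of $(Q,\cdot,\backslash,/)$, and therefore every term operation (affine operations being closed under composition), has the shape $\bar x\mapsto\sum_i\theta_i(x_i)+e$ with $\theta_i$ endomorphisms of $(Q,+)$. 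The term condition then holds automatically: in an equality $t(\bar a,\bar c)=t(\bar a,\bar d)$ the contribution of $\bar a$ cancels, leaving a relation between $\bar c$ and $\bar d$ independent of $\bar a$, so $t(\bar b,\bar c)=t(\bar b,\bar d)$ for every $\bar b$. Hence $(Q,\cdot,\backslash,/)$ is abelian, so $\zeta_Q=\nabla_Q$, i.e. $(Q,\cdot)$ is central.

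For $(\Rightarrow)$, suppose $(Q,\cdot)$ is central, hence abelian, hence by the fundamental theorem affine: there is an abelian group $(Q,+)$ (the additive group of the associated module) with $x\cdot y=\lambda(x,y)+c$ for some homomorphism $\lambda\colon(Q,+)^2\to(Q,+)$ and some $c\in Q$. Put $\varphi x=\lambda(x,0)$ and $\psi y=\lambda(0,y)$; these are endomorphisms of $(Q,+)$ and $\lambda(x,y)=\varphi x+\psi y$, so $x\cdot y=\varphi x+\psi y+c$. Finally, since $(Q,\cdot)$ is a quasigroup, each $L_x$ and each $R_y$ is a bijection of $Q$; fixing $y$ (respectively $x$) shows $\varphi$ (respectively $\psi$) is a bijective endomorphism of $(Q,+)$, i.e. an automorphism. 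Therefore $(Q,\cdot)$ is a $T$-quasigroup.

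The main obstacle is the input ``abelian $\Rightarrow$ affine'' in Mal'cev varieties: this is the genuinely non-routine ingredient and I would cite it rather than reprove it (Freese--McKenzie for the general congruence–modular statement; Smith's \emph{Mal'cev Varieties} and \cite{Bel_94} for the quasigroup form). Two minor points still deserve care: confirming that the displayed Mal'cev term really satisfies $p(x,x,y)=y=p(y,x,x)$ in \emph{every} quasigroup, and checking that ``$Q$ coincides with its centre'' forces $\zeta_Q=\nabla_Q$ rather than merely producing one large block — here one uses that $\zeta_Q$ is a congruence and the centre is one of its classes. A more self-contained alternative would follow Belyavskaya's original argument, encoding her centre through normality of a suitable congruence on $Q\times Q$ together with the action of the multiplication group; but this essentially redevelops the same commutator machinery by hand.
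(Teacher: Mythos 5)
The paper itself gives no proof of this theorem: it is simply quoted from Belyavskaya's article \cite{Bel_94}, so there is no in-paper argument to compare yours against. Judged on its own, your commutator-theoretic route is correct in all the steps you make explicit: the term $p(x,y,z)=(x/(y\backslash y))\cdot(y\backslash z)$ is indeed a Mal'cev term (since $x\backslash x=e(x)$, identities (\ref{(1)}) and (\ref{(2)}) give $p(x,x,z)=z$ and $p(x,z,z)=x$); ``$Q$ coincides with its centre'' does force $\zeta_Q=\nabla_Q$, hence abelianness of the equational quasigroup $(Q,\cdot,\backslash,/)$; your formulas for $\backslash$ and $/$ in a T-quasigroup are right, and the term-condition computation in the affine setting is routine; and in the converse direction the decomposition $\lambda(x,y)=\lambda(x,0)+\lambda(0,y)$ plus bijectivity of translations correctly upgrades the endomorphisms $\varphi,\psi$ to automorphisms. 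The genuinely hard content is concentrated in the cited input ``abelian $\Rightarrow$ affine'' for Mal'cev (congruence-modular) varieties, which is a legitimate thing to quote — this makes your proof exactly as (non-)self-contained as the paper's citation, just with the black box moved from Belyavskaya's paper to Herrmann--Gumm/Smith. Two small caveats: the statement speaks of centrality ``in Belyavskaya \emph{and} Smith sense,'' and your argument addresses only Smith's commutator-theoretic centre, so strictly you are also using (or should state) the known coincidence of the two centres, which the paper glosses as the definitions being ``close''; and when invoking affineness you should note that congruences of $(Q,\cdot,\backslash,/)$ are the \emph{normal} congruences of $(Q,\cdot)$ (Lemma \ref{CONGR_PRIM_QUAS}), so that Smith's centre, defined for the equational quasigroup, is the object your $\zeta_Q$ refers to. Belyavskaya's own proof is a more elementary, translation-based argument inside quasigroup theory; your version buys generality and brevity at the cost of the universal-algebraic machinery.
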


An overview of various definitions of quasigroup center is in \cite{SCERB_03}.

\subsection{A-nuclei and autotopy group}

We recall that isotopic quasigroups have isomorphic autotopy groups \cite{VD} (see Lemma \ref{ISOM_AUTOT_GR} of this paper).

It is well known that any quasigroup $(Q, \cdot) $ that is an isotope of a group $Q$  has the following structure of its autotopy group \cite{IVL, vs2}
\[
Avt (Q, \cdot) \cong (Q \times Q) \leftthreetimes Aut(Q)
\]

\begin{theorem} \label{NORM_SUB_AUTOT_GR}
For any  loop  $Q$ we have

\begin{equation*}
 (N^A_l \times N^A_r)  \leftthreetimes Aut(Q) \cong H  \subseteq  Avt (Q)
\end{equation*}
\begin{equation*}
(N^A_l \times N^A_m)  \leftthreetimes Aut(Q) \cong H  \subseteq  Avt (Q)
\end{equation*}
\begin{equation*}
   (N^A_m \times N^A_r)  \leftthreetimes Aut(Q) \cong H  \subseteq  Avt (Q)
\end{equation*}
\end{theorem}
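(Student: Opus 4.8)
The plan is to realize each of the three groups on the left as an \emph{internal} semidirect product of explicit subgroups of $Avt(Q)$. Throughout, identify $Aut(Q)$ with its image in $Avt(Q)$ under the injective homomorphism $\varphi \mapsto (\varphi,\varphi,\varphi)$ (the equality $\varphi(x)\cdot\varphi(y)=\varphi(x\cdot y)$ says precisely that $(\varphi,\varphi,\varphi)$ is an autotopy). By Lemma \ref{NORMALITY_OF_A_NUCL} each of $N^A_l$, $N^A_m$, $N^A_r$ is a normal subgroup of $Avt(Q)$; hence for any two of them, say $N^A_X$ and $N^A_Y$ with $X\neq Y$, the set product $N^A_X N^A_Y$ is again a normal subgroup of $Avt(Q)$, and the commutator $[N^A_X,N^A_Y]$ is contained in $N^A_X\cap N^A_Y$.

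First I would check that $N^A_l\cap N^A_r$, $N^A_l\cap N^A_m$ and $N^A_m\cap N^A_r$ are all trivial. Indeed, an element lying in two of these nuclei has two of its three components equal to $\varepsilon$ — for instance an element of $N^A_l\cap N^A_r$ has first component $\varepsilon$ by the shape of $N^A_r$ and second component $\varepsilon$ by the shape of $N^A_l$ — so by Corollary \ref{COROLL_OF_LEAKH_TH} the third component is $\varepsilon$ as well. Combined with the previous paragraph this gives $[N^A_X,N^A_Y]=\{\varepsilon\}$, so $N^A_X$ and $N^A_Y$ centralise each other and the product map yields $N^A_X N^A_Y\cong N^A_X\times N^A_Y$.

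The key step, which I expect to require the most care, is to show that the diagonal copy of $Aut(Q)$ meets each product $N^A_X N^A_Y$ only in the identity autotopy. Here I would invoke Theorem \ref{A-NUCLEI_OF_LOOPS}: since $Q$ is a loop, every element of $N^A_l$ has the form $(L_a,\varepsilon,L_a)$, every element of $N^A_r$ the form $(\varepsilon,R_b,R_b)$, and every element of $N^A_m$ the form $(R_c,L^{-1}_c,\varepsilon)$. Multiplying two such elements componentwise and comparing with $(\varphi,\varphi,\varphi)$ forces, in each of the three cases, one of the translations to equal $\varepsilon$; since in a loop $R_c=\varepsilon$ (resp.\ $L_c=\varepsilon$) only for $c=1$, one concludes $\varphi=\varepsilon$. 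For example, $(L_a,\varepsilon,L_a)(\varepsilon,R_b,R_b)=(L_a,R_b,L_aR_b)=(\varphi,\varphi,\varphi)$ gives $L_a=R_b$, hence $a=b$ (evaluate at $1$) and $L_a=R_a$, and then $L_aR_b=L_a$ yields $R_a=\varepsilon$, so $a=1$ and $\varphi=\varepsilon$; the pairs $(N^A_l,N^A_m)$ and $(N^A_m,N^A_r)$ are handled the same way, each time using the uniqueness of the loop identity.

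Finally I would assemble the semidirect product. For any two of the nuclei, $H:=(N^A_X N^A_Y)\cdot Aut(Q)$ is a subgroup of $Avt(Q)$, being the product of a normal subgroup with a subgroup; $N^A_X N^A_Y$ is normal in $H$, and $(N^A_X N^A_Y)\cap Aut(Q)=\{\varepsilon\}$ by the previous step, so $H$ is the internal semidirect product $(N^A_X N^A_Y)\leftthreetimes Aut(Q)$ with $Aut(Q)$ acting by conjugation. Since $\varphi L_a\varphi^{-1}=L_{\varphi a}$ and $\varphi R_b\varphi^{-1}=R_{\varphi b}$ for $\varphi\in Aut(Q)$, this conjugation action preserves the two direct factors, so $H\cong(N^A_X\times N^A_Y)\leftthreetimes Aut(Q)$. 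Applying this with $(X,Y)=(l,r),(l,m),(m,r)$ produces the three displayed isomorphisms.
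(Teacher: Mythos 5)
Your proposal is correct and follows essentially the same route as the paper: trivial pairwise intersections of the A-nuclei via Corollary \ref{COROLL_OF_LEAKH_TH}, the explicit loop forms $(L_a,\varepsilon,L_a)$, $(\varepsilon,R_b,R_b)$, $(R_c,L^{-1}_c,\varepsilon)$ from Theorem \ref{A-NUCLEI_OF_LOOPS} to show the diagonal copy of $Aut(Q)$ meets the product of two A-nuclei only in the identity, and then the internal semidirect product decomposition of $H$. The only (harmless) variation is how you turn $N^A_X N^A_Y$ into a direct product: you use normality in $Avt(Q)$ (Lemma \ref{NORMALITY_OF_A_NUCL}) together with $[N^A_X,N^A_Y]\subseteq N^A_X\cap N^A_Y=\{\varepsilon\}$, whereas the paper deduces the commutation of the third components from Lemma \ref{TWO_AUTOTOPY_COMPONENTS}.
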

\begin{proof}
Case 1.
It is clear that the groups $N^A_l$, $N^A_r$ and $Aut(Q)$ are subgroup of the group $Avt (Q)$.

Further proof is quit standard for group theory \cite{KM}.  Let $H  = N^A_l \cdot  N^A_r \cdot Aut(Q)$. Here the operation "$\cdot$" is the operation of  multiplication  of triplets of the group $Avt(Q)$.

We demonstrate that $N^A_l \cap  N^A_r = (\varepsilon, \varepsilon,\varepsilon)$.

 Any element of the group $N^A_l \cap  N^A_r$ has the form $(\varepsilon, \varepsilon, \gamma)$. By Corollary \ref{COROLL_OF_LEAKH_TH} $\gamma = \varepsilon$ in this case.

It is easy to see that $N^A_l \unlhd Avt(Q)$. Then $N^A_l \unlhd H$. Similarly $N^A_r \unlhd H$.

Prove that $N^A_l \cdot  N^A_r = N^A_r \cdot  N^A_l$. Let $(\delta, \varepsilon, \lambda) \in N^A_l$, $(\varepsilon,   \mu, \psi) \in N^A_r$.  Any element of the set $N^A_l \cdot  N^A_r$ has the form $(\delta, \mu, \lambda\psi)$ and any element of the set $N^A_r \cdot  N^A_l$ has the form $(\delta, \mu, \psi\lambda)$. By Lemma  \ref{TWO_AUTOTOPY_COMPONENTS} any two autotopy components define the third component in the unique way, therefore $\psi\lambda =  \lambda\psi$ in our case.
Thus $N^A_l \cdot  N^A_r = N^A_l \times  N^A_r$.

Prove that $N^A_l \times  N^A_r \unlhd Avt(Q)$.
It is easy to see that $N^A_l \unlhd Avt(Q)$, $ N^A_r \unlhd Avt(Q)$.
Indeed, in loop case we have
\[
(\alpha^{-1}, \beta^{-1}, \gamma^{-1})  (\delta_1, \varepsilon, \lambda_1)(\alpha, \beta, \gamma) = (\delta_2, \varepsilon, \lambda_2)\] where $(\alpha, \beta, \gamma) \in Avt (Q)$,  $(\delta_1, \varepsilon, \lambda_1), (\delta_2, \varepsilon, \lambda_2) \in N^A_l$.

Similarly
\[
(\alpha^{-1}, \beta^{-1}, \gamma^{-1})  (\varepsilon, \mu_1, \psi_1)(\alpha, \beta, \gamma) = (\varepsilon, \mu_2, \psi_2)\]
Further  we have
\begin{equation}\label{EQUAL_29}
\begin{split}
& (\alpha^{-1}, \beta^{-1}, \gamma^{-1})  (\delta, \mu, \lambda\psi)(\alpha, \beta, \gamma) =  \\
& = (\alpha^{-1}, \beta^{-1}, \gamma^{-1})  (\delta, \varepsilon, \lambda) (\varepsilon, \mu, \psi)(\alpha, \beta, \gamma)=  \\
& = (\alpha^{-1}, \beta^{-1}, \gamma^{-1}) (\delta, \varepsilon, \lambda) (\alpha, \beta, \gamma) (\alpha^{-1}, \beta^{-1}, \gamma^{-1})  (\varepsilon, \mu, \psi) (\alpha, \beta, \gamma)= \\
& = (\delta_1, \varepsilon, \lambda_1) (\varepsilon, \mu_1, \psi_1) = (\delta_1, \mu_1, \lambda_1 \psi_1) \in N^A_l \times  N^A_r
\end{split}
\end{equation}

Therefore $N^A_l \times  N^A_r \unlhd Avt(Q)$ in any quasigroup $Q$.

Prove  that $(N^A_l \times  N^A_r) \cap  Aut(Q) = (\varepsilon, \varepsilon,\varepsilon)$. In this place we shall use
the fact that $Q$ is a loop. In a loop any element of the group $(N^A_l \times  N^A_r)$ has the form $(L_a, R_b, R_b L_a)$. Since any automorphism is an autotopy with equal components we have  $L_a = R_b L_a$, $R_b=\varepsilon$, $R_b =  R_b L_a$, $L_a = \varepsilon$.

Therefore we have  that  $H \cong (N^A_l \times  N^A_r) \leftthreetimes Aut(Q)$.

Cases 2 and 3 are proved similarly to Case 1.
\end{proof}

\begin{remark}
The fact  $N^A_l \cap  N^A_r = (\varepsilon, \varepsilon,\varepsilon)$  demonstrates that there is difference between Garrison left nucleus  $N_l$  and left A-nucleus  $N^A_l$,  between Garrison right nucleus  $N_r$  and right A-nucleus  $N^A_r$, and so on.
\end{remark}

\begin{remark}
Theorem \ref{NORM_SUB_AUTOT_GR} is true for any quasigroup $(Q, \cdot)$  but by isotopy ($(Q, \cdot)\sim (Q, +)$), in general, the group $Aut(Q, +)$ passes in a subgroup of the group $Avt(Q, \cdot)$ \cite{vs2}.
\end{remark}

\begin{corollary} \label{NORM_DIR_PROD}
For any  quasigroup $Q$ we have
\begin{equation*}
 (N^A_l \times N^A_r)  \unlhd  Avt (Q)
\end{equation*}
\begin{equation*}
(N^A_l \times N^A_m)  \unlhd  Avt (Q)
\end{equation*}
\begin{equation*}
   (N^A_m \times N^A_r)  \unlhd Avt (Q)
\end{equation*}
\end{corollary}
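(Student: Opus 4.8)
The plan is to deduce this directly from the normality of the individual A-nuclei together with an elementary group-theoretic fact. By Lemma \ref{NORMALITY_OF_A_NUCL} each of $N^A_l$, $N^A_m$, $N^A_r$ is a normal subgroup of $Avt(Q)$. Recall that in any group the product $MN$ of two normal subgroups $M, N$ is again a normal subgroup: $MN$ is a subgroup since $MN = NM$ (as $N$ is normal), and for every $g \in Avt(Q)$ one has $g(MN)g^{-1} = (gMg^{-1})(gNg^{-1}) = MN$. Applying this with $M = N^A_l$, $N = N^A_r$, and likewise for the other two pairs, gives that $N^A_l N^A_r$, $N^A_l N^A_m$ and $N^A_m N^A_r$ are normal subgroups of $Avt(Q)$.

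It remains to justify the product notation, i.e. that each of these products is in fact an internal direct product of its two factors. For this I would repeat the argument already used in the proof of Theorem \ref{NORM_SUB_AUTOT_GR}: an element of $N^A_l \cap N^A_r$ has simultaneously the shape $(\alpha, \varepsilon, \gamma)$ and $(\varepsilon, \beta, \gamma)$, hence is of the form $(\varepsilon, \varepsilon, \gamma)$, and then Corollary \ref{COROLL_OF_LEAKH_TH} forces $\gamma = \varepsilon$; similarly $N^A_l \cap N^A_m$ consists of triples $(\alpha, \varepsilon, \varepsilon)$, so $\alpha = \varepsilon$, and $N^A_m \cap N^A_r$ of triples $(\varepsilon, \beta, \varepsilon)$, so $\beta = \varepsilon$. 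Thus in each case the intersection of the two A-nuclei is the trivial subgroup $\{(\varepsilon, \varepsilon, \varepsilon)\}$, and since both factors are normal in $Avt(Q)$ their product is an internal direct product. This yields $(N^A_l \times N^A_r) \unlhd Avt(Q)$, $(N^A_l \times N^A_m) \unlhd Avt(Q)$ and $(N^A_m \times N^A_r) \unlhd Avt(Q)$.

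There is essentially no obstacle here: the statement is a corollary of Theorem \ref{NORM_SUB_AUTOT_GR} once one observes that the loop hypothesis was used in that proof only to identify the semidirect complement $Aut(Q)$ and to obtain the concrete forms $(L_a, R_b, R_b L_a)$ of nuclear autotopies — neither of which is needed for the present weaker assertion. The one point requiring care is that the triviality of the pairwise intersections was established inside the proof of Theorem \ref{NORM_SUB_AUTOT_GR} using only Corollary \ref{COROLL_OF_LEAKH_TH} (equivalently Lemma \ref{TWO_AUTOTOPY_COMPONENTS}), which holds for arbitrary quasigroups, so the passage from the loop case to the general case is unobstructed.
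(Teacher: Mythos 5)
Your proposal is correct and takes essentially the same route as the paper: the paper's proof of this corollary simply points back to the conjugation computation (\ref{EQUAL_29}) inside Theorem \ref{NORM_SUB_AUTOT_GR}, which is exactly the specialization of your general fact that a product of normal subgroups is normal, and your trivial-intersection computations via Corollary \ref{COROLL_OF_LEAKH_TH} are the same as those in that proof (the paper gets the direct-product structure from Lemma \ref{TWO_AUTOTOPY_COMPONENTS} rather than from ``normal plus trivial intersection'', a cosmetic difference). You are also right that the loop hypothesis of Theorem \ref{NORM_SUB_AUTOT_GR} is used only to identify the complement $Aut(Q)$ and the explicit forms of nuclear autotopies, so nothing obstructs the passage to arbitrary quasigroups.
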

\begin{proof}
The proof follows from the proof of Theorem \ref{NORM_SUB_AUTOT_GR} (equality (\ref{EQUAL_29})).
\end{proof}

\begin{corollary} \label{coroll_Perm_el}
For any  quasigroup $Q$ we have:
\begin{itemize}
\item [1.] if   $\mu \in {}_1N^A_l$, $\nu \in {}_1N^A_m$, then $\mu \nu = \nu \mu$;
\item [2.] if   $\mu \in {}_2N^A_r$, $\nu \in {}_2N^A_m$, then $\mu \nu = \nu \mu$;
\item [3.] if   $\mu \in {}_3N^A_l$, $\nu \in {}_3N^A_r$, then $\mu \nu = \nu \mu$.
\end{itemize}
\end{corollary}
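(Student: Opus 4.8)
The plan is to lift each nuclear component to an autotopy and then play the two resulting autotopies off against each other via Lemma \ref{TWO_AUTOTOPY_COMPONENTS} (any two components of a quasigroup autotopy determine the third). Indeed, by Definition \ref{A_NUCLEI} and Lemma \ref{COMPONENTS}, for $\mu\in{}_1N^A_l$ there is an autotopy $(\mu,\varepsilon,\mu_3)\in N^A_l$, and for $\nu\in{}_1N^A_m$ there is an autotopy $(\nu,\nu_2,\varepsilon)\in N^A_m$ (the ``missing'' components $\mu_3,\nu_2$ being uniquely determined by Lemma \ref{TWO_AUTOTOPY_COMPONENTS}). Multiplying these in both orders inside $Avt(Q)$ gives $(\mu,\varepsilon,\mu_3)(\nu,\nu_2,\varepsilon)=(\mu\nu,\nu_2,\mu_3)$ and $(\nu,\nu_2,\varepsilon)(\mu,\varepsilon,\mu_3)=(\nu\mu,\nu_2,\mu_3)$. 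Both are autotopies and they share the second component $\nu_2$ and the third component $\mu_3$; hence by Lemma \ref{TWO_AUTOTOPY_COMPONENTS} they coincide, so $\mu\nu=\nu\mu$. Equivalently, this is precisely the assertion --- implicit in the proof of Theorem \ref{NORM_SUB_AUTOT_GR} and recorded in Corollary \ref{NORM_DIR_PROD} --- that the subgroups $N^A_l$ and $N^A_m$ commute elementwise in $Avt(Q)$, read off on the first components; this is Case 1.

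Cases 2 and 3 are handled in exactly the same fashion, only changing which two of the three components are ``pinned''. For Case 2 take $(\varepsilon,\mu,\mu_3)\in N^A_r$ and $(\nu_1,\nu,\varepsilon)\in N^A_m$; the two products $(\varepsilon,\mu,\mu_3)(\nu_1,\nu,\varepsilon)=(\nu_1,\mu\nu,\mu_3)$ and $(\nu_1,\nu,\varepsilon)(\varepsilon,\mu,\mu_3)=(\nu_1,\nu\mu,\mu_3)$ agree in their first and third components, so $\mu\nu=\nu\mu$ by Lemma \ref{TWO_AUTOTOPY_COMPONENTS}. For Case 3 take $(\mu_1,\varepsilon,\mu)\in N^A_l$ and $(\varepsilon,\nu_2,\nu)\in N^A_r$; the two products $(\mu_1,\varepsilon,\mu)(\varepsilon,\nu_2,\nu)=(\mu_1,\nu_2,\mu\nu)$ and $(\varepsilon,\nu_2,\nu)(\mu_1,\varepsilon,\mu)=(\mu_1,\nu_2,\nu\mu)$ agree in their first and second components, whence $\mu\nu=\nu\mu$. (This last case is literally the computation ``$\psi\lambda=\lambda\psi$'' already carried out in the proof of Theorem \ref{NORM_SUB_AUTOT_GR}.)

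I do not anticipate a genuine obstacle. The two points that need a moment of care are: (i) the auxiliary components must be uniquely determined, so that the two products genuinely share two components --- this is exactly Lemma \ref{TWO_AUTOTOPY_COMPONENTS}; and (ii) the argument must hold for an arbitrary quasigroup, not merely for a loop, since the statement is for any quasigroup $Q$. The latter is automatic because the part of the proof of Theorem \ref{NORM_SUB_AUTOT_GR} being invoked uses only Lemma \ref{TWO_AUTOTOPY_COMPONENTS}, which is valid in every quasigroup; no identity element is needed.
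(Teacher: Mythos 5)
Your argument is correct and is essentially the paper's own: the corollary is derived there from Theorem \ref{NORM_SUB_AUTOT_GR}, whose proof contains exactly your computation (multiplying the two autotopies in both orders and invoking Lemma \ref{TWO_AUTOTOPY_COMPONENTS} to conclude $\psi\lambda=\lambda\psi$). Your explicit remark that only Lemma \ref{TWO_AUTOTOPY_COMPONENTS} is needed, so the argument works in an arbitrary quasigroup rather than just a loop, is a welcome clarification but not a different method.
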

\begin{proof}
The proof follows from Theorem \ref{NORM_SUB_AUTOT_GR}.
\end{proof}

\begin{corollary}
In any  quasigroup
\begin{itemize}
\item[1.] ${}_1N^A_l \cdot {}_1N^A_m = {}_1N^A_m \cdot {}_1N^A_l$;
\item[2.] $ {}_2N^A_r \cdot {}_2N^A_m = {}_2N^A_m \cdot {}_2N^A_r$;
\item[3.] $ {}_3N^A_r \cdot {}_3N^A_l = {}_3N^A_l \cdot {}_3N^A_r$.
\end{itemize}
\end{corollary}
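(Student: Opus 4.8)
The plan is to deduce all three equalities directly from Corollary \ref{coroll_Perm_el}, which asserts that the relevant pairs of components commute elementwise. Concretely, for Case 1 I would argue as follows: an arbitrary element of the set product ${}_1N^A_l \cdot {}_1N^A_m$ has the form $\mu\nu$ with $\mu \in {}_1N^A_l$ and $\nu \in {}_1N^A_m$; by Corollary \ref{coroll_Perm_el}(1) we have $\mu\nu = \nu\mu$, and since $\nu \in {}_1N^A_m$, $\mu \in {}_1N^A_l$, the permutation $\nu\mu$ lies in ${}_1N^A_m \cdot {}_1N^A_l$. This gives the inclusion ${}_1N^A_l \cdot {}_1N^A_m \subseteq {}_1N^A_m \cdot {}_1N^A_l$. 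The reverse inclusion is obtained the same way, swapping the roles of the two factors (or by noting that Corollary \ref{coroll_Perm_el}(1) is symmetric in $\mu$ and $\nu$). Hence ${}_1N^A_l \cdot {}_1N^A_m = {}_1N^A_m \cdot {}_1N^A_l$.

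Cases 2 and 3 are handled in exactly the same manner, invoking parts (2) and (3) of Corollary \ref{coroll_Perm_el} respectively: every $\mu \in {}_2N^A_r$ commutes with every $\nu \in {}_2N^A_m$, and every $\mu \in {}_3N^A_r$ commutes with every $\nu \in {}_3N^A_l$, so the corresponding set products coincide. Since all three components in question are themselves groups (Lemma \ref{COMPONENTS}), one could additionally remark that each of these set products is actually a subgroup of $S_Q$, being the product of two permuting subgroups, though this is not needed for the stated equalities.

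There is essentially no obstacle here: the whole content has already been extracted in Corollary \ref{coroll_Perm_el} (which in turn rests on Theorem \ref{NORM_SUB_AUTOT_GR} and the fact that two components of a quasigroup autotopy determine the third, Lemma \ref{TWO_AUTOTOPY_COMPONENTS}). The only thing to verify is the trivial set-theoretic fact that two subsets of a group generate the same two-sided product precisely when they commute elementwise, which is what the two inclusions above spell out.
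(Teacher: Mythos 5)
Your argument is correct and is exactly the paper's route: the paper also deduces the three equalities of set products directly from Corollary \ref{coroll_Perm_el} (elementwise commutation of the respective components), and you have simply spelled out the trivial two-inclusion step that the paper leaves implicit.
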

\begin{proof}
The proof follows from Corollary  \ref{coroll_Perm_el}.
\end{proof}

\begin{example}
Loop $(Q, \cdot)$ has   identity group $Aut(Q)$ \cite{SHCHUKIN_08}, the  identity groups $(N^A_l \times  N^A_r) \leftthreetimes Aut(Q)$,
$(N^A_l \times  N^A_m) \leftthreetimes Aut(Q)$,$(N^A_r \times  N^A_r) \leftthreetimes Aut(Q)$. Its autotopy group $Avt(Q)$ is isomorphic to the alternating group $A_4$ \cite{HALL}.
\[
\begin{array}{c|ccccc}
\cdot & 1 & 2 & 3 & 4 & 5 \\
\hline
1 & 1 & 2 & 3 & 4 & 5 \\
2 & 2 & 3 & 1 & 5 & 4 \\
3 & 3 & 4 & 5 & 1 & 2 \\
4 & 4 & 5 & 2 & 3 & 1 \\
5 & 5 & 1 & 4 & 2 & 3
\end{array}
\]
We recall  $|A_4| = 12$ and this number  is  divisor of the number $5! \cdot 5 = 600$.
\end{example}

\subsection{A-centers of a loop}

\begin{definition} \label{AUTOT_CENTER_LOOP}
Let $Q$ be a loop.

Autotopy  of the form $\{ (L_a,  \varepsilon, L_a) \, | \,  a \in Z(Q)\}$ we shall name left central autotopy. Group of all left central autotopies  we shall denote  by $Z^A_l$.

Autotopy  of the form $\{ (\varepsilon, L_a,  L_a) \, | \,  a \in Z(Q)\}$ we shall name right  central autotopy. Group of all  right  central autotopies  we shall denote by $Z^A_r$.

 Autotopy  of the form $\{ (L_a, L^{-1}_a, \varepsilon) \, | \,  a \in Z(Q)\}$ we shall name  middle central autotopy. Group of all middle central autotopies  we shall denote  by $Z^A_m$.
\end{definition}

\begin{lemma} \label{ISOM_OF_CENTERS_OF_LOOP}
In any loop the groups $Z^A_l$, $Z^A_r$, $Z^A_m$,  ${}_1 Z^A_l$, ${}_3 Z^A_l$, ${}_2 Z^A_r$, ${}_3 Z^A_r$, ${}_1 Z^A_m$, ${}_2 Z^A_m$, and $Z$ are isomorphic. In more details  $Z \cong Z^A_l \cong Z^A_r \cong Z^A_m \cong {}_1 Z^A_l = {}_3 Z^A_l = {}_2 Z^A_r = {}_3 Z^A_r = {}_1 Z^A_m = {}_2 Z^A_m$.
\end{lemma}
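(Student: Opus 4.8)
The plan is to exhibit explicit isomorphisms among all ten groups, using the fact that in a loop every central element $a \in Z(Q)$ lies in the nucleus $N$ and commutes with everything, together with the structural descriptions of A-nuclei and their components established in Lemma~\ref{ISOMOR OF COMP}, Lemma~\ref{ISOMORPHISMS OF COMPONENTS}, and Theorem~\ref{A-NUCLEI_OF_LOOPS}. First I would note that the map $a \mapsto (L_a, \varepsilon, L_a)$ is a bijection from $Z(Q)$ onto $Z^A_l$ by definition; it is a homomorphism because $L_a L_b = L_{ab}$ when $a, b$ are central (in a loop, central elements associate and $L_a L_b = L_{a \cdot b}$ precisely when $a$ lies in the left nucleus). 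Hence $Z \cong Z^A_l$, and the same argument via $a \mapsto (\varepsilon, L_a, L_a)$ and $a \mapsto (L_a, L_a^{-1}, \varepsilon)$ gives $Z \cong Z^A_r$ and $Z \cong Z^A_m$; for the middle case one uses that $L_a^{-1} = L_{a^{-1}}$ for central $a$ and that the first-component map is a bijection by Leakh's Theorem (Theorem~\ref{DEF_OF_COMP_BY_AUTOT_AND_EL}) or Lemma~\ref{TWO_AUTOTOPY_COMPONENTS}.

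Next I would handle the component groups. By Lemma~\ref{COMPONENTS}, the first and third components of $Z^A_l$ form groups ${}_1Z^A_l$ and ${}_3Z^A_l$; since every element of $Z^A_l$ has the shape $(L_a, \varepsilon, L_a)$ with both nontrivial components equal to $L_a$, the projection onto either component is injective, so ${}_1Z^A_l = {}_3Z^A_l = \{ L_a \mid a \in Z(Q)\}$ and both are isomorphic to $Z^A_l$, hence to $Z$. Likewise $Z^A_r$ has elements $(\varepsilon, L_a, L_a)$, giving ${}_2Z^A_r = {}_3Z^A_r = \{L_a \mid a \in Z(Q)\}$; and $Z^A_m$ has elements $(L_a, L_a^{-1}, \varepsilon)$, giving ${}_1Z^A_m = \{L_a \mid a \in Z(Q)\}$ and ${}_2Z^A_m = \{L_a^{-1} \mid a \in Z(Q)\} = \{L_a \mid a \in Z(Q)\}$ (the last equality because $Z(Q)$ is closed under inversion and $L_{a}^{-1} = L_{a^{-1}}$ for central $a$). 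Thus all six listed component groups literally coincide as subsets of $S_Q$, namely with $\{L_a \mid a \in Z(Q)\}$, which settles the chain of equalities $ {}_1 Z^A_l = {}_3 Z^A_l = {}_2 Z^A_r = {}_3 Z^A_r = {}_1 Z^A_m = {}_2 Z^A_m$ and exhibits each as isomorphic to $Z$ via $a \mapsto L_a$.

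Finally I would assemble the statement: combining $Z \cong Z^A_l \cong Z^A_r \cong Z^A_m$ with the observation that each of the nine A-center groups equals or is isomorphic to $\{L_a \mid a \in Z(Q)\} \cong Z$ gives the displayed chain. I expect the main obstacle to be the verification that $L_a L_b = L_{ab}$ for central $a, b$ and that $L_a^{-1} = L_{a^{-1}}$ for central $a$ — i.e., that the set $\{L_a \mid a \in Z(Q)\}$ really is a subgroup of $S_Q$ isomorphic to $(Z(Q), \cdot)$. This is exactly where one must invoke that $Z(Q) \subseteq N_l$ (so $L_a L_b = L_{a \cdot b}$) and that $Z(Q)$ is a group under the loop operation (Bruck's result, recalled before Definition~\ref{BRUCK_DEF_CENTER}); with that in hand, everything else is bookkeeping with the explicit forms of the autotopies. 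A secondary point requiring a word of care is the injectivity of the component projections, which follows from Lemma~\ref{TWO_AUTOTOPY_COMPONENTS} since two A-center autotopies sharing a nontrivial component and having $\varepsilon$ in a fixed slot must be equal.
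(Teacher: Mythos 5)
Your proposal is correct and follows essentially the same route as the paper: the paper's (very terse) proof also rests on the explicit map $\xi\colon a\mapsto L_a$ (equivalently $a\mapsto(L_a,\varepsilon,L_a)$, etc.) from $Z$ onto the central autotopy groups and their components, using the forms in Definition~\ref{AUTOT_CENTER_LOOP}. You have simply filled in the routine verifications (that $L_aL_b=L_{ab}$ and $L_a^{-1}=L_{a^{-1}}$ for central $a$, and injectivity of the component projections), which the paper leaves to the reader with ``and so on.''
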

\begin{proof}
The proof follows from definition  of center of  a loop (Definition \ref{BRUCK_DEF_CENTER}) and Definition  \ref{AUTOT_CENTER_LOOP}. The map $\xi: a \mapsto L_a$, where $a\in Z$, gives necessary isomorphism of the group $Z$ and ${}_1 Z^A_l$. And so on.
\end{proof}

\begin{lemma}
If $Q$ is a loop, then ${}_1Z^A_l 1 = {}_3Z^A_l =  {}_2Z^A_r 1 = {}_3Z^A_r = {}_1Z^A_m = {}_2Z^A_m 1 = Z$.
\end{lemma}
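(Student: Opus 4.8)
The plan is to unwind the explicit descriptions of the central autotopy groups from Definition \ref{AUTOT_CENTER_LOOP} and compute directly the orbit of the loop identity $1$ under each of the listed component groups; throughout I read the displayed chain of equalities as an assertion about these orbits, e.g. ${}_1Z^A_l 1 = \{L_a 1 \mid a \in Z\}$ as a subset of $Q$.

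First I would record, by reading off components in Definition \ref{AUTOT_CENTER_LOOP}, that as sets of permutations of $Q$ one has ${}_1Z^A_l = {}_3Z^A_l = \{L_a \mid a \in Z\}$ (from the autotopies $(L_a,\varepsilon,L_a)$), ${}_2Z^A_r = {}_3Z^A_r = \{L_a \mid a \in Z\}$ (from $(\varepsilon,L_a,L_a)$), ${}_1Z^A_m = \{L_a \mid a \in Z\}$ and ${}_2Z^A_m = \{L_a^{-1} \mid a \in Z\}$ (from $(L_a,L_a^{-1},\varepsilon)$). This is essentially the content of Lemma \ref{ISOM_OF_CENTERS_OF_LOOP}, which in particular already gives ${}_1Z^A_l = {}_3Z^A_l = {}_2Z^A_r = {}_3Z^A_r = {}_1Z^A_m = {}_2Z^A_m$ as subgroups of $S_Q$.

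Next, for any group of the form $\{L_a \mid a \in Z\}$ I would use that $1$ is the identity of the loop, so $L_a 1 = a\cdot 1 = a$ for every $a$; hence the orbit of $1$ is $\{a \mid a\in Z\} = Z$. This immediately yields ${}_1Z^A_l 1 = {}_3Z^A_l 1 = {}_2Z^A_r 1 = {}_3Z^A_r 1 = {}_1Z^A_m 1 = Z$. For ${}_2Z^A_m$ the same computation applies with $L_a$ replaced by $L_a^{-1}$: $L_a^{-1} 1 = c$ is equivalent to $a\cdot c = 1$, i.e. $c$ is the right inverse $a^{-1}$ of $a$. Since $Z = N\cap C$ is a (normal, abelian) subgroup of the loop $Q$ by \cite{RHB}, the map $a\mapsto a^{-1}$ permutes $Z$, so $\{L_a^{-1} 1 \mid a \in Z\} = \{a^{-1} \mid a\in Z\} = Z$, giving ${}_2Z^A_m 1 = Z$ as well. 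Alternatively, one can simply invoke the equality ${}_2Z^A_m = {}_1Z^A_m$ of permutation groups from Lemma \ref{ISOM_OF_CENTERS_OF_LOOP}, so that the two orbits coincide.

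I do not expect any genuine obstacle here; the computation is routine. The only step requiring a moment of care is the ${}_2Z^A_m$ case, where the second components are the permutations $L_a^{-1}$ rather than $L_a$: there one must use that the center of a loop is closed under taking inverses — i.e. that $Z$ is a subgroup, not merely a subset — so that the right inverses $a^{-1}$ range over all of $Z$ as $a$ does. Collecting the six identities then completes the proof.
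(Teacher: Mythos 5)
Your proof is correct and follows essentially the same route as the paper, whose entire argument is just ``use Definition \ref{AUTOT_CENTER_LOOP}'': you read off the components $\{L_a \mid a\in Z\}$ (resp.\ $\{L_a^{-1}\mid a\in Z\}$) and compute $L_a1=a$, so every orbit of $1$ is $Z$. Your explicit handling of the ${}_2Z^A_m$ case via closure of $Z$ under inverses (or via ${}_1Z^A_m={}_2Z^A_m$ from Lemma \ref{ISOM_OF_CENTERS_OF_LOOP}), and your reading of the unadorned terms ${}_3Z^A_l$, ${}_3Z^A_r$, ${}_1Z^A_m$ as orbits of $1$, are exactly the details the paper leaves implicit.
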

\begin{proof}
It is possible to use   Definition  \ref{AUTOT_CENTER_LOOP}.
\end{proof}

\begin{corollary}
In any loop $Q$ the orbit ${}_1Z^A_l 1$ (${}_3Z^A_l$, ${}_2Z^A_r 1$, ${}_3Z^A_r$, ${}_1Z^A_m$, ${}_2Z^A_m 1$) coincides with the set $Q$ if and only if $Q$ is an abelian group.
\end{corollary}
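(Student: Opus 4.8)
The plan is to first compute the orbit ${}_1Z^A_l\cdot 1$ explicitly, thereby reducing the statement to a classical fact about loops.

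First I would note that, by Definition \ref{AUTOT_CENTER_LOOP}, the component group ${}_1Z^A_l$ is exactly $\{L_a \mid a\in Z(Q)\}$, and since $Q$ is a loop with identity $1$ we have $L_a 1 = a\cdot 1 = a$ for every $a$. Hence the orbit ${}_1Z^A_l\cdot 1 = \{a \mid a\in Z(Q)\} = Z(Q)$, and by the preceding lemma (together with Lemma \ref{ISOM_OF_CENTERS_OF_LOOP}) the same holds for each of ${}_3Z^A_l$, ${}_2Z^A_r 1$, ${}_3Z^A_r$, ${}_1Z^A_m$, ${}_2Z^A_m 1$. So the corollary is equivalent to the assertion that $Z(Q)=Q$ if and only if $(Q,\cdot)$ is an abelian group.

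For the direction "$(Q,\cdot)$ an abelian group $\Rightarrow Z(Q)=Q$": associativity gives $N_l=N_r=N_m=Q$, so $N=Q$ by Definition \ref{BRUCK_NUCLEUS}; commutativity gives $C=\{a\in Q\mid a\cdot x=x\cdot a\ \forall x\in Q\}=Q$; hence $Z(Q)=N\cap C=Q$ by Definition \ref{BRUCK_DEF_CENTER}. For the converse, suppose $Z(Q)=Q$. Then $Q=Z(Q)\subseteq N\subseteq N_l$, so $N_l=Q$, which by Definition \ref{GARRISON_NUCL_DEF} means that $(a\cdot x)\cdot y=a\cdot(x\cdot y)$ holds for all $a,x,y\in Q$; thus $(Q,\cdot)$ is associative, and an associative loop is a group. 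Since also $C=Q$, this group is commutative, i.e. $(Q,\cdot)$ is an abelian group. (Alternatively, one may invoke the cited fact that the center of a loop is always an abelian group, so $Z(Q)=Q$ makes $Q$ itself an abelian group.)

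There is essentially no obstacle here: the only step requiring a moment's care is the identification of the orbit through the loop identity of a first/second-component central group with the Bruck center $Z(Q)$; once that is in place, the equivalence follows from the definitions of Garrison nucleus and Bruck center together with the elementary fact that associativity turns a loop into a group.
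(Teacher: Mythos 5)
Your proof is correct and follows essentially the route the paper intends (the corollary is left without an explicit proof there, resting on the preceding lemma that identifies each orbit with $Z$): you identify ${}_1Z^A_l\,1=Z(Q)$ from Definition of the central autotopies and then use the standard equivalence $Z(Q)=Q \Leftrightarrow (Q,\cdot)$ is an abelian group, which you verify directly from the definitions of the Garrison nuclei and the Bruck center. Nothing further is needed.
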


\begin{theorem} \label{INTERS_PAIR_AND_THE_THIRD}
Let $Q$ be a loop. Then
\begin{itemize}
\item [1.]   $(N^A_l \times N^A_r) \cap N^A_m = \{ (L_a, L^{-1}_a, \varepsilon) \, | \,  a \in Z(Q)\} = Z^A_m$;
\item [2.]   $(N^A_l \times N^A_m) \cap N^A_r = \{ ( \varepsilon, L_a, L_a) \, | \,  a \in Z(Q)\} = Z^A_r$;
\item [3.]   $(N^A_r \times N^A_m) \cap N^A_l = \{ (L_a, \varepsilon, L_a) \, | \,  a \in Z(Q)\} = Z^A_l$.
\item [4.]   $Z^A_l\trianglelefteq Avt(Q), Z^A_r\trianglelefteq Avt(Q), Z^A_m\trianglelefteq Avt(Q)$.
\item [5.]   The groups $Z^A_l, Z^A_r$ and $Z^A_m$ are abelian subgroups of the group $ Avt(Q)$.
\end{itemize}
\end{theorem}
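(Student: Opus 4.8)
My overall strategy is to exploit the characterizations already proven for loops: any element of $N^A_l$ has the form $(L_a,\varepsilon,L_a)$, any element of $N^A_r$ has the form $(\varepsilon,R_b,R_b)=(\varepsilon,L_b,L_b)$ wait — in a loop $R_b$ need not equal $L_b$; rather by Theorem \ref{A-NUCLEI_OF_LOOPS}(1)(b) a right A-nuclear autotopy is $(\varepsilon,R_b,R_b)$ and by (a) a left one is $(L_a,\varepsilon,L_a)$, and by (c) a middle one is $(R_c,L^{-1}_c,\varepsilon)$. So the real computational content of parts 1--3 is to intersect these explicit forms and show the intersection forces the index element into the loop center $Z(Q)=N\cap C$. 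I will do part 3 in detail and note that parts 1 and 2 are parastrophic/analogous.

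For part 3: an element of $N^A_r\times N^A_m$, by the proof-technique of Theorem \ref{NORM_SUB_AUTOT_GR} (composing $(\varepsilon,R_b,R_b)$ with $(R_c,L^{-1}_c,\varepsilon)$), has first component $R_c$, second component $R_b L^{-1}_c$, third component $R_b$; requiring this to also lie in $N^A_l$, i.e.\ to have the form $(L_a,\varepsilon,L_a)$, forces $R_c=L_a$ and $R_bL^{-1}_c=\varepsilon$ and $R_b=L_a$, hence $R_c=R_b$, $c=b$, $R_b=L_a$. Evaluating $R_b=L_a$ at $1$ gives $b=a$, so $R_a=L_a$, which (applied to arbitrary $x$) says $x\cdot a=a\cdot x$ for all $x$, i.e.\ $a\in C$. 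That $a\in N_l$ (hence, since $(L_a,\varepsilon,L_a)\in N^A_l$ corresponds to the classical left nucleus, $a\in N_l$) together with $a\in C$ gives $a\in N_l\cap C$; and by the remark after Definition \ref{BRUCK_DEF_CENTER} that $Z=N_l\cap C$ in a loop, we get $a\in Z(Q)$. Conversely every $a\in Z(Q)$ yields $(L_a,\varepsilon,L_a)\in N^A_l$ and, since $L_a=R_a$ on $Z$, this autotopy equals $(R_a,L_a^{-1}\cdot\!\!,\dots)$—more cleanly, one checks directly $(L_a,\varepsilon,L_a)\in N^A_r\times N^A_m$ using $a\in N_r\cap N_m\cap C$. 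This establishes the displayed equality with $Z^A_l$. Parts 1 and 2 follow by the same bookkeeping (or by applying Table 3 / the $(12)$- and $(23)$-parastrophe correspondences between the three A-nuclei).

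Part 4 is almost immediate: Lemma \ref{NORMALITY_OF_A_NUCL} gives $N^A_l,N^A_r,N^A_m\trianglelefteq Avt(Q)$, and Corollary \ref{NORM_DIR_PROD} gives $N^A_l\times N^A_r\trianglelefteq Avt(Q)$, etc. A finite intersection of normal subgroups is normal, so $Z^A_m=(N^A_l\times N^A_r)\cap N^A_m\trianglelefteq Avt(Q)$, and similarly for $Z^A_r$ and $Z^A_l$. Part 5: by Lemma \ref{ISOM_OF_CENTERS_OF_LOOP} each of $Z^A_l,Z^A_r,Z^A_m$ is isomorphic to $Z(Q)$, which is a well-known abelian (normal) subgroup of the loop $Q$ \cite{RHB}; hence each is an abelian subgroup of $Avt(Q)$. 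Alternatively one can note $Z^A_m\subseteq N^A_m$ and a direct computation of the commutator of two middle central autotopies, using that the index elements lie in the abelian group $Z(Q)$, collapses to the identity.

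**Main obstacle.** The only genuinely delicate point is pinning down, in parts 1--3, exactly which index element lands in $Z(Q)$ and verifying the reverse inclusion without circularity — i.e.\ correctly translating ``$(L_a,\varepsilon,L_a)$ and simultaneously of middle/right form'' into the three membership conditions $a\in N_l$, $a\in N_r$, $a\in C$ (equivalently $L_a=R_a$), and then invoking $Z=N_l\cap C$. Everything else is routine translation algebra in the loop, of the same flavour as the proof of Theorem \ref{NORM_SUB_AUTOT_GR}.
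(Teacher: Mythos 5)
Your route is the same as the paper's: use the explicit loop forms from Theorem \ref{A-NUCLEI_OF_LOOPS} (elements of $N^A_l$, $N^A_r$, $N^A_m$ being $(L_a,\varepsilon,L_a)$, $(\varepsilon,R_b,R_b)$, $(R_c,L^{-1}_c,\varepsilon)$), write a generic element of the relevant direct product as in Theorem \ref{NORM_SUB_AUTOT_GR}, compare componentwise to force $L_a=R_a$ and equality of the index elements, and then get normality of the $Z^A$'s from Lemma \ref{NORMALITY_OF_A_NUCL} / Corollary \ref{NORM_DIR_PROD} plus ``intersection of normal subgroups is normal'', and abelianness via Lemma \ref{ISOM_OF_CENTERS_OF_LOOP}. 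The componentwise bookkeeping in your Part 3, the converse inclusion, and Parts 4--5 all match the paper's argument.

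There is, however, one step that does not hold up as written: to conclude $a\in Z(Q)$ in the forward inclusion you invoke ``$Z=N_l\cap C$'' and attribute it to the remark after Definition \ref{BRUCK_DEF_CENTER}. That remark asserts $Z=N_l\cap N_r\cap C=N_l\cap N_m\cap C=N_m\cap N_r\cap C$, i.e.\ you need \emph{two} nuclei together with $C$; one nucleus is not enough. Indeed, from $a\in N_l\cap C$ alone the obvious manipulation gives only $(x\cdot y)\cdot a=(x\cdot a)\cdot y$, so that $a\in N_r$ is \emph{equivalent} to $a\in N_m$, and neither follows --- so the step, as justified, fails. The repair is already contained in your own computation: the two factors of the decomposition are $(\varepsilon,R_a,R_a)\in N^A_r$ and $(R_a,L^{-1}_a,\varepsilon)\in N^A_m$, whence $a\in N_r\cap N_m$, and together with $L_a=R_a$ (i.e.\ $a\in C$) the remark in its correct form gives $a\in Z(Q)$; this is exactly how the paper concludes (it records membership of $a$ in all three A-nuclei before declaring $a$ central). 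With that citation corrected the proof is complete; as a minor aside, the parastrophy shortcut you offer for Parts 1--2 is shaky (parastrophes of a loop are only unipotent one-sided loops), so stick to the direct ``same bookkeeping'' computation, which is what the paper does.
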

\begin{proof} Case 1.
Any element  of the group $N^A_l \times N^A_r$ has the form
\begin{equation} \label{LEFT_RIGHT_AUTT}
(L_a, R_b, L_aR_b)
\end{equation}
 any element  of the group $N^A_m$ has the form
\begin{equation} \label{MIDDL_AUTT}
(R_c, L^{-1}_c, \varepsilon)
\end{equation}
 (Theorem \ref{A-NUCLEI_OF_LOOPS}). Then $R_b L_a = \varepsilon$, $R_b = L^{-1}_a$. Expression (\ref{LEFT_RIGHT_AUTT}) takes the form $(L_a, L^{-1}_a, L_a L^{-1}_a) = (L_a, L^{-1}_a, \varepsilon)$. Comparing it with expression (\ref{MIDDL_AUTT}) we have  that $L_a=R_a$, i.e. $ax = xa$ for any element $x\in Q$. In addition taking into consideration that $(L_a, L^{-1}_a, \varepsilon) \in N^A_m, (L_a, \varepsilon, L_a) \in N^A_l, (\varepsilon, L^{-1}_a, L^{-1}_a) \in N^A_r$, we obtain that the element $a$ is central, i.e. $a\in Z(Q)$.

Therefore  $(N^A_l \times N^A_r)\cap N^A_m \subseteq  \{ (L_a, L^{-1}_a, \varepsilon) \, | \,  a \in Z(Q)\}$.

Converse. It is easy to see that any element from the center $Z(Q)$
 of a loop $Q$ "generate" autotopies of left, right, middle A-nucleus of the loop $Q$, i.e.
 $\{ (L_a, L^{-1}_a, \varepsilon) \, | \,  a \in Z(Q)\} \subseteq (N^A_l \times N^A_r)\cap N^A_m $.

 And finally we have  $(N^A_l \times N^A_r)\cap N^A_m = \{ (L_a, L^{-1}_a, \varepsilon) \, | \,  a \in Z(Q)\} $.

Case 2. The proof of Case 2 is similar to the proof of Case 1 and we omit some details.
Any element  of the group $N^A_l \times N^A_m$ has the form
\begin{equation} \label{LEFT_MIDDLL_AUTT}
(L_a R_b, L^{-1}_b, L_a)
\end{equation}
 any element  of the group $N^A_r$ has the form
\begin{equation} \label{RIGHT_AUTT}
( \varepsilon, R_c, R_c)
\end{equation}
 (Theorem \ref{A-NUCLEI_OF_LOOPS}). Since expressions (\ref{LEFT_MIDDLL_AUTT}) and (\ref{RIGHT_AUTT}) in the intersection should be componentwise equal, we have the following system of equations
 \[
 \left\{
 \begin{array}{l}
L_a R_b  = \varepsilon \\
L^{-1}_b = R_c\\
L_a = R_c
\end{array}
\right.
 \]

We remember that in any loop for any nuclear translation we have $L^{-1}_a = L_{a^{-1}} $, $R^{-1}_a = R_{a^{-1}} $  (Theorem \ref{A-NUCLEI_OF_LOOPS}).

From equality $L^{-1}_b = L_a$ we have that $a = b^{-1}$ and $b = a^{-1}$. Then we from  equality
$L_a R_b  = \varepsilon$ it follows that  $L_b = R_b$ for all elements of the group $(N^A_l \times N^A_m) \cap N^A_r$.

  Expression (\ref{LEFT_MIDDLL_AUTT}) takes the form $(L_a L^{-1}_a, L_a,  L_a) = ( \varepsilon, L_a, L_a)$. Expression (\ref{MIDDL_AUTT}) takes the same form.

Taking into consideration that $L_a = R_a$,  $(L^{-1}_a, L_a, \varepsilon) \in N^A_m, (L_a, \varepsilon, L_a) \in N^A_l, (\varepsilon, L_a, L_a) \in N^A_r$, we obtain that the element $a$ is central, i.e. $a\in Z(Q)$.

Therefore  $(N^A_l \times N^A_r)\cap N^A_m \subseteq  \{ (L_a, L^{-1}_a, \varepsilon) \, | \,  a \in Z(Q)\}$.

Converse inclusion is proved similarly with the proof in Case 1.

Finally we have  $(N^A_l \times N^A_m)\cap N^A_r = \{ (\varepsilon, L_a, L_a) \, | \,  a \in Z(Q)\} $.

Case 3. The proof of Case 3 is similar to the proof of Cases 1, 2.
Any element  of the group $N^A_r \times N^A_m$ has the form
\begin{equation} \label{RIGHT_MIDDLL_AUTT}
(R_b, R_a L^{-1}_b, R_a)
\end{equation}
 any element  of the group $N^A_l$ has the form
\begin{equation} \label{LEFT_AUTT}
(L_c,  \varepsilon, L_c)
\end{equation}
 (Theorem \ref{A-NUCLEI_OF_LOOPS}). Since expressions (\ref{RIGHT_MIDDLL_AUTT}) and (\ref{LEFT_AUTT}) in the intersection should be equal componentwise, we have the following system of equations
 \[
 \left\{
 \begin{array}{l}
R_b  = L_c \\
R_a L^{-1}_b = \varepsilon\\
R_a = L_c
\end{array}
\right.
 \]

From equality $R_b = R_a$ we have that $a = b$. Then we from  equality
$R_a L^{-1}_b  = \varepsilon$ it follows that  $L_b = R_b$ for all elements of the group $(N^A_r \times N^A_m) \cap N^A_l$.

  Expression (\ref{RIGHT_MIDDLL_AUTT}) takes the form $(L_a,   L_a L^{-1}_a,  L_a) = (L_a,  \varepsilon, L_a)$.

Taking into consideration that $L_a = R_a$,  $(L_a, L^{-1}_a, \varepsilon) \in N^A_m, (L_a, \varepsilon, L_a) \in N^A_l, (\varepsilon, L_a, L_a) \in N^A_r$, we obtain that the element $a$ is central, i.e. $a\in Z(Q)$.

Therefore  $(N^A_r \times N^A_m)\cap N^A_l =  \{ (L_a,  \varepsilon, L_a) \, | \,  a \in Z(Q)\}$.

Case 4. The proof follows from Corollary \ref{NORM_DIR_PROD} and the fact that intersection of normal subgroups of a group is normal subgroup.

Case 5. The proof follows from Lemma \ref{ISOM_OF_CENTERS_OF_LOOP}. Indeed, in this case, if we denote the loop operation by $+$, then we have $L_aL_b = L_{a+b} = L_{b+a} = L_bL_a$.
\end{proof}

\begin{theorem}
Let $Q$ be a loop, $K= N^A_l \cdot  N^A_r \cdot N^A_m$. Then
\begin{itemize}
\item[(i)] $K/Z^A_l \cong (N^A_r\times N^A_m)/Z^A_l\times N^A_l/Z^A_l$;
\item[(ii)] $K/Z^A_r \cong (N^A_l\times N^A_m)/Z^A_r\times N^A_r/Z^A_r$;
\item[(iii)] $K/Z^A_m \cong (N^A_l\times N^A_r)/Z^A_m\times N^A_m/Z^A_m$.
\end{itemize}
\end{theorem}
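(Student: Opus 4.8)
The plan is to deduce all three isomorphisms from one elementary fact of group theory, applied inside $G=Avt(Q)$: if $A$ and $B$ are normal subgroups of a group $G$ and $C=A\cap B$, then $C\trianglelefteq AB$ and $AB/C\cong (A/C)\times(B/C)$. First I would verify this fact. Since $A,B\trianglelefteq G$, the product $AB$ is a subgroup and $C=A\cap B\trianglelefteq G$, hence $C\trianglelefteq AB$, so the quotients make sense. Inside $AB/C$ the subgroups $A/C$ and $B/C$ satisfy $(A/C)\cap(B/C)=\{C\}$, because $aC=bC$ with $a\in A$, $b\in B$ forces $ab^{-1}\in B$, so $a\in A\cap B=C$; they commute elementwise, because $[A,B]\subseteq A\cap B=C$ by normality of both $A$ and $B$; and $(A/C)(B/C)=AB/C$. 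Hence $AB/C$ is the internal direct product of $A/C$ and $B/C$.

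Next I would record that $K=N^A_l\cdot N^A_r\cdot N^A_m$ is a normal subgroup of $Avt(Q)$ which coincides with each of $(N^A_r\times N^A_m)\cdot N^A_l$, $(N^A_l\times N^A_m)\cdot N^A_r$ and $(N^A_l\times N^A_r)\cdot N^A_m$. This follows from Lemma \ref{NORMALITY_OF_A_NUCL} (each A-nucleus is normal in $Avt(Q)$) together with Corollary \ref{NORM_DIR_PROD} (each of the pairwise products $N^A_l\times N^A_r$, $N^A_l\times N^A_m$, $N^A_m\times N^A_r$ is normal in $Avt(Q)$), since the product of normal subgroups of a group is again a normal subgroup and is independent of the order and bracketing of the factors.

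Finally I would apply the group-theoretic fact three times with $G=Avt(Q)$. For item~(i) take $A=N^A_r\times N^A_m$ and $B=N^A_l$; both are normal by Corollary \ref{NORM_DIR_PROD} and Lemma \ref{NORMALITY_OF_A_NUCL}, $A\cap B=Z^A_l$ by Theorem \ref{INTERS_PAIR_AND_THE_THIRD}, and $AB=K$ by the previous paragraph, so $K/Z^A_l\cong (N^A_r\times N^A_m)/Z^A_l\times N^A_l/Z^A_l$. For item~(ii) take $A=N^A_l\times N^A_m$, $B=N^A_r$ and use $A\cap B=Z^A_r$ from Theorem \ref{INTERS_PAIR_AND_THE_THIRD}. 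For item~(iii) take $A=N^A_l\times N^A_r$, $B=N^A_m$ and use $A\cap B=Z^A_m$ from Theorem \ref{INTERS_PAIR_AND_THE_THIRD}. Each application yields exactly the stated decomposition.

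There is no serious obstacle here: the substantive work — identifying the pairwise intersections with the central autotopy groups $Z^A_l,Z^A_r,Z^A_m$ and establishing the relevant normality — has already been carried out in Corollary \ref{NORM_DIR_PROD} and Theorem \ref{INTERS_PAIR_AND_THE_THIRD}. The only points requiring care are the verification of the elementary direct-product criterion above, in particular the elementwise commutativity of $A/C$ and $B/C$, which genuinely uses that \emph{both} $A$ and $B$ are normal (not just one of them), and the routine bookkeeping that the triple product $K$ is one and the same group however it is bracketed.
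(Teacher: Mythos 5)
Your proposal is correct and takes essentially the same route as the paper: normality of the A-nuclei and of their pairwise products (Lemma \ref{NORMALITY_OF_A_NUCL}, Corollary \ref{NORM_DIR_PROD}), identification of the relevant intersection with $Z^A_l$, $Z^A_r$, $Z^A_m$ (Theorem \ref{INTERS_PAIR_AND_THE_THIRD}), and then an internal direct-product decomposition of the quotient $K/Z^A$. The only difference is cosmetic: you prove the elementary criterion ($A,B\trianglelefteq G$, $C=A\cap B$ implies $AB/C\cong A/C\times B/C$) from scratch and make the equality $AB=K$ explicit, where the paper cites Theorem 4.2.1 of Kargapolov--Merzlyakov for the same fact.
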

\begin{proof} Case (i).
The groups $N^A_l$, $N^A_r$ and $N^A_m$ are normal subgroups of the group $Avt(Q)$.
From Theorem \ref{NORM_SUB_AUTOT_GR} we have that  the groups $N^A_l$, $N^A_r$ and $N^A_m$ are intersected in pairs by identity group.

From Lemma \ref{NORMALITY_OF_A_NUCL} and  Corollary \ref{NORM_DIR_PROD} we have that  $N^A_l  \unlhd  Avt (Q)$, $(N^A_r \times N^A_m)  \unlhd  Avt (Q)$. Then $N^A_l  \unlhd  K$, $(N^A_r \times N^A_m)  \unlhd K$.

From Theorem \ref{INTERS_PAIR_AND_THE_THIRD} and Theorem 4.2.1 \cite[p. 47]{KM} we have that  groups $(N^A_r\times N^A_m)/Z^A_l$ and $N^A_l/Z^A_l$ are normal subgroups of the group $K/Z^A_l$ that are intersected by the identity subgroup. Therefore $K/Z^A_l \cong (N^A_r\times N^A_m)/Z^A_l\times N^A_l/Z^A_l$.

Cases (ii) and (iii) are proved in the similar way.
\end{proof}

\subsection{A-centers of a quasigroup}

\begin{definition}
Let $Q$ be a quasigroup. We shall name
\begin{itemize}
\item[(i)] the group   $(N^A_r \times N^A_m) \cap N^A_l$  as left   A-center of  $Q$ and shall denote by $Z^A_l$;
\item[(ii)] the group  $(N^A_l \times N^A_m) \cap N^A_r$ as  right  A-center of  $Q$ and shall denote by $Z^A_r$;
\item[(iii)] the group   $(N^A_l \times N^A_r) \cap N^A_m$ as  middle A-center of $Q$ and shall denote by $Z^A_m$.
\end{itemize}
\end{definition}

\begin{lemma} \label{IZOTOPES_OF_QUS_CENTER}
Left, right, and middle A-centers and their components  of isotopic  quasigroups $(Q, \circ)$ and $(Q, \cdot)$ are isomorphic, i.e., if $(Q, \circ) \sim (Q, \cdot)$,
then
\[
\begin{array}{llllll}
1. & Z^A_l(Q, \cdot) \cong Z^A_l(Q, \circ); & 2. & {}_1Z^A_l(Q, \cdot) \cong {}_1Z^A_l(Q, \circ); & 3. & {}_3Z^A_l(Q, \cdot) \cong {}_3Z^A_l(Q, \circ);\\
4. & Z^A_r(Q, \cdot) \cong Z^A_r(Q, \circ); & 5. & {}_2Z^A_r(Q, \cdot) \cong {}_2Z^A_r(Q, \circ); & 6. & {}_3Z^A_r(Q, \cdot) \cong {}_3Z^A_r(Q, \circ);\\
7. & Z^A_l(Q, \cdot) \cong Z^A_l(Q, \circ); & 8. & {}_1Z^A_m(Q, \cdot) \cong {}_1Z^A_m(Q, \circ); & 9. & {}_2Z^A_m(Q, \cdot) \cong {}_2Z^A_m(Q, \circ);\\
\end{array}
\]
\end{lemma}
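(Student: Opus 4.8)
The plan is to derive the whole statement from Lemma \ref{ISOM_AUTOT_GR}. Suppose $(Q,\circ)=(Q,\cdot)T$ with $T=(\mu_1,\mu_2,\mu_3)$; then $\varphi_T\colon Avt(Q,\cdot)\to Avt(Q,\circ)$, $\varphi_T(S)=T^{-1}ST$, is a group isomorphism, and it acts componentwise: $\varphi_T(\alpha,\beta,\gamma)=(\mu_1^{-1}\alpha\mu_1,\mu_2^{-1}\beta\mu_2,\mu_3^{-1}\gamma\mu_3)$. In particular $\varphi_T$ preserves the shapes $(\alpha,\varepsilon,\gamma)$, $(\varepsilon,\beta,\gamma)$ and $(\alpha,\beta,\varepsilon)$, so it maps $N^A_l(Q,\cdot)$ into $N^A_l(Q,\circ)$, $N^A_r(Q,\cdot)$ into $N^A_r(Q,\circ)$ and $N^A_m(Q,\cdot)$ into $N^A_m(Q,\circ)$; since $\varphi_T^{-1}=\varphi_{T^{-1}}$ is a map of the same kind, these inclusions are in fact equalities. (This is exactly Lemma \ref{ISOTOPIC_COMPONENTS}, which I would simply quote.) Thus $\varphi_T$ restricts to a single group isomorphism $Avt(Q,\cdot)\to Avt(Q,\circ)$ that carries $N^A_l,N^A_r,N^A_m$ of $(Q,\cdot)$ onto $N^A_l,N^A_r,N^A_m$ of $(Q,\circ)$, respectively.

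Next I would note that the three A-centers are built from the three A-nuclei purely by the group-theoretic operations of forming the subgroup generated by two subgroups and forming their intersection: by definition $Z^A_l(Q,\cdot)=\big(N^A_r(Q,\cdot)\times N^A_m(Q,\cdot)\big)\cap N^A_l(Q,\cdot)$, where $N^A_r\times N^A_m$ is a subgroup of $Avt(Q,\cdot)$ by Corollary \ref{NORM_DIR_PROD}, and similarly for $Z^A_r$ and $Z^A_m$ with the roles of $l,r,m$ permuted. Since a group isomorphism commutes with both of these operations, $\varphi_T$ sends $N^A_r(Q,\cdot)\times N^A_m(Q,\cdot)$ onto $N^A_r(Q,\circ)\times N^A_m(Q,\circ)$ and hence $Z^A_l(Q,\cdot)$ onto $Z^A_l(Q,\circ)$; therefore $Z^A_l(Q,\cdot)\cong Z^A_l(Q,\circ)$, and likewise $Z^A_r(Q,\cdot)\cong Z^A_r(Q,\circ)$ and $Z^A_m(Q,\cdot)\cong Z^A_m(Q,\circ)$. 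These are items $1$, $4$ and $7$ of the list.

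For the component statements I would use that every element of $Z^A_l\subseteq N^A_l$ is an autotopy $(\alpha,\varepsilon,\gamma)$ whose middle component is $\varepsilon$; by Lemma \ref{TWO_AUTOTOPY_COMPONENTS} such an autotopy is determined by its first component alone, and also by its third component alone, so the two coordinate projections give group isomorphisms $Z^A_l\cong {}_1Z^A_l$ and $Z^A_l\cong {}_3Z^A_l$ (the A-center analogue of Lemma \ref{ISOMORPHISMS OF COMPONENTS_1}, proved in the same way). Composing these with the isomorphism $Z^A_l(Q,\cdot)\cong Z^A_l(Q,\circ)$ from the previous step yields items $2$ and $3$; the analogous chains for $Z^A_r$ (projections onto the second and third coordinates) and for $Z^A_m$ (projections onto the first and second coordinates) yield items $5$, $6$, $8$ and $9$.

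I do not expect a genuine obstacle here — the argument is bookkeeping on top of Lemma \ref{ISOM_AUTOT_GR}. The one place that deserves attention is verifying surjectivity rather than mere containment in the first step (that $\varphi_T$ maps each A-nucleus \emph{onto}, not just into, the corresponding A-nucleus of $(Q,\circ)$), together with the identities $\varphi_T(A\cap B)=\varphi_T(A)\cap\varphi_T(B)$ and $\varphi_T(AB)=\varphi_T(A)\varphi_T(B)$ used in the second step; all of these follow immediately from $\varphi_T$ being a bijective homomorphism whose inverse $\varphi_{T^{-1}}$ has the same isotopy-induced form. A secondary technical point is that the coordinate-projection maps used for the components are well defined and injective on the A-centers, which is precisely what Lemma \ref{TWO_AUTOTOPY_COMPONENTS} (equivalently Leakh's Theorem \ref{DEF_OF_COMP_BY_AUTOT_AND_EL}) guarantees.
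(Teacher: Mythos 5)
Your proposal is correct and follows essentially the same route as the paper: conjugation by the isotopy $T$ (Lemma \ref{ISOM_AUTOT_GR}) carries each A-nucleus onto the corresponding A-nucleus, and since this bijective homomorphism commutes with intersections and with the internal direct product $N^A_r\times N^A_m$, it maps each A-center onto the corresponding A-center. The paper handles the component statements tersely ("follow from Case 1," i.e.\ conjugation acts componentwise), while you route them through the projection isomorphisms $Z^A_l\cong{}_1Z^A_l\cong{}_3Z^A_l$ justified by Lemma \ref{TWO_AUTOTOPY_COMPONENTS}; this is only a minor variation of the same argument.
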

\begin{proof}
Case 1. Suppose that  $(Q, \circ) = (Q, \cdot)T$, where $T$ is an isotopy. By Lemma \ref{ISOM_AUTOT_GR}  $Avt(Q, \circ) = T^{-1} Avt (Q, \cdot) T$.

Notice,  $T^{-1}Z^A_l(Q, \cdot)T =  T^{-1}((N^A_r (Q, \cdot) \times N^A_m(Q, \cdot)) \cap N^A_l(Q, \cdot))T$.
For short below we shall omit denotation of quasigroups $(Q, \cdot)$ and $(Q, \circ)$.

Denote $(N^A_r \times N^A_m)$ by $A$, $N^A_l$ by $B$, the
conjugation isomorphism by the letter $\varphi$.
Prove that $\varphi (A\cap B) = \varphi A \cap \varphi B$.

Let $\varphi x\in \varphi(A\cap B)$. Then $x \in A\cap B$ since $\varphi$ is a bijective map. If $x\in A\cap B$, then $x\in A$, $x\in B$,
$\varphi (x) \in \varphi(A) $,  $\varphi(x)\in \varphi(B)$, $\varphi(x) \in \varphi(A)\cap \varphi(B)$, $\varphi(A\cap B) \subseteq \varphi
A\cap \varphi B$.

Let $\varphi x\in \varphi A\cap \varphi B$. Then $\varphi x \in \varphi A$, $\varphi x \in \varphi B$. Since $\varphi$ is a bijective map, then
$x\in A$, $x\in B$,  $x\in A \cap B$, $\varphi x\in \varphi(A\cap B)$,   $\varphi A\cap \varphi B \subseteq \varphi(A\cap B)$.

Finally, $\varphi
A\cap \varphi B = \varphi(A\cap B)$, i.e. $T^{-1}((N^A_r \times N^A_m) \cap N^A_l)T = T^{-1}(N^A_r \times N^A_m)T \cap T^{-1}N^A_l T$.

The fact, that $T^{-1}(N^A_r \times N^A_m)T = T^{-1}N^A_rT \times T^{-1}N^A_mT$ is well known.  Indeed, in the group of all triplets $S_Q\times S_Q \times S_Q$ isomorphic image of the direct product of two subgroups  is the  direct product of their isomorphic images. Also it is possible to prove this fact  using equality  (\ref{EQUAL_29}).

We have the following chain of equalities
\begin{equation*}
\begin{split}
& T^{-1}Z^A_l(Q, \cdot)T =  T^{-1}((N^A_r \times N^A_m) \cap N^A_l)T = \\ & T^{-1}(N^A_r \times N^A_m)T \cap T^{-1} N^A_lT  = (( T^{-1}N^A_r T) \times (T^{-1} N^A_m T)) \cap T^{-1} N^A_lT = \\ & (N^A_r (Q, \circ) \times N^A_m (Q, \circ)) \cap N^A_l(Q, \circ)  = Z^A_l(Q, \circ)
\end{split}
\end{equation*}

Therefore $  Z^A_l(Q, \cdot)\cong Z^A_l(Q, \circ)$.

Cases 2, 3 follows from Case 1. Cases 4 and 7 are proved in the similar way with Case 1. Cases 5, 6 and 8, 9 follow from Cases 2 and 3, respectively.
\end{proof}

\begin{corollary} \label{COR_Z_GROUPS_ABEL}
In any quasigroup $(Q, \cdot)$  the groups $Z^A_l$, $Z^A_r$, $Z^A_m$,  ${}_1 Z^A_l$, ${}_3 Z^A_l$, ${}_2 Z^A_r$, ${}_3 Z^A_r$, ${}_1 Z^A_m$, ${}_2 Z^A_m$ are isomorphic  abelian groups, i.e.
\begin{equation*}
\begin{split}
& Z^A_l(Q, \cdot) \cong Z^A_r(Q, \cdot) \cong Z^A_m(Q, \cdot) \cong \\& {}_1 Z^A_l(Q, \cdot)  \cong {}_3 Z^A_l(Q, \cdot) \cong {}_2 Z^A_r(Q, \cdot) \cong \\& {}_3 Z^A_r(Q, \cdot) \cong {}_1 Z^A_m(Q, \cdot) \cong {}_2 Z^A_m(Q, \cdot) \end{split}
\end{equation*}
\end{corollary}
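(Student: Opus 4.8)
The plan is to reduce the statement to the loop case by isotopy. First I would recall that every quasigroup $(Q, \cdot)$ is isotopic to a loop, namely to any of its LP-isotopes (Theorem \ref{LP_ISOT_AND_ANALOGS}). Fix such a loop $(Q, \circ)$ with $(Q, \circ) \sim (Q, \cdot)$. The idea is that the A-centers of $(Q, \cdot)$ are governed, up to isomorphism, by those of $(Q, \circ)$, and in the loop case everything is already known.

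Next I would invoke Lemma \ref{IZOTOPES_OF_QUS_CENTER}, which states precisely that left, right and middle A-centers of isotopic quasigroups, together with all their relevant components, are isomorphic. Hence
\[
Z^A_l(Q, \cdot) \cong Z^A_l(Q, \circ), \quad Z^A_r(Q, \cdot) \cong Z^A_r(Q, \circ), \quad Z^A_m(Q, \cdot) \cong Z^A_m(Q, \circ),
\]
and likewise ${}_1Z^A_l(Q, \cdot) \cong {}_1Z^A_l(Q, \circ)$, ${}_3Z^A_l(Q, \cdot) \cong {}_3Z^A_l(Q, \circ)$, and so on for the remaining component groups ${}_2Z^A_r$, ${}_3Z^A_r$, ${}_1Z^A_m$, ${}_2Z^A_m$.

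Then I would pass to the loop $(Q, \circ)$. By Theorem \ref{INTERS_PAIR_AND_THE_THIRD}, Cases 1--3, the three intersections defining $Z^A_l$, $Z^A_r$, $Z^A_m$ of the quasigroup definition coincide, in the loop case, with the concrete groups $\{(L_a, \varepsilon, L_a) \mid a \in Z(Q, \circ)\}$, $\{(\varepsilon, L_a, L_a) \mid a \in Z(Q, \circ)\}$ and $\{(L_a, L^{-1}_a, \varepsilon) \mid a \in Z(Q, \circ)\}$, that is, with the groups of left, right and middle central autotopies of Definition \ref{AUTOT_CENTER_LOOP}. Lemma \ref{ISOM_OF_CENTERS_OF_LOOP} then gives
\[
Z(Q, \circ) \cong Z^A_l(Q, \circ) \cong Z^A_r(Q, \circ) \cong Z^A_m(Q, \circ) \cong {}_1Z^A_l(Q, \circ) = {}_3Z^A_l(Q, \circ) = \dots = {}_2Z^A_m(Q, \circ),
\]
while Case 5 of Theorem \ref{INTERS_PAIR_AND_THE_THIRD}, together with commutativity of $Z(Q, \circ)$, shows that all these groups are abelian. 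Combining this with the isotopy isomorphisms of the previous paragraph produces the complete chain of isomorphisms asserted in the corollary, with every group abelian.

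Since all the real work is already encapsulated in the cited lemmas, there is essentially nothing to grind through; the only point needing slight attention is that the two meanings of the symbol $Z^A_l$ (the intersection $(N^A_r \times N^A_m) \cap N^A_l$ from the quasigroup definition, and the group of left central autotopies from Definition \ref{AUTOT_CENTER_LOOP}) must be seen to agree on loops — but that is exactly Cases 1--3 of Theorem \ref{INTERS_PAIR_AND_THE_THIRD}. So no genuine obstacle arises: the corollary is a formal consequence of the loop case plus isotopy invariance.
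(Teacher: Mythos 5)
Your proposal is correct and follows essentially the same route as the paper's own proof: reduce to a loop by isotopy, apply Lemma \ref{IZOTOPES_OF_QUS_CENTER} to transfer the A-centers and their components, and then conclude via Lemma \ref{ISOM_OF_CENTERS_OF_LOOP} that all these groups are isomorphic abelian groups. The only difference is that you make explicit (via Theorem \ref{INTERS_PAIR_AND_THE_THIRD}) the identification of the quasigroup-style intersection definition of $Z^A_l$, $Z^A_r$, $Z^A_m$ with the loop-style central-autotopy definition, a point the paper's proof uses silently.
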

\begin{proof}
Suppose that quasigroup $(Q,\cdot)$ is isomorphic  to a loop $(Q, \circ)$. Then by Lemma   \ref{IZOTOPES_OF_QUS_CENTER}   $Z^A_l(Q, \cdot) \cong Z^A_l(Q, \circ)$;   $Z^A_r(Q, \cdot) \cong Z^A_r(Q, \circ)$;
 $Z^A_m(Q, \cdot) \cong Z^A_m(Q, \circ)$ and so on.

 By Lemma \ref{ISOM_OF_CENTERS_OF_LOOP} the groups $Z^A_l(Q, \circ)$, ${}_1Z^A_l(Q, \circ)$, ${}_3Z^A_l(Q, \circ)$,  $ Z^A_r(Q, \circ)$, ${}_2Z^A_r(Q, \circ)$, ${}_3Z^A_r(Q, \circ)$, $Z^A_m(Q, \circ)$, ${}_1Z^A_m(Q, \circ)$, ${}_2Z^A_m(Q, \circ)$ are isomorphic  abelian groups. Therefore
the same is true for A-centers of any quasigroup.
\end{proof}

\begin{theorem} \label{SUBQUAS_SUB_ZENTER_TH}
   If an orbit $K$ by the action of a subgroup $H$ of the  group $_1Z^A_l(Q, \cdot)$ ($_3Z^A_l(Q, \cdot)$, $_2Z^A_r(Q, \cdot)$, $_3Z^A_r(Q, \cdot)$, $_1Z^A_m(Q, \cdot)$, $_2Z^A_m(Q, \cdot)$) on the set $Q$ is a subquasigroup of quasigroup  $(Q, \cdot)$, then $(K, \cdot)$ is an isotope of abelian group $H$.
\end{theorem}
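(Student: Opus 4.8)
The plan is to reduce this statement to Theorem~\ref{SUBQUAS_SUB_NUCL_TH}, which already delivers the group-isotope conclusion for subgroups of the A-nuclear component groups, and then to upgrade the word \emph{group} to \emph{abelian group} by invoking Corollary~\ref{COR_Z_GROUPS_ABEL}.

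First I would record the inclusions of component groups. By definition $Z^A_l = (N^A_r \times N^A_m) \cap N^A_l$ is a subgroup of $N^A_l$ inside $Avt(Q)$; its elements are triples of the form $(\alpha, \varepsilon, \gamma)$, so their first components lie in $_1N^A_l$ and their third components lie in $_3N^A_l$. Using Lemma~\ref{COMPONENTS} (applied to the subgroup $Z^A_l$ of $N^A_l$), the first and third components of $Z^A_l$ form groups, hence $_1Z^A_l \subseteq \,_1N^A_l$ and $_3Z^A_l \subseteq \,_3N^A_l$. In exactly the same way $_2Z^A_r \subseteq \,_2N^A_r$, $_3Z^A_r \subseteq \,_3N^A_r$, $_1Z^A_m \subseteq \,_1N^A_m$ and $_2Z^A_m \subseteq \,_2N^A_m$. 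Consequently a subgroup $H$ of one of the A-central component groups is automatically a subgroup of the matching A-nuclear component group, and the action of $H$ on the set $Q$ — together with any orbit $K$ of that action — is literally the same set-theoretic action in both settings, so the hypothesis that $K$ is a subquasigroup of $(Q, \cdot)$ is transported unchanged.

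Next I would apply the appropriate case of Theorem~\ref{SUBQUAS_SUB_NUCL_TH}: for instance, if $H \subseteq \,_1Z^A_l \subseteq \,_1N^A_l$ and the orbit $K$ is a subquasigroup of $(Q, \cdot)$, then that theorem gives that $(K, \cdot)$ is an isotope of the group $H$. The only remaining point is commutativity: by Corollary~\ref{COR_Z_GROUPS_ABEL} the group $_1Z^A_l$ (respectively $_3Z^A_l$, $_2Z^A_r$, $_3Z^A_r$, $_1Z^A_m$, $_2Z^A_m$) is abelian, and therefore every subgroup $H$ of it is abelian. Hence $(K, \cdot)$ is an isotope of the abelian group $H$. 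Repeating the argument through each of the six component groups — matching $_3Z^A_l$ with case 2 of Theorem~\ref{SUBQUAS_SUB_NUCL_TH}, $_2Z^A_r$ with case 3, and so on — completes the proof.

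I do not expect a genuine obstacle: the whole content is the bookkeeping that identifies which A-central component sits inside which A-nuclear component, so that the correct case of Theorem~\ref{SUBQUAS_SUB_NUCL_TH} is invoked, plus the observation that abelianness is inherited by subgroups. If one preferred a self-contained argument, one could simply repeat the proof of Theorem~\ref{SUBQUAS_SUB_NUCL_TH} verbatim — write each element of $K$ as $\delta k$ with $\delta \in H$, define the transported operation $\delta \circ \lambda = \mu$ iff $\delta k \star \lambda k = \mu k$ on $H$, deduce $(H, \circ) \cong (K, \star) \sim (K, \cdot)$ and an isotopy $(H, \circ) = (H, \ast)(\varepsilon, L^{\circ}_{\varepsilon}, \varepsilon)$ onto the group $(H, \ast)$ — and then quote Corollary~\ref{COR_Z_GROUPS_ABEL} to see that $(H, \ast)$ is abelian; but the short route above is cleaner.
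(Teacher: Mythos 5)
Your proposal is correct and follows essentially the same route as the paper: the paper's own proof likewise cites Corollary \ref{COR_Z_GROUPS_ABEL} to get that $_1Z^A_l$ (hence any subgroup $H$) is abelian, and derives the group-isotope conclusion from the definition of the A-center together with Theorem \ref{SUBQUAS_SUB_NUCL_TH}. Your explicit bookkeeping of the inclusions $_1Z^A_l\subseteq\,_1N^A_l$, etc., via Lemma \ref{COMPONENTS} merely spells out what the paper leaves implicit.
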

\begin{proof} From Corollary \ref{COR_Z_GROUPS_ABEL} it follows that the group $_1Z^A_l$ is abelian. Then any subgroup $H$ of group $_1Z^A_l$ also is abelian.
The fact that $(K, \cdot)$ is  group isotope follows from definition of left A-center $Z^A_l$ and Theorem   \ref{SUBQUAS_SUB_NUCL_TH}.

Other cases  are proved in the similar way.
\end{proof}

\begin{corollary} \label{ACTION_OF_ZENTER}
    If an orbit $K$ by the action of   group $_1Z^A_l(Q, \cdot)$ ($_3Z^A_l(Q, \cdot)$, $_2Z^A_r(Q, \cdot)$, $_3Z^A_r(Q, \cdot)$, $_1Z^A_m(Q, \cdot)$, $_2Z^A_m(Q, \cdot)$)  on the set $Q$ is a subquasigroup of quasigroup  $(Q, \cdot)$, then $(K, \cdot)$ is an isotope of abelian group $_1Z^A_l(Q, \cdot)$ ($_3Z^A_l(Q, \cdot)$, $_2Z^A_r(Q, \cdot)$, $_3Z^A_r(Q, \cdot)$, $_1Z^A_m(Q, \cdot)$, $_2Z^A_m(Q, \cdot)$, respectively).
\end{corollary}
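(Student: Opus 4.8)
The plan is to read off this statement as the special case of Theorem~\ref{SUBQUAS_SUB_ZENTER_TH} in which the subgroup $H$ is taken to be the ambient component group itself. Since any group is a subgroup of itself, $H = {}_1Z^A_l(Q,\cdot)$ is an admissible choice of subgroup of ${}_1Z^A_l(Q,\cdot)$, and similarly for the five remaining components ${}_3Z^A_l$, ${}_2Z^A_r$, ${}_3Z^A_r$, ${}_1Z^A_m$, ${}_2Z^A_m$.

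First I would cite Corollary~\ref{COR_Z_GROUPS_ABEL}, which guarantees that each of these six component groups is abelian; hence in every case the chosen $H$ is an abelian group, and the adjective \emph{abelian} appearing in the conclusion will be legitimate.

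Next I would apply Theorem~\ref{SUBQUAS_SUB_ZENTER_TH} with this $H$. Its hypothesis --- that the orbit $K$ of the action of $H$ on $Q$ is a subquasigroup of $(Q,\cdot)$ --- is exactly the hypothesis of the present corollary, and its conclusion is that $(K,\cdot)$ is an isotope of the abelian group $H$. Specializing $H$ back to ${}_1Z^A_l(Q,\cdot)$ (respectively to each of the other five components) then yields precisely the asserted statement.

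I do not expect any real obstacle here: the only points that need to be said are the two remarks above, namely that Theorem~\ref{SUBQUAS_SUB_ZENTER_TH} is applicable with $H$ equal to the full component (because a group is a subgroup of itself) and that this full component is abelian by Corollary~\ref{COR_Z_GROUPS_ABEL}. One could alternatively re-derive everything directly from Theorem~\ref{SUBQUAS_SUB_NUCL_TH} together with the definition of the A-centers as the relevant intersections $(N^A_r\times N^A_m)\cap N^A_l$, and so on, but that would only duplicate the argument already in place and is not needed.
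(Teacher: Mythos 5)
Your proposal is correct and follows exactly the paper's own route: the paper proves this corollary as the special case of Theorem~\ref{SUBQUAS_SUB_ZENTER_TH} with $H$ taken to be the full component group, e.g.\ $H = {}_1Z^A_l(Q,\cdot)$ in Case 1. The remark about abelianness via Corollary~\ref{COR_Z_GROUPS_ABEL} is already built into the statement of Theorem~\ref{SUBQUAS_SUB_ZENTER_TH}, so nothing further is needed.
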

\begin{proof}
This corollary is partial case of Theorem \ref{SUBQUAS_SUB_ZENTER_TH}. For example in Case 1 $ H= {}_1Z^A_l(Q, \cdot)$.
\end{proof}

\begin{theorem}\label{QUAS_ZENTR_TH}
 The  group $_1Z^A_l$ ($_3Z^A_l$, $_2Z^A_r$, $_3Z^A_r$, $_1Z^A_m$, $_2Z^A_m$) of a quasigroup $(Q, \cdot)$ acts on the set $Q$ simply transitively if and only if  quasigroup $(Q, \cdot)$ is abelian group isotope.
\end{theorem}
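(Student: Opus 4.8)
The plan is to reduce the claim to the analogous statement about A-nuclei, Theorem~\ref{QUAS_NUCL_TR_TH}, together with the structural information about A-centers accumulated in the loop case. The two directions are handled separately, and the easy direction is the "abelian group isotope $\Rightarrow$ transitivity".

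\textbf{Sufficiency.} Suppose $(Q,\cdot)\sim(Q,+)$, where $(Q,+)$ is an abelian group. First I would verify the statement directly for an abelian group $(Q,+)$: there every element of the left A-center has the form $(L^+_a,\varepsilon,L^+_a)$ with $a\in Q$ (indeed $Z(Q,+)=Q$ and, by Theorem~\ref{INTERS_PAIR_AND_THE_THIRD}, $Z^A_l=\{(L^+_a,\varepsilon,L^+_a)\mid a\in Q\}$), so $_1Z^A_l(Q,+)=\mathbb L(Q,+)$ acts on $Q$ simply transitively. Now if $(Q,\cdot)=(Q,+)T$ for an isotopy $T=(\alpha,\beta,\gamma)$, then by Lemma~\ref{ISOM_AUTOT_GR} $Avt(Q,\cdot)=T^{-1}Avt(Q,+)T$, and by the chain of equalities in the proof of Lemma~\ref{IZOTOPES_OF_QUS_CENTER} the left A-center is transported the same way, so $_1Z^A_l(Q,\cdot)=\alpha^{-1}\,{}_1Z^A_l(Q,+)\,\alpha$. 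Transitivity is preserved under this conjugation exactly as in the proof of Theorem~\ref{QUAS_NUCL_TR_TH}: given $a,b\in Q$ pick $x$ with $L^+_x(\alpha a)=\alpha b$, then $\alpha^{-1}L^+_x\alpha\in{}_1Z^A_l(Q,\cdot)$ sends $a$ to $b$. Semiregularity follows from Theorem~\ref{GROUP_ACTION_ON QUAS} applied to $_1N^A_l\supseteq{}_1Z^A_l$ (or directly to $_1Z^A_l$ by Corollary~\ref{SUBGR_FREE_ACT_GR}, since $_1Z^A_l$ is a subgroup of $_1N^A_l$). Hence the action is simply transitive. The other five cases are obtained in the same manner, using the appropriate line of Theorem~\ref{INTERS_PAIR_AND_THE_THIRD} and the corresponding component.

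\textbf{Necessity.} Conversely, assume $_1Z^A_l(Q,\cdot)$ acts on $Q$ simply transitively. Then the single orbit $K$ equals $Q$, so $K$ is (trivially) a subquasigroup, and Corollary~\ref{ACTION_OF_ZENTER} (equivalently Theorem~\ref{SUBQUAS_SUB_ZENTER_TH} with $H={}_1Z^A_l$) gives that $(Q,\cdot)=(K,\cdot)$ is an isotope of the \emph{abelian} group $_1Z^A_l(Q,\cdot)$; the abelianness of this group is guaranteed by Corollary~\ref{COR_Z_GROUPS_ABEL}. This is exactly the conclusion "abelian group isotope". The remaining five components are handled identically, each time quoting the relevant clause of Corollary~\ref{ACTION_OF_ZENTER} and Corollary~\ref{COR_Z_GROUPS_ABEL}.

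\textbf{Where the work really sits.} The present statement is essentially a corollary: all the substance has been front-loaded into Theorem~\ref{SUBQUAS_SUB_NUCL_TH} (orbit-that-is-a-subquasigroup is a group isotope), Theorem~\ref{INTERS_PAIR_AND_THE_THIRD} (the A-centers are abelian and are the expected intersections), and Corollary~\ref{COR_Z_GROUPS_ABEL} (invariance and abelianness of the A-centers under isotopy). So I do not expect a genuine obstacle; the only point requiring a little care is making sure that the "transported" A-center $\alpha^{-1}{}_1Z^A_l(Q,+)\alpha$ is literally $_1Z^A_l(Q,\cdot)$ and not merely isomorphic to it — this is what the displayed chain of set-equalities in Lemma~\ref{IZOTOPES_OF_QUS_CENTER} provides, so I would cite that chain explicitly rather than just the isomorphism statement. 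One should also note that since all nine groups $Z^A_l,\dots,{}_2Z^A_m$ are isomorphic (Corollary~\ref{COR_Z_GROUPS_ABEL}) and their components are pairwise conjugate inside $S_Q$ by translations, transitivity of one forces transitivity of all, so the six cases of the theorem are genuinely equivalent and it suffices to prove one of them carefully.
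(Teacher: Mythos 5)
Your proposal is correct and follows essentially the same route as the paper: the necessity direction via Corollary \ref{ACTION_OF_ZENTER} (the single orbit is $Q$) together with Corollary \ref{COR_Z_GROUPS_ABEL}, and the sufficiency direction by identifying $_1Z^A_l(Q,+)$ with the left translations of the abelian group and transporting it by conjugation to $(Q,\cdot)$, with semiregularity coming from Theorem \ref{GROUP_ACTION_ON QUAS}. Your extra care in citing the set-equality chain of Lemma \ref{IZOTOPES_OF_QUS_CENTER} (rather than only the conjugation of autotopy groups) to get $_1Z^A_l(Q,\cdot)=\alpha^{-1}\,{}_1Z^A_l(Q,+)\,\alpha$ literally is a welcome refinement but not a different method.
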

\begin{proof}
If the group $_1Z^A_l$ of a quasigroup $(Q, \cdot)$ acts on the set $Q$  transitively, then the orbit by the action of group $_1Z^A_l$ coincides with the set $Q$, and by Corollary \ref{ACTION_OF_ZENTER} $(Q, \cdot)\sim \, _1Z^A_l$.

Let $(Q, \cdot)\sim (Q,+)$, where $(Q, +)$ is abelian group.
Prove, that in this case the group  $\, _1Z^A_l (Q, \cdot)$ acts on the set $Q$ simply transitively.
Any element of left A-center  of the group $(Q,+)$ has the form $(L^+_a, \varepsilon, L^+_a)$ for all $a\in Q$.

It is clear that  group  $\, _1Z^A_l (Q, +)$ acts on the set $Q$ simply transitively.

If $(Q, \cdot)  = (Q,+) (\alpha, \beta, \gamma)$, then $\, _1Z^A_l (Q, \cdot) = \alpha^{-1} \, _1Z^A_l (Q, +) \alpha$ (Corollary \ref{COROLL_VDB_LEMMA}).

Prove that action of the  group $\, _1Z^A_l (Q, \cdot)$  on the set $Q$ is transitive.
Let $a, b \in Q$. Prove that there exists permutation $\psi \in \, _1Z^A_l (Q, \cdot)$ such that $\psi a = b$. We can write permutation $\psi$ in the form $\alpha^{-1} L^+_x \alpha$.
Since the group $\, _1Z^A_l (Q, +)$ is transitive on the set $Q$, we can take element $x$ such that $L^+_x \alpha a = \alpha b$.
Then $\psi a = \alpha^{-1} L^+_x \alpha a = \alpha^{-1} \alpha b = b$.

The fact that action of the group $\, _1Z^A_l$ on the set $Q$ has identity stabilizers for any element $x\in Q$  follows from Theorem \ref{GROUP_ACTION_ON QUAS} since in any abelian group $\, _1N^A_l = \, _1Z^A_l$.

Other cases are proved in the similar way.
\end{proof}

\begin{corollary}
If at least one component of quasigroup  A-center is transitive, then all  components of A-centers are transitive.
\end{corollary}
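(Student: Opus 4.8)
The plan is to reduce this corollary to Theorem~\ref{QUAS_ZENTR_TH}, which already characterizes \emph{simple} transitivity of each individual A-center component in terms of the quasigroup being an abelian group isotope. The only gap between the hypothesis here (``one component acts transitively'') and the hypothesis of that theorem (``one component acts simply transitively'') is the freeness of the action, and I would close that gap using the free-action results already proved for A-nuclei.

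First I would note that every component of an A-center that can be non-trivial is a subgroup of the corresponding A-nucleus component. By definition $Z^A_l = (N^A_r \times N^A_m) \cap N^A_l \subseteq N^A_l$, so passing to first and third components gives ${}_1Z^A_l \subseteq {}_1N^A_l$ and ${}_3Z^A_l \subseteq {}_3N^A_l$; likewise ${}_2Z^A_r \subseteq {}_2N^A_r$, ${}_3Z^A_r \subseteq {}_3N^A_r$, ${}_1Z^A_m \subseteq {}_1N^A_m$, ${}_2Z^A_m \subseteq {}_2N^A_m$. Since the elements of $Z^A_l$ are autotopies of the form $(\alpha,\varepsilon,\gamma)$, those of $Z^A_r$ of the form $(\varepsilon,\beta,\gamma)$, and those of $Z^A_m$ of the form $(\alpha,\beta,\varepsilon)$, the above six groups are exactly the components of A-centers that can differ from the identity group, so it suffices to treat them.

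Next I would invoke Corollary~\ref{SUBGR_FREE_ACT_GR}: every subgroup of ${}_1N^A_l$, ${}_3N^A_l$, ${}_2N^A_r$, ${}_3N^A_r$, ${}_1N^A_m$, ${}_2N^A_m$ acts on $Q$ freely, with trivial stabilizer of every element. Hence each of the six A-center components acts freely on $Q$, and therefore the component assumed transitive is in fact simply transitive. Theorem~\ref{QUAS_ZENTR_TH} then yields that $(Q,\cdot)$ is an abelian group isotope; applying the converse direction of the same theorem to each of the six components shows that every one of them acts simply transitively on $Q$, and in particular transitively. (Alternatively, once $(Q,\cdot)$ is known to be an abelian group isotope, one may quote Corollary~\ref{COR_Z_GROUPS_ABEL} and the isotopy-invariance in Lemma~\ref{IZOTOPES_OF_QUS_CENTER} to transport the situation to an abelian group, where transitivity of all components is visible directly.)

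I do not expect a genuine obstacle here: the statement is a bookkeeping consequence of Theorem~\ref{QUAS_ZENTR_TH} together with Corollary~\ref{SUBGR_FREE_ACT_GR}. The only point requiring a little care is verifying that the component assumed transitive really is one of the six listed components, so that both the free-action corollary and Theorem~\ref{QUAS_ZENTR_TH} apply to it; this is immediate from the shapes $(\alpha,\varepsilon,\gamma)$, $(\varepsilon,\beta,\gamma)$, $(\alpha,\beta,\varepsilon)$ of the autotopies comprising $Z^A_l$, $Z^A_r$, $Z^A_m$ respectively.
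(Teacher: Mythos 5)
Your proof is correct and follows essentially the same route as the paper, which simply cites Theorem~\ref{QUAS_ZENTR_TH}. Your extra step of upgrading ``transitive'' to ``simply transitive'' via Corollary~\ref{SUBGR_FREE_ACT_GR} (using ${}_1Z^A_l \subseteq {}_1N^A_l$, etc.) is a careful touch that the paper handles implicitly, since the forward direction of the theorem's own proof only uses transitivity of the orbit.
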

\begin{proof}
The proof follows from Theorem \ref{QUAS_ZENTR_TH}.
\end{proof}

\begin{definition} \label{A_CENTRA_DEF}
   \textit{A-central quasigroup}  is a quasigroup $Q$ with transitive  action of at least one from its  components of A-centers on $Q$.
\end{definition}

Using this definition we formulate main theorem of this paper
\begin{theorem} \label{group_isotope_trans_CENTR}
A quasigroup is A-central  if and only if it is abelian group isotope.
\end{theorem}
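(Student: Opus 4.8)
The plan is to obtain Theorem~\ref{group_isotope_trans_CENTR} as an almost immediate consequence of Theorem~\ref{QUAS_ZENTR_TH} together with Definition~\ref{A_CENTRA_DEF}. The only gap to bridge is that Definition~\ref{A_CENTRA_DEF} speaks of a merely \emph{transitive} action of one of the component groups $_1Z^A_l$, $_3Z^A_l$, $_2Z^A_r$, $_3Z^A_r$, $_1Z^A_m$, $_2Z^A_m$, whereas Theorem~\ref{QUAS_ZENTR_TH} is phrased in terms of a \emph{simply transitive} (regular) action; so I first record that for these six groups transitivity already forces regularity.

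For the implication ``A-central $\Rightarrow$ abelian group isotope'', I would argue as follows. Suppose $(Q,\cdot)$ is A-central. By Definition~\ref{A_CENTRA_DEF}, one of the six component groups above, say $_1Z^A_l$ without loss of generality, acts transitively on $Q$. Since $Z^A_l=(N^A_r\times N^A_m)\cap N^A_l$ is a subgroup of $N^A_l$, its group of first components $_1Z^A_l$ is a subgroup of $_1N^A_l$ by Lemma~\ref{COMPONENTS}. Hence, by Corollary~\ref{SUBGR_FREE_ACT_GR} (which rests on Theorem~\ref{GROUP_ACTION_ON QUAS}), $_1Z^A_l$ acts on $Q$ freely, with trivial point stabilizers. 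A free and transitive action is regular, so $_1Z^A_l$ acts on $Q$ simply transitively, and Theorem~\ref{QUAS_ZENTR_TH} then gives that $(Q,\cdot)$ is an abelian group isotope. The remaining five possibilities are handled identically: in each case the relevant component of an A-center is, by Lemma~\ref{COMPONENTS}, a subgroup of the corresponding component $_3N^A_l$, $_2N^A_r$, $_3N^A_r$, $_1N^A_m$ or $_2N^A_m$ of an A-nucleus, all of which act freely by Corollary~\ref{SUBGR_FREE_ACT_GR}, and Theorem~\ref{QUAS_ZENTR_TH} applies to that component.

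For the converse, suppose $(Q,\cdot)\sim(Q,+)$ with $(Q,+)$ an abelian group. Then Theorem~\ref{QUAS_ZENTR_TH} asserts that $_1Z^A_l$ acts on $Q$ simply transitively, in particular transitively, so $(Q,\cdot)$ is A-central by Definition~\ref{A_CENTRA_DEF}. (One could equally invoke any of the other five components.)

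I do not anticipate a genuine obstacle: all the substantive content is already contained in Theorem~\ref{QUAS_ZENTR_TH} (which in turn rests on Theorem~\ref{SUBQUAS_SUB_ZENTER_TH} and on the structure theory of A-nuclei, Corollary~\ref{COR_Z_GROUPS_ABEL}). The only points demanding a little care are the passage from ``transitive'' to ``simply transitive'' via the freeness of the actions in Corollary~\ref{SUBGR_FREE_ACT_GR}, and checking that each of the six component groups of the A-centers is indeed a subgroup of one of the six freely acting component groups of the A-nuclei — both of which are routine verifications using Lemma~\ref{COMPONENTS}.
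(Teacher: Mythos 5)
Your proposal is correct and follows essentially the same route as the paper, which proves the theorem simply by invoking Theorem~\ref{QUAS_ZENTR_TH} together with Definition~\ref{A_CENTRA_DEF}. Your extra step bridging ``transitive'' and ``simply transitive'' via the free action of the A-nuclear components (Corollary~\ref{SUBGR_FREE_ACT_GR}, using that each $_iZ^A$ is a subgroup of the corresponding $_iN^A$) is a sound clarification of a point the paper leaves implicit; note also that the forward direction of the paper's proof of Theorem~\ref{QUAS_ZENTR_TH} in fact uses only transitivity, so no additional content is needed.
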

\begin{proof}
We can use Theorem \ref{QUAS_ZENTR_TH}.
\end{proof}

\begin{corollary}
Any central quasigroup in G.B.~Belyavskaya and J.D.H.~Smith sense is A-central quasigroup.
\end{corollary}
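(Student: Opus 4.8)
The plan is to obtain this corollary by concatenating three results already available in the paper, so that no fresh computation is needed. The backbone of the argument is the observation that the chain "central in the Belyavskaya--Smith sense $\Rightarrow$ T-quasigroup $\Rightarrow$ abelian group isotope" terminates exactly at the hypothesis of the main theorem, Theorem~\ref{group_isotope_trans_CENTR}, which characterizes A-central quasigroups.

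First I would invoke Theorem~\ref{GBEL_CENTRAL_TH} (Belyavskaya's theorem): a quasigroup $(Q,\cdot)$ is central in the sense of G.B.~Belyavskaya and J.D.H.~Smith if and only if it is a T-quasigroup, i.e. there are an abelian group $(Q,+)$, automorphisms $\varphi,\psi$ of $(Q,+)$ and an element $a\in Q$ with $x\cdot y=\varphi x+\psi y+a$ for all $x,y\in Q$. Next I would read off from this defining identity that $(Q,\cdot)$ is an isotope of $(Q,+)$: writing $R^{+}_a$ for the right translation by $a$ in $(Q,+)$, one has $x\cdot y=R^{+}_a(\varphi x+\psi y)$, that is, $(Q,\cdot)=(Q,+)(\varphi,\psi,(R^{+}_a)^{-1})$, so $(Q,\cdot)$ is an abelian group isotope. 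Finally, by Theorem~\ref{group_isotope_trans_CENTR}, a quasigroup is A-central precisely when it is an abelian group isotope; applying this to $(Q,\cdot)$ gives that $(Q,\cdot)$ is A-central, which is the assertion.

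There is essentially no obstacle here: the statement is a formal consequence of the cited theorems once the definition of a T-quasigroup is unwound, and the only step deserving a second glance is the middle one, namely recognizing that $x\cdot y=\varphi x+\psi y+a$ literally presents $(Q,\cdot)$ as an isotope of the abelian group $(Q,+)$ in the sense of Definition of isotopism used in the paper; that is immediate. It is worth noting that only the one implication is being claimed: A-centrality is isotopy-invariant (Lemma~\ref{IZOTOPES_OF_QUS_CENTER}), whereas the class of abelian group isotopes is strictly wider than the class of T-quasigroups, so the converse fails in general and is not asserted.
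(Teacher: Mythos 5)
Your proof is correct and follows essentially the same route as the paper: cite Belyavskaya's Theorem \ref{GBEL_CENTRAL_TH}, observe that the T-quasigroup form $x\cdot y=\varphi x+\psi y+a$ exhibits $(Q,\cdot)$ as an abelian group isotope, and conclude by the characterization of A-central quasigroups. You are in fact slightly more careful than the paper, which cites Theorem \ref{group_isotope_trans} (the A-nuclear characterization) where the relevant result is Theorem \ref{group_isotope_trans_CENTR}, the theorem you correctly invoke.
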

\begin{proof}
The proof follows from Theorems \ref{GBEL_CENTRAL_TH} and \ref{group_isotope_trans}. Any $T$-quasigroup is an isotope of  abelian group.
\end{proof}

\section{Normality of A-nuclei and A-centers}

\subsection{Normality of  equivalences in quasigroups}

A binary relation on a set $Q$ is any subset of the set $Q\times Q$.

\begin{definition} A relation $\theta$ on a set $S$ satisfying the reflexive ($x\theta x$), symmetric ($x\theta y \Leftrightarrow y\theta x $), and
transitive ($x\theta y, y\theta z \Longrightarrow  x\theta z$)
properties is an equivalence relation on $S$. Each cell $\bar a = \theta (a)$ in the natural partition given by an
equivalence  relation is an equivalence class \cite{FR}.
\end{definition}

\begin{definition} \label{UNIV_AKGEBR_QUAS_DEF}
 Let $A$ be an algebra of type $\mathbb F$ and let $\theta$ an equivalence.  Then $\theta$ is a congruence
on A if $\theta$  satisfies the following compatibility property:
 For each n-ary function symbol $f\in \mathbb F$ and elements $a_i, b_i \in A$, if $a_i\theta b_i$ holds for
$1\leq i \leq n$ then $f^A(a_1,  \dots a_n)\theta f^A(b_1,  \dots b_n)$
holds \cite{BURRIS}.
\end{definition}
"The compatibility property is an obvious condition for introducing an algebraic structure
on the set of equivalence classes $A\slash \theta$  an algebraic structure which is inherited from the
algebra $A$" \cite{BURRIS}.

Notice there exists a quasigroup $(Q,\cdot)$  in the sense of Definition  \ref{def2} ("existential quasigroup") and  its congruence $\theta$  in
the  sense of Definition \ref{UNIV_AKGEBR_QUAS_DEF} such that $Q\slash \theta$ ("an algebraic structure which is inherited from the algebra
$A$") is a division groupoid which, in general, is not  a quasigroup \cite{BK, SHCH_BR_BL_05}. Therefore for quasigroups congruence and homomorphism theory has a little non-standard character from the point of view of universal-algebraic approach \cite{BURRIS}.

\begin{definition} \label{DEF_CONGR_QUAS}
An equivalence $\theta$ is a left congruence of a groupoid  $(Q, \circ)$, if the following implication is  true for
all  $x, y, z \in Q$: $x \, \theta \, y  \Longrightarrow (z\circ x)\, \theta \, (z\circ y)$.
In other words equivalence  $\theta$ is stable relative to any left translation of $(Q,\circ)$.

An equivalence $\theta$ is a right  congruence of a groupoid  $(Q, \circ)$, if the following implication is  true for
all  $x, y, z \in Q$: $ x\, \theta \, y
\Longrightarrow (x\circ z)\, \theta \, (y\circ z)$.
In other words equivalence  $\theta$ is stable relative to any right translation  of $(Q,\circ)$ \cite{PC}.
\end{definition}

\begin{definition}\label{NORM_CONGR_QUAS} An equivalence  $\theta$ of a groupoid  $(Q,\circ)$ is called cancellative from the left, if the following implication is
true for all  $x, y, z \in Q$: $(z\circ x) \, \theta \, (z\circ y)  \Longrightarrow x\, \theta \, y$.

An equivalence  $\theta$ of a groupoid  $(Q,\circ)$ is called cancellative from the right, if the following implication is true for all  $x, y, z \in Q$: $(z\circ x) \, \theta \, (z\circ y)  \Longrightarrow x\, \theta \, y, (x\circ z)
\, \theta \ (y\circ z) \Longrightarrow x\, \theta \, y$ \cite{VD, 1a}.

An equivalence  $\theta$ of a groupoid  $(Q,\circ)$ is called normal from the right (right normal), if it is stable and cancellative from the right.

An equivalence  $\theta$ of a groupoid  $(Q,\circ)$ is called normal from the left (left normal), if it is stable and cancellative from the left.

An equivalence  $\theta$ of a groupoid  $(Q,\circ)$ is called normal, if it is left and right normal.
\end{definition}

\begin{remark}
In  \cite{SCERB_07} there is slightly other approach to normality of  groupoid congruences.
\end{remark}

\begin{definition}
 If $\theta$ is a binary relation on a  set $Q$, $\alpha$ is a permutation of the set $Q$ and from $x\theta y$
it follows  $\alpha x \theta \alpha y$  and  $\alpha^{-1} x \theta \alpha^{-1} y$ for all $(x,y) \in \theta$,
then we shall say  that the permutation $\alpha$ is an \textit{admissible} permutation relative to the binary
relation $\theta$ \cite{VD}.
\end{definition}
\index{Binary relation!admissible}

Moreover,  we shall say that a binary relation $\theta$ admits a permutation $\alpha$.

One from the most important properties of  e-quasigroup  $(Q, \cdot, \backslash, /)$  is the following property.
\begin{lemma} \label{CONGR_PRIM_QUAS}
Any congruence of a quasigroup $(Q, \cdot, \backslash, /)$  is a normal congruence of quasigroup $(Q, \cdot)$;
any normal congruence of a quasigroup $(Q, \cdot)$ is a congruence of  quasigroup $(Q, \cdot, \backslash, /)$
\cite{BIRKHOFF, MALTSEV,  VD, 1a}.
\end{lemma}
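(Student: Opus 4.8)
The plan is to prove the two implications separately, in each case unwinding the relevant definitions and using Evans's identities (\ref{(1)})--(\ref{(4)}) to move between the operation $\cdot$ and the divisions $\backslash$ and $/$. Throughout, $\theta$ is an equivalence, so reflexivity, symmetry and transitivity are available for free.

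\textbf{From a congruence of $(Q,\cdot,\backslash,/)$ to normality.} Suppose $\theta$ is a congruence of the algebra $(Q,\cdot,\backslash,/)$ in the sense of Definition \ref{UNIV_AKGEBR_QUAS_DEF}, i.e. $\theta$ is compatible with each of the three binary operations. Compatibility with $\cdot$ says precisely that $x\,\theta\,y$ implies $(z\cdot x)\,\theta\,(z\cdot y)$ and $(x\cdot z)\,\theta\,(y\cdot z)$, so $\theta$ is a left and right congruence of $(Q,\cdot)$. For left cancellativity, assume $(z\cdot x)\,\theta\,(z\cdot y)$; using $z\,\theta\,z$ and compatibility with $\backslash$ we get $z\backslash(z\cdot x)\,\theta\,z\backslash(z\cdot y)$, and identity (\ref{(3)}) rewrites this as $x\,\theta\,y$. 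Right cancellativity is the mirror argument: from $(x\cdot z)\,\theta\,(y\cdot z)$ and compatibility with $/$ we obtain $(x\cdot z)/z\,\theta\,(y\cdot z)/z$, which by identity (\ref{(4)}) is $x\,\theta\,y$. Hence $\theta$ is a normal congruence of $(Q,\cdot)$ in the sense of Definition \ref{NORM_CONGR_QUAS}.

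\textbf{From normality to a congruence of $(Q,\cdot,\backslash,/)$.} Suppose now $\theta$ is a normal congruence of $(Q,\cdot)$, i.e. stable and cancellative from both sides. Compatibility with $\cdot$ is exactly stability. For compatibility with $\backslash$, take $x_1\,\theta\,x_2$ and $y_1\,\theta\,y_2$, and put $u=x_1\backslash y_1$, $v=x_2\backslash y_2$, so that $x_1\cdot u=y_1$ and $x_2\cdot v=y_2$ by identity (\ref{(1)}). Applying the right translation $R_u$ to $x_1\,\theta\,x_2$ gives $x_1\cdot u\,\theta\,x_2\cdot u$, hence $x_2\cdot u\,\theta\,y_1\,\theta\,y_2=x_2\cdot v$; left cancellativity now yields $u\,\theta\,v$, i.e. $(x_1\backslash y_1)\,\theta\,(x_2\backslash y_2)$. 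Compatibility with $/$ is symmetric: put $u=x_1/y_1$, $v=x_2/y_2$, so that $u\cdot y_1=x_1$ and $v\cdot y_2=x_2$ by identity (\ref{(2)}); applying the left translation $L_u$ to $y_1\,\theta\,y_2$ gives $u\cdot y_1\,\theta\,u\cdot y_2$, hence $u\cdot y_2\,\theta\,x_1\,\theta\,x_2=v\cdot y_2$, and right cancellativity yields $u\,\theta\,v$. Thus $\theta$ is compatible with $\cdot$, $\backslash$, $/$, so it is a congruence of $(Q,\cdot,\backslash,/)$.

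I do not expect a genuine obstacle here — this is the classical equivalence between the existential and equational notions of quasigroup congruence \cite{BIRKHOFF, MALTSEV}. The only point that needs care is the bookkeeping in the converse direction: one must push the two hypotheses $x_1\,\theta\,x_2$ and $y_1\,\theta\,y_2$ through the correct one-sided translation and then cancel on the correct side, chaining the intermediate $\theta$-relations via symmetry and transitivity. A remark after the proof could note that it is exactly this lemma that makes homomorphism theory for equational quasigroups behave well, in contrast to the ``existential'' setting mentioned just before the statement.
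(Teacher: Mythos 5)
Your proof is correct and complete: the forward direction uses compatibility with $\backslash$ and $/$ together with identities (\ref{(3)}) and (\ref{(4)}) to get cancellativity, and the converse pushes the hypotheses through the appropriate translations and cancels, which is exactly the classical argument. The paper itself gives no proof of this lemma — it is stated with citations to \cite{BIRKHOFF, MALTSEV, VD, 1a} — so there is nothing in the text to diverge from; your write-up is the standard proof those sources contain, with only the routine remark that two-sided stability plus transitivity yields compatibility of $\theta$ with the binary operation $\cdot$ left implicit.
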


\begin{lemma} \label{NORM_CONGR_ADMISSBLE_REL_TRANS}
Any normal quasigroup congruence of a quasigroup $(Q, \cdot)$ is admissible relative to  any left, right and middle  quasigroup translation
\cite{BELAS}.
\end{lemma}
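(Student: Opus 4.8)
The plan is to check the admissibility condition from the definition of an admissible permutation separately for each of the three kinds of quasigroup translations, using the four ingredients packaged in the hypothesis that $\theta$ is normal (Definitions \ref{DEF_CONGR_QUAS} and \ref{NORM_CONGR_QUAS}): stability under left translations, stability under right translations, left cancellativity, and right cancellativity. Since admissibility of a permutation $\alpha$ demands both $x\,\theta\,y\Rightarrow\alpha x\,\theta\,\alpha y$ and $x\,\theta\,y\Rightarrow\alpha^{-1}x\,\theta\,\alpha^{-1}y$, and since the inverse of a translation is again the inverse of a translation, it is enough to establish the forward implication for each of $L_a$, $L_a^{-1}$, $R_a$, $R_a^{-1}$, $P_a$, $P_a^{-1}$.

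First I would dispatch the one-sided translations. If $x\,\theta\,y$, then stability of $\theta$ under left translations gives $L_a x\,\theta\,L_a y$ at once. For $L_a^{-1}$, write $x=L_a u$ and $y=L_a v$ (using that $L_a$ is a bijection of $Q$); then $L_a u\,\theta\,L_a v$, so left cancellativity of $\theta$ yields $u\,\theta\,v$, i.e. $L_a^{-1}x\,\theta\,L_a^{-1}y$. The treatment of $R_a$ and $R_a^{-1}$ is the exact mirror image, with \emph{left} replaced by \emph{right} throughout.

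The substantive case is the middle translation $P_a$, which is not a one-sided translation, so one must unfold its defining property $x\cdot P_a x=a$. Given $x\,\theta\,y$, set $u=P_a x$ and $v=P_a y$, so that $x\cdot u=a=y\cdot v$. Applying stability under the right translation $R_u$ to $x\,\theta\,y$ gives $x\cdot u\,\theta\,y\cdot u$; since $x\cdot u=y\cdot v$, this reads $y\cdot v\,\theta\,y\cdot u$, and left cancellativity forces $v\,\theta\,u$, that is, $P_a x\,\theta\,P_a y$. For $P_a^{-1}$ one argues symmetrically: with $u=P_a^{-1}x$ and $v=P_a^{-1}y$ one has $u\cdot x=a=v\cdot y$; stability under $L_u$ gives $u\cdot x\,\theta\,u\cdot y$, hence $v\cdot y\,\theta\,u\cdot y$, and right cancellativity gives $v\,\theta\,u$. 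Alternatively, the whole middle case can be shortcut through Lemma \ref{CONGR_PRIM_QUAS}: a normal congruence of $(Q,\cdot)$ is a congruence of the e-quasigroup $(Q,\cdot,\backslash,/)$, hence is stable under $\backslash$ and $/$; since $P_a x=x\backslash a$ and $P_a^{-1}x=a/x$, admissibility relative to $P_a$ follows immediately.

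The only real difficulty is bookkeeping in the middle-translation case: one must be careful to invoke the correct one of the two cancellativity hypotheses (left cancellativity for $P_a$, right cancellativity for $P_a^{-1}$) and to apply stability to the appropriate side. Apart from that, every step is a direct consequence of the definitions, so I expect the proof to be short.
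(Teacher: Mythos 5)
Your proof is correct. For the left and right translations you argue exactly as the paper does (directly from the stability and cancellation clauses of Definitions \ref{DEF_CONGR_QUAS} and \ref{NORM_CONGR_QUAS}), but for the middle translation your primary argument differs from the paper's. The paper handles $P_c$ and $P_c^{-1}$ by invoking Lemma \ref{CONGR_PRIM_QUAS}: a normal congruence of $(Q,\cdot)$ is a congruence of the e-quasigroup $(Q,\cdot,\backslash,/)$, and since $P_c x = x\backslash c = R^{\backslash}_c x$ and $P^{-1}_c x = c/x = L^{/}_c x$, admissibility is immediate — this is precisely the "shortcut" you mention at the end. Your main argument instead unfolds $x\cdot P_a x = a$ and combines right (resp.\ left) stability with left (resp.\ right) cancellativity of $\theta$ to get $P_a x\,\theta\,P_a y$ and $P_a^{-1}x\,\theta\,P_a^{-1}y$ directly; the bookkeeping there (which side is cancelled, plus a use of symmetry of $\theta$ at the end) is all in order. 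What your route buys is self-containedness: it uses only the translation-level definition of normality and never needs the equivalence between normal congruences of $(Q,\cdot)$ and congruences of $(Q,\cdot,\backslash,/)$, a result the paper imports from the literature. What the paper's route buys is brevity and the structural point that middle translations are just one-sided translations of the parastrophes $\backslash$ and $/$, which is the viewpoint exploited later (e.g.\ in Corollary \ref{PARASTROPHY_INVAR_NORM_CONG} and Lemma \ref{LEFT_CONGR_RELAT_PARASTROPHY}).
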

\begin{proof}
The fact that any normal quasigroup congruence is admissible relative to  any left and right  quasigroup
translation follows from Definitions \ref{DEF_CONGR_QUAS} and \ref{NORM_CONGR_QUAS}.

Let $\theta$ be a normal congruence of a quasigroup $(Q,\cdot)$. Prove the following implication
\begin{equation} \label{IMPLICATION_1}
a\theta b \rightarrow P_c a \, \theta \, P_c b
\end{equation}
 If $P_c a = k$, then $a\cdot k = c$, $k = a\backslash c$, $k =
R^{\backslash}_c a$. Similarly  if $P_c b = m$, then $b\cdot m = c$, $m = b\backslash c$, $m = R^{\backslash}_c
b$. Since $\theta$ is a congruence of quasigroup $(Q, \cdot, \backslash, /)$ (Lemma \ref{CONGR_PRIM_QUAS}), then
implication (\ref{IMPLICATION_1}) is true.

Implication \begin{equation}\label{IMPLICATION_2}
 a\theta b \rightarrow P^{-1}_c a \, \theta \, P^{-1}_c b
\end{equation} is proved in the similar way.
 If $P^{-1}_c a = k$, then $k \cdot a = c$, $k = c\slash a$, $k =
L^{\slash}_c a$. Similarly  if $P^{-1}_c b = m$, then $m\cdot b = c$, $m = c\slash b$, $m = L^{\slash}_c b$.
Since $\theta$ is a congruence of quasigroup $(Q, \cdot, \backslash, /)$ (Lemma \ref{CONGR_PRIM_QUAS}), then
implication (\ref{IMPLICATION_2}) is true.
\end{proof}
\begin{corollary} \label{PARASTROPHY_INVAR_NORM_CONG}
If $\theta$ is a normal quasigroup congruence of a quasigroup $Q$, then $\theta$ is a normal congruence of any
parastrophe  of $Q$  \cite{BELAS}.
\end{corollary}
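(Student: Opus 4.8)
The plan is to verify directly that $\theta$ is stable and cancellative relative to every left and right translation of an arbitrary parastrophe $(Q,\cdot)^{\sigma}$, $\sigma \in S_3$. By Definition \ref{NORM_CONGR_QUAS} this is precisely what it means for $\theta$ to be a normal congruence of $(Q,\cdot)^{\sigma}$ --- the compatibility (congruence) property for the binary operation $(\cdot)^{\sigma}$ comes for free from stability relative to the one-sided translations: if $x_1\,\theta\,y_1$ and $x_2\,\theta\,y_2$, then $x_1 \circ x_2 \,\theta\, y_1 \circ x_2$ by stability under the right translation $R^{\sigma}_{x_2}$ of $(Q,\cdot)^{\sigma}$, and $y_1 \circ x_2 \,\theta\, y_1 \circ y_2$ by stability under the left translation $L^{\sigma}_{y_1}$, whence $x_1 \circ x_2\,\theta\,y_1 \circ y_2$ by transitivity.

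First I would recall Table 1: for each $\sigma\in S_3$ the maps $R^{\sigma}$, $L^{\sigma}$, $P^{\sigma}$ are among the six maps $L_a$, $R_a$, $P_a$, $L_a^{-1}$, $R_a^{-1}$, $P_a^{-1}$ of the original quasigroup $(Q,\cdot)$; for instance $R^{(23)}=P$, $L^{(23)}=L^{-1}$, $R^{(123)}=P^{-1}$, $L^{(123)}=R^{-1}$, and so on. Hence every left and right translation of $(Q,\cdot)^{\sigma}$ lies in the set $\{\,L_a^{\pm1},R_a^{\pm1},P_a^{\pm1}\mid a\in Q\,\}$ of translations of $(Q,\cdot)$ and their inverses.

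Next I would invoke Lemma \ref{NORM_CONGR_ADMISSBLE_REL_TRANS}: since $\theta$ is a normal congruence of $(Q,\cdot)$, it is admissible relative to every left, right, and middle translation of $(Q,\cdot)$. By the definition of an admissible permutation, admissibility of a permutation $\alpha$ already includes admissibility of $\alpha^{-1}$, so $\theta$ is admissible relative to each of $L_a^{\pm1}$, $R_a^{\pm1}$, $P_a^{\pm1}$. Consequently, for any left or right translation $T$ of $(Q,\cdot)^{\sigma}$ we have both $x\,\theta\,y\Rightarrow Tx\,\theta\,Ty$ (stability) and $Tx\,\theta\,Ty\Rightarrow x\,\theta\,y$ (cancellativity, applying admissibility of $T^{-1}$). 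Combining this with the first paragraph shows $\theta$ is a normal congruence of $(Q,\cdot)^{\sigma}$; since $\sigma$ was arbitrary, the corollary follows.

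There is no real obstacle here; the only two points needing care are: (i) unwinding the definition of \emph{admissible} so that the inverse translations are seen to be covered automatically --- this is what yields the cancellativity half of normality; and (ii) observing that for some parastrophes (e.g.\ the $(23)$- or $(123)$-parastrophe) the one-sided translations are \emph{middle} translations of $(Q,\cdot)$, so the genuinely nontrivial ingredient is the middle-translation case of Lemma \ref{NORM_CONGR_ADMISSBLE_REL_TRANS}, which itself rests on Lemma \ref{CONGR_PRIM_QUAS} (normal congruences of $(Q,\cdot)$ are exactly the congruences of the equational quasigroup $(Q,\cdot,\backslash,/)$).
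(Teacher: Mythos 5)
Your argument is correct and follows essentially the same route as the paper: the paper's proof is exactly "Lemma \ref{NORM_CONGR_ADMISSBLE_REL_TRANS} plus Table 1," i.e.\ admissibility of $\theta$ with respect to all left, right and middle translations of $(Q,\cdot)$ (hence also their inverses, by the definition of admissibility) combined with the fact that the translations of every parastrophe are among $L_a^{\pm1}, R_a^{\pm1}, P_a^{\pm1}$. Your additional remarks on deriving the compatibility property from one-sided stability and on the middle-translation case being the essential ingredient merely spell out details the paper leaves implicit.
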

\begin{proof}
The proof follows from Lemma \ref{NORM_CONGR_ADMISSBLE_REL_TRANS} and Table 1.
\end{proof}

From Corollary \ref{PARASTROPHY_INVAR_NORM_CONG} it follows that any element of the group $FM(Q, \cdot)$ of a quasigroup $(Q, \cdot)$ is admissible relative to any normal congruence of the quasigroup $(Q, \cdot)$.

In the following lemma we give information on behavior of a left quasigroup congruence relative to quasigroup parastrophy.

\begin{lemma} \label{LEFT_CONGR_RELAT_PARASTROPHY}
The following propositions are equivalent.
 \begin{enumerate}
   \item An equivalence $\theta$ is stable relative to  translation $L_c$  of   a quasigroup $(Q,\cdot)$.
   \item An equivalence $\theta$ is stable relative to   translation $R_c$  of   a quasigroup $(Q,\ast)$.
   \item An equivalence $\theta$ is stable relative to  translation  $P^{-1}_c$ of   a quasigroup $(Q,\slash)$.
   \item An equivalence $\theta$ is stable relative to  translation  $L^{-1}_c$ of   a quasigroup $(Q,\backslash)$.
   \item An equivalence $\theta$ is stable relative to   translation $R^{-1}_c$  of   a quasigroup $(Q,\backslash\backslash)$.
   \item An equivalence $\theta$ is stable relative to   translation $P_c$ of   a quasigroup $(Q,\slash\slash)$.
 \end{enumerate}
\end{lemma}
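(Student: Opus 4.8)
The statement is a "parastrophic translation" equivalence: the six conditions are exactly the six ways that a single compatibility requirement on $\theta$ reappears across the six parastrophes of $(Q,\cdot)$, once we track how $L_c$ transforms under each $\sigma\in S_3$. The plan is to prove the cyclic chain $(1)\Rightarrow(2)\Rightarrow(3)\Rightarrow(4)\Rightarrow(5)\Rightarrow(6)\Rightarrow(1)$, since each implication is of the same type, and in fact each of them is really the single implication "$(1)\Leftrightarrow(2)$" applied to a suitable parastrophe. So the economical route is: first establish $(1)\Leftrightarrow(2)$ directly from the definitions, then invoke Table 1 to read off the remaining equivalences.

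First I would unwind Definition~\ref{DEF_CONGR_QUAS}: stability of $\theta$ relative to $L_c^{\cdot}$ means $x\,\theta\,y \Rightarrow c\cdot x \,\theta\, c\cdot y$, i.e. $L_c^{\cdot}x \,\theta\, L_c^{\cdot}y$; stability relative to $R_c^{\ast}$ means $x\,\theta\,y \Rightarrow x\ast c \,\theta\, y\ast c$, i.e. $R_c^{\ast}x\,\theta\,R_c^{\ast}y$. By the definition of the $(12)$-parastrophe, $x\ast c = c\cdot x$, hence $R_c^{\ast} = L_c^{\cdot}$ as permutations of $Q$ (this is also the entry in Table~1: the row $L$, column $(12)$, gives $R$). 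Therefore the two stability conditions are literally the same condition on $\theta$, and $(1)\Leftrightarrow(2)$. I would then note that the same reasoning applies verbatim with $(12)$ replaced by any $\sigma$: stability of $\theta$ relative to a translation $X_c$ of $(Q,\cdot)$ is equivalent to stability relative to the translation $X_c$ (viewed as a permutation) of $(Q,A^{\sigma})$, because the underlying permutation of $Q$ is unchanged — only its "name" among $L,R,P$ and their inverses changes according to Table~1. Reading down the column $L$ of Table~1 gives precisely: $L \leftrightarrow R$ (for $(12)$), $L\leftrightarrow P^{-1}$ (for $(13)$), $L\leftrightarrow L^{-1}$ (for $(23)$), $L\leftrightarrow R^{-1}$ (for $(123)$), $L\leftrightarrow P$ (for $(132)$), which matches items (2)--(6) in the stated order (after identifying which parastrophe $\ast$, $\slash$, $\backslash$, $\backslash\backslash$, $\slash\slash$ denotes — here $\backslash\backslash$ and $\slash\slash$ are the $(123)$- and $(132)$-parastrophes respectively, consistent with the conventions fixed earlier in the paper).

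The one genuine subtlety — and the step I expect to require the most care — is the bookkeeping of the parastrophe labels. The paper fixes $\ast = (12)$, $/ = (13)$, $\backslash = (23)$, but the symbols $\backslash\backslash$ and $\slash\slash$ for the two ternary-cycle parastrophes are used without an explicit earlier definition in this excerpt, so I would state at the outset which $\sigma\in\{(123),(132)\}$ each denotes, and then verify that Table~1's entry in row $L$, that column, is exactly the translation named in the lemma. A secondary point to handle cleanly: "stable relative to the translation $X_c$" must be interpreted as "for every $c\in Q$", and one must check that as $c$ ranges over $Q$ the index of the translation on the parastrophe side also ranges over all of $Q$ (it does, since each of $L,R,P$ is indexed bijectively by $Q$ in a quasigroup, by the lemma following Theorem~\ref{TERMANN}); hence the universally-quantified statements match up. With those identifications pinned down, the proof is a one-line appeal to Table~1 for each implication.

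\begin{proof}
By Definition~\ref{DEF_CONGR_QUAS}, $\theta$ is stable relative to a translation $X_c$ of a groupoid means: $x\,\theta\,y$ implies $X_c x\,\theta\,X_c y$. This condition depends only on $X_c$ as a permutation of the set $Q$, not on which binary operation produced it. For any $\sigma\in S_3$, the set of permutations $\{X_c\}$ arising from the translations of $(Q, A^{\sigma})$ coincides, as a set of permutations of $Q$, with a set of translations (or their inverses) of $(Q,\cdot)$ given by Table~1; moreover, since $(Q,\cdot)$ and all its parastrophes are quasigroups, in each of them the left, right and middle translations are indexed bijectively by $Q$ (see the lemma following Theorem~\ref{TERMANN}), so "stable relative to all translations of a given kind" is the same requirement on $\theta$ on both sides.

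Now read the row corresponding to $L$ in Table~1. In the column $(12)$ it gives $R$: hence an equivalence $\theta$ is stable relative to every $L_c$ of $(Q,\cdot)$ if and only if it is stable relative to every $R_c$ of the $(12)$-parastrophe $(Q,\ast)$, which proves $(1)\Leftrightarrow(2)$. In the column $(13)$ the entry is $P^{-1}$, giving $(1)\Leftrightarrow(3)$ with $(Q,\slash)$ the $(13)$-parastrophe. In the column $(23)$ the entry is $L^{-1}$, giving $(1)\Leftrightarrow(4)$ with $(Q,\backslash)$ the $(23)$-parastrophe. In the column $(123)$ the entry is $R^{-1}$, giving $(1)\Leftrightarrow(5)$ with $(Q,\backslash\backslash)$ the $(123)$-parastrophe. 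In the column $(132)$ the entry is $P$, giving $(1)\Leftrightarrow(6)$ with $(Q,\slash\slash)$ the $(132)$-parastrophe. Thus all six propositions are equivalent.
\end{proof}
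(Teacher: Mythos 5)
Your overall strategy is the paper's own: stability is a property of the underlying permutation, and Table~1 identifies each translation named in items (2)--(6) with $L_c$ of $(Q,\cdot)$; your direct check of $(1)\Leftrightarrow(2)$ via $R^{\ast}_c=L^{\cdot}_c$ is correct. The gap is in your treatment of items (5) and (6). Reading the row labelled $L$ in Table~1 answers the question ``what is the \emph{left} translation of the $\sigma$-parastrophe, expressed through translations of $(Q,\cdot)$'' (e.g. $L^{(123)}_c=R^{-1}_c$, $L^{(132)}_c=P_c$), whereas items (5) and (6) speak of the translations $R^{-1}_c$ and $P_c$ \emph{of the parastrophes} $(Q,\backslash\backslash)$ and $(Q,\slash\slash)$; for those you must read the rows $R^{-1}$ and $P$. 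The two readings agree for the involutions $(12),(13),(23)$ -- which is why (2)--(4) come out right -- but they do not agree for the three-cycles, since converting ``$X$ of the $\sigma$-parastrophe equals $Y$ of $(Q,\cdot)$'' into ``$Y$ of the parastrophe equals $X$ of $(Q,\cdot)$'' replaces $\sigma$ by $\sigma^{-1}$ and hence swaps $(123)$ and $(132)$.

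Concretely, with your assignment $\backslash\backslash=(123)$, $\slash\slash=(132)$ one has $\bigl(R^{(123)}_c\bigr)^{-1}=P_c$ and $P^{(132)}_c=R^{-1}_c$, so your items (5) and (6) amount to stability of $\theta$ under all middle translations, respectively all inverse right translations, of $(Q,\cdot)$; neither is equivalent to (1) (the partition of a group into left cosets of a non-normal subgroup is stable under every $L_c$ but not under every $P_c$ or $R^{-1}_c$). The lemma holds with the opposite, standard identification $x\backslash\backslash y=y\backslash x$ (the $(132)$-parastrophe) and $x\slash\slash y=y\slash x$ (the $(123)$-parastrophe): then Table~1, rows $R$, $P^{-1}$, $L^{-1}$, $R^{-1}$, $P$, gives $R^{\ast}_c=\bigl(P^{\slash}_c\bigr)^{-1}=\bigl(L^{\backslash}_c\bigr)^{-1}=\bigl(R^{\backslash\backslash}_c\bigr)^{-1}=P^{\slash\slash}_c=L_c$, and your argument closes exactly as intended (this is the content of the paper's terse proof). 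You flagged the labelling of $\backslash\backslash$ and $\slash\slash$ as the delicate step -- rightly so, and the paper's own later usage of $\slash\slash$ in Proposition~\ref{NP1} adds to the confusion -- but the resolution you chose, justified by the row $L$ of Table~1, is the one choice under which the asserted equivalences (5) and (6) fail; swap the two labels and cite the correct rows of Table~1 to repair the proof.
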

\begin{proof}
From Table 1 it follows that $L^{\ast}_c = R_c$, $L^{\slash}_c = P^{-1}_c$.  And so on.
\end{proof}

\begin{remark}
It is easy to see that similar equivalences (Lemma \ref{LEFT_CONGR_RELAT_PARASTROPHY}) are true for the other five kinds of translations and its combinations of a quasigroup $(Q,\cdot)$.
\end{remark}

\begin{lemma} \label{NORMALITY_INVAR_CONGR_RELAT_TRANSL}
\begin{enumerate}
  \item If an equivalence $\theta$ of a quasigroup $(Q, \cdot)$ is admissible relative to any left and right translation of this quasigroup, then $\theta$ is a normal congruence.
      \item If an equivalence $\theta$ of a quasigroup $(Q, \cdot)$ is admissible relative to any left and middle translation of this quasigroup, then $\theta$ is a normal congruence.
  \item If an equivalence $\theta$ of a quasigroup $(Q, \cdot)$ is admissible relative to any right and middle  translation of this quasigroup, then $\theta$ is a normal congruence.
\end{enumerate}
\end{lemma}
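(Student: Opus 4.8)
The plan is to reduce each of the three items to the single observation that a quasigroup congruence in the universal-algebraic sense (Definition \ref{UNIV_AKGEBR_QUAS_DEF}) that is stable under two of the three kinds of translations is automatically stable under the third, and then to invoke Lemma \ref{CONGR_PRIM_QUAS} together with Definition \ref{NORM_CONGR_QUAS} to upgrade ``congruence'' to ``normal congruence''. The key point to establish first is that the hypothesis in each case already gives us a genuine congruence of $(Q, \cdot)$, i.e. that $x_1 \,\theta\, x_2$ and $y_1 \,\theta\, y_2$ imply $(x_1 \cdot y_1)\,\theta\,(x_2 \cdot y_2)$. Granting stability under all left and all right translations, this is the standard two-step argument: $(x_1 \cdot y_1)\,\theta\,(x_1 \cdot y_2)$ by admissibility of $L_{x_1}$, and $(x_1 \cdot y_2)\,\theta\,(x_2 \cdot y_2)$ by admissibility of $R_{y_2}$, then transitivity. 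So Case 1 is essentially immediate once we also show cancellativity, which is exactly what ``admissible'' buys us: admissibility of $L_c$ includes stability under $L_c^{-1}$, which is cancellativity from the left, and similarly for $R_c^{-1}$; hence $\theta$ is both stable and cancellative from both sides, i.e. normal.

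For Cases 2 and 3 the obstacle is that we are handed stability under middle translations $P_c$ (and $P_c^{-1}$) rather than under one of the ``multiplicative'' families, so we cannot directly run the product argument above. Here I would use Lemma \ref{LEFT_CONGR_RELAT_PARASTROPHY} (and the Remark following it, together with Table 1 on p. \pageref{Table_0}): stability of $\theta$ under $P_c$ of $(Q,\cdot)$ is equivalent to stability under an appropriate left or right translation of a suitable parastrophe of $(Q,\cdot)$. Concretely, from Table 1 we have, for instance, $R^{\backslash}_c = P_c$ and $L^{/}_c = P^{-1}_c$, so admissibility of all middle translations of $(Q,\cdot)$ translates into admissibility of all right translations of $(Q,\backslash)$ and all left translations of $(Q,/)$. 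Combined in Case 2 with admissibility of all left translations of $(Q,\cdot)$ — which, again by Table 1, is admissibility of certain translations of parastrophes — one recovers, for a single well-chosen parastrophe $(Q,\sigma)$, admissibility of $\theta$ under all left \emph{and} all right translations of $(Q,\sigma)$. By Case 1 already proved, $\theta$ is then a normal congruence of $(Q,\sigma)$, and by Corollary \ref{PARASTROPHY_INVAR_NORM_CONG} normality is parastrophy-invariant, so $\theta$ is a normal congruence of $(Q,\cdot)$ itself. Case 3 is handled the same way with the roles of the left and right families interchanged (equivalently, by applying Case 2 to the $(12)$-parastrophe).

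An alternative, more self-contained route for Cases 2 and 3 avoids the parastrophy bookkeeping: work directly with the Evans-style operations $\backslash, /$. Since $P_c a = a \backslash c$ and $P^{-1}_c a = c / a$ (as in the proof of Lemma \ref{NORM_CONGR_ADMISSBLE_REL_TRANS}), admissibility of all $P_c, P_c^{-1}$ says precisely that $\theta$ is stable under all translations $R^{\backslash}_c : a \mapsto a\backslash c$ and $L^{/}_c : a \mapsto c/a$ and their inverses. Together with admissibility under all $L_c$ (Case 2) or all $R_c$ (Case 3), one then checks the compatibility of $\theta$ with each of the three basic operations $\cdot, \backslash, /$ by the same one-variable-at-a-time argument, using at each step whichever translation family is available; this makes $\theta$ a congruence of $(Q, \cdot, \backslash, /)$, hence a normal congruence of $(Q,\cdot)$ by Lemma \ref{CONGR_PRIM_QUAS}. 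I expect the main obstacle to be purely organizational: verifying that the two translation families supplied in Cases 2 and 3 really do generate enough translations (after the parastrophic dictionary of Table 1 is applied) to pin down compatibility with \emph{all} of $\cdot$, $\backslash$, $/$ — the identities (\ref{(1)})--(\ref{(4)}) and (\ref{(53)})--(\ref{(63)}) guarantee the needed translations exist as the relevant one-sided operations, so no genuinely new idea is required, only care in matching up which translation controls which argument slot.
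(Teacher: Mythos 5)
Your proposal is correct and follows essentially the same route as the paper: Case 1 is immediate from Definition \ref{NORM_CONGR_QUAS} (admissibility giving both stability and cancellativity on both sides), and Cases 2 and 3 are reduced via the translation dictionary of Table 1 to normality of $\theta$ in the parastrophes $(Q,\backslash)$ and $(Q,/)$ respectively, after which Corollary \ref{PARASTROPHY_INVAR_NORM_CONG} transfers normality back to $(Q,\cdot)$. The alternative Evans-operations route you sketch is a fine variant but not needed; your main argument is exactly the paper's.
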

\begin{proof}
Case 1. Definition \ref{NORM_CONGR_QUAS}.

Case 2. Equivalence $\theta$ is a normal congruence of quasigroup $(Q, \backslash)$ (Table 1, Definition \ref{NORM_CONGR_QUAS}) and we apply Corollary  \ref{PARASTROPHY_INVAR_NORM_CONG}.

Case 3. Equivalence $\theta$ is a normal congruence of quasigroup $(Q, \slash)$ (Table 1, Definition \ref{NORM_CONGR_QUAS})  and we apply Corollary  \ref{PARASTROPHY_INVAR_NORM_CONG}.
\end{proof}

\subsection{Other conditions of normality of equivalences}

We find additional conditions when an equivalence is  left, right or  "middle" quasigroup congruence.
We  use  H. Thurston \cite{10, THURSTON} and A.I.~Mal'tsev \cite{7} approaches.
Results similar to the results from this section are in  \cite{SHCH_BR_BL_05, SCERB_TAB_PUSH_09}.

The set of all left and  right  translations of a  quasigroup $(Q,\cdot)$ will be denoted by
$\mathbb T(Q,\cdot)$.
If $ \varphi $ and $ \psi $ are  binary relations on $Q $, then  their
product is defined in the following way: $ (a, b) \in \varphi \circ \psi $ if there is an element $c\in Q $ such
that $ (a, c) \in \varphi $ and $ (c, b) \in \psi $. If $ \varphi $ is a  binary relation on $Q $, then
$\varphi^{-1} = \left\{ (y, x) \, \mid \, (x, y) \in \varphi \right\}$. The operation of the product of binary
relations is associative \cite{5, II, SM,  RIGE}.

\begin{remark}
Translations of a quasigroup can be considered as binary relations:  $ (x, y) \in L_a, $ if and only if $y =
a\cdot x $; $ (x, y) \in R_b, $ if and only if $ y = x\cdot b $; $ (x, y) \in P_c, $ if and only if $ y = x\backslash c $ \cite{90, SCERB_91, SHCH_BR_BL_05}.
\end{remark}

\begin{remark} \label{NREM_1}
To coordinate the multiplication of translations with  their multiplication as binary relations, we use the
following  multiplication of translations: if  $ \alpha, \beta $ are translations, $x $ is an  element of the
set $Q$, then $ (\alpha \beta) (x) = \beta (\alpha (x))$, i.e. $(\alpha \beta) x = \beta\alpha \, x$.
\end{remark}

\begin{proposition} \label{NP1}
\begin{enumerate}
  \item An equivalence $ \theta $ is a
left congruence of a quasigroup $(Q, \cdot) $ if and only if $ \theta \omega \subseteq \omega\theta $ for all $ \omega \in
{\mathbb L}$.
  \item An equivalence $ \theta $ is a
right congruence of a quasigroup $(Q, \cdot)$ if and only if $ \theta \omega \subseteq \omega\theta $ for all $ \omega \in
{\mathbb R}$.
 \item An equivalence $ \theta $ is a
"middle" congruence of a quasigroup $(Q, \cdot)$ if and only if $ \theta \omega \subseteq \omega\theta $ for all $ \omega \in
{\mathbb P}$.
  \item An equivalence $ \theta $ is a
congruence of a quasigroup $(Q, \cdot)$ if and only if $ \theta \omega \subseteq \omega\theta $ for all $ \omega \in
{\mathbb T}$ \cite{90, SCERB_91, SHCH_BR_BL_05}.
\end{enumerate}
\end{proposition}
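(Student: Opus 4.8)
The plan is to recast everything in the language of composition of binary relations, exploiting the convention of Remark \ref{NREM_1}, which is arranged precisely so that for translations regarded as relations the relational product agrees with the adopted product of maps. So first I would write out the relations explicitly: $L_a = \{(x, a\cdot x)\mid x\in Q\}$, $R_b = \{(x, x\cdot b)\mid x\in Q\}$, and, since $(x,y)\in P_c$ is equivalent to $x\cdot y = c$, also $P_c = \{(x, x\backslash c)\mid x\in Q\}$.

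For item (1), fix $a\in Q$ and put $\omega = L_a$. Unravelling the definition of $\circ$ gives
\[
\theta\omega = \{(u, a\cdot z)\mid u\,\theta\,z\}, \qquad \omega\theta = \{(u, v)\mid (a\cdot u)\,\theta\,v\}.
\]
Hence $\theta\omega\subseteq\omega\theta$ holds if and only if: for all $u,z$ with $u\,\theta\,z$ one has $(a\cdot u)\,\theta\,(a\cdot z)$, i.e.\ $\theta$ is stable relative to $L_a$. Letting $a$ range over $Q$ (equivalently, $\omega$ over $\mathbb L$) yields exactly the assertion that $\theta$ is a left congruence. Items (2) and (3) follow from the identical two-line computation with $R_b$, respectively $P_c$, in place of $L_a$: one gets $\theta R_b = \{(u, z\cdot b)\mid u\,\theta\,z\}$, $R_b\theta = \{(u,v)\mid (u\cdot b)\,\theta\,v\}$, so the inclusion encodes right stability; and $\theta P_c = \{(u, z\backslash c)\mid u\,\theta\,z\}$, $P_c\theta = \{(u,v)\mid (u\backslash c)\,\theta\,v\}$, so the inclusion encodes $P_c u\,\theta\,P_c z$, i.e.\ stability under all middle translations.

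For item (4), I would first record the elementary fact that an equivalence $\theta$ is a congruence of the groupoid $(Q,\cdot)$ if and only if it is simultaneously a left and a right congruence: fixing one argument gives the two one-sided conditions, and conversely $a_1\,\theta\,b_1$, $a_2\,\theta\,b_2$ yield $a_1a_2\,\theta\,b_1a_2\,\theta\,b_1b_2$ by right stability, left stability and transitivity. Since $\mathbb T(Q,\cdot)=\mathbb L\cup\mathbb R$, the characterization for $\mathbb T$ is then merely the conjunction of items (1) and (2).

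I do not anticipate a genuine obstacle; the care needed is purely in the bookkeeping. Two points deserve attention. First, one must keep the composition convention straight so that the inclusion really comes out in the stated direction $\theta\omega\subseteq\omega\theta$ rather than the reverse. Second, only the inclusion — not the equality $\theta\omega=\omega\theta$ — is relevant here: the reverse inclusion would correspond to left/right cancellativity of $\theta$, which enters the notion of normal congruence but is not claimed in this proposition. Because $L_a$, $R_b$, $P_c$ are honest permutations of $Q$ in a quasigroup, all the implicit ``there exists a (unique) $z$ with $\dots$'' steps in the unravelling are automatic, and the equivalences in (1)--(3) hold with no further use of reflexivity or symmetry of $\theta$ beyond what is assumed.
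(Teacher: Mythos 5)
Your proof is correct and follows essentially the same relational-product computation as the paper: you unravel $\theta L_a \subseteq L_a\theta$ into the stability implication $x\,\theta\,z \Rightarrow ax\,\theta\,az$ (and likewise for $R_b$), and obtain Case 4 by combining the two one-sided cases, exactly as the paper does. The only divergence is Case 3, which you verify by the same direct computation using $P_c x = x\backslash c$, whereas the paper transfers Case 1 to the $(132)$-parastrophe via Table 1; both arguments are equally valid.
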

\begin{proof} Case 1. Let $ \theta $ be an  equivalence, $ \omega = L_a $. It is clear that   $ (x, z) \in
\theta L_a $
 is equivalent to that there exists an element $y\in Q $ such
that $ (x, y) \in \theta $ and $ (y, z) \in L_a $. But if $ (y, z) \in L_a $, $z=ay $, then $y = L ^ {-1} _a z
$. Therefore, from the relation  $ (x, z) \in \theta L_a $ it follows that $ (x, L ^ {-1} _a z) \in \theta $.

Let us prove that from $ (x, L ^ {-1} _a z) \in \theta $  it follows $ (x, z) \in \theta L_a $. We have  $ (x, L
^{-1} _a z) \in \theta $ and $ (L ^ {-1} _a z, z) \in L_a $, $ (x, z) \in \theta L_a $. Thus $ (x, z) \in \theta
L_a $ is equivalent to  $ (x, L ^ {-1} _az) \in \theta $.
\smallskip

Similarly, $ (x, z) \in L_a\theta $ is equivalent to $ (ax, z) \in \theta $. Now we can say that the inclusion $
\theta \omega \subseteq \omega\theta $ by  $ \omega = L_a $ is equivalent to  the following implication:
$$
(x, L ^ {-1} _a z) \in \theta \Longrightarrow (ax, z) \in \theta
$$
for all suitable  $a, x, z \in Q $.

If we replace in the last implication $z $ with  $L_a z $, we shall obtain the following  implication:
$$
(x, z) \in \theta \Longrightarrow (ax, az) \in \theta
$$
for all $a\in Q $.

Thus, the inclusion $ \theta L_a \subseteq L_a \theta $ is equivalent to the stability of the relation  $ \theta
$ from
 the left relative to  an element $a $. Since the element $a $ is an arbitrary element of the set $Q$, we have that
the inclusion $ \theta \omega \subseteq \omega \theta $ by $ \omega \in {\mathbb L} $  is equivalent to the
stability of the relation $ \theta $ from  the left.

Case 2. Similarly, the  inclusion $ \theta \omega \subseteq \omega \theta $ for any $ \omega \in {\mathbb R} $  is
equivalent to the stability  from  the right of relation  $ \theta $.

Case 3. Applying Case 1 to the quasigroup $(Q, \slash\slash) = (Q, \cdot)^{(132)}$ we obtain that an equivalence
$ \theta $ is a left congruence of a quasigroup $(Q, \cdot)^{(132)} $ if and only if $ \theta \omega \subseteq \omega\theta $ for all $ \omega \in {\mathbb L^{(132)}}$.

Using   parastrophic equivalence of translations (Table 1 or Lemma \ref{LEFT_CONGR_RELAT_PARASTROPHY}) we conclude that  an equivalence $ \theta $ is stable relative to any middle translation of a quasigroup $(Q, \cdot) $ if and only if $ \theta \omega \subseteq \omega\theta $ for all $ \omega \in {\mathbb P}$.

In other words an equivalence $ \theta $ is a
"middle" congruence of a quasigroup $(Q, \cdot)$ if and only if $ \theta \omega \subseteq \omega\theta $ for all $ \omega \in {\mathbb P}$.

Case 4. Uniting Cases 1 and 2  we obtain required  equivalence.
\end{proof}

Let us remark  Proposition \ref{NP1} can be deduced from results of H.~Thurston \cite{THURSTON}.

\begin{proposition} \label{NP1_1}
\begin{enumerate}
  \item An equivalence $ \theta $ is  left-cancellative  congruence of a quasigroup $(Q, \cdot) $ if and only if $ \omega\theta  \subseteq  \theta \omega   $ for all $ \omega \in
{\mathbb L}$.
  \item An equivalence $ \theta $ is  right-cancellative  congruence of a quasigroup $(Q, \cdot) $ if and only if  $ \omega\theta   \subseteq     \theta \omega $ for all $ \omega \in
{\mathbb R}$.
 \item An equivalence $ \theta $ is  middle-cancellative  congruence of a quasigroup $(Q, \cdot) $ if and only if $ \omega\theta  \subseteq \theta \omega   $ for all $ \omega \in
{\mathbb P}$.
  \end{enumerate}
\end{proposition}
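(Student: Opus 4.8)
The plan is to run, line by line, the argument of Proposition~\ref{NP1}, but tracking the opposite inclusion $\omega\theta\subseteq\theta\omega$ instead of $\theta\omega\subseteq\omega\theta$. Throughout, translations are regarded as binary relations and composed by the convention of Remark~\ref{NREM_1}. The two computational facts I would record first are: for a translation $\omega$ (a bijection, hence a single-valued everywhere-defined invertible relation) and an equivalence $\theta$, the membership $(x,z)\in\theta\omega$ is equivalent to $(x,\omega^{-1}z)\in\theta$, while $(x,z)\in\omega\theta$ is equivalent to $(\omega x,z)\in\theta$. These come straight from the definition of the product of binary relations; the equivalence axioms for $\theta$ play no role here.

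First I would treat Case~1. Fix $a\in Q$ and put $\omega=L_a$. By the two facts above, the inclusion $L_a\theta\subseteq\theta L_a$ unfolds to: for all $x,z\in Q$, $(ax,z)\in\theta\Rightarrow(x,L^{-1}_az)\in\theta$. Since $L_a$ is a permutation I may substitute $z\mapsto L_az=az$, obtaining the implication $(ax,az)\in\theta\Rightarrow(x,z)\in\theta$, i.e.\ exactly cancellativity from the left with respect to $a$ (Definition~\ref{NORM_CONGR_QUAS}); the reverse substitution shows the two implications are logically equivalent. Letting $a$ range over $Q$ turns ``$\omega\theta\subseteq\theta\omega$ for all $\omega\in\mathbb{L}$'' into ``$\theta$ is cancellative from the left''. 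Case~2 is identical with $\omega=R_a$ and the identity $(x,z)\in\theta R_a\Leftrightarrow(x,R^{-1}_az)\in\theta$, giving cancellativity from the right.

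For Case~3 I would reduce to Case~1 by parastrophy, exactly as in the proof of Proposition~\ref{NP1}, Case~3. By Table~1 (equivalently Lemma~\ref{LEFT_CONGR_RELAT_PARASTROPHY}) the set of left translations of the parastrophe $(Q,\slash\slash)=(Q,\cdot)^{(132)}$ coincides with the set $\mathbb{P}$ of middle translations of $(Q,\cdot)$, and cancellativity from the left of $\theta$ for $(Q,\slash\slash)$ is precisely middle cancellativity of $\theta$ for $(Q,\cdot)$. Applying Case~1 of the present proposition to $(Q,\slash\slash)$ then yields the statement: $\theta$ is a middle-cancellative congruence of $(Q,\cdot)$ if and only if $\omega\theta\subseteq\theta\omega$ for all $\omega\in\mathbb{P}$. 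As with Proposition~\ref{NP1}, the result can alternatively be deduced from H.~Thurston's work.

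I do not anticipate a genuine difficulty: the whole argument is the bookkeeping identity for relation products together with one substitution $z\mapsto\omega z$. The only point demanding care is to make sure the inclusion is taken in the direction $\omega\theta\subseteq\theta\omega$, so that it unfolds to a \emph{cancellation} law and not to stability (which is Proposition~\ref{NP1}), and, in Case~3, to pick the parastrophe $(132)$ so that ``middle'' cancellation lands on ``left'' cancellation rather than on a right-cancellation condition.
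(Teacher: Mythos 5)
Your proof is correct and follows essentially the paper's own argument: the same relational unfolding of $\omega\theta$ and $\theta\omega$ followed by the substitution $z\mapsto\omega z$ handles Cases 1 and 2 exactly as in the paper. For Case 3 the paper only says ``similarly'' (hinting at a direct computation via identity (\ref{(63)})), whereas you reduce to Case 1 through the $(132)$-parastrophe just as in the proof of Proposition \ref{NP1}; both routes are valid and equivalent in substance.
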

\begin{proof} As it is proved in Proposition  \ref{NP1}, the inclusion $ \theta L_a \subseteq L_a \theta
$ is equivalent to the implication $x\theta y \Longrightarrow ax\theta ay $.

 Let us check up  that the inclusion $L_a \theta \subseteq \theta L_a $ is equivalent to the implication
$$
ax\theta ay \Rightarrow x\theta y.
$$

Indeed, as it is proved in Proposition \ref{NP1}, $ (x, z) \in \theta L_a $ is equivalent with  $ (x, L ^ {-1}
_az) \in \theta $. Similarly, $ (x, z) \in L_a\theta $ is equivalent with  $ (ax, z) \in \theta $. The inclusion
$ \omega\theta \subseteq \theta \omega $ by  $ \omega = L_a $ has the form  $L_a\theta \subseteq \theta L_a $
and it is equivalent to the following  implication:
$$
(ax, z) \in \theta \Longrightarrow (x, L ^ {-1} _a z) \in \theta
$$
for all  $a, x, z \in Q $. If we change in the last implication the  element $z $ by the element $L_a z $, we
shall obtain that the inclusion $ \theta L_a \supseteq L_a \theta $ is equivalent to  the implication $ax\theta
ay \Rightarrow x\theta y $. Therefore, the equivalence $ \theta $ is cancellative   from  the left.

Similarly, the inclusion $R_b \theta \subseteq \theta R_b $ is equivalent to the implication:
$$
(xa, za) \in \theta \Longrightarrow (x, z) \in \theta.
$$

Case 3 is  proved similarly. It is possible to use identity (\ref{(63)}).
\end{proof}

Below by the symbol $\left< {\mathbb L}, {\mathbb P} \right>$ we shall denote the group generated by all left and middle translations of a quasigroup.

\begin{corollary} \label{NP1_1_1}
\begin{enumerate}
  \item An equivalence $ \theta $ is  left normal  congruence of a quasigroup $(Q, \cdot) $ if and only if $ \omega\theta  =  \theta \omega   $ for all $ \omega \in
{\mathbb L}$.
  \item An equivalence $ \theta $ is  right normal  congruence of a quasigroup $(Q, \cdot) $ if and only if  $ \omega\theta   =     \theta \omega $ for all $ \omega \in
{\mathbb R}$.
 \item An equivalence $ \theta $ is  middle normal   congruence of a quasigroup $(Q, \cdot) $ if and only if $ \omega\theta  = \theta \omega   $ for all $ \omega \in
{\mathbb P}$.
  \item An equivalence $ \theta $ is   normal  congruence of a quasigroup $(Q, \cdot) $ if and only if $ \omega\theta  =  \theta \omega   $ for all $ \omega \in
{\mathbb L} \cup {\mathbb P}$.
  \item An equivalence $ \theta $ is   normal  congruence of a quasigroup $(Q, \cdot) $ if and only if $ \omega\theta  =  \theta \omega   $ for all $ \omega \in
{\mathbb R} \cup {\mathbb P}$.
 \item An equivalence $ \theta $ is   normal   congruence of a quasigroup $(Q, \cdot) $ if and only if $ \omega\theta  = \theta \omega   $ for all $ \omega \in
\left< {\mathbb L}, {\mathbb R} \right> = M(Q, \cdot)$.
  \item An equivalence $ \theta $ is   normal  congruence of a quasigroup $(Q, \cdot) $ if and only if $ \omega\theta  =  \theta \omega   $ for all $ \omega \in
\left< {\mathbb L}, {\mathbb P} \right>$.
  \item An equivalence $ \theta $ is   normal  congruence of a quasigroup $(Q, \cdot) $ if and only if $ \omega\theta  =  \theta \omega   $ for all $ \omega \in
\left< {\mathbb R}, {\mathbb P} \right>$.
   \end{enumerate}
\end{corollary}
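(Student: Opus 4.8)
The plan is to assemble the corollary from the two preceding propositions together with one structural observation about the set of permutations that commute with $\theta$, and then to feed the outcome into Lemma \ref{NORMALITY_INVAR_CONGR_RELAT_TRANSL}.

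For parts 1--3 I would simply superpose Proposition \ref{NP1} and Proposition \ref{NP1_1}. By Proposition \ref{NP1} an equivalence $\theta$ is stable relative to all $\omega\in\mathbb L$ (that is, it is a left congruence) exactly when $\theta\omega\subseteq\omega\theta$ for every such $\omega$, while by Proposition \ref{NP1_1} it is left-cancellative exactly when $\omega\theta\subseteq\theta\omega$ for every such $\omega$. Since ``left normal'' means ``stable and cancellative from the left'' (Definition \ref{NORM_CONGR_QUAS}), the conjunction of the two inclusions is precisely $\omega\theta=\theta\omega$ for all $\omega\in\mathbb L$, which is part 1; parts 2 and 3 are the verbatim analogues with $\mathbb R$ and $\mathbb P$ in place of $\mathbb L$.

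Next I would record the group-theoretic fact underlying the rest: the set $G_\theta=\{\,\omega\in S_Q\mid\omega\theta=\theta\omega\,\}$ is a subgroup of $S_Q$. It contains $\varepsilon$, it is closed under inverses directly from $\omega\theta=\theta\omega$, and it is closed under products because the product of binary relations is associative, so $\omega_1\omega_2\theta=\omega_1\theta\omega_2=\theta\omega_1\omega_2$. Moreover, as was actually shown inside the proofs of Propositions \ref{NP1} and \ref{NP1_1} for $\omega=L_a$ (and in the same way for $R_a$ and $P_a$), the condition $\omega\in G_\theta$ is equivalent to $\theta$ admitting the permutation $\omega$. With this, parts 6--8 reduce to: commuting with every element of $\mathbb L\cup\mathbb R$ (respectively $\mathbb L\cup\mathbb P$, $\mathbb R\cup\mathbb P$) is equivalent, by the subgroup property of $G_\theta$, to commuting with the whole generated group $\left<\mathbb L,\mathbb R\right>=M(Q,\cdot)$ (respectively $\left<\mathbb L,\mathbb P\right>$, $\left<\mathbb R,\mathbb P\right>$).

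Finally I would connect these commutation statements to the word ``normal'' (left and right normal at once). For the substantive implications I invoke Lemma \ref{NORMALITY_INVAR_CONGR_RELAT_TRANSL}, and for the converses Lemma \ref{NORM_CONGR_ADMISSBLE_REL_TRANS}. Concretely: if $\omega\theta=\theta\omega$ for all $\omega\in\mathbb L\cup\mathbb P$, then $\theta$ is admissible relative to every left and every middle translation, hence is a normal congruence by Case 2 of Lemma \ref{NORMALITY_INVAR_CONGR_RELAT_TRANSL}; conversely a normal congruence is admissible relative to all left, right and middle translations by Lemma \ref{NORM_CONGR_ADMISSBLE_REL_TRANS}, so it commutes with all of $\mathbb L\cup\mathbb P$ — this is part 4, and part 5 is the same with Case 3 and $\mathbb R\cup\mathbb P$. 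Parts 6, 7, 8 then follow by combining the reduction from the previous paragraph (commuting with $\mathbb L,\mathbb R$ versus with $M(Q,\cdot)$, etc.) with Cases 1, 2, 3 of Lemma \ref{NORMALITY_INVAR_CONGR_RELAT_TRANSL}, respectively. There is no genuine obstacle here since all the heavy inputs are already available; the one step I would treat with care is the bookkeeping of the direction of the two inclusions and the left/right asymmetry in the definition of a cancellative equivalence, so that the slogan ``stable $+$ cancellative $=$ equality of the two products'' is applied to the correct class of translations in each of the eight items.
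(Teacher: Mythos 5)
Your proposal is correct and takes essentially the same route as the paper: items 1--3 by superposing Propositions \ref{NP1} and \ref{NP1_1}, items 4--8 via Lemma \ref{NORMALITY_INVAR_CONGR_RELAT_TRANSL} (with Lemma \ref{NORM_CONGR_ADMISSBLE_REL_TRANS} supplying the converse direction, which the paper leaves implicit), and the passage from generators to the generated groups in items 6--8 through the observation that $\omega\theta=\theta\omega$ implies $\omega^{-1}\theta=\theta\omega^{-1}$ -- your subgroup $G_\theta$ is exactly the paper's stated hint for these cases, packaged slightly more systematically.
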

\begin{proof}
The proof follows from Propositions  \ref{NP1}, \ref{NP1_1} and Lemma \ref{NORMALITY_INVAR_CONGR_RELAT_TRANSL}.

In the proving of Cases 6--8 we can use the following fact: if $\omega\theta = \theta\omega$, then $\omega^{-1}\theta = \theta\omega^{-1}$.
\end{proof}

The following proposition is  almost obvious corollary of  Theorem 5 from \cite{7}.

\begin{proposition} \label{NP2}
\begin{enumerate}
  \item
 An equivalence $ \theta $
is a left congruence of a quasigroup $Q $ if and only if  $ \omega \theta (x) \subseteq \theta (\omega x) $ for all
$x\in Q $, $ \omega \in {\mathbb L}$.
  \item
 An equivalence $ \theta $
is a right congruence of a quasigroup $Q $ if and only if  $ \omega \theta (x) \subseteq \theta (\omega x) $ for all
$x\in Q $, $ \omega \in {\mathbb R}$.
 \item
 An equivalence $ \theta $
is a "middle" congruence of a quasigroup $Q $ if and only if  $ \omega \theta (x) \subseteq \theta (\omega x) $ for all
$x\in Q $, $ \omega \in {\mathbb P}$.
 \item An equivalence $ \theta $
is left-cancelative  congruence of a quasigroup $Q $ if and only if  $ \theta (\omega x)    \subseteq \omega \theta (x)$ for all
$x\in Q $, $ \omega \in {\mathbb L}$.
 \item An equivalence $ \theta $
is right-cancelative  congruence of a quasigroup $Q $ if and only if  $ \theta (\omega x)    \subseteq \omega \theta (x)$ for all
$x\in Q $, $ \omega \in {\mathbb R}$.
 \item An equivalence $ \theta $
is middle-cancelative  congruence (i.e. $P_x a \, \theta \, P_x b \Rightarrow a \theta b $) of a quasigroup $Q $  if and only if  $ \theta (\omega x)    \subseteq \omega \theta (x)$ for all
$x\in Q $, $ \omega \in {\mathbb P}$.
\end{enumerate}
\end{proposition}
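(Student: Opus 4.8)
The plan is to prove all six equivalences by a direct unwinding of the definitions of stability and of cancellativity (Definitions \ref{DEF_CONGR_QUAS} and \ref{NORM_CONGR_QUAS}), combined with the parastrophic dictionary of Table 1 / Lemma \ref{LEFT_CONGR_RELAT_PARASTROPHY} in order to reduce the middle-translation cases to the left-translation ones. Throughout I would keep the notation $\theta(x)$ for the $\theta$-class of $x$ and write $\omega\theta(x) = \{\omega y \mid y \in \theta(x)\}$ for its image; I would also record the trivial but essential remark that every element of $\mathbb{L}$ (respectively $\mathbb{R}$, $\mathbb{P}$) is $L_a$ (respectively $R_a$, $P_a$) for a uniquely determined $a \in Q$, so that quantifying over $\omega$ is the same as quantifying over the index element $a$.

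For Case 1, I would observe that ``$\theta$ is stable under $L_a$'' means $x\,\theta\,y \Rightarrow (a\cdot x)\,\theta\,(a\cdot y)$, that is, $y \in \theta(x) \Rightarrow L_a y \in \theta(L_a x)$, which is literally the inclusion $L_a\theta(x) \subseteq \theta(L_a x)$; letting $a$ run over $Q$ yields the asserted statement for all $\omega \in \mathbb{L}$, and the converse reads off the same way. Case 2 is the verbatim mirror with $R_a$ in place of $L_a$. For Case 3 I would apply Case 1 to the parastrophe $(Q,\slash\slash) = (Q,\cdot)^{(132)}$, whose left translations are exactly the middle translations $P_c$ of $(Q,\cdot)$ by Table 1; equivalently, one invokes the stability equivalences of Lemma \ref{LEFT_CONGR_RELAT_PARASTROPHY}. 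This is the same reduction already used for Proposition \ref{NP1}.

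For the cancellative Cases 4--6 the guiding principle is that reversing the inclusion reverses the implication. For $\omega = L_a$, I would show that $\theta(L_a x) \subseteq L_a\theta(x)$ unwinds, after the substitution $z \mapsto L_a z$, to the implication $(a\cdot z)\,\theta\,(a\cdot x) \Rightarrow z\,\theta\,x$ for all $x,z$, which is precisely cancellativity from the left in the sense of Definition \ref{NORM_CONGR_QUAS}. Case 5 is the mirror statement with $R_a$, and Case 6 follows by transporting Case 4 through the $(132)$-parastrophe exactly as in Case 3 (using $P_a = L^{\slash\slash}_a$), or alternatively one can argue directly with identity (\ref{(63)}). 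At the end I would note, as the surrounding text already signals, that all six characterizations are instances of the criteria of H.~Thurston \cite{THURSTON} and A.I.~Mal'tsev \cite{7}.

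The argument is essentially bookkeeping rather than computation, so there is no genuine obstacle; the points that need care are, first, keeping the parastrophe correspondence straight so that the middle cases land on $\mathbb{P}$ and not on some other translation set, and second, getting the direction of the substitution $z \mapsto L_a z$ (and the corresponding one for $R_a$, $P_a$) correct in Cases 4--6, since that is the one place where a direction-of-quantification slip could occur. I would therefore write out Case 1 in full and then explicitly flag which letter plays which role in each of the remaining cases, rather than merely saying ``similarly''.
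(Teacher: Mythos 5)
Your proposal is correct and follows essentially the same route as the paper: the left and right cases are handled by directly unwinding the inclusion $\omega\theta(x)\subseteq\theta(\omega x)$ (respectively its reverse for the cancellative cases) against Definitions \ref{DEF_CONGR_QUAS} and \ref{NORM_CONGR_QUAS}, and the middle cases are reduced to the left ones through the $(132)$-parastrophe via Table~1, exactly as in Proposition \ref{NP1}. Your extra care about the direction of the substitution $z\mapsto L_a z$ in Cases 4--6 is precisely the detail the paper leaves to ``similarly'', so nothing is missing.
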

\begin{proof} Case 1. Let $ \theta $ be  an equivalence relation and for all $ \omega \in {\mathbb L} $, $
\omega \theta (x) \subseteq \theta (\omega x) $. We shall prove that  from $a \theta b $ follows $ca \theta cb
$ for all $c\in Q $.

By definition of the equivalence $\theta$, $a\theta b $  is equivalent to $a\in \theta (b) $. Then $ ca \in c
\theta (b) {\subseteq} \theta (cb),$   $ca\theta cb $.

Converse.  Let $ \theta $ be a left congruence. We shall prove that $c \, \theta (a) \subseteq \theta (ca) $ for all
$c, a \in Q $. Let $x\in c\, \theta (a) $. Then $x = c y $, where $y\in \theta (a) $, that is $y\theta a $.
Then, since  $ \theta $ is  a left congruence, we obtain $cy\theta ca $. Therefore $x = cy \in \theta (ca) $. Thus,
$L_c\theta \subseteq \theta (ca) $.

Case 2 is proved similarly.

Case 3. We can use approach similar to the approach used by the proof of Case 4 of Proposition \ref{NP1}.

Cases 4--6 are proved in the similar way with Cases 1--3.
\end{proof}

\begin{proposition} \label{EQUIV_NORM_CONG}
\begin{enumerate}
  \item An equivalence  $ \theta $ of a quasigroup $Q $ is left normal if and only if
 $ \theta (\omega x) = \omega \theta (x) $ for all $ \omega \in {\mathbb L} $, $x\in Q $;
  \item A equivalence  $ \theta $ of a quasigroup $Q $ is right normal if and only if  $ \theta (\omega x) = \omega \theta (x) $ for all $ \omega \in {\mathbb R} $, $x\in Q $.
  \item An equivalence  $ \theta $ of a quasigroup $Q $ is middle normal if and only if  $ \theta (\omega x) = \omega \theta (x) $ for all $ \omega \in {\mathbb P} $, $x\in Q $.
\end{enumerate}
\end{proposition}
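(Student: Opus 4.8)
The plan is to read this proposition as the conjunction of the two one-sided statements already isolated in Proposition \ref{NP2}. Recall from Definition \ref{NORM_CONGR_QUAS} that an equivalence $\theta$ of a quasigroup $Q$ is left normal precisely when it is both stable from the left (a left congruence) and cancellative from the left; right normal and middle normal are defined by the obvious analogues with right, respectively middle, translations. So for Case 1 I would simply invoke Proposition \ref{NP2}(1), which asserts that $\theta$ is a left congruence if and only if $\omega\theta(x)\subseteq\theta(\omega x)$ for all $\omega\in\mathbb L$, $x\in Q$, together with Proposition \ref{NP2}(4), which asserts that $\theta$ is left-cancellative if and only if $\theta(\omega x)\subseteq\omega\theta(x)$ for all $\omega\in\mathbb L$, $x\in Q$.

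Conjoining these two equivalences, $\theta$ is left normal iff for every $\omega\in\mathbb L$ and every $x\in Q$ both inclusions $\omega\theta(x)\subseteq\theta(\omega x)$ and $\theta(\omega x)\subseteq\omega\theta(x)$ hold, i.e.\ iff $\theta(\omega x)=\omega\theta(x)$ for all $\omega\in\mathbb L$, $x\in Q$. Here one should keep in mind that $\omega\theta(x)$ denotes the image set $\{\omega y\mid y\in\theta(x)\}$ while $\theta(\omega x)$ is the $\theta$-class of the element $\omega x$, so the displayed identity is an equality of subsets of $Q$; no further computation is needed once Proposition \ref{NP2} is granted. Cases 2 and 3 are obtained in exactly the same way, pairing Proposition \ref{NP2}(2) with \ref{NP2}(5) for the right case and Proposition \ref{NP2}(3) with \ref{NP2}(6) for the middle case (in the middle case recalling, as in the statement of \ref{NP2}(6), that middle-cancellativity means $P_x a\,\theta\,P_x b\Rightarrow a\,\theta\,b$).

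The argument is thus essentially bookkeeping, and I do not expect a genuine obstacle; the only point demanding care is matching the direction of each inclusion to the correct half of normality — stability (closure of $\theta$ under translations) yields $\omega\theta(x)\subseteq\theta(\omega x)$, whereas cancellativity (reflecting $\theta$ back along translations) yields the reverse inclusion $\theta(\omega x)\subseteq\omega\theta(x)$ — and then observing that the "normal'' conjunction is literally the set equality. Alternatively, for uniformity with Corollary \ref{NP1_1_1} one could instead deduce the proposition from Propositions \ref{NP1} and \ref{NP1_1} by evaluating the relational inclusions $\theta\omega=\omega\theta$ at a point $x$, but the route through Proposition \ref{NP2} is the most direct.
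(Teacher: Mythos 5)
Your proposal is correct and coincides with the paper's own argument: the paper proves this proposition precisely by citing Proposition \ref{NP2}, i.e.\ by pairing the stability inclusions (Cases 1--3) with the cancellativity inclusions (Cases 4--6) to obtain the set equality $\theta(\omega x)=\omega\theta(x)$, exactly as you describe.
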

\begin{proof} The proof follows from Proposition \ref{NP2}.
\end{proof}

\begin{proposition}
\begin{enumerate}
  \item An equivalence  $ \theta $  is  normal congruence of a quasigroup $Q $  if and only if
 $ \theta (\omega x) = \omega \theta (x) $ for all $ \omega \in {\mathbb L} \cup {\mathbb R}$, $x\in Q $;
  \item An equivalence  $ \theta $  is  normal congruence of a quasigroup $Q $  if and only if
 $ \theta (\omega x) = \omega \theta (x) $ for all $ \omega \in {\mathbb L} \cup {\mathbb P}$, $x\in Q $;
  \item An equivalence  $ \theta $  is  normal congruence of a quasigroup $Q $  if and only if
 $ \theta (\omega x) = \omega \theta (x) $ for all $ \omega \in {\mathbb R} \cup {\mathbb P}$, $x\in Q $.
 \item An equivalence  $ \theta $  is  normal congruence of a quasigroup $Q $  if and only if
 $ \theta (\omega x) = \omega \theta (x) $ for all $ \omega \in \left< {\mathbb L},  {\mathbb R}\right>$, $x\in Q $;
  \item An equivalence  $ \theta $  is  normal congruence of a quasigroup $Q $  if and only if
 $ \theta (\omega x) = \omega \theta (x) $ for all $ \omega \in \left< {\mathbb L}, {\mathbb P}\right>$, $x\in Q $;
  \item An equivalence  $ \theta $  is  normal congruence of a quasigroup $Q $  if and only if
 $ \theta (\omega x) = \omega \theta (x) $ for all $ \omega \in \left<{\mathbb R}, {\mathbb P}\right>$, $x\in Q $.
\end{enumerate}
\end{proposition}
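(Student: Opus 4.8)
The plan is to reduce each of the six equivalences to Proposition~\ref{EQUIV_NORM_CONG}, which already expresses left, right and middle normality of $\theta$ in precisely the pointwise form appearing in the statement, and then to glue the one-sided statements together. For Cases 1--3 the glue is that a normal congruence is the same thing as an equivalence which is one-sided normal on two of the three translation kinds (Definition~\ref{NORM_CONGR_QUAS}, Lemmas~\ref{NORM_CONGR_ADMISSBLE_REL_TRANS} and \ref{NORMALITY_INVAR_CONGR_RELAT_TRANSL}); for Cases 4--6 the glue is that the set of permutations $\alpha$ with $\theta(\alpha x)=\alpha\theta(x)$ for all $x$ is a subgroup of $S_Q$, hence contains a generating set if and only if it contains the group generated by it.

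First I would settle Case 1. By Definition~\ref{NORM_CONGR_QUAS} an equivalence $\theta$ is a normal congruence exactly when it is both left normal and right normal; by the first and second parts of Proposition~\ref{EQUIV_NORM_CONG} these two conditions read, respectively, $\theta(\omega x)=\omega\theta(x)$ for all $\omega\in\mathbb L$, $x\in Q$, and the same for all $\omega\in\mathbb R$, and together they are precisely the requirement for all $\omega\in\mathbb L\cup\mathbb R$. For Cases 2 and 3 I would first record the routine observation that $\theta$ is admissible (in the sense of the definition of an admissible permutation) relative to every left translation if and only if $\theta$ is left normal: admissibility relative to $L_c$ means stability under $L_c$ together with stability under $L_c^{-1}$, and after the substitution $x\mapsto L_cx$, $y\mapsto L_cy$ the latter is exactly left cancellativity; likewise ``admissible relative to every middle translation'' coincides with ``middle normal'' and ``admissible relative to every right translation'' with ``right normal''. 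Now if $\theta$ is a normal congruence then by Lemma~\ref{NORM_CONGR_ADMISSBLE_REL_TRANS} it is admissible relative to every left, right and middle translation, hence in particular left normal and middle normal; conversely, if $\theta$ is left normal and middle normal then it is admissible relative to every left and middle translation, so the second part of Lemma~\ref{NORMALITY_INVAR_CONGR_RELAT_TRANSL} gives that $\theta$ is a normal congruence. Feeding the pair ``left normal $+$ middle normal'' through the first and third parts of Proposition~\ref{EQUIV_NORM_CONG} yields Case 2, and the same argument with $\mathbb R$ in place of $\mathbb L$, using the third part of Lemma~\ref{NORMALITY_INVAR_CONGR_RELAT_TRANSL} and the second and third parts of Proposition~\ref{EQUIV_NORM_CONG}, yields Case 3.

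For Cases 4--6 I would introduce
\[
F=\{\,\alpha\in S_Q \;\mid\; \theta(\alpha x)=\alpha\theta(x)\ \text{for all }x\in Q\,\}
\]
and verify that $F$ is a subgroup of $S_Q$. Clearly $\varepsilon\in F$. If $\alpha,\beta\in F$ then, using $(\alpha\beta)(x)=\alpha(\beta(x))$,
\[
\theta((\alpha\beta)x)=\theta(\alpha(\beta x))=\alpha\theta(\beta x)=\alpha\beta\theta(x),
\]
so $\alpha\beta\in F$. If $\alpha\in F$, then applying the bijection $\alpha^{-1}$ to both sides of the set equality $\theta(\alpha x)=\alpha\theta(x)$ gives $\alpha^{-1}\theta(\alpha x)=\theta(x)$, and replacing $x$ by $\alpha^{-1}x$ gives $\alpha^{-1}\theta(x)=\theta(\alpha^{-1}x)$, so $\alpha^{-1}\in F$. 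Hence $\mathbb L\cup\mathbb R\subseteq F$ if and only if $\left<\mathbb L,\mathbb R\right>=M(Q,\cdot)\subseteq F$, and analogously for $\left<\mathbb L,\mathbb P\right>$ and $\left<\mathbb R,\mathbb P\right>$; combining this with Cases 1, 2 and 3 respectively gives Cases 4, 5 and 6.

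I do not expect a genuine obstacle, since everything rests on results established earlier in the paper. The only points needing care are the bookkeeping in Cases 2--3 --- identifying ``admissible relative to every translation of a given kind'' with the correct one-sided normality and invoking exactly the matching part of Lemma~\ref{NORMALITY_INVAR_CONGR_RELAT_TRANSL} --- and, in Cases 4--6, the closure of $F$ under inversion, where one uses that $\alpha$ is a bijection in order to push $\alpha^{-1}$ through the set equality $\theta(\alpha x)=\alpha\theta(x)$.
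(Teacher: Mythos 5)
Your proof is correct and follows essentially the same route as the paper: both reduce Cases 1--3 to Proposition \ref{EQUIV_NORM_CONG} together with Lemma \ref{NORMALITY_INVAR_CONGR_RELAT_TRANSL} (the paper's proof of the forward direction implicitly uses the same admissibility facts you spell out via Lemma \ref{NORM_CONGR_ADMISSBLE_REL_TRANS}), and for Cases 4--6 your subgroup $F$ argument is exactly the paper's observation that the relation $\theta(\omega x)=\omega\theta(x)$ passes to $\omega^{-1}$ (and trivially to products), so holding on a generating set is equivalent to holding on the generated group. No gaps; your version merely makes the bookkeeping explicit.
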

\begin{proof} The proof follows from Proposition \ref{EQUIV_NORM_CONG} and Lemma \ref{NORMALITY_INVAR_CONGR_RELAT_TRANSL}.

In the proving of Cases 4--6 we can use the following fact. If $\theta (\omega x) = \omega\theta (x)$, then  $\theta (\omega\omega^{-1} x) = \omega \theta (\omega^{-1} x) = \omega\omega^{-1} \theta (x)$. Thus   $\omega \theta (\omega^{-1} x) = \omega\omega^{-1} \theta (x)$, $\theta (\omega^{-1} x) = \omega^{-1} \theta (x)$.
\end{proof}

\subsection{A-nuclei and congruences}

\begin{definition}
We define the following binary relation on a quasigroup $(Q, \cdot)$ that correspond to a subgroup $H$ of the  group $_1N^A_l$:
 $a \, (_1\theta^A_l)\, b$ if and only if there exists a permutation $\alpha \in H \subseteq \, _1N^A_l  $ such that $b = \alpha a$.
\end{definition}

For a subgroup $H$ of the group  $_3N^A_l$ ($_1N^A_m$, $_2N^A_m$,   $_2N^A_r$ and $_3N^A_r$) binary relation $_3\theta^A_l$ ($_1\theta^A_m$, $_2\theta^A_m$,   $_2\theta^A_r$ and $_3\theta^A_r$, respectively) is defined in the similar way.

\begin{lemma} \label{BIN_REL_IS_EQIVIV}
The binary relation $_1\theta^A_l$ is an equivalence of the set $Q$.
\end{lemma}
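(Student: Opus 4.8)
The plan is to verify the three defining properties of an equivalence relation for the binary relation $_1\theta^A_l$ directly, using the fact (established in Lemma \ref{COMPONENTS}) that the first components of a subgroup $H$ of $N^A_l$ themselves form a group $K_1(N^A_l) \subseteq {}_1N^A_l$, and that $H$ being a subgroup of $_1N^A_l$ means $H$ is closed under composition and inverses and contains the identity permutation $\varepsilon$.

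First I would check reflexivity: for any $a \in Q$, since $\varepsilon \in H$ and $\varepsilon a = a$, we have $a\,(_1\theta^A_l)\,a$. Second, symmetry: if $a\,(_1\theta^A_l)\,b$, there is $\alpha \in H$ with $b = \alpha a$; then $\alpha^{-1} \in H$ (as $H$ is a group of permutations) and $\alpha^{-1} b = \alpha^{-1}\alpha a = a$, so $b\,(_1\theta^A_l)\,a$. Third, transitivity: if $a\,(_1\theta^A_l)\,b$ and $b\,(_1\theta^A_l)\,c$, pick $\alpha, \beta \in H$ with $b = \alpha a$ and $c = \beta b$; then $\beta\alpha \in H$ and $(\beta\alpha) a = \beta(\alpha a) = \beta b = c$, hence $a\,(_1\theta^A_l)\,c$. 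This exhausts the three properties, so $_1\theta^A_l$ is an equivalence relation on $Q$.

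There is essentially no obstacle here — the statement is the routine observation that an orbit equivalence induced by a group action is an equivalence relation, and the only thing to be careful about is that $H$ is genuinely a subgroup of $_1N^A_l$ (so that it is closed under inverses and composition and contains $\varepsilon$), which is exactly the hypothesis. One could also phrase the whole argument in one line: the classes of $_1\theta^A_l$ are precisely the orbits of the action of $H$ on $Q$, and orbits of a group action always partition the underlying set, which is equivalent to $_1\theta^A_l$ being an equivalence. I would present the explicit three-step verification since it is short and self-contained, and remark that the identical argument applies verbatim to $_3\theta^A_l$, $_1\theta^A_m$, $_2\theta^A_m$, $_2\theta^A_r$, and $_3\theta^A_r$.
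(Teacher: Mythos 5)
Your proof is correct and matches the paper's approach: the paper simply observes that the classes of $_1\theta^A_l$ are the orbits of the action on $Q$ of a subgroup of $_1N^A_l$, hence it is an equivalence, which is exactly the one-line argument you mention; your explicit three-step verification just unpacks that same fact using closure of $H$ under identity, inverses, and composition.
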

\begin{proof}
Since binary relation $_1\theta^A_l$  is  defined  using  orbits by the action on the set $Q$ of a subgroup of the group $_1N^A_l$, we conclude that  this binary relation is an  equivalences. It is clear that any orbit defines an equivalence class. Notice, all these equivalence classes are of equal order (Corollary \ref{SUBGR_FREE_ACT_GR}).
\end{proof}

Analogs of Lemma \ref{BIN_REL_IS_EQIVIV} are true for any subgroup of the groups  $_3\theta^A_l$, $_1\theta^A_m$, $_2\theta^A_m$,   $_2\theta^A_r$ and $_3\theta^A_r$.

These equivalences have additional properties.

\begin{lemma} \label{Pairs_of_Eqiviv} Let $(Q, \cdot)$ be a quasigroup.
\begin{enumerate}
\item Let $_1\theta^A_l$ be an equivalence   that is defined by a subgroup $K$ of the group $_1N^A_l$  and equivalence  $_3\theta^A_l$ is defined by isomorphic to $K$ subgroup  $R_x K R^{-1}_x$ of the group $_3N^A_l$ (Lemma \ref{ISOMORPHISMS OF COMPONENTS}). Then we have the following implications:
    \begin{enumerate}
\item    if $a (_1\theta^A_l) b$,  then $(a \cdot c) (_3\theta^A_l) (b \cdot c)$ for all $a, b, c \in Q$;
\item  if $(a \cdot c) (_1\theta^A_l) (b \cdot c)$,  then $a (_3\theta^A_l) b$ for all $a, b, c \in Q$;
\end{enumerate}

\item Let $_3\theta^A_l$ be an equivalence   that is defined by a subgroup $K$ of the group $_3N^A_l$  and equivalence  $_1\theta^A_l$ is defined by isomorphic to $K$ subgroup  $R^{-1}_x K R_x$ of the group $_1N^A_l$ (Lemma \ref{ISOMORPHISMS OF COMPONENTS}). Then we have the following implications:
      \begin{enumerate}
\item if $a (_3\theta^A_l) b$,  then $(a \cdot c) (_1\theta^A_l) (b \cdot c)$ for all $a, b, c \in Q$;
\item   if $(a \cdot c) (_3\theta^A_l) (b \cdot c)$,  then $a (_1\theta^A_l) b$ for all $a, b, c \in Q$;
 \end{enumerate}
 \item Let $_2\theta^A_r$ be an equivalence   that is defined by a subgroup $K$ of the group $_2N^A_r$  and equivalence  $_3\theta^A_r$ is defined by isomorphic to $K$ subgroup  $L_x K L^{-1}_x$ of the group $_3N^A_r$ (Lemma \ref{ISOMORPHISMS OF COMPONENTS}). Then we have the following implications:
      \begin{enumerate}
 \item if $a (_2\theta^A_r) b$,  then $(c \cdot a) (_3\theta^A_r) (c \cdot b)$ for all $a, b, c \in Q$;
\item  if $(c \cdot a) (_2\theta^A_r) (c \cdot b)$,  then $a (_3\theta^A_r) b$ for all $a, b, c \in Q$;
 \end{enumerate}

 \item Let $_3\theta^A_r$ be an equivalence   that is defined by a subgroup $K$ of the group $_3N^A_r$  and equivalence  $_2\theta^A_r$ is defined by isomorphic to $K$ subgroup  $L^{-1}_x K L_x$ of the group $_3N^A_r$ (Lemma \ref{ISOMORPHISMS OF COMPONENTS}). Then we have the following implications:
  \begin{enumerate}
\item if $a (_3\theta^A_r) b$,  then $(c \cdot a) (_2\theta^A_r) (c \cdot b)$ for all $a, b, c \in Q$;
\item if $(c \cdot a) (_3\theta^A_r) (c \cdot b)$,  then $a (_2\theta^A_r) b$ for all $a, b, c \in Q$;
 \end{enumerate}

 \item Let $_1\theta^A_m$ be an equivalence   that is defined by a subgroup $K$ of the group $_1N^A_m$  and equivalence  $_2\theta^A_m$ is defined by isomorphic to $K$ subgroup  $P_x K P^{-1}_x$ of the group $_2N^A_m$ (Lemma \ref{ISOMORPHISMS OF COMPONENTS}). Then we have the following implications:
      \begin{enumerate}
\item  if $a (_1\theta^A_m) b$,  then $P_c a (_2\theta^A_m) P_c b$ for all $a, b, c \in Q$;
\item  if $ P_c a (_1\theta^A_m) P_c b$,  then $a (_2\theta^A_m) b$ for all $a, b, c \in Q$;
 \end{enumerate}

 \item Let $_2\theta^A_m$ be an equivalence   that is defined by a subgroup $K$ of the group $_2N^A_m$  and equivalence  $_1\theta^A_m$ is defined by isomorphic to $K$ subgroup  $P^{-1}_x K P_x$ of the group $_1N^A_m$ (Lemma \ref{ISOMORPHISMS OF COMPONENTS}). Then we have the following implications:
      \begin{enumerate}
 \item if $a (_2\theta^A_m) b$,  then $P^{-1}_c a  (_1\theta^A_m) P^{-1}_c b$ for all $a, b, c \in Q$;
\item if $P^{-1}_c a  (_2\theta^A_m) P^{-1}_c b $,  then $a (_1\theta^A_m) b$ for all $a, b, c \in Q$.
  \end{enumerate}
 \end{enumerate}
\end{lemma}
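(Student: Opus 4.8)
The plan is to turn each of the twelve implications into a statement about the autotopies defining the A-nuclei, using the explicit descriptions of $N^A_l$, $N^A_r$, $N^A_m$ in Lemma~\ref{ISOMOR OF COMP} and the conjugation isomorphisms of Lemma~\ref{ISOMORPHISMS OF COMPONENTS}. All six items have the same shape, obtained from one another by passing from right to left, respectively middle, translations (Table~\ref{Table_0}) and by interchanging the roles of the two A-nuclear components involved; so I would carry out Case~1 in full and only indicate the substitutions for the rest. Throughout, recall from Lemma~\ref{BIN_REL_IS_EQIVIV} that ${}_1\theta^A_l$, ${}_3\theta^A_l$, \dots\ are exactly the orbit equivalences of the actions on $Q$ of the subgroups $K$, $R_xKR_x^{-1}$, \dots .

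For Case~1 fix the bookkeeping: for $\alpha\in{}_1N^A_l$ let $\bar\alpha$ denote the third component of the unique autotopy $(\alpha,\varepsilon,\bar\alpha)$ of $(Q,\cdot)$; by Lemma~\ref{ISOMOR OF COMP} one has $\bar\alpha=R_c\alpha R_c^{-1}$ for \emph{every} $c\in Q$, and $\alpha\mapsto\bar\alpha$ is an isomorphism of $K$ onto $R_xKR_x^{-1}\subseteq{}_3N^A_l$, a subgroup which by Lemma~\ref{ISOMORPHISMS OF COMPONENTS} does not depend on $x$. Item~1(a) is then immediate: if $a\,({}_1\theta^A_l)\,b$, say $b=\alpha a$ with $\alpha\in K$, then evaluating the left A-nuclear identity $\alpha u\cdot v=\bar\alpha(u\cdot v)$ at $u=a$, $v=c$ gives $b\cdot c=\bar\alpha(a\cdot c)$; since $\bar\alpha=R_c\alpha R_c^{-1}$ lies in $R_cKR_c^{-1}$, the group defining ${}_3\theta^A_l$, this says $(a\cdot c)\,({}_3\theta^A_l)\,(b\cdot c)$.

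For item~1(b) assume $(a\cdot c)\,({}_1\theta^A_l)\,(b\cdot c)$, i.e. $b\cdot c=\beta(a\cdot c)$ for some $\beta\in K$; writing $a\cdot c=R_ca$ and $b\cdot c=R_cb$ this reads $R_cb=\beta R_ca$, hence $b=(R_c^{-1}\beta R_c)\,a$. The substantive step is to see that the permutation $R_c^{-1}\beta R_c$ again belongs to the group $R_xKR_x^{-1}$ used to define ${}_3\theta^A_l$ (not merely to the ambient component ${}_3N^A_l$); once that is established, the displayed equality is precisely $a\,({}_3\theta^A_l)\,b$. This is where the independence of $R_xKR_x^{-1}$ on the index in Lemma~\ref{ISOMORPHISMS OF COMPONENTS} — equivalently, the commutation of the elements of ${}_1N^A_l$ with the permutations $R_a^{-1}R_b\in{\cal M}^{\ast}_{{\cal R}}$ that comes out of the bundle estimates (cf.\ Corollary~\ref{COROLL_FROM_BUNDLE_QUAS})\,— must be invoked: it says $K$ is normalised by $\langle{\cal M}^{\ast}_{{\cal R}}\rangle$, which is what is needed to keep conjugation by $R_c$ inside the class of subgroups represented by $R_xKR_x^{-1}$.

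Cases~2--6 are handled in the same way with the remaining rows of Lemma~\ref{ISOMOR OF COMP}: Case~2 again uses left A-nuclear autotopies $(R_c^{-1}\gamma R_c,\varepsilon,\gamma)$ but starts from $K\le{}_3N^A_l$; Cases~3 and~4 use $N^A_r=\{(\varepsilon,\beta,L_c\beta L_c^{-1})\}=\{(\varepsilon,L_c^{-1}\gamma L_c,\gamma)\}$ and the right A-nuclear identity $u\cdot\beta v=\gamma(u\cdot v)$, so that left translations take over the role of the right ones above; Cases~5 and~6 use $N^A_m=\{(\alpha,P_c\alpha P_c^{-1},\varepsilon)\}=\{(P_c^{-1}\beta P_c,\beta,\varepsilon)\}$ together with the middle-translation relation $x\cdot P_cx=c$, which is why the relations in items 5 and 6 are phrased through $P_c$ rather than through the operation, the conjugating permutations now being middle translations. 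In every item the (a)-implication is a one-line evaluation of an autotopy identity; the (b)-implication is its ``cancellation'' counterpart, and I expect these (b)-implications to be the main obstacle: in each of them one manufactures a permutation of shape $R_c^{-1}\beta R_c$ (respectively $L_c^{-1}\beta L_c$, $P_c^{-1}\beta P_c$) that sends $a$ to $b$, and the real work is to certify that this permutation lies in the specific conjugate subgroup used to define the target equivalence — precisely the point for which Lemma~\ref{ISOMORPHISMS OF COMPONENTS} (and the bundle commutation properties) is brought in.
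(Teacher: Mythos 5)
Your handling of the implications 1(a), 2(b), 3(a), 4(b), 5(a), 6(a)-type steps is exactly the paper's argument: rewrite the orbit relation through the autotopy of Lemma~\ref{ISOMOR OF COMP}, observe that the conjugate $R_c\alpha R_c^{-1}$ (resp. $L_c\beta L_c^{-1}$, $P_c\alpha P_c^{-1}$, or the inverse-direction conjugate when the defining subgroup is $R_x^{-1}KR_x$, $L_x^{-1}KL_x$, $P_x^{-1}KP_x$) lands in the subgroup that defines the target equivalence, using the independence of the conjugating index from Lemma~\ref{ISOMORPHISMS OF COMPONENTS}. For those implications your write-up and the paper's proof coincide in substance.

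The problem is the step you yourself single out as ``the substantive step'' in the crossed implications (1(b), 2(a), 3(b), 4(a), 5(b), 6(b)): from $b\cdot c=\beta(a\cdot c)$ with $\beta\in K\subseteq{}_1N^A_l$ you get $b=(R_c^{-1}\beta R_c)a$ and must certify $R_c^{-1}\beta R_c\in R_xKR_x^{-1}$. Your justification — that $K$ is normalised by $\langle{\cal M}^{\ast}_{{\cal R}}\rangle$ — does not give this. What the bundle/centraliser facts (Lemma~\ref{ISOMOR OF COMP}, Corollary~\ref{COROLL_FROM_BUNDLE_QUAS}) yield is that elements of ${}_1N^A_l$ commute with the quotients $R_u^{-1}R_v$; that is precisely why $R_xKR_x^{-1}$ is independent of $x$, i.e. why conjugation by $R_c$ and by $R_x$ give the \emph{same} subgroup. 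But $R_c^{-1}\beta R_c$ is conjugation in the opposite direction: $R_c^{-1}KR_c$ differs from $R_xKR_x^{-1}$ by conjugation by $R_cR_x$, which is not an element of $\langle{\cal M}^{\ast}_{{\cal R}}\rangle$, so the commutation relations say nothing about it. Closure of the first component under conjugation by single right translations is the condition $RM\subseteq N_{S_Q}({}_1N^A_l)$, which by Theorem~\ref{COINCIDENCE NUCLEI_COMPONENTS} is equivalent to ${}_1N^A_l={}_3N^A_l$ — an extra hypothesis, not available in a general quasigroup. To be fair, the paper's own proof does no more at this point (it simply sets $\gamma=R_c^{-1}\alpha R_c$ and declares $\gamma\in H$ ``by Lemma~\ref{ISOMORPHISMS OF COMPONENTS}''), so your proposal is not weaker than the printed proof; but as it stands the crossed implications are not established by either argument, whereas the uncrossed pairs — which are the ones actually used later, e.g. in Corollary~\ref{Eqiviv_NORMALITY} — are fully proved by the computation you give.
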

\begin{proof}
Case 1, (a). Expression  $a (_1\theta^A_l) b$ means that there exists a permutation  $\alpha \in K \subseteq \,  _1 N^A_l$ such that  $b = \alpha a$.
Expression  $(a \cdot c) (_3\theta^A_l) (b \cdot c)$ means that there exists a permutation  $\gamma \in R_x K R^{-1}_x \subseteq \,_3 N^A_l$ such that $b \cdot c  = \gamma (a \cdot c)$. We can take $\gamma = R_c K R^{-1}_c$, i.e. can take $x=c$.  Then $b \cdot c  = \alpha a \cdot c$.

Then we obtain the following implication $b = \alpha a \Longrightarrow b \cdot c  = \alpha a \cdot c$ for all $a, b, c \in Q$. It is clear that this implication is true for all  $a, b, c \in Q$, since $(Q, \cdot)$ is a quasigroup.
Therefore, implication
\[
\text{if} \; a (_1\theta^A_l) b, \: \text{then} \; (a \cdot c) (_3\theta^A_l) (b \cdot c) \: \text{for all} \: a, b, c \in Q
\]
also is true.

Case 1, (b).
Expression  $(a \cdot c) (_1\theta^A_l) (b \cdot c)$ means that there exists a permutation  $\alpha \in K  \subseteq \, _1 N^A_l$ such that  $b \cdot c = \alpha (a \cdot c)$, i.e. $b  = R^{-1}_c\alpha R_c a $.
Expression $a (_3\theta^A_l) b$ means that there exists a permutation  $\gamma \in H = R_x K R^{-1}_x \subseteq \,  _3 N^A_l$ such that $b  = \gamma a $.
By Lemma \ref{ISOMORPHISMS OF COMPONENTS} we can take $\gamma = R^{-1}_c \alpha  R_c$.

Then we obtain the following implication
\[
b  = R^{-1}_c\alpha R_c a  \Longrightarrow b  =  R^{-1}_c\alpha R_c a \: \; \text{for all} \;  a, b, c \in Q
\]
 It is clear that this implication is true.

Cases 2--4 are proved in the similar way with  Case 1.

Case 5, (a). Expression  $a (_1\theta^A_m) b$ means that there exists a permutation  $\alpha \in K \subseteq \, _1 N^A_m$ such that  $b = \alpha a$.
Expression  $P_c a (_2\theta^A_m) P_c b$ means that there exists a permutation  $\beta \in H \subseteq \,  _2 N^A_m$ such that $P_c b  = \beta P_c a$.
By Lemma \ref{ISOMORPHISMS OF COMPONENTS} we can take $\beta = P_c \alpha P^{-1}_c$. Then
$P_c b  = \beta P_c a  = P_c \alpha P^{-1}_c P_c a = P_c \alpha a$.

Then we obtain the following implication $b = \alpha a \Longrightarrow P_c b  = P_c \alpha a$ for all $a, b, c \in Q$. It is clear that this implication is true for all  $a, b, c \in Q$.

Case 5, (b).
Expression $ P_c a (_1\theta^A_m) P_c b$ means that there exists a permutation  $\alpha \in H \subseteq \,  _1 N^A_m$ such that $P_c a  = \alpha P_c b$. By Lemma \ref{ISOMORPHISMS OF COMPONENTS} we can take $\alpha = P_c \beta P^{-1}_c$, where $\beta \in  \, _2 N^A_m$. Thus  $P_c a  = P_c \beta P^{-1}_c P_c b$, $a = \beta b$,   $a (_2\theta^A_m) b$.

Case 6 is proved in the similar way with Case 5.
\end{proof}

Cases 1 and 2 show that the pair of equivalences $_1\theta^A_l$ and $_3\theta^A_l$ is  normal from the right reciprocally.

Cases 3 and 4 show that the pair of equivalences $_2\theta^A_r$ and $_3\theta^A_r$ is  normal from the left  reciprocally.

Cases 5 and 6 show that the pair of equivalences $_1\theta^A_m$ and $_2\theta^A_m$ is stable relative to middle quasigroup translations and its inverse.

\begin{corollary} \label{Eqiviv_NORMALITY}
If in a quasigroup $(Q,\cdot)$:
\begin{itemize}
  \item [] $_1\theta^A_l = \, _3\theta^A_l = \theta^A_l$, then the equivalence $_1\theta^A_l$ ($_3\theta^A_l$) is normal from the right;
  \item [] $_2\theta^A_r = \, _3\theta^A_r  = \theta^A_r$, then the equivalence $_2\theta^A_r$  ($_3\theta^A_r$) is normal from the left;
  \item [] $_1\theta^A_m = \, _2\theta^A_m = \theta^A_m$, then the equivalence $_1\theta^A_m$ ($_2\theta^A_m$) is  stable relative to middle quasigroup translations and its inverse;
  \item []  $\theta^A_l =  \theta^A_r$, then equivalences  ${}_1\theta^A_l$,  ${}_3\theta^A_l$, ${}_2\theta^A_r$, ${}_3\theta^A_r$  are normal congruences;
  \item []  $\theta^A_l =  \theta^A_m$, then equivalences ${}_1\theta^A_l$,  ${}_3\theta^A_l$, ${}_1\theta^A_m$, ${}_2\theta^A_m$  are normal congruences;
  \item []  $\theta^A_r =  \theta^A_m$, then equivalences ${}_2\theta^A_r$,  ${}_3\theta^A_r$, ${}_1\theta^A_m$, ${}_2\theta^A_m$  are normal congruences.
\end{itemize}
\end{corollary}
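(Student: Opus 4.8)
The plan is to reduce the statement to the two normality results for pairs of equivalences that were already proved or observed just before this corollary, namely Lemma~\ref{Pairs_of_Eqiviv} together with its three "reciprocal normality" consequences (the sentences immediately following that lemma). Each of the six bullets asserts that, under an equality of two of the equivalences $_1\theta^A_l$, $_3\theta^A_l$, $_2\theta^A_r$, $_3\theta^A_r$, $_1\theta^A_m$, $_2\theta^A_m$, some single equivalence becomes stable and cancellative in the relevant sense (or fully normal). So the skeleton is: substitute the assumed equality into the relevant case(s) of Lemma~\ref{Pairs_of_Eqiviv}, read off stability and cancellativity with respect to one kind of translation, and then, for the last three bullets, combine two one-sided normalities into full normality via Lemma~\ref{NORMALITY_INVAR_CONGR_RELAT_TRANSL}.

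Concretely, for the first bullet: if $_1\theta^A_l = {}_3\theta^A_l$, call this common equivalence $\theta^A_l$; then Case~1(a) of Lemma~\ref{Pairs_of_Eqiviv} reads "$a\,\theta^A_l\,b \Rightarrow (a\cdot c)\,\theta^A_l\,(b\cdot c)$", i.e. $\theta^A_l$ is stable from the right, and Case~1(b) reads "$(a\cdot c)\,\theta^A_l\,(b\cdot c) \Rightarrow a\,\theta^A_l\,b$", i.e. $\theta^A_l$ is cancellative from the right; by Definition~\ref{NORM_CONGR_QUAS} this is exactly right-normality. The second and third bullets are handled identically using Cases~3--4 (left translations, giving left-normality of $\theta^A_r$) and Cases~5--6 (middle translations, giving that $\theta^A_m$ is stable and cancellative relative to all $P_c$ and $P_c^{-1}$). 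For the fourth bullet, $\theta^A_l = \theta^A_r$ makes this one equivalence simultaneously right-normal (by the first bullet's argument) and left-normal (by the second bullet's argument), hence admissible relative to every left and right translation; Lemma~\ref{NORMALITY_INVAR_CONGR_RELAT_TRANSL}(1) then gives that it is a normal congruence, and since all four listed equivalences coincide, each is a normal congruence. The fifth bullet uses admissibility relative to left and middle translations and Lemma~\ref{NORMALITY_INVAR_CONGR_RELAT_TRANSL}(2); the sixth uses right and middle translations and Lemma~\ref{NORMALITY_INVAR_CONGR_RELAT_TRANSL}(3).

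The only point requiring a little care is the interface between "$\theta$ is admissible relative to translation $L_c$ and $L_c^{-1}$" (which is what Cases~(a),(b) of the lemma literally deliver, after noting that stability plus cancellativity in one direction yields invariance under both $L_c$ and $L_c^{-1}$) and the hypothesis of Lemma~\ref{NORMALITY_INVAR_CONGR_RELAT_TRANSL}, which is phrased in terms of admissibility relative to translations of two of the three types. Here one should spell out that "stable from the right $+$ cancellative from the right" is equivalent to "$\theta(R_c x) = R_c\theta(x)$ for all $c$", i.e. to admissibility of each $R_c$, and similarly for $L_c$ and $P_c$; this is precisely Propositions~\ref{EQUIV_NORM_CONG} and~\ref{NP1_1_1}. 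So strictly speaking the cleanest route for the last three bullets is: from the pair hypothesis obtain two-sided stability of $\theta$ relative to two of the three translation families, invoke Corollary~\ref{NP1_1_1} (cases~4, 5, 6) to conclude $\omega\theta = \theta\omega$ for all $\omega$ in the corresponding generated group, which is exactly normality.

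I expect the main (very mild) obstacle to be purely bookkeeping: matching each of the six equality hypotheses to the correct case of Lemma~\ref{Pairs_of_Eqiviv} and to the correct clause of Lemma~\ref{NORMALITY_INVAR_CONGR_RELAT_TRANSL}, and being explicit that the lemma's implications in directions (a) and (b) together amount to admissibility of the translation \emph{and} its inverse, so that the parastrophy argument packaged in Lemma~\ref{NORMALITY_INVAR_CONGR_RELAT_TRANSL} applies verbatim. No genuinely new computation is needed beyond what the preceding lemmas supply.
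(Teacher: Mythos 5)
Your proposal is correct and follows essentially the same route as the paper, whose proof is exactly the combination of Lemma~\ref{Pairs_of_Eqiviv} (read with the assumed equalities substituted in) and Lemma~\ref{NORMALITY_INVAR_CONGR_RELAT_TRANSL} for the last three bullets. The extra interface remarks via Propositions~\ref{NP1_1}, \ref{EQUIV_NORM_CONG} and Corollary~\ref{NP1_1_1} are consistent with the paper's framework and merely make explicit what the paper leaves implicit.
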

\begin{proof}
We use Lemmas \ref{Pairs_of_Eqiviv} and \ref{NORMALITY_INVAR_CONGR_RELAT_TRANSL}.
\end{proof}

If in conditions of Lemma \ref{Pairs_of_Eqiviv} $ K= \, _1N^A_l$, then corresponding equivalence we shall denote by $_1\Theta^A_l$ and so on.

\begin{corollary} \label{ISMS OF COMPONENTS_LOOPS}
\begin{enumerate}
  \item If  $(Q,\circ)$ is a right loop, then  $_1\theta^A_l = \, _3\theta^A_l = \theta^A_l$ and equivalence $_1\theta^A_l$  is normal from the right. Any subgroup of the left nucleus $N_l$ (coset class $\theta^A_l(1)$) is normal from the right.
  \item If  $(Q,\circ)$ is a left loop, then  $_2\theta^A_r = \, _3\theta^A_r  = \theta^A_r$ and equivalence $_2\theta^A_r$ is normal from the left. Any subgroup of the right nucleus $N_r$ (coset class $\theta^A_r(1)$) is normal from the left.
  \item If  $(Q,\circ)$ is an unipotent quasigroup, then $_1\theta^A_m = \, _2\theta^A_m = \theta^A_m$ and equivalence $_1\theta^A_m$ is  stable relative to middle quasigroup translations and its inverse. Any subgroup of the middle nucleus $N_m$ (coset class $\theta^A_m(1)$) is stable relative to middle quasigroup translations and its inverse.
\end{enumerate}
 \end{corollary}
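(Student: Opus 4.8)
The plan is to handle the three cases by a common three-step recipe: convert the coincidence of two A-nuclear \emph{components} (provided by Corollary~\ref{ISOMORPHISMS OF COMPONENTS_LOOPS}) into a coincidence of the corresponding two A-nuclear \emph{equivalences}, apply Corollary~\ref{Eqiviv_NORMALITY} to get the stated normality, and then specialize to subgroups of the Garrison nucleus.

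For Case~1 I would fix a right loop $(Q,\circ)$ with right identity $1$, so $R_1=\varepsilon$, and a subgroup $K$ of ${}_1N^A_l$. By Lemma~\ref{ISOMORPHISMS OF COMPONENTS} the subgroup of ${}_3N^A_l$ paired with $K$ in Lemma~\ref{Pairs_of_Eqiviv}(1)--(2) is $R_xKR_x^{-1}$ for any $x\in Q$; choosing $x=1$ it is $K$ itself, and by Corollary~\ref{ISOMORPHISMS OF COMPONENTS_LOOPS}(1) indeed $K\subseteq{}_1N^A_l={}_3N^A_l$. Hence both ${}_1\theta^A_l$ and ${}_3\theta^A_l$ are, by definition, the partition of $Q$ into $K$-orbits, so ${}_1\theta^A_l={}_3\theta^A_l=:\theta^A_l$, and Corollary~\ref{Eqiviv_NORMALITY} (first bullet) makes $\theta^A_l$ normal from the right. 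For the final clause I would take a subgroup $N$ of $N_l$: since a non-trivial $N_l$ forces the right loop to be a loop, $1$ is two-sided, the relation $L_{a\circ b}=L_aL_b$ for $a\in N_l$ embeds $N$ as a subgroup $K_N\subseteq{}_1N^A_l$, and the $K_N$-orbit of $1$ is $\{a\circ 1\mid a\in N\}=N$. Thus $N=\theta^A_l(1)$ is a coset class of the right-normal congruence attached to $K_N$, which is exactly the assertion.

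Cases~2 and~3 I would obtain from Case~1 by the obvious parastrophic substitution. For a left loop one replaces $R$ by $L$ everywhere: $L_1=\varepsilon$, Corollary~\ref{ISOMORPHISMS OF COMPONENTS_LOOPS}(2) gives ${}_2N^A_r={}_3N^A_r$, Lemma~\ref{ISOMORPHISMS OF COMPONENTS} with conjugator $L_1$ together with Lemma~\ref{Pairs_of_Eqiviv}(3)--(4) gives ${}_2\theta^A_r={}_3\theta^A_r$, and the second bullet of Corollary~\ref{Eqiviv_NORMALITY} gives normality from the left; for $N\le N_r$ the map $b\mapsto R_b$ embeds $N$ into ${}_2N^A_r$ with orbit of $1$ again $N$. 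For a unipotent quasigroup one uses $P_1=\varepsilon$ (since $x\circ x=1$ for all $x$), Corollary~\ref{ISOMORPHISMS OF COMPONENTS_LOOPS}(3) giving ${}_1N^A_m={}_2N^A_m$, Lemma~\ref{ISOMORPHISMS OF COMPONENTS} with conjugator $P_1$, Lemma~\ref{Pairs_of_Eqiviv}(5)--(6), and the third bullet of Corollary~\ref{Eqiviv_NORMALITY}; a non-trivial $N_m$ again forces $(Q,\circ)$ to be a loop, and $a\mapsto R_a$ sends $N\le N_m$ into ${}_1N^A_m$ with orbit of $1$ equal to $N$.

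The only step that is not routine is the second one: the abstract isomorphism ${}_1N^A_l\cong{}_3N^A_l$ of Lemma~\ref{ISOMORPHISMS OF COMPONENTS} must be recognized as the \emph{identity} map, which is exactly where the presence of the one-sided (or middle) identity is used, since the conjugating translation evaluated there degenerates to $\varepsilon$. The subsidiary remark about subgroups of $N_l$, $N_r$, $N_m$ carries content only when the Garrison nucleus is non-trivial, in which case the quasigroup collapses to a loop and the orbit identities $a\circ 1=a=1\circ a$ are immediate; spotting that reduction is the one mildly delicate point, after which everything follows from the cited lemmas.
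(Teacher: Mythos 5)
Your argument is correct and follows essentially the same route as the paper, whose proof is simply a citation of Corollary \ref{ISOMORPHISMS OF COMPONENTS_LOOPS} together with Corollary \ref{Eqiviv_NORMALITY}; you merely make explicit the step the paper leaves implicit, namely that the conjugating translation ($R_1$, $L_1$, or $P_1$) evaluated at the one-sided or middle identity is $\varepsilon$, so the paired subgroups and hence the equivalences ${}_1\theta^A_l={}_3\theta^A_l$ (etc.) literally coincide, and that subgroups of the Garrison nuclei arise as the orbit $\theta^A(1)$ via the nuclear translations. No gap.
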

\begin{proof}
The proof follows from  Corollary \ref{ISOMORPHISMS OF COMPONENTS_LOOPS} and Corollary  \ref{Eqiviv_NORMALITY}.
\end{proof}

\begin{corollary} \label{Normality_OF_quasigroup_Nuclei}
\begin{enumerate}
  \item If  $(Q,\circ)$ is a commutative loop, then any from equivalences $\theta^A_l$ and $\theta^A_r$ is a normal congruence. Left nucleus $N_l$ (coset class $\Theta^A_l(1)$) and right nucleus $N_r$ (coset class
      $\Theta^A_r(1)$) are equal and $N_l$ is  a normal subgroup of $(Q,\circ)$.
      \item If  $(Q,\circ)$ is a unipotent right  loop with identity $x\circ (x\circ y) = y$, then any from  equivalences $\theta^A_l$ and $\theta^A_m$ is a normal congruence. Left nucleus $N_l$ (coset class $\Theta^A_l(1)$) and middle  nucleus $N_m$ (coset class $\Theta^A_m(1)$) are equal  and $N_l$ is a  normal subgroup of $(Q,\circ)$.
\item If  $(Q,\circ)$ is a unipotent left loop with identity $(x\circ y) \circ y = x$, then any from equivalences $\theta^A_r$ and $\theta^A_m$ is a normal congruence. Right  nucleus $N_r$ (coset class $\Theta^A_r(1)$) and middle  nucleus $N_m$ (coset class $\Theta^A_m(1)$) are equal  and $N_r$ is a  normal subgroup of $(Q,\circ)$.
 \end{enumerate}
 \end{corollary}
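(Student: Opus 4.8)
The plan is to handle the three cases by a single device: in each case the extra identity forces two of the three translation sets $\mathbb{L},\mathbb{R},\mathbb{P}$ to generate one and the same subgroup of $S_Q$, and then Corollary~\ref{ISOMORPHISMS OF COMPONENTS_LOOPS} identifies two of the A-nuclear component groups, which makes the two associated equivalences coincide; Corollary~\ref{Eqiviv_NORMALITY} then upgrades the one-sided normality already provided by Corollary~\ref{ISMS OF COMPONENTS_LOOPS} (via Lemma~\ref{Pairs_of_Eqiviv}) to genuine two-sided normality, and the nuclear statement drops out of the identification of the coset class of the identity with the Garrison nucleus.

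For Case~1, since $(Q,\circ)$ is a commutative loop we have $R_a=L_a$ for all $a$, so $\mathbb{R}=\mathbb{L}$ and $RM(Q,\circ)=LM(Q,\circ)$. A loop is both a left and a right loop, so Corollary~\ref{ISOMORPHISMS OF COMPONENTS_LOOPS}(4) gives ${}_1N^A_l=C_{S_Q}(RM)=C_{S_Q}(LM)={}_2N^A_r$. Consequently any subgroup $K$ of this common group produces, through its action on $Q$, one and the same relation ($a\sim b$ iff $b=\kappa a$ for some $\kappa\in K$) whether regarded inside ${}_1N^A_l$ or inside ${}_2N^A_r$, that is $\theta^A_l=\theta^A_r$, and Corollary~\ref{Eqiviv_NORMALITY} then makes ${}_1\theta^A_l,{}_3\theta^A_l,{}_2\theta^A_r,{}_3\theta^A_r$ normal congruences. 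For the nucleus I would use Theorem~\ref{A-NUCLEI_OF_LOOPS}(1): each $\alpha\in{}_1N^A_l$ equals $L_{\alpha 1}$ with $(L_{\alpha 1},\varepsilon,L_{\alpha 1})\in Avt(Q,\circ)$, and since the action of ${}_1N^A_l$ on $Q$ is free (Theorem~\ref{GROUP_ACTION_ON QUAS}), the orbit $\Theta^A_l(1)$ is exactly $\{a\in Q\mid(L_a,\varepsilon,L_a)\in Avt(Q,\circ)\}$, which by Definition~\ref{GARRISON_NUCL_DEF} is the Garrison left nucleus $N_l$; likewise $\Theta^A_r(1)=N_r$. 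As $\Theta^A_l=\Theta^A_r$ their classes through $1$ agree, so $N_l=N_r$, and this set, being a congruence class through the identity of the normal congruence $\Theta^A_l$ and at the same time a subgroup of $(Q,\circ)$, is a normal subgroup.

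Cases~2 and~3 are mirror images via the $(12)$-parastrophe. In Case~2 the identity $x\circ(x\circ y)=y$ says $L_xL_x=\varepsilon$, hence $L_x^{-1}=L_x$; since $P_a y=L_y^{-1}a$ by the definition of middle translation, this gives $P_a y=L_y a=R_a y$, so $\mathbb{P}=\mathbb{R}$ and $PM=RM$. As $(Q,\circ)$ is a right loop and unipotent, Corollary~\ref{ISOMORPHISMS OF COMPONENTS_LOOPS}(6) yields ${}_1N^A_l=C_{S_Q}(RM)=C_{S_Q}(PM)={}_1N^A_m$, hence $\theta^A_l=\theta^A_m$, and Corollary~\ref{Eqiviv_NORMALITY} makes ${}_1\theta^A_l,{}_3\theta^A_l,{}_1\theta^A_m,{}_2\theta^A_m$ normal congruences; the identifications $N_l=\theta^A_l(1)$ and $N_m=\theta^A_m(1)$ supplied by Corollary~\ref{ISMS OF COMPONENTS_LOOPS} (the quasigroup being a right loop and unipotent) then give $N_l=N_m$, a normal subgroup as before. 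In Case~3 the identity $(x\circ y)\circ y=x$ says $R_y^{-1}=R_y$, and reading off $P_a^{-1}z=R_z^{-1}a=R_z a=L_a z$ gives $P_a=L_a^{-1}$, so $PM=LM$; Corollary~\ref{ISOMORPHISMS OF COMPONENTS_LOOPS}(5) then gives ${}_2N^A_r=C_{S_Q}(LM)=C_{S_Q}(PM)={}_1N^A_m$, and the argument concludes exactly as in Case~2 with the roles of left and right interchanged, yielding $N_r=N_m$ normal.

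The routine steps are the translation identities and the bookkeeping of which component group pairs with which equivalence. The point that needs the most care is confirming that the coset-class identification $\Theta^A_l(1)=N_l$ (and its analogues for $N_r,N_m$) remains valid in the one-sided-loop setting of Cases~2 and~3, where there is no two-sided identity; here one does not reprove it from scratch but leans on Corollary~\ref{ISMS OF COMPONENTS_LOOPS}, which already establishes precisely this identification together with the corresponding one-sided normality, so that the only genuinely new input our proof contributes is the equality of the two relevant equivalences forced by the special identity.
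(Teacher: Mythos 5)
Your proposal is correct and shares the paper's skeleton -- force the relevant A-nuclear component groups to coincide, so that the two equivalences are equal, and then apply Corollary~\ref{Eqiviv_NORMALITY} -- but it implements two steps differently. In Case~1 the paper obtains the coincidence from the explicit loop form of nuclear autotopies in Theorem~\ref{A-NUCLEI_OF_LOOPS} (elements $(L_a,\varepsilon,L_a)$ of $N^A_l$ and $(\varepsilon,R_b,R_b)$ of $N^A_r$) together with $L_a=R_a$, whereas you route it through the centralizer description ${}_1N^A_l=C_{S_Q}(RM)$, ${}_2N^A_r=C_{S_Q}(LM)$ of Corollary~\ref{ISOMORPHISMS OF COMPONENTS_LOOPS} and the equality $\mathbb{L}=\mathbb{R}$; both are sound. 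The more substantial divergence is in Cases~2 and~3: the paper settles them in one line by passing to parastrophes of the commutative loop of Case~1, while you prove them directly, noting that $x\circ(x\circ y)=y$ gives $L_x^{-1}=L_x$, hence $P_a=R_a$ and $PM=RM$, and that $(x\circ y)\circ y=x$ gives $R_y^{-1}=R_y$, hence $P_a=L_a^{-1}$ and $PM=LM$, after which Cases~5 and~6 of Corollary~\ref{ISOMORPHISMS OF COMPONENTS_LOOPS} yield ${}_1N^A_l={}_1N^A_m$ (resp.\ ${}_2N^A_r={}_1N^A_m$). Your direct computation is self-contained and avoids the parastrophe bookkeeping -- which is a real advantage here, since by Remark~\ref{REM_LOOP_PARASTR} it is the $(13)$-parastrophe of a commutative loop that produces the unipotent right loop with $x\circ(x\circ y)=y$ and the $(23)$-parastrophe that produces the unipotent left loop with $(x\circ y)\circ y=x$, so the paper's attribution ``$(23)$ and $(13)$, respectively'' appears to be interchanged -- while the paper's reduction is shorter and exploits the parastrophy-invariance machinery (Table~3, Corollary~\ref{PARASTROPHY_INVAR_NORM_CONG}) already in place. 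On the remaining points you match the paper's own level of rigor: in Cases~2 and~3 the symbol $N_l$ (resp.\ $N_r$, $N_m$) must be read, as the statement's parentheses indicate, as the coset class $\Theta^A_l(1)$ rather than Garrison's nucleus (which may be empty when $(Q,\circ)$ has no left identity), and the group structure of this class is taken from Corollary~\ref{ISMS OF COMPONENTS_LOOPS} and Theorem~\ref{SUBQUAS_SUB_NUCL_TH} exactly as the paper implicitly does.
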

\begin{proof}
Case 1.  In any  right loop first and third component of any element of the group $N^A_l$ coincide,  in any  left loop second and third component of the group  $ N^A_r$  coincide (Corollary \ref{A-NUCLEI_OF_LOOPS}). In a commutative loop in additional  $L_a = R_a$ for any element $a\in Q$.
 From Corollary   \ref{Eqiviv_NORMALITY} it follows that in this case any  equivalence $\theta^A_l$ is a normal congruence and therefore any subgroup of left nucleus  $N_l$ is normal as coset class $\theta^A_l(1)$ of normal congruence $\theta^A_l$.

Cases 2 and 3 are proved using  passage to parastrophy images ($(23)$ and $(13)$, respectively) of the loop from Case 1.
\end{proof}

\begin{lemma} \label{SUBGR_CENTER_NORMALITY}
In a loop  $(Q,\cdot)$ any subgroup of  group $Z$ ($Z^A_l$, $Z^A_r$, $Z^A_m$, ${}_1Z^A_l$, ${}_3Z^A_l$, ${}_2Z^A_r$, ${}_3Z^A_r$, ${}_1Z^A_m$, ${}_2Z^A_m$) defines a normal congruence.
\end{lemma}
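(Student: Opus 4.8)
The plan is to reduce the statement to the normality of the equivalences $_1\Theta^A_l$, $_3\Theta^A_l$, etc.\ already introduced, together with the characterisation of center components in a loop. First I would observe that, by Lemma~\ref{ISOM_OF_CENTERS_OF_LOOP}, in a loop all the listed groups $Z$, $Z^A_l$, $Z^A_r$, $Z^A_m$, ${}_1Z^A_l$, ${}_3Z^A_l$, ${}_2Z^A_r$, ${}_3Z^A_r$, ${}_1Z^A_m$, ${}_2Z^A_m$ are pairwise isomorphic via the maps $a\mapsto L_a$ (for $Z$), $\alpha\mapsto R_c\alpha R_c^{-1}$ (between ${}_1$- and ${}_3$-components), $\alpha\mapsto P_c\alpha P_c^{-1}$, etc., so that the orbit on $Q$ of a subgroup of any one of them coincides, coset-class by coset-class, with the orbit of the corresponding subgroup of $Z=N_l\cap N_r\cap N_m\cap C$. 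Hence it suffices to treat, say, a subgroup $H\subseteq {}_1Z^A_l$ and its companions $R_xHR_x^{-1}\subseteq{}_3Z^A_l$, $L_xHL_x^{-1}\subseteq{}_2Z^A_r$, etc., and to show that the single equivalence they jointly define (call it $\theta_H$) is a normal congruence.

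Next I would invoke Lemma~\ref{Pairs_of_Eqiviv}: since $H$ sits simultaneously inside ${}_1N^A_l$ (Case 1), ${}_2N^A_r$ (via its $L$-conjugate companion, Case 3) and ${}_1N^A_m$ (via its $P$-conjugate companion, Case 5), and since the key extra fact about central elements is that $L_a=R_a$ for $a\in Z$, the relevant companion subgroups in ${}_3N^A_l$, ${}_3N^A_r$, ${}_2N^A_m$ actually coincide with the ${}_1$-/${}_2$-versions. Concretely, for $a\in Z$ we have $R_cL_aR_c^{-1}=L_a$ because $L_a=R_a$ is a nuclear translation that commutes with every $R_c$ (Corollary~\ref{ISOMORPHISMS OF COMPONENTS_LOOPS}, loop case, $_1N_l^A=C_{S_Q}(RM)$ when $(Q,\cdot)$ is a loop — and being in $N_r$ it is also in $C_{S_Q}(LM)$). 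Thus in the loop case the pairs $({}_1\theta^A_l,{}_3\theta^A_l)$, $({}_2\theta^A_r,{}_3\theta^A_r)$, $({}_1\theta^A_m,{}_2\theta^A_m)$ restricted to the central subgroups collapse to a single equivalence, which by the ``if'' clauses of Corollary~\ref{Eqiviv_NORMALITY} (the bottom three bullets, $\theta^A_l=\theta^A_r=\theta^A_m$) is a normal congruence; equivalently by Lemma~\ref{NORMALITY_INVAR_CONGR_RELAT_TRANSL} it is admissible under all left, right and middle translations.

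Then I would finish by the direct route suggested by Proposition~\ref{EQUIV_NORM_CONG}: the equivalence $\theta_H$ has $\theta_H(x)=Hx$ (an orbit), and for any translation $\omega\in\mathbb L\cup\mathbb R\cup\mathbb P$ of $(Q,\cdot)$ one checks $\omega\,\theta_H(x)=\omega Hx$ while $\theta_H(\omega x)=H\omega x$; since every $h\in H$ has the form $h=L_a=R_a$ with $a\in Z$ and $a$ is in all three Garrison nuclei, $h$ commutes past any single translation ($h\omega=\omega h$ as maps), giving $\omega Hx=H\omega x$, i.e.\ $\theta_H(\omega x)=\omega\,\theta_H(x)$ for all such $\omega$. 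By the proposition following Proposition~\ref{EQUIV_NORM_CONG} this is exactly normality of $\theta_H$. Finally, the coset class $\theta_H(1)$ equals $H\cdot 1$, which (tracing the isomorphism $a\mapsto L_a$ back) is precisely the chosen subgroup of $Z$, so every subgroup of $Z$ arises as a coset class of a normal congruence and is therefore a normal subloop.

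\textbf{Expected main obstacle.} The routine part is the translation bookkeeping; the one place needing care is justifying that the companion subgroups inside the ${}_3$- and ${}_2$-components genuinely coincide with the original $H$ rather than merely being isomorphic to it — this is where centrality ($L_a=R_a$ and membership in $N_l\cap N_r\cap N_m$) is essential, and it is what upgrades ``normal from one side'' or ``stable under middle translations'' into full normality. I would state this coincidence as a short preliminary observation before quoting Corollary~\ref{Eqiviv_NORMALITY}.
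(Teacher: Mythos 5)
Your proposal is correct and follows essentially the same route as the paper: in a loop the components of the A-centers coincide (Lemma \ref{ISOM_OF_CENTERS_OF_LOOP}), so the equivalences attached to a central subgroup $H$ via the left-, right- and middle-nuclear pictures are literally the same relation, and Corollary \ref{Eqiviv_NORMALITY} then yields a normal congruence — which is exactly the paper's argument, your direct verification via $\theta_H(\omega x)=\omega\theta_H(x)$ being a harmless supplement. One small correction to that supplement: a central translation $L_a=R_a$ does \emph{not} commute with middle translations (one has $P_cL_aP_c^{-1}=L_a^{-1}$, coming from the autotopy $(R_a,L_a^{-1},\varepsilon)$), but this does not hurt you, since admissibility under left and right translations already gives normality (Lemma \ref{NORMALITY_INVAR_CONGR_RELAT_TRANSL}), and even for $\omega\in\mathbb P$ the set equality $\omega Hx=H\omega x$ survives because $H$ is closed under inverses.
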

\begin{proof}
We use Lemma  \ref{ISOM_OF_CENTERS_OF_LOOP} and Corollary \ref{Eqiviv_NORMALITY}. If $H$ is a subgroup of the group $Z$,  then corresponding to $H$ subgroups of the groups ${}_1Z^A_l$, ${}_3Z^A_l$   are equal,  since $(Q, \cdot)$ is a loop. Therefore   corresponding equivalences ${}_1\zeta^A_{\,l}= {}_3\zeta^A_{\, l}$ are equal. Since ${}_1Z^A_l = {}_2Z^A_r = {}_3Z^A_r$ (Lemma  \ref{ISOM_OF_CENTERS_OF_LOOP}) we obtain that ${}_1\zeta^A_{\, l} = {}_2\zeta^A_{\, r} = {}_3\zeta^A_{\, r}$ and equivalence ${}_1\zeta^A_{\, l}$ is normal congruence (Corollary \ref{Eqiviv_NORMALITY}) of loop $(Q, \cdot)$.

Other cases are proved in the similar way.
\end{proof}

\begin{corollary} \label{SUBGR_CENTER_NORMALITY_COR}
In a loop  $(Q,\cdot)$ A-center defines a normal congruence.
\end{corollary}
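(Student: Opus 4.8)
The plan is to obtain this corollary as the maximal-subgroup instance of Lemma~\ref{SUBGR_CENTER_NORMALITY}. Recall that, by Definition of A-centers and Lemma~\ref{ISOM_OF_CENTERS_OF_LOOP}, the phrase ``A-center'' of a loop $(Q,\cdot)$ refers to any of the groups $Z^A_l$, $Z^A_r$, $Z^A_m$, or equivalently to any of their components ${}_1Z^A_l$, ${}_3Z^A_l$, ${}_2Z^A_r$, ${}_3Z^A_r$, ${}_1Z^A_m$, ${}_2Z^A_m$; in the loop case all of these are isomorphic and, as subgroups of $S_Q$, actually coincide as permutation groups acting on $Q$. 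So the first step is to remark that ``the congruence defined by the A-center'' is unambiguous: all the component groups listed above give the same orbit partition of $Q$, hence the same equivalence.

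Second, since every group is a subgroup of itself, I would apply Lemma~\ref{SUBGR_CENTER_NORMALITY} with $H$ taken to be the whole group ${}_1Z^A_l$ (equivalently $Z^A_l$, etc.). That lemma then yields directly that the equivalence ${}_1\zeta^A_{\,l}$ attached to this action is a normal congruence of $(Q,\cdot)$; its coset class through the identity element is precisely the corresponding center-type subgroup, so the A-center is a normal subloop. No further computation is needed: the content has already been packaged in Lemma~\ref{SUBGR_CENTER_NORMALITY}, whose proof in turn rests on Lemma~\ref{ISOM_OF_CENTERS_OF_LOOP} and Corollary~\ref{Eqiviv_NORMALITY}.

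There is essentially no obstacle here; the only point requiring a little care is the bookkeeping in the first step, namely confirming that in the loop case the various components of the three A-centers act identically on $Q$ so that a single well-defined equivalence is meant, and that Lemma~\ref{SUBGR_CENTER_NORMALITY} was stated for an \emph{arbitrary} subgroup (so that the improper subgroup, the full A-center, is covered). Both are immediate from the cited results, so the proof reduces to a one-line invocation.
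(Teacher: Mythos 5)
Your proposal is correct and coincides with the paper's own argument: the paper proves this corollary simply by invoking Lemma~\ref{SUBGR_CENTER_NORMALITY}, which is exactly your maximal-subgroup ($H$ equal to the whole A-center) instance. Your extra remark that the various center components act identically on $Q$ in the loop case is a harmless elaboration of what Lemma~\ref{ISOM_OF_CENTERS_OF_LOOP} already provides inside the proof of that lemma.
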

\begin{proof}
We use Lemma  \ref{SUBGR_CENTER_NORMALITY}.
\end{proof}

\medskip

We give conditions of normality of A-nuclei of a loop. We also give conditions of normality of Garrison's nuclei, since  Garrison's nuclei are coset classes by action of corresponding A-nuclei.

\begin{theorem} \label{NORM_COND_LOOPS}
Let $(Q, \cdot)$ be a loop.
\begin{enumerate}
\item Let $H$ be  a subgroup of the group ${}_1N^A_l$. Denote by  ${}_1\theta^A_l$  corresponding equivalence to the group $H$.
\begin{enumerate}
\item If equivalence ${}_1\theta^A_l$ is admissible relative to any translation $L_x$ of loop $(Q, \cdot)$ and its inverse, then ${}_1\theta^A_l$ is normal congruence of  $(Q, \cdot)$.
\item If equivalence ${}_1\theta^A_l$ is admissible relative to any translation $P_x$ of loop $(Q, \cdot)$ and its inverse, then ${}_1\theta^A_l$ is normal congruence of  $(Q, \cdot)$.
\end{enumerate}
\item Let  $H$ be  a subgroup of the group ${}_2N^A_r$. Denote by  ${}_2\theta^A_r$  corresponding equivalence to the group $H$.
\begin{enumerate}
\item If equivalence ${}_2\theta^A_r$ is admissible relative to any translation $R_x$ of loop $(Q, \cdot)$ and its inverse, then ${}_2\theta^A_r$ is normal congruence of  $(Q, \cdot)$.
\item
If equivalence ${}_2\theta^A_r$ is admissible relative to any translation $P_x$ of loop $(Q, \cdot)$ and its inverse, then ${}_2\theta^A_r$ is normal congruence of  $(Q, \cdot)$.
\end{enumerate}
\item Let  $H$ be  a subgroup of the group $\,{}_1N^A_m \cap \, {}_2N^A_m$. Denote by  ${}_1\theta^A_m$  corresponding equivalence to the group $H$.
\begin{enumerate}
\item If equivalence ${}_1\theta^A_m$ is admissible relative to any translation $R_x$ of loop $(Q, \cdot)$ and its inverse, then ${}_1\theta^A_m$ is normal congruence of  $(Q, \cdot)$.
\item
If equivalence ${}_1\theta^A_m$ is admissible relative to any translation $L_x$ of loop $(Q, \cdot)$ and its inverse, then ${}_1\theta^A_m$ is normal congruence of  $(Q, \cdot)$.
\end{enumerate}
\end{enumerate}
\end{theorem}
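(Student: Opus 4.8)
The plan is to notice that, for a loop, each of the equivalences ${}_1\theta^A_l$, ${}_2\theta^A_r$, ${}_1\theta^A_m$ attached to the prescribed subgroup $H$ is automatically ``one half'' of a normal congruence with no extra hypothesis at all, so that the admissibility assumption supplies exactly the missing half; combining the two halves through Lemma~\ref{NORMALITY_INVAR_CONGR_RELAT_TRANSL} then finishes each of the six implications in a single line. Two bookkeeping facts are used throughout: by Lemma~\ref{BIN_REL_IS_EQIVIV} (and its analogues) the relation in question is indeed an equivalence, and ``$\theta$ is admissible relative to every translation of a given kind and to its inverse'' is, verbatim, ``$\theta$ is stable and cancellative with respect to that family of translations'', which is the hypothesis format of Lemma~\ref{NORMALITY_INVAR_CONGR_RELAT_TRANSL}.

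For item~1, a loop is in particular a right loop, so Corollary~\ref{ISMS OF COMPONENTS_LOOPS}(1) gives ${}_1\theta^A_l={}_3\theta^A_l$ and, more to the point, that ${}_1\theta^A_l$ is normal from the right, i.e.\ admissible relative to every right translation and its inverse. Under hypothesis (a) it is in addition admissible relative to every left translation and its inverse, so Lemma~\ref{NORMALITY_INVAR_CONGR_RELAT_TRANSL}(1) makes it a normal congruence; under hypothesis (b) it is in addition admissible relative to every middle translation and its inverse, so Lemma~\ref{NORMALITY_INVAR_CONGR_RELAT_TRANSL}(3) applies. Item~2 is the mirror image: a loop is a left loop, hence Corollary~\ref{ISMS OF COMPONENTS_LOOPS}(2) yields ${}_2\theta^A_r={}_3\theta^A_r$ together with left-normality of ${}_2\theta^A_r$, and then hypothesis (a) combined with Lemma~\ref{NORMALITY_INVAR_CONGR_RELAT_TRANSL}(1), respectively hypothesis (b) combined with Lemma~\ref{NORMALITY_INVAR_CONGR_RELAT_TRANSL}(2), gives normality.

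For item~3 a general loop need not be unipotent, so Corollary~\ref{ISMS OF COMPONENTS_LOOPS}(3) is not available directly; this is where the assumption $H\subseteq{}_1N^A_m\cap{}_2N^A_m$ is used. Reading $H$ inside ${}_1N^A_m$ produces ${}_1\theta^A_m$, reading the same set $H$ inside ${}_2N^A_m$ produces ${}_2\theta^A_m$, and both are by definition the orbit partition of $Q$ under $H$; thus ${}_1\theta^A_m={}_2\theta^A_m={}\theta^A_m$, and by Corollary~\ref{Eqiviv_NORMALITY} (the statement built on Lemma~\ref{Pairs_of_Eqiviv}(5)--(6)) the equivalence $\theta^A_m$ is stable and cancellative with respect to all middle translations, i.e.\ admissible relative to every $P_x$ and its inverse. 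Adjoining hypothesis (a), that $\theta^A_m$ is admissible relative to every $R_x$ and its inverse, and invoking Lemma~\ref{NORMALITY_INVAR_CONGR_RELAT_TRANSL}(3); respectively adjoining hypothesis (b), admissibility relative to every $L_x$ and its inverse, and invoking Lemma~\ref{NORMALITY_INVAR_CONGR_RELAT_TRANSL}(2); shows $\theta^A_m$ is a normal congruence.

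The step I expect to need the most care is precisely this middle case: one must check that the two readings of $H$ (inside ${}_1N^A_m$ and inside ${}_2N^A_m$) genuinely yield the same partition of $Q$, so that the hypothesis ``${}_1\theta^A_m={}_2\theta^A_m$'' of Corollary~\ref{Eqiviv_NORMALITY} is met, and one must keep the one-sided normality harvested from the loop structure (Corollary~\ref{ISMS OF COMPONENTS_LOOPS}) in the inverse-closed form that Lemma~\ref{NORMALITY_INVAR_CONGR_RELAT_TRANSL} requires. Once these points are settled, each of the six parts is the routine union of a ``free half'' coming from the one-sided identity element (or, in part~3, from the intersection hypothesis) with the admissibility half assumed in the statement.
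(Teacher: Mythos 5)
Your handling of items~1 and~2 is correct and coincides with the paper's route: in a loop the canonical pairing of Lemma~\ref{ISOMOR OF COMP} between the first and third components of $N^A_l$ (take $y=1$, so $\gamma=R_1\alpha R_1^{-1}=\alpha$), respectively between the second and third components of $N^A_r$, is the identity, so the orbit equivalence of $H$ is automatically admissible with respect to every $R_x$ (resp.\ $L_x$) and its inverse, and the assumed admissibility supplies the second family required by Lemma~\ref{NORMALITY_INVAR_CONGR_RELAT_TRANSL}. You are also right that item~3 is the delicate case, since Corollary~\ref{ISMS OF COMPONENTS_LOOPS}(3) is stated for unipotent quasigroups and a loop need not be unipotent. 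But the justification you offer there does not close the gap. The hypothesis ``${}_1\theta^A_m={}_2\theta^A_m$'' of Corollary~\ref{Eqiviv_NORMALITY} is meant in the sense of Lemma~\ref{Pairs_of_Eqiviv}(5)--(6): ${}_2\theta^A_m$ must be the orbit equivalence of the \emph{partner} subgroup $P_xHP^{-1}_x=\{\beta \mid (\alpha,\beta,\varepsilon)\in N^A_m,\ \alpha\in H\}$ (independent of $x$ by Lemma~\ref{ISOMORPHISMS OF COMPONENTS}), i.e.\ of the image of $H$ under the canonical isomorphism ${}_1N^A_m\to{}_2N^A_m$ --- not of $H$ itself re-read inside ${}_2N^A_m$. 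Your check that ``both readings of $H$ give the orbit partition of $H$'' is vacuously true and verifies the wrong equality: if $a\,\theta\, b$ with $b=\alpha a$, $\alpha\in H$, then $P_cb=(P_c\alpha P^{-1}_c)P_ca$, and to get $P_ca\,\theta\,P_cb$ one needs the partner $P_c\alpha P^{-1}_c$ to preserve the $H$-orbits; this does not follow from $H\subseteq{}_1N^A_m\cap{}_2N^A_m$ together with ``same set, same orbits''.

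The gap is fixable, and the fix is exactly what the intersection hypothesis is for. By Theorem~\ref{A-NUCLEI_OF_LOOPS} every element of $N^A_m$ of a loop has the form $(R_b,L^{-1}_b,\varepsilon)$ with $b\in N_m$, so ${}_1N^A_m=\{R_b\}$ and ${}_2N^A_m=\{L^{-1}_b\}$. Hence $\alpha\in{}_1N^A_m\cap{}_2N^A_m$ means $\alpha=R_b=L^{-1}_d=L_{d^{-1}}$ with $b,d\in N_m$; evaluating at $1$ gives $b=d^{-1}$, so $\alpha=R_b=L_b$ and its partner second component is $L^{-1}_b=R^{-1}_b=\alpha^{-1}$. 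Consequently the partner subgroup of $H$ is $H^{-1}=H$, its orbits coincide with those of $H$, and only now do Lemma~\ref{Pairs_of_Eqiviv}(5)--(6) (equivalently Corollary~\ref{Eqiviv_NORMALITY}) yield admissibility of ${}_1\theta^A_m$ with respect to every $P_c$ and $P^{-1}_c$; combining this with the assumed admissibility for all $R_x$ (case (a)) or all $L_x$ (case (b)) and Lemma~\ref{NORMALITY_INVAR_CONGR_RELAT_TRANSL} gives normality. Note that the paper's one-line proof glosses over precisely this point, so supplying this computation is the substantive content of item~3; as written, your argument for it does not supply it.
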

\begin{proof} The proof follows from Lemma  \ref{NORMALITY_INVAR_CONGR_RELAT_TRANSL} and Corollary \ref{ISMS OF COMPONENTS_LOOPS}.
\end{proof}

\begin{theorem}
If in a quasigroup $(Q, \cdot)$ $H = {}_1N_l^A \cap {}_3N_l^A \cap {}_2N_r^A \cap {}_3N_r^A$, then $H$ induces  normal congruence of $(Q, \cdot)$.

If in a quasigroup $(Q, \cdot)$ $H = {}_1N_l^A \cap {}_3N_l^A \cap {}_1N_m^A \cap {}_2N_m^A$, then $H$ induces normal congruence of $(Q, \cdot)$.

If in a quasigroup $(Q, \cdot)$ $H = {}_2N_r^A \cap {}_3N_r^A \cap {}_1N_m^A \cap {}_2N_m^A$, then $H$ induces normal congruence of $(Q, \cdot)$.
\end{theorem}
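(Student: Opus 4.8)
The plan is to show that for each of the three choices of $H$ the orbit equivalence $\theta$ of $H$ acting on $Q$ (well defined by the argument of Lemma \ref{BIN_REL_IS_EQIVIV}, and with all classes of the common size $|H|$, since $H$ lies inside components that act freely on $Q$ by Corollary \ref{SUBGR_FREE_ACT_GR}) is a \emph{normal} congruence of $(Q,\cdot)$. The reduction step will be Lemma \ref{NORMALITY_INVAR_CONGR_RELAT_TRANSL}: for $H={}_1N^A_l\cap{}_3N^A_l\cap{}_2N^A_r\cap{}_3N^A_r$ it suffices to prove $\theta$ admissible under every left and every right translation of $(Q,\cdot)$; for $H={}_1N^A_l\cap{}_3N^A_l\cap{}_1N^A_m\cap{}_2N^A_m$, under every left and every middle translation; for $H={}_2N^A_r\cap{}_3N^A_r\cap{}_1N^A_m\cap{}_2N^A_m$, under every right and every middle translation. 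Two structural facts will be used throughout: since $H\subseteq{}_3N^A_l\cap{}_3N^A_r$ (and analogously in the other two cases), Corollary \ref{coroll_Perm_el} makes $H$ abelian and, more to the point, shows that $H$ is centralized in $S_Q$ by ${}_3N^A_l\cup{}_3N^A_r$; and the bundle inclusions ${}_1N^A_l\subseteq C_{S_Q}({\cal M}^{\ast}_{\cal R})$, ${}_3N^A_l\subseteq C_{S_Q}({\cal M}_{\cal R})$ together with their ${\cal L}$- and ${\cal P}$-analogues show that $H$ commutes with all products $R_c^{-1}R_d$, $R_cR_d^{-1}$ (and the corresponding $L$- and $P$-products).

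For the right translations I would exploit $H\subseteq{}_1N^A_l\cap{}_3N^A_l$: for $\alpha\in H\subseteq{}_1N^A_l$ the left-A-nuclear identity of Lemma \ref{ISOMOR OF COMP} reads $\alpha x\cdot y=\gamma(x\cdot y)$ with $\gamma=R_c\alpha R_c^{-1}\in{}_3N^A_l$ independent of $c$, so a right translation carries the $H$-orbit partition onto the $(R_cHR_c^{-1})$-orbit partition; the second inclusion $H\subseteq{}_3N^A_l$ is used dually. Combining the two directions is precisely the reciprocal right-normality of the pair $({}_1\theta^A_l,{}_3\theta^A_l)$ recorded in Lemma \ref{Pairs_of_Eqiviv}, Cases 1--2, so that — once $\theta$ is recognised simultaneously as ${}_1\theta^A_l$ and as ${}_3\theta^A_l$ attached to $H$ — Corollary \ref{Eqiviv_NORMALITY} gives that $\theta$ is normal from the right. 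Symmetrically, $H\subseteq{}_2N^A_r\cap{}_3N^A_r$ yields normality from the left, and Lemma \ref{NORMALITY_INVAR_CONGR_RELAT_TRANSL}(1) then gives a normal congruence. The second and third statements are proved by the same template, replacing $N^A_l$ (resp. $N^A_r$) by $N^A_m$, the conjugating right (resp. left) translations by middle translations $P_c$, and using parts (2) and (3) of Lemma \ref{NORMALITY_INVAR_CONGR_RELAT_TRANSL}; the middle-nucleus analogues of Lemma \ref{ISOMORPHISMS OF COMPONENTS} and of Lemma \ref{Pairs_of_Eqiviv} (Cases 5--6) supply the needed identities there.

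The delicate point, and the step I expect to be the main obstacle, is making the phrase ``$\theta$ is simultaneously ${}_1\theta^A_l$ and ${}_3\theta^A_l$'' rigorous: Lemma \ref{Pairs_of_Eqiviv} pairs a subgroup $K\subseteq{}_1N^A_l$ with its conjugate $R_cKR_c^{-1}\subseteq{}_3N^A_l$, so applying Corollary \ref{Eqiviv_NORMALITY} to $K=H$ requires knowing that conjugation by a right translation maps $H$ onto itself, i.e. $R_cHR_c^{-1}=H$ for all $c$ (equivalently $RM(Q,\cdot)$ normalizes $H$), and dually $L_cHL_c^{-1}=H$. This is exactly where all four defining inclusions of $H$ must be combined: a priori one knows only that $R_c\alpha R_c^{-1}\in{}_3N^A_l$ for $\alpha\in H$, and this must be upgraded to membership in ${}_1N^A_l\cap{}_2N^A_r\cap{}_3N^A_r$ as well, using that $H$ is centralized by ${}_3N^A_l\cup{}_3N^A_r$, that $H$ commutes with ${\cal M}^{\ast}_{\cal R},{\cal M}_{\cal R},{\cal M}^{\ast}_{\cal L},{\cal M}_{\cal L}$, and that the conjugacy ${}_1N^A_l\to{}_3N^A_l$ of Lemma \ref{ISOMOR OF COMP} is canonical (independent of the chosen translation). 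With $R_cHR_c^{-1}=H$ and $L_cHL_c^{-1}=H$ established, $\theta$ is admissible under all the relevant translations and the theorem follows from Lemma \ref{NORMALITY_INVAR_CONGR_RELAT_TRANSL}.
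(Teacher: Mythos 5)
Your reduction (via Lemma~\ref{NORMALITY_INVAR_CONGR_RELAT_TRANSL}) and your use of Lemma~\ref{Pairs_of_Eqiviv} together with Corollary~\ref{Eqiviv_NORMALITY} follow exactly the route the paper indicates (its own proof is a bare citation of Lemma~\ref{Pairs_of_Eqiviv}), and you correctly isolate the decisive point: to apply Corollary~\ref{Eqiviv_NORMALITY} with $K=H$ one must know that the partner equivalences coincide with the orbit equivalence $\theta_H$, i.e. that $R_cHR_c^{-1}=H$ and $L_cHL_c^{-1}=H$ for all $c$. But the proposal stops at asserting that this ``must be'' obtainable from the four defining inclusions; no argument is given, and the ingredients you list cannot supply one. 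Those inclusions yield only that $H$ centralizes ${\cal M}_{\cal R}$, ${\cal M}^{\ast}_{\cal R}$, ${\cal M}_{\cal L}$, ${\cal M}^{\ast}_{\cal L}$, hence that $\phi(\alpha)=R_c\alpha R_c^{-1}$ and $\chi(\alpha)=L_c\alpha L_c^{-1}$ are well defined (independent of $c$) and land in ${}_3N^A_l$, resp.\ ${}_3N^A_r$; nothing forces $\phi(H)=H$ or $\chi(H)=H$.

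In fact this invariance can fail, so the gap is essential rather than technical. Take $Q=\mathbb{Z}_2^3$ with basis $a,b,d$, define a permutation $\alpha$ by $0\mapsto 0$, $a\mapsto b$, $b\mapsto a+b$, $a+b\mapsto a$, $d\mapsto b+d$, $a+d\mapsto d$, $b+d\mapsto a+b+d$, $a+b+d\mapsto a+d$, and put $x\cdot y=\alpha(x)+y$. Then $\alpha(x+a)=\alpha(x)+b$ for all $x$, so $(L^{+}_a,\varepsilon,L^{+}_b)$, $(\alpha^{-1}L^{+}_a\alpha,\varepsilon,L^{+}_a)$ and $(\varepsilon,L^{+}_a,L^{+}_a)$ are autotopisms of $(Q,\cdot)$, hence $L^{+}_a\in H$; solving the autotopy equations directly gives ${}_3N^A_l={}_2N^A_r={}_3N^A_r=\{L^{+}_u \mid u\in Q\}$ and ${}_1N^A_l=\{\alpha^{-1}L^{+}_u\alpha \mid u\in Q\}$, whence $H=\{\varepsilon,L^{+}_a\}$. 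Its orbits are the cosets of $\{0,a\}$, yet $R^{\cdot}_0=\alpha$ sends $0\mapsto0$ and $a\mapsto b$, so $\theta_H$ is not even a right congruence, and indeed $R^{\cdot}_cH(R^{\cdot}_c)^{-1}=\{\varepsilon,L^{+}_b\}\neq H$. Thus the step you flag as the main obstacle cannot be established in this generality and the plan cannot be completed as written; the same example shows that the cancellation halves (Cases 1(b), 2(a)) of Lemma~\ref{Pairs_of_Eqiviv}, on which the paper's one-line proof implicitly rests, fail as well, so the statement needs either an extra hypothesis (for instance that the translations normalize $H$) or a different reading of ``induces'' before any proof along these lines can go through.
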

\begin{proof}
The proof follows from Lemma \ref{Pairs_of_Eqiviv}.
\end{proof}

\subsection{Normality of A-nuclei of inverse quasigroups}

Mainly information   on the coincidence of A-nuclei and  nuclei of inverse quasigroups and loops are taken from \cite{KEED_SCERB}.
Also we give  results on normality of nuclei of some inverse quasigroups and loops.

\begin{definition}
A quasigroup $(Q,\circ )$ has the {\it $\lambda $-inverse-property} if there exist permutations $\lambda_1,
\lambda_2, \lambda_3$ of the set $Q$ such that \begin{equation} \lambda_1 x \circ  \lambda_2 (x\circ y) =
\lambda_3 y \label{eqno(2.8)}\end{equation} for all $x, y \in Q$ \cite{INVERS}.
\end{definition}

\begin{definition}
A quasigroup $(Q,\circ )$ has the {\it $\rho $-inverse-property} if there exist permutations $\rho_1, \rho_2$,
$\rho_3$ of the set $Q$ such that \begin{equation}  \rho_1 (x\circ y) \circ  \rho_2 y =  \rho_3 x
\label{eqno(2.9)}\end{equation} for all $x, y \in Q$ \cite{INVERS}.
\end{definition}

\begin{definition}
A  quasigroup $(Q, \circ)$ is an $(\alpha, \beta, \gamma
)$-inverse quasigroup if  there exist permutations $\alpha, \beta, \gamma $ of the set $Q$ such that
\begin{equation}
\alpha (x\circ y) \circ \beta x = \gamma y \label{eqno(2.7)}
\end{equation}
 for all $x, y \in Q$ \cite{ks3, RB,KS,STB}.
\end{definition}

\begin{definition}
A quasigroup $(Q,\circ )$ has the {\it $\mu  $-inverse-property} if there exist permutations $\mu _1, \mu _2$,
$\mu _3$ of the set $Q$ such that \begin{equation}  \mu _1 y\circ \mu _2 x  =  \mu _3 (x \circ y)
\label{eqno(2.10)}\end{equation} for all $x, y \in Q$.
\end{definition}

We give definition of  main classes of inverse quasigroups  using autostrophy \cite{INVERS, KEED_SCERB, SCERB_08}.
\begin{theorem} \label{INVERS_AUTOSTR_DEF}
 A quasigroup $(Q, \cdot)$ is:
 \begin{enumerate}
  \item  a  $\lambda $-inverse quasigroup  if and only if it has  $[(2\;3),(\lambda_1,\lambda_2,\lambda_3)]$ autostrophy;
 \item   a $\rho $-inverse quasigroup  if and only if it has  $[(1\;3),$ $(\rho_1,\rho_2,\rho_3)]$ autostrophy;
 \item an $(\alpha ,\beta , \gamma )$-inverse quasigroup if and only if it has
$[(1\;2\;3),(\alpha, \beta ,\gamma)]$ autostrophy;
\item    a \textit{$\mu $-inverse quasigroup} if and only if it has  $[(12), (\mu _1, \mu _2, \mu _3) ]$     autostrophy.
  \end{enumerate}
\end{theorem}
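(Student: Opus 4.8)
The plan is to deduce all four equivalences from one observation: for a fixed $\sigma\in S_3$ and a triple $T=(\alpha_1,\alpha_2,\alpha_3)$, the assertion that $(Q,\circ)$ admits the autostrophy $[\sigma,T]$ is, by Definition \ref{D2.3}, the identity $(Q,\circ)^{\sigma}=(Q,\circ)T$, i.e.\ that the $\sigma$-parastrophe of $(Q,\circ)$ is a prescribed principal isotope of $(Q,\circ)$ itself. Each defining functional equation \eqref{eqno(2.8)}, \eqref{eqno(2.9)}, \eqref{eqno(2.7)}, \eqref{eqno(2.10)} can be brought into exactly this shape by replacing the single compound subterm $x\circ y$ (when one is present) by a fresh variable via the appropriate division operation, and conversely the autostrophy identity returns the functional equation upon substituting that variable back; so the two implications in each of the four items are handled simultaneously. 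The elementary identities \eqref{(2)} and \eqref{(3)} supply the substitutions, while Lemma \ref{L2.1} and Table 1 keep track of how $\sigma$ permutes the three coordinates.

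First I would treat the $\lambda$-case explicitly. In \eqref{eqno(2.8)} set $u=x\circ y$, so that $y=x\backslash u$ by \eqref{(3)}; substituting gives $\lambda_1 x\circ\lambda_2 u=\lambda_3(x\backslash u)$, hence $x\backslash u=\lambda_3^{-1}\bigl(\lambda_1 x\circ\lambda_2 u\bigr)$, which is precisely $(Q,\backslash)=(Q,\circ)^{(2\,3)}=(Q,\circ)\,(\lambda_1,\lambda_2,\lambda_3)$, i.e.\ a $(2\,3)$-autostrophy of $(Q,\circ)$. The $\rho$-case is the symmetric one: in \eqref{eqno(2.9)} put $u=x\circ y$ and use \eqref{(2)} to write $x=u/y$, obtaining $u/y=\rho_3^{-1}\bigl(\rho_1 u\circ\rho_2 y\bigr)$, a $(1\,3)$-autostrophy. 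The $\mu$-case needs no substitution at all: \eqref{eqno(2.10)} already reads $\mu_3(x\circ y)=\mu_1 y\circ\mu_2 x$, which after the trivial renaming $x\leftrightarrow y$ is an identity of the form $(Q,\circ)=(Q,\circ)^{(1\,2)}(\mu_1,\mu_2,\mu_3)$, a $(1\,2)$-autostrophy. Finally, in the $(\alpha,\beta,\gamma)$-case one eliminates $y$ in \eqref{eqno(2.7)} via $y=x\backslash(x\circ y)$, after which the left-hand side becomes $A$ evaluated on a reordered translated pair; tracking the reordering with Lemma \ref{L2.1} shows the relevant parastrophe is the composite of two transpositions, namely the $3$-cycle $(1\,2\,3)$, which yields the autostrophy $[(1\,2\,3),(\alpha,\beta,\gamma)]$. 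In every case the computation is an equivalence, so reversing it proves the converse implication.

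The one genuinely delicate point is bookkeeping rather than mathematics: one must fix the convention by which the parastrophy $\sigma$ acts on the components of the triple $T$ — the two options recorded in the Remark following Lemma \ref{L2.1} differ exactly by the permutation $\sigma$ of the three slots — and verify that with the convention of \cite{INVERS, KEED_SCERB} the permutations $\lambda_i$, $\rho_i$, $\mu_i$, and $\alpha,\beta,\gamma$ occupy the slots as displayed in the statement. With that convention the four identities obtained above match the four inverse-property equations verbatim; with the bare convention of Definition \ref{D2.3} one instead lands on the triples $T^{\sigma}$, and the two descriptions are interchanged by Lemma \ref{L2.1}. Once the convention is pinned down, each of the four statements follows by reading Definition \ref{D2.3} forwards and backwards as above, in agreement with the treatment in \cite{SCERB_08}.
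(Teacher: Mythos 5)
Your proposal is correct and is in substance the same verification the paper gives: the paper writes the autostrophy condition in the ternary-relational form (formula (\ref{autostrophism}), derived from (\ref{composition_of_isostrophisms_LEFT_REC})) and reads off $\sigma$ and the components by matching slots against each defining identity, while you carry out the identical matching operationally, using the divisions $\backslash$ and $/$ to recast each identity as ``$\sigma$-parastrophe $=$ principal isotope,'' with every step an equivalence so both implications come at once. The convention point you flag ($T$ versus $T^{\sigma}$) is precisely the slot bookkeeping the paper performs in its one displayed computation, so your sketch supplies the same content in operational rather than relational notation.
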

\begin{proof}
If $(\sigma, (\alpha_1, \alpha_2, \alpha_3))$ is an autostrophism of a quasigroup $(Q, A)$,
then from formula (\ref{composition_of_isostrophisms_LEFT_REC})
we obtain the following formula
\begin{equation}\label{autostrophism}
\begin{split}
A(x_1, x_2, x_3) =
 A(\alpha_{1} x_{\sigma^{-1}1}, \alpha_{2} x_{\sigma^{-1} 2}, \alpha_{3} x_{\sigma^{-1} 3})
\end{split}
\end{equation}

Case 1.
Using equality (\ref{autostrophism}) we find values of $\sigma$ and $\alpha_i$. Since $\alpha_{1} x_{\sigma^{-1}1} = \lambda_1 x_1$, we have  $\alpha_{1} = \lambda_1$, $x_{\sigma^{-1}1} = x_1$, $\sigma 1 = 1$. And so on.

Cases 2--4 are proved in similar way with Case 1.
\end{proof}

\begin{lemma} \label{INVERSE_AUTOSTRP}
  \begin{enumerate}
  \item  A  $\lambda $-inverse quasigroup  has  $((2\;3),(\lambda^{-1}_1,\lambda^{-1}_3,\lambda^{-1}_2))$ autostrophy;
 \item   a $\rho $-inverse quasigroup   has  $((1\;3),$ $(\rho^{-1}_3,\rho^{-1}_2,\rho^{-1}_1))$ autostrophy;
 \item an $(\alpha ,\beta , \gamma )$-inverse quasigroup has
$((1\;3\;2),(\beta^{-1}, \gamma^{-1}, \alpha^{-1} ))$ autostrophy, i.e. in any $(\alpha ,\beta , \gamma )$-inverse quasigroup $(Q, \circ)$ the following equality is true $\beta^{-1} y \circ \gamma^{-1}(x\circ y) = \alpha^{-1} x$;
 \item    a \textit{$\mu $-inverse quasigroup} has  $((12), (\mu _2^{-1}, \mu _1^{-1}, \mu _3^{-1}))$     autostrophy.
   \end{enumerate}
\end{lemma}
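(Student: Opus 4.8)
The plan is to derive every statement from Theorem \ref{INVERS_AUTOSTR_DEF} by passing to the inverse autostrophy. Recall that the set $Aus(Q,A)$ of all autostrophisms of a quasigroup is a group, so together with an autostrophy $(\sigma,T)$ it contains $(\sigma,T)^{-1}$, and by Lemma \ref{inver_AUTOSTROPHY} this inverse equals $(\sigma^{-1},(\alpha^{-1}_{\sigma 1},\alpha^{-1}_{\sigma 2},\alpha^{-1}_{\sigma 3}))$ for $T=(\alpha_1,\alpha_2,\alpha_3)$. Thus in each case it suffices to start from the autostrophy listed in Theorem \ref{INVERS_AUTOSTR_DEF}, compute $\sigma^{-1}$, and reorder the components by the rule of Lemma \ref{inver_AUTOSTROPHY}; then, when an explicit identity is wanted (Case 3), translate the resulting autostrophy back into an equation via formula (\ref{autostrophism}).

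First I would treat Case 1. A $\lambda$-inverse quasigroup has the $((2\,3),(\lambda_1,\lambda_2,\lambda_3))$ autostrophy; since $(2\,3)$ is an involution, $\sigma^{-1}=(2\,3)$, and $\sigma 1=1,\ \sigma 2=3,\ \sigma 3=2$, so the inverse autostrophy is $((2\,3),(\lambda^{-1}_1,\lambda^{-1}_3,\lambda^{-1}_2))$, as claimed. Case 2 ($\rho$-inverse, $\sigma=(1\,3)$) and Case 4 ($\mu$-inverse, $\sigma=(1\,2)$) are handled identically: both $\sigma$ are involutions, and applying $\sigma$ to the subscripts ($1\mapsto 3,\ 2\mapsto 2,\ 3\mapsto 1$ in Case 2, resp. $1\mapsto 2,\ 2\mapsto 1,\ 3\mapsto 3$ in Case 4) turns $(\rho_1,\rho_2,\rho_3)$ into $(\rho^{-1}_3,\rho^{-1}_2,\rho^{-1}_1)$ and $(\mu_1,\mu_2,\mu_3)$ into $(\mu^{-1}_2,\mu^{-1}_1,\mu^{-1}_3)$.

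For Case 3 the cycle $\sigma=(1\,2\,3)$ is not an involution: $\sigma^{-1}=(1\,3\,2)$ and $\sigma 1=2,\ \sigma 2=3,\ \sigma 3=1$, so Lemma \ref{inver_AUTOSTROPHY} yields the autostrophy $((1\,3\,2),(\beta^{-1},\gamma^{-1},\alpha^{-1}))$. To rewrite it as a quasigroup identity I would unwind it from the defining equation $\alpha(x\circ y)\circ\beta x=\gamma y$: putting $u=\alpha(x\circ y)$ and $v=\beta x$ gives $u\circ v=\gamma y$, hence $x=\beta^{-1}v$, $x\circ y=\alpha^{-1}u$, and $y=\gamma^{-1}(u\circ v)$, so $\beta^{-1}v\circ\gamma^{-1}(u\circ v)=\alpha^{-1}u$; renaming $u,v$ as $x,y$ gives $\beta^{-1}y\circ\gamma^{-1}(x\circ y)=\alpha^{-1}x$. (Alternatively one substitutes $\sigma^{-1}$ and the reordered components into formula (\ref{autostrophism}) to get the same identity.)

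The only real difficulty is bookkeeping of the index conventions — whether $\sigma$ or $\sigma^{-1}$ acts on the subscripts when forming the inverse autostrophy and when reading formula (\ref{autostrophism}) — since a slip there interchanges two of the permutations. Once the convention of Lemma \ref{inver_AUTOSTROPHY} is applied consistently, each item is a one-line computation, with Case 3 the only one needing the extra substitution to exhibit the explicit identity.
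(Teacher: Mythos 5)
Your proposal is correct and follows essentially the same route as the paper: the paper's proof likewise takes the autostrophies of Theorem \ref{INVERS_AUTOSTR_DEF} and inverts them by the formula of Lemma \ref{inver_AUTOSTROPHY}, $(\sigma,T)^{-1}=(\sigma^{-1},(\alpha^{-1}_{\sigma 1},\alpha^{-1}_{\sigma 2},\alpha^{-1}_{\sigma 3}))$, leaving the case-by-case index computations implicit. You merely make explicit what the paper omits, namely the subscript bookkeeping in each of the four cases and the substitution that turns the Case 3 autostrophy into the identity $\beta^{-1}y\circ\gamma^{-1}(x\circ y)=\alpha^{-1}x$.
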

\begin{proof}
From Lemma \ref{inver_AUTOSTROPHY} we have.  If $(\sigma, T) = (\sigma, (\alpha_1, \alpha_2, \alpha_3))$ is an autostrophism, then
\begin{equation}\label{INVERSE_AUTOsTROPHISM}
(\sigma, T)^{-1} = (\sigma^{-1},(T^{-1})^{\sigma^{-1}}) = (\sigma^{-1}, (\alpha^{-1}_{\sigma 1}, \alpha^{-1}_{\sigma 2}, \alpha^{-1}_{\sigma 3}))
\end{equation}
\end{proof}

\subsubsection{$(\alpha, \beta, \gamma)$-inverse quasigroups}

\begin{lemma} \label{A_NUCLEI_OF_RST_INV}
In $(\alpha, \beta, \gamma)$-inverse quasigroup we have the following relations between components of A-nuclei:
\begin{equation} \label{EUAL_A_NUCLEI_1}
 {}_1N^A_l = \alpha^{-1}{}_3N^A_r \alpha = \beta \, {}_2N^A_m \beta^{-1}
\end{equation}
\begin{equation}\label{EUAL_A_NUCLEI_2}
 {}_3N^A_l = \gamma^{-1} {}_2N^A_r \gamma = \alpha \, {}_1N^A_m \alpha^{-1}
 \end{equation}
\begin{equation}\label{EUAL_A_NUCLEI_3}
 {}_2N^A_r = \beta^{-1}{}_1N^A_m \beta = \gamma \, {}_3N^A_l \gamma^{-1}
 \end{equation}
\begin{equation}\label{EUAL_A_NUCLEI_4}
 {}_3N^A_r = \gamma^{-1}{}_2N^A_m \gamma = \alpha \, {}_1N^A_l \alpha^{-1}
 \end{equation}
\begin{equation}\label{EUAL_A_NUCLEI_5}
 {}_1N^A_m = \alpha^{-1}{}_3N^A_l\alpha  = \beta \, {}_2N^A_r \beta^{-1}
 \end{equation}
\begin{equation}\label{EUAL_A_NUCLEI_6}
 {}_2N^A_m = \beta^{-1}{}_1N^A_l \beta = \gamma \, {}_3N^A_r\gamma.
\end{equation}
\end{lemma}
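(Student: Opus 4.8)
The plan is to obtain all six relations by reading off Table~\ref{Table_3}, applied to the two autostrophies that an $(\alpha,\beta,\gamma)$-inverse quasigroup carries. By Theorem~\ref{INVERS_AUTOSTR_DEF}(3) such a quasigroup $(Q,\cdot)$ admits the autostrophy $S=((123),(\alpha,\beta,\gamma))$, that is, $(Q,\cdot)=(Q,\cdot)((123)(\alpha,\beta,\gamma))$. Table~\ref{Table_3} records, for any isostrophic image $(Q,\circ)=(Q,\cdot)((\sigma)(\alpha,\beta,\gamma))$, the conjugation identities tying the A-nuclear components of $(Q,\circ)$ to those of $(Q,\cdot)$, and these identities were established (Lemma~\ref{ISOSTR_AUTOT_GR_1vsp}, Corollary~\ref{AUTOSTR_AUTOT_GR} and the Remark following Table~\ref{Table_3}) without assuming that $(Q,\circ)$ is distinct from $(Q,\cdot)$; hence they remain valid verbatim when the isostrophy is an autostrophy. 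Specialising the column headed $((123),T)$ with $T=(\alpha,\beta,\gamma)$ and $(Q,\circ)=(Q,\cdot)$ therefore collapses every superscript $\circ$ to $\cdot$ and yields the first equality in each of $(\ref{EUAL_A_NUCLEI_1})$--$(\ref{EUAL_A_NUCLEI_6})$; for instance the $_2N_r^{\circ}$-row of that column reads $\beta^{-1}\,{}_1N^A_m\,\beta$, which is the first half of $(\ref{EUAL_A_NUCLEI_3})$, and likewise for the five remaining rows.

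For the second equality in each line I would invoke Lemma~\ref{INVERSE_AUTOSTRP}(3), by which the same quasigroup also carries the inverse autostrophy $((132),(\beta^{-1},\gamma^{-1},\alpha^{-1}))$. Feeding $\sigma=(132)$ and $T=(\beta^{-1},\gamma^{-1},\alpha^{-1})$ into the column of Table~\ref{Table_3} headed $((132),T)$ (again with $(Q,\circ)=(Q,\cdot)$) and carrying out the substitutions ``$\alpha$''$\mapsto\beta^{-1}$, ``$\beta$''$\mapsto\gamma^{-1}$, ``$\gamma$''$\mapsto\alpha^{-1}$ produces, row by row, $\beta\,{}_2N^A_m\,\beta^{-1}$, $\alpha\,{}_1N^A_m\,\alpha^{-1}$, $\gamma\,{}_3N^A_l\,\gamma^{-1}$, $\alpha\,{}_1N^A_l\,\alpha^{-1}$, $\beta\,{}_2N^A_r\,\beta^{-1}$, $\gamma\,{}_3N^A_r\,\gamma^{-1}$, i.e. precisely the remaining equalities of $(\ref{EUAL_A_NUCLEI_1})$--$(\ref{EUAL_A_NUCLEI_6})$ (the last term of $(\ref{EUAL_A_NUCLEI_6})$ being $\gamma\,{}_3N^A_r\,\gamma^{-1}$).

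I expect no genuine obstacle beyond careful matching of the $\sigma$-columns; the single point that deserves an explicit sentence is the legitimacy of using Table~\ref{Table_3} in the autostrophic case, which I would justify exactly as indicated above. As an alternative to passing through the inverse autostrophy, one could instead deduce the six ``second'' identities from the six ``first'' ones entirely inside the group $Aus(Q,\cdot)$, composing the conjugations induced by $S$ and $S^{-1}$ and using Lemma~\ref{L2.1}; but the two-column reading of Table~\ref{Table_3} is the shortest route.
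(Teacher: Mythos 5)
Your proposal is correct and is essentially the paper's own proof, which likewise derives all six relations from Theorem \ref{INVERS_AUTOSTR_DEF}, Lemma \ref{INVERSE_AUTOSTRP} and Table 3 (your column-by-column readings check out, and the term $\gamma\,{}_3N^A_r\,\gamma$ in (\ref{EUAL_A_NUCLEI_6}) is indeed a typo for $\gamma\,{}_3N^A_r\,\gamma^{-1}$). Your extra sentence justifying the use of Table 3 in the autostrophic case, and the remark that the ``second'' equalities are just rearrangements of the ``first'' ones, only make the argument more explicit than the paper's.
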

\begin{proof}
The proof follows from Theorem \ref{INVERS_AUTOSTR_DEF}, Lemma \ref{INVERSE_AUTOSTRP} and Table 3.
\end{proof}

\begin{theorem} \label{A_NUCL_OF_INV_LOOPS}
\begin{enumerate}
  \item In  $(\varepsilon; \beta; \gamma)$-inverse loop $(Q, \circ)$ ${}_1N^A_l = {}_3N^A_l= {}_2N^A_r= {}_3N^A_r = {}_1N^A_m$, $N_l =  N_r = N_m \unlhd Q$.
  \item In  $(\alpha; \varepsilon;  \gamma)$-inverse loop $(Q, \circ)$ ${}_1N^A_l = {}_3N^A_l=  {}_2N^A_m$, ${}_2N^A_r = {}_3N^A_r =  {}_1N^A_m$,  $N_l =  N_r = N_m$.
  \item In  $(\alpha; \beta; \varepsilon)$-inverse loop $(Q, \circ)$ ${}_1N^A_l = {}_3N^A_l= {}_2N^A_r= {}_3N^A_r = {}_2N^A_m$, $N_l =  N_r = N_m \unlhd Q$.
      \item In  $(\alpha; \alpha^{-1}; \gamma)$-inverse loop $(Q, \circ)$ ${}_1N^A_l = {}_3N^A_l= {}_2N^A_r= {}_3N^A_r = {}_2N^A_m$, $N_l =  N_r = N_m \unlhd Q$.
           \item In  $(\alpha; \beta; \alpha^{-1})$-inverse loop $(Q, \circ)$ ${}_1N^A_l = {}_3N^A_l = {}_2N^A_m$,
           ${}_2N^A_r = {}_3N^A_r = {}_1N^A_m$,  $N_l =  N_r = N_m$.
\item In  $(\alpha; \beta; \beta^{-1})$-inverse loop $(Q, \circ)$ ${}_1N^A_l = {}_3N^A_l = {}_2N^A_r = {}_3N^A_r = {}_1N^A_m$,  $N_l =  N_r = N_m \unlhd Q$.
\end{enumerate}
 \end{theorem}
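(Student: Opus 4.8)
The plan is to derive everything mechanically from Lemma~\ref{A_NUCLEI_OF_RST_INV}, which already lists, for a general $(\alpha,\beta,\gamma)$-inverse quasigroup, all six identities relating the components ${}_iN^A_j$ by conjugation. Each of the six cases of the theorem is obtained by specialising one (or two) of $\alpha,\beta,\gamma$ to the identity permutation $\varepsilon$ and then simplifying. For Case~1, substituting $\alpha=\varepsilon$ into equations (\ref{EUAL_A_NUCLEI_1})--(\ref{EUAL_A_NUCLEI_6}) collapses several conjugations: from (\ref{EUAL_A_NUCLEI_4}) we get ${}_3N^A_r = {}_1N^A_l$, from (\ref{EUAL_A_NUCLEI_2}) we get ${}_3N^A_l = {}_1N^A_m$, and the remaining chains link these to ${}_2N^A_r$; together with the loop identity ${}_1N^A_l = {}_3N^A_l$, ${}_2N^A_r = {}_3N^A_r$, ${}_1N^A_m = {}_2N^A_m$ (Lemma~\ref{ISOMORPHISMS OF COMPONENTS_1}, specialised to loops via Theorem~\ref{A-NUCLEI_OF_LOOPS}) one forces all five listed components to coincide. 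The statements about Garrison's nuclei $N_l, N_r, N_m$ then follow because, by Theorem~\ref{A-NUCLEI_OF_LOOPS} and the discussion after Definition~\ref{A_NUCLEI}, in a loop each Garrison nucleus is (isomorphic to) the orbit of $1$ under the corresponding A-nuclear component, and equal A-nuclear components give equal orbits.

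First I would fix, once and for all, the bookkeeping: in a loop the elements of ${}_1N^A_l$, ${}_3N^A_l$ have the form $L_a$; those of ${}_2N^A_r$, ${}_3N^A_r$ have the form $R_b = L_b$ on the nucleus (here I use that in a loop $R_{\beta 1} = R^{-1}_{\alpha 1}$ and the translation-index uniqueness from the proof of Theorem~\ref{A-NUCLEI_OF_LOOPS}); and those of ${}_1N^A_m$, ${}_2N^A_m$ are $R_c$ and $L^{-1}_c$ respectively. With this dictionary, "${}_1N^A_l = {}_3N^A_l = \dots$" is literally the assertion that the three Garrison nuclei coincide as subsets of $Q$, so the A-nuclear equalities and the nuclear equalities are two faces of the same computation. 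Then for each of the six cases I would: (i) write down which of $\alpha,\beta,\gamma$ equals $\varepsilon$; (ii) read off from (\ref{EUAL_A_NUCLEI_1})--(\ref{EUAL_A_NUCLEI_6}) which conjugations become trivial identifications; (iii) close the loop of equalities using Lemma~\ref{ISOMORPHISMS OF COMPONENTS_1}; (iv) deduce $N_l = N_r = N_m$; (v) for Cases 1, 3, 4, 6 additionally prove normality in $Q$.

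The one genuinely non-routine step is the normality claim $N_l = N_r = N_m \unlhd Q$ in Cases 1, 3, 4, 6 (and its absence in Cases 2, 5). For this I would invoke the machinery of Section~4.3: the equality of A-nuclear components translates into equality of the associated equivalences ${}_1\theta^A_l = {}_3\theta^A_l = {}_2\theta^A_r = \dots$, and Corollary~\ref{Eqiviv_NORMALITY} then gives that the common equivalence is a normal congruence precisely when enough of $\theta^A_l, \theta^A_r, \theta^A_m$ agree. Concretely, in Case~1 the hypothesis $\alpha=\varepsilon$ forces $\theta^A_l = \theta^A_r = \theta^A_m$, so by the last three bullets of Corollary~\ref{Eqiviv_NORMALITY} the equivalence is a normal congruence, and $N_l$, being the coset class $\theta^A_l(1)$, is a normal subgroup of the loop. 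In Cases 2 and 5, by contrast, only one of the three pairings among $\{\theta^A_l,\theta^A_r,\theta^A_m\}$ holds (the substitution $\beta=\varepsilon$ or $\gamma=\alpha^{-1}$ identifies, say, ${}_1N^A_l$ with ${}_2N^A_m$ but leaves ${}_1N^A_m$ in a separate class), so one only gets one-sided stability and cannot conclude normality — which is exactly why the statement omits "$\unlhd Q$" there.

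I expect the main obstacle to be purely organisational rather than conceptual: keeping straight which conjugating permutation cancels in which of the six equations of Lemma~\ref{A_NUCLEI_OF_RST_INV}, and making sure that in a \emph{loop} (as opposed to a general quasigroup) the extra identifications ${}_1N^A_l = {}_3N^A_l$ etc.\ are legitimately available — they are, by Lemma~\ref{ISOMORPHISMS OF COMPONENTS_1} together with Theorem~\ref{A-NUCLEI_OF_LOOPS}. Once the dictionary between A-nuclear components and Garrison nuclei is in place, each case is a few lines of substitution plus one appeal to Corollary~\ref{Eqiviv_NORMALITY}, so I would present Case~1 in full and then remark that Cases 2--6 follow by the same argument with the indicated specialisations, flagging explicitly why normality holds in 1, 3, 4, 6 and fails to be asserted in 2, 5.
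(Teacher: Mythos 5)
Your overall route is the same as the paper's: specialise the six conjugation relations of Lemma~\ref{A_NUCLEI_OF_RST_INV} to the case where one of $\alpha,\beta,\gamma$ is $\varepsilon$ (or two of them are mutually inverse), close the chain of equalities with the loop identifications, read the Garrison nuclei off as orbits of $1$, and get normality from Corollary~\ref{Eqiviv_NORMALITY}. However, one ingredient you rely on is wrong: the ``loop identity'' ${}_1N^A_m={}_2N^A_m$. In a loop any element of $N^A_m$ has the form $(R_c,L^{-1}_c,\varepsilon)$ (Theorem~\ref{A-NUCLEI_OF_LOOPS}), so ${}_1N^A_m$ consists of right translations and ${}_2N^A_m$ of inverses of left translations by nuclear elements; these sets coincide for unipotent quasigroups (Corollary~\ref{ISOMORPHISMS OF COMPONENTS_LOOPS}, Case~3) or, say, commutative loops, but not in a general loop. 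Lemma~\ref{ISOMORPHISMS OF COMPONENTS_1} only gives an \emph{isomorphism}, not equality, and the paper's proof deliberately uses only ${}_1N^A_l={}_3N^A_l$ and ${}_2N^A_r={}_3N^A_r$. The error matters because the whole point of the theorem's case distinctions is that sometimes the chain reaches ${}_1N^A_m$ and sometimes ${}_2N^A_m$: if ${}_1N^A_m={}_2N^A_m$ were available, then in Cases~2 and~5 all six components would collapse and normality would follow there too, contradicting the statement's careful omission of ``$\unlhd Q$'' in exactly those cases.

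Fortunately the false identity is not load-bearing for the chains you actually write: in Case~1, equations (\ref{EUAL_A_NUCLEI_2}) and (\ref{EUAL_A_NUCLEI_4}) with $\alpha=\varepsilon$ give ${}_3N^A_l={}_1N^A_m$ and ${}_3N^A_r={}_1N^A_l$, and together with the two genuine loop identities this yields the five stated equalities; the other cases close analogously. The same correction is needed in your normality step: in Case~1 you cannot claim $\theta^A_l=\theta^A_r=\theta^A_m$, since $\theta^A_m$ presupposes ${}_1\theta^A_m={}_2\theta^A_m$, which is not part of the conclusion; what you do have is ${}_1\theta^A_l={}_3\theta^A_l={}_2\theta^A_r={}_3\theta^A_r$, i.e. $\theta^A_l=\theta^A_r$, and that single bullet of Corollary~\ref{Eqiviv_NORMALITY} already makes the common equivalence a normal congruence, whence $N_l=\theta^A_l(1)\unlhd Q$. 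With the spurious identity removed and the normality argument trimmed to the available equalities, your proof coincides with the paper's.
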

\begin{proof}
The proof follows from Lemma \ref{A_NUCLEI_OF_RST_INV} and the fact that in any loop ${}_1N^A_l= {}_3N^A_l$, ${}_2N^A_r= {}_3N^A_r$. Normality of Garrison's nuclei follows from Corollary \ref{Eqiviv_NORMALITY}.
\end{proof}

\begin{theorem}\label{COINCIDENCE_NUCLEI}
If in $(\alpha; \beta; \gamma)$-inverse loop $(Q,\cdot)$ $\alpha\, {}_3N_r^{A} \alpha^{-1} \subseteq {}_3N_r^{A},$ $ \beta \, {}_1N_m^{A} \beta^{-1} \subseteq {}_1N_m^{A},$ $\alpha^{-1}  {}_3N_l^{A}\alpha  \subseteq {}_3N_l^{A}$,
then in this  loop $N_l = N_m = N_r$.
\end{theorem}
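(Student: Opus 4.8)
The plan is to translate the statement into the conjugacy relations of Lemma~\ref{A_NUCLEI_OF_RST_INV} among the components of the A-nuclei, specialised to a loop. Since in a loop ${}_1N^A_l = {}_3N^A_l$ and ${}_2N^A_r = {}_3N^A_r$ (Theorem~\ref{A-NUCLEI_OF_LOOPS}), put $G_l := {}_1N^A_l = {}_3N^A_l$, $G_r := {}_2N^A_r = {}_3N^A_r$ and $G_m := {}_1N^A_m$, all regarded as subgroups of $S_Q$. Then (\ref{EUAL_A_NUCLEI_4}) (equivalently (\ref{EUAL_A_NUCLEI_1})) reads $G_r = \alpha G_l \alpha^{-1}$, and (\ref{EUAL_A_NUCLEI_5}) reads $G_m = \alpha^{-1} G_l \alpha$ (and also $G_m = \beta G_r \beta^{-1}$). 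In this notation the three hypotheses become $\alpha G_r \alpha^{-1} \subseteq G_r$, $\beta G_m \beta^{-1} \subseteq G_m$, and $\alpha^{-1} G_l \alpha \subseteq G_l$.

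The first task is to prove $G_l = G_r = G_m$. Conjugating $\alpha^{-1} G_l \alpha \subseteq G_l$ by $\alpha$ gives $G_l \subseteq \alpha G_l \alpha^{-1} = G_r$. Conjugating $\alpha G_r \alpha^{-1} \subseteq G_r$ by $\alpha^{-1}$ gives $G_r \subseteq \alpha^{-1} G_r \alpha = \alpha^{-1}(\alpha G_l \alpha^{-1})\alpha = G_l$. Hence $G_l = G_r$. Consequently $\alpha G_l \alpha^{-1} = G_r = G_l$, so conjugating this equality by $\alpha^{-1}$ gives $G_l \subseteq \alpha^{-1} G_l \alpha = G_m$, whereas the hypothesis $\alpha^{-1}G_l\alpha \subseteq G_l$ says $G_m \subseteq G_l$; therefore $G_m = G_l$. (The remaining hypothesis $\beta G_m \beta^{-1} \subseteq G_m$ yields, via $G_m = \beta G_r \beta^{-1}$ and the same conjugation trick, the confirming inclusion $G_m \subseteq G_r = G_l$, so it is in fact not needed for the conclusion.)

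It remains to descend from this equality of subgroups of $S_Q$ to the asserted equality of Garrison nuclei as subsets of $Q$. In a loop, Theorem~\ref{A-NUCLEI_OF_LOOPS} together with the autotopy--nucleus correspondence gives $G_l = \{L_a \mid a \in N_l\}$, $G_r = \{R_b \mid b \in N_r\}$, $G_m = \{R_c \mid c \in N_m\}$, with $a \mapsto L_a$, $b \mapsto R_b$, $c \mapsto R_c$ bijections from the respective Garrison nuclei onto these groups. For $a \in N_l$, the equality $G_l = G_m$ yields $L_a = R_c$ for some $c \in N_m$; evaluating both sides at the loop identity $1$ gives $a = L_a 1 = R_c 1 = c$, so $a \in N_m$, and the symmetric argument gives $N_m \subseteq N_l$; hence $N_l = N_m$. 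Applying the same evaluation to $G_l = G_r$ gives $N_l = N_r$. Therefore $N_l = N_m = N_r$.

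The computations are short; the point requiring care is that the hypotheses are only one-sided inclusions $\varphi X \varphi^{-1} \subseteq X$, not equalities, so — in particular in the infinite case, where a cardinality argument is unavailable — the reverse inclusions must be extracted purely by conjugating the exact identities of Lemma~\ref{A_NUCLEI_OF_RST_INV} in the correct direction. The other thing to watch is the bookkeeping of which component of which A-nucleus is conjugate to which (Lemma~\ref{A_NUCLEI_OF_RST_INV} mixes all six components), and the final, slightly delicate passage from an equality $L_a = R_c$ of concrete translations to membership of the common index element in all three Garrison nuclei.
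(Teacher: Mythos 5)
Your proposal is correct, and it reaches the conclusion by a genuinely different organization of the same ingredients. The paper's own proof chases a single element around a cycle: for $a\in N_l$ it uses (\ref{EUAL_A_NUCLEI_1}) together with the first hypothesis to conclude $L_a\in{}_3N^A_r$, hence $a=L_a1\in{}_3N^A_r1=N_r$; then (\ref{EUAL_A_NUCLEI_3}) with the $\beta$-hypothesis gives $R_a\in{}_1N^A_m$, hence $a\in N_m$; then (\ref{EUAL_A_NUCLEI_5}) with the third hypothesis gives $a\in N_l$, so $N_l\subseteq N_r\subseteq N_m\subseteq N_l$, each hypothesis being used exactly once. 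You instead work with whole components: setting $G_l={}_1N^A_l={}_3N^A_l$, $G_r={}_2N^A_r={}_3N^A_r$, $G_m={}_1N^A_m$ (legitimate in a loop by Theorem~\ref{A-NUCLEI_OF_LOOPS}), you conjugate the exact equalities $G_r=\alpha G_l\alpha^{-1}$ and $G_m=\alpha^{-1}G_l\alpha$ from (\ref{EUAL_A_NUCLEI_4}) and (\ref{EUAL_A_NUCLEI_5}) against the one-sided hypotheses to obtain the group equality $G_l=G_r=G_m$ inside $S_Q$, and only afterwards descend to Garrison's nuclei by evaluating translations at $1$, which is sound since in a loop $G_l=\{L_a\mid a\in N_l\}$, $G_r=\{R_b\mid b\in N_r\}$, $G_m=\{R_c\mid c\in N_m\}$ and these nuclei are precisely the orbits of $1$. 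What your globalized version buys is a small sharpening the cyclic chase does not give: with the conjugation relations of Lemma~\ref{A_NUCLEI_OF_RST_INV} taken as stated, the hypothesis $\beta\,{}_1N^A_m\beta^{-1}\subseteq{}_1N^A_m$ is indeed never used, so the conclusion already follows from the two $\alpha$-hypotheses; what the paper's version buys is symmetry (one hypothesis per inclusion) and the fact that it never needs equality of entire permutation groups, only membership of the particular translations attached to the element being chased. Your treatment of the one-sided inclusions by conjugating exact identities, and the final passage from $L_a=R_c$ to $a=c$ by evaluation at the identity, are both correct.
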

\begin{proof}
By proving this theorem we use standard way \cite{kk}.
Let $L_a\in {{}_1N_l^A},$ i.e. $a\in N_l$. Then using (\ref{EUAL_A_NUCLEI_1}) we have
$\alpha^{-1} L_a \alpha \in {}_3N_r^{A}$. Therefore $ L_a \in
\alpha\,\, {}_3N_r^{A} \alpha^{-1} \subseteq {}_3N_r^{A}.$  Thus
$L_a 1 = a \in {}_3N_r^{A} 1 = N_r$.

If $a\in N_r$, then $(\varepsilon, R_a, R_a )\in {N_r^A}$ and
using (\ref{EUAL_A_NUCLEI_3})  we see that $\beta^{-1} R_a \beta \in  {}_1 N_m^{A}$. Then
$ R_a \in \beta \; {}_1N_m^{A} \beta^{-1} \subseteq \, {}_1N_m^{A}$.
Therefore $ a = R_a 1 \in  {}_1N_m^{A} 1 = N_m$.

If $a\in N_m$, then $(R_a, L^{-1}_a, \varepsilon)\in {N_m^A}$ and using (\ref{EUAL_A_NUCLEI_5})
we see that $\alpha  R_a \alpha^{-1} \in {}_3N_l^{A}$. Then
$ R_a \in \alpha^{-1}  {}_3N_l^{A}\alpha  \subseteq {}_3N_l^{A}$.
Therefore $ a=R_a 1 \in  {}_3N_l^{A} 1 = N_l$.

We have obtained $N_l \subseteq  N_r \subseteq N_m \subseteq N_l$,
therefore $N_l = N_r = N_m.$
\end{proof}

\begin{definition}
A  quasigroup $(Q, \circ)$ is
\begin{enumerate}
\item an $(r, s, t)$-inverse quasigroup if  there exist a permutation $J$ of the set $Q$  such that
$J^r (x\circ y) \circ J^s x = J^t y$ \label{2.73}
 for all $x, y \in Q$ \cite{ks3, KS}.
\item
 an $m$-inverse quasigroup if  there exist a permutation $J$ of the set $Q$  such that
$J^m (x\circ y) \circ J^{m+1} x = J^m y$ \label{2.77}
 for all $x, y \in Q$ \cite{kk, ks3, KS}.
\item
   a  $WIP$-inverse quasigroup if  there exist a permutation $J$ of the set $Q$  such that
$J (x\circ y) \circ  x = J y $\label{2.87}
 for all $x, y \in Q$ \cite{VD, RB, STB, KS}.
\item
   a  $CI$-inverse quasigroup if  there exist a permutation $J$ of the set $Q$  such that
$ (x\circ y) \circ  J x =  y \label{2.98}
$
 for all $x, y \in Q$ \cite{RA0, RA, BEL_TSU, VD}.
\end{enumerate}
\end{definition}

It is easy to see, that classes of $(r, s, t)$-inverse, $m$-inverse, $WIP$-inverse, $CI$-inverse quasigroups are  included in the class of $(\alpha, \beta, \gamma)$-inverse quasigroups.

\begin{lemma}
An $(r, s, t)$-inverse quasigroup has $((1\, 2\, 3), (J^r, J^s, J^t))$ autostrophy.

An $m$-inverse quasigroup has $((1\, 2\, 3), (J^m, J^{m+1}, J^m))$ autostrophy.

A $WIP$-inverse quasigroup has $((1\, 2\, 3), (J, \varepsilon, J))$ autostrophy.

A $CI$-inverse quasigroup has $((1\, 2\, 3), (\varepsilon, J, \varepsilon))$ autostrophy.
\end{lemma}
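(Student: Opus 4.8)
The plan is to obtain all four assertions uniformly from the autostrophy characterization of $(\alpha,\beta,\gamma)$-inverse quasigroups, namely Theorem \ref{INVERS_AUTOSTR_DEF}(3): a quasigroup $(Q,\circ)$ is an $(\alpha,\beta,\gamma)$-inverse quasigroup if and only if it has the $[(1\,2\,3),(\alpha,\beta,\gamma)]$ autostrophy. Combined with the already-noted inclusion of the classes of $(r,s,t)$-inverse, $m$-inverse, $WIP$-inverse and $CI$-inverse quasigroups into the class of $(\alpha,\beta,\gamma)$-inverse quasigroups, the only thing that remains is to read off, from each defining identity, the precise triple of permutations that plays the role of $(\alpha,\beta,\gamma)$, expressed through the fixed permutation $J$ (using that every power of the permutation $J$, including $J^0=\varepsilon$, is again a permutation of $Q$).

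First I would treat the $(r,s,t)$-inverse case: the defining identity $J^r(x\circ y)\circ J^s x=J^t y$ is literally the identity $\alpha(x\circ y)\circ\beta x=\gamma y$ from the definition of an $(\alpha,\beta,\gamma)$-inverse quasigroup, with $\alpha=J^r$, $\beta=J^s$, $\gamma=J^t$. Hence such a quasigroup is $(J^r,J^s,J^t)$-inverse, and Theorem \ref{INVERS_AUTOSTR_DEF}(3) gives it the $[(1\,2\,3),(J^r,J^s,J^t)]$ autostrophy. The remaining three cases are then immediate specializations: the $m$-inverse identity is the case $r=m$, $s=m+1$, $t=m$, yielding the triple $(J^m,J^{m+1},J^m)$; the $WIP$-inverse identity $J(x\circ y)\circ x=Jy$ is the case $r=1$, $s=0$, $t=1$, yielding $(J,\varepsilon,J)$ since $J^0=\varepsilon$; and the $CI$-inverse identity $(x\circ y)\circ Jx=y$ is the case $r=0$, $s=1$, $t=0$, yielding $(\varepsilon,J,\varepsilon)$.

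There is essentially no obstacle here; the lemma is a bookkeeping corollary of the autostrophy description of $(\alpha,\beta,\gamma)$-inverse quasigroups. The one point deserving a line of care is the convention for how the parastrophy $\sigma=(1\,2\,3)$ acts on the triple of permutations: one should confirm, via formula (\ref{autostrophism}) (equivalently (\ref{composition_of_isostrophisms_LEFT_REC})) with $\sigma^{-1}=(1\,3\,2)$, that the $[(1\,2\,3),(\alpha,\beta,\gamma)]$-autostrophy condition indeed unfolds to $\alpha(x\circ y)\circ\beta x=\gamma y$ and not to a variant with the components rearranged. Since this is exactly what Theorem \ref{INVERS_AUTOSTR_DEF} already records, the proof can simply invoke that theorem together with Lemma \ref{INVERSE_AUTOSTRP}, and state the four specializations of $(\alpha,\beta,\gamma)$ explicitly.
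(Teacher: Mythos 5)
Your proposal is correct and follows essentially the same route as the paper, whose entire proof is an appeal to Theorem \ref{INVERS_AUTOSTR_DEF}: each defining identity is recognized as the $(\alpha,\beta,\gamma)$-inverse identity with $\alpha,\beta,\gamma$ the appropriate powers of $J$ (with $J^0=\varepsilon$), and the $[(1\,2\,3),(\alpha,\beta,\gamma)]$ autostrophy is read off. Your extra remark about verifying the convention via formula (\ref{autostrophism}) is a reasonable precaution but adds nothing beyond what Theorem \ref{INVERS_AUTOSTR_DEF} already establishes.
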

\begin{proof}
The proof follows from Theorem \ref{INVERS_AUTOSTR_DEF}.
\end{proof}

\begin{theorem}
\begin{enumerate}
\item If in   m-inverse loop $(Q, \circ)$ $J = I_r$, where $x\circ I_r x = 1$, then  $N_l =  N_r = N_m$.
\item In WIP-loop   ${}_1N^A_l = {}_3N^A_l=  {}_2N^A_m$, ${}_2N^A_r = {}_3N^A_r =  {}_1N^A_m$,  $N_l =  N_r = N_m$.
\item In CI-loop   ${}_1N^A_l = {}_3N^A_l=  {}_2N^A_m = {}_2N^A_r = {}_3N^A_r =  {}_1N^A_m$,  $N_l =  N_r = N_m \unlhd Q$.
\end{enumerate}
\end{theorem}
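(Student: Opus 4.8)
The plan is to dispose of the three parts separately, in each case reducing to statements already obtained for $(\alpha,\beta,\gamma)$-inverse loops.

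Parts 2 and 3 are essentially corollaries of Theorem~\ref{A_NUCL_OF_INV_LOOPS}. By the lemma immediately preceding the theorem, a $WIP$-loop carries the autostrophy $((1\,2\,3),(J,\varepsilon,J))$, so by Theorem~\ref{INVERS_AUTOSTR_DEF} it is a $(J;\varepsilon;J)$-inverse loop, i.e. an $(\alpha;\varepsilon;\gamma)$-inverse loop, and part 2 is exactly the conclusion of Theorem~\ref{A_NUCL_OF_INV_LOOPS}, item 2. Likewise a $CI$-loop carries the autostrophy $((1\,2\,3),(\varepsilon,J,\varepsilon))$, hence is a $(\varepsilon;J;\varepsilon)$-inverse loop; this loop belongs simultaneously to the class of $(\varepsilon;\beta;\gamma)$-inverse loops (take $\gamma=\varepsilon$) and to the class of $(\alpha;\beta;\varepsilon)$-inverse loops (take $\alpha=\varepsilon$). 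Applying items 1 and 3 of Theorem~\ref{A_NUCL_OF_INV_LOOPS} and intersecting the two chains of equalities one gets ${}_1N^A_l={}_3N^A_l={}_2N^A_m={}_2N^A_r={}_3N^A_r={}_1N^A_m$ together with $N_l=N_r=N_m\unlhd Q$, which is part 3.

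For part 1, the lemma preceding the theorem gives that an $m$-inverse loop carries the autostrophy $((1\,2\,3),(J^m,J^{m+1},J^m))$, so it is a $(J^m;J^{m+1};J^m)$-inverse loop; the idea is to apply Theorem~\ref{COINCIDENCE_NUCLEI} with $\alpha=\gamma=J^m$ and $\beta=J^{m+1}$. Thus it suffices to verify the three inclusions
\[
J^m\,{}_3N^A_r\,J^{-m}\subseteq{}_3N^A_r,\qquad
J^{m+1}\,{}_1N^A_m\,J^{-(m+1)}\subseteq{}_1N^A_m,\qquad
J^{-m}\,{}_3N^A_l\,J^m\subseteq{}_3N^A_l .
\]
Here the hypothesis $J=I_r$ enters decisively: for a loop $J=I_r$ coincides with the middle translation $P_1$, since both are characterised by $x\circ Jx=1$. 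I would then combine two ingredients. First, Lemma~\ref{A_NUCLEI_OF_RST_INV}, applied with $\alpha=\gamma=J^m$, $\beta=J^{m+1}$, identifies the conjugates $J^m\,{}_3N^A_r\,J^{-m}$, $J^{m+1}\,{}_1N^A_m\,J^{-(m+1)}$, $J^{-m}\,{}_3N^A_l\,J^m$ with other components of the A-nuclei. Second, Lemma~\ref{ISOMORPHISMS OF COMPONENTS} describes how conjugation by the middle translation $P_1$ acts on the components (it carries ${}_1N^A_m$ onto ${}_2N^A_m$, and so on). In a loop ${}_1N^A_l={}_3N^A_l$ and ${}_2N^A_r={}_3N^A_r$, and feeding these equalities, together with the fact established inside the proof of Theorem~\ref{A-NUCLEI_OF_LOOPS} that the nuclear components consist of genuine left, right or middle translations indexed by nucleus elements with $a^{-1}={}^{-1}a$, into the relations produced by the two lemmas, the three displayed inclusions should follow; Theorem~\ref{COINCIDENCE_NUCLEI} then gives $N_l=N_m=N_r$.

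The main obstacle is precisely this last verification in part 1: extracting, from the $m$-inverse autostrophy together with the description of conjugation by the middle translation $P_1=I_r$, enough coincidences among ${}_1N^A_l$, ${}_3N^A_l$, ${}_2N^A_r$, ${}_3N^A_r$, ${}_1N^A_m$, ${}_2N^A_m$ to close the cycle of conjugation relations (the relations by themselves only cycle back after conjugation by $J^{3m+1}$, so one must bring in the extra information $J=P_1$). For a general $m$-inverse loop the permutation $J$ need not be the right-inverse map and the three nuclei need not coincide, so this step is where the hypothesis is genuinely used. The computation is a routine but careful manipulation of powers of $J$ and of the loop divisions, keeping track of the distinction between the left and right inverse maps $I_l=I_r^{-1}$ and using that these two maps agree on every nucleus of the loop.
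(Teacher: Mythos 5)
Your handling of parts 2 and 3 is exactly the paper's argument: the $WIP$- and $CI$-autostrophies identify these loops as $(\alpha;\varepsilon;\gamma)$- and $(\varepsilon;\beta;\varepsilon)$-inverse loops, and items 2, respectively 1 and 3, of Theorem~\ref{A_NUCL_OF_INV_LOOPS} give the stated coincidences and the normality; nothing to object to there.

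Part 1 is where the proposal has a genuine gap. You reduce the problem to verifying the three inclusions $J^{m}\,{}_3N^A_r\,J^{-m}\subseteq{}_3N^A_r$, $J^{m+1}\,{}_1N^A_m\,J^{-(m+1)}\subseteq{}_1N^A_m$, $J^{-m}\,{}_3N^A_l\,J^{m}\subseteq{}_3N^A_l$ and expect them to follow by a routine manipulation of Lemma~\ref{A_NUCLEI_OF_RST_INV} and conjugation by $P_1$. They cannot follow, because they are false in general: any nonabelian group $(Q,\circ)$ is a $1$-inverse loop with $J=I_r$ (here $Jx=x^{-1}$ and $J(x\circ y)\circ J^{2}x=(xy)^{-1}x=y^{-1}=Jy$), yet in it $J\,{}_3N^A_r\,J^{-1}=\{JR_bJ^{-1}\}=\{L_{b^{-1}}\}={}_1N^A_l\not\subseteq{}_3N^A_r$ and $J^{-1}\,{}_3N^A_l\,J=\{R_{a^{-1}}\}\not\subseteq{}_3N^A_l$, although the conclusion $N_l=N_r=N_m=Q$ holds trivially. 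So for odd $m$ the hypotheses of Theorem~\ref{COINCIDENCE_NUCLEI} may simply fail, and the step you describe as ``routine but careful'' is not a computation that can be completed; the conclusion must be reached by a different argument. (To be fair, the paper's own proof of this case consists of the bare citation of Theorem~\ref{COINCIDENCE_NUCLEI} and is open to the same objection; the substance is in the cited source \cite{kk}.)

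What the hypothesis $J=I_r=P_1$ really yields, and how your ingredients can be made to close the case, is this. Put $H={}_1N^A_l={}_3N^A_l$. In a loop, Lemma~\ref{A_NUCLEI_OF_RST_INV} with $\alpha=\gamma=J^{m}$, $\beta=J^{m+1}$ collapses to ${}_2N^A_r={}_3N^A_r=J^{m}HJ^{-m}$, ${}_1N^A_m=J^{-m}HJ^{m}$, ${}_2N^A_m=J^{-(m+1)}HJ^{m+1}$ and $J^{-(3m+1)}HJ^{3m+1}=H$; Lemma~\ref{ISOMOR OF COMP} with $c=1$ gives ${}_2N^A_m=P_1\,{}_1N^A_m\,P_1^{-1}=J\,{}_1N^A_m\,J^{-1}$, and substituting the previous expressions yields $J^{2}HJ^{-2}=H$. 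If $m$ is even, then $\gcd(2,3m+1)=1$, so every power of $J$ normalizes $H$, all six nuclear components coincide, and comparing index elements (Theorem~\ref{A-NUCLEI_OF_LOOPS}) gives $N_l=N_r=N_m$. If $m$ is odd, one gets instead ${}_2N^A_m=H={}_1N^A_l$ and ${}_2N^A_r={}_1N^A_m$ (the nonabelian-group example shows this is all one can get), and since ${}_1N^A_l=\{L_a:a\in N_l\}$, ${}_2N^A_m=\{L_{c^{-1}}:c\in N_m\}$, ${}_2N^A_r=\{R_b:b\in N_r\}$, ${}_1N^A_m=\{R_c:c\in N_m\}$, these equalities force $N_l=N_m$ and $N_r=N_m$. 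Either way $N_l=N_r=N_m$, but by this direct use of the conjugation relations rather than by verifying the hypotheses of Theorem~\ref{COINCIDENCE_NUCLEI}.
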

\begin{proof}
Case 1. The proof follows from Theorem \ref{COINCIDENCE_NUCLEI}.

Cases 2 and 3.
The proof follows from Theorem \ref{A_NUCL_OF_INV_LOOPS} and Corollary \ref{ISMS OF COMPONENTS_LOOPS}. \end{proof}

\subsubsection{$\lambda$-, $\rho$-,  and $\mu $-inverse quasigroups}

\begin{lemma} \label{COINCIND_OF_A_NUCL_L_R_M}
\begin{enumerate}
  \item In  $\lambda$-inverse quasigroup
   \begin{enumerate}
\item ${}_1N^A_l = \lambda^{-1}_1 {}_1N^A_m \lambda_1  = \lambda_1 \, \, {}_1N^A_m \lambda^{-1}_1$;
\item ${}_3N^A_l = \lambda^{-1}_3 {}_2N^A_m \lambda_3  = \lambda_2 \, \, {}_2N^A_m \lambda^{-1}_2$;
\item  ${}_2N^A_r = \lambda^{-1}_2 {}_3N^A_r \lambda_2  = \lambda_3 \, \, {}_3N^A_r \lambda^{-1}_3$.
\end{enumerate}
\item In  $\rho$-inverse quasigroup
\begin{enumerate}
\item ${}_1N^A_l = \rho^{-1}_1 {}_3N^A_l \rho_1  = \rho_3 \, \, {}_3N^A_l \rho^{-1}_3$;
\item ${}_2N^A_r = \rho^{-1}_2 {}_2N^A_m \rho_2  = \rho_2 \, \, {}_2N^A_m \rho^{-1}_2$;
\item  ${}_3N^A_r = \rho^{-1}_3 {}_1N^A_m \rho_3  = \rho_1 \, \, {}_1N^A_m \rho^{-1}_1$.
\end{enumerate}
    \item In  $\mu $-inverse quasigroup
  \begin{enumerate}
\item ${}_1N^A_l = \mu^{-1}_1 {}_2N^A_r \mu_1  = \mu_2 \, \, {}_2N^A_r \mu^{-1}_2$;
\item ${}_3N^A_l = \mu^{-1}_3 {}_3N^A_r \mu_3  = \mu_3 \, \, {}_3N^A_r \mu^{-1}_3$;
\item  ${}_1N^A_m = \mu^{-1}_1 {}_2N^A_m \mu_1  = \mu_2 \, \, {}_2N^A_m \mu^{-1}_2$.
\end{enumerate}
 \end{enumerate}
\end{lemma}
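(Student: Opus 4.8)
The statement is a direct computational consequence of the autostrophy characterizations of $\lambda$-, $\rho$-, and $\mu$-inverse quasigroups (Theorem~\ref{INVERS_AUTOSTR_DEF} and Lemma~\ref{INVERSE_AUTOSTRP}) combined with the transformation table for A-nuclear components under isostrophy (Table~3, together with Lemma~\ref{ISOSTR_AUTOT_GR_1vsp} and its analogs). The idea is: if a quasigroup $(Q,\circ)$ admits the autostrophy $(\sigma,(\alpha_1,\alpha_2,\alpha_3))$, then $(Q,\circ)=(Q,\circ)((\sigma)(\alpha_1,\alpha_2,\alpha_3))$, so Table~3 (read for the parastrophy $\sigma$ and isotopy component letters $\alpha_1,\alpha_2,\alpha_3$ in the roles of $\alpha,\beta,\gamma$) immediately yields equalities of the form ${}_iN^{A}_{j}=\alpha_k^{-1}\,{}_{i'}N^{A}_{j'}\,\alpha_k$. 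Each case of the lemma is then obtained by specializing $\sigma$ and the component maps.

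\textbf{Case 1 ($\lambda$-inverse).} By Theorem~\ref{INVERS_AUTOSTR_DEF} a $\lambda$-inverse quasigroup has $((2\,3),(\lambda_1,\lambda_2,\lambda_3))$ autostrophy, and by Lemma~\ref{INVERSE_AUTOSTRP} it also has $((2\,3),(\lambda_1^{-1},\lambda_3^{-1},\lambda_2^{-1}))$ autostrophy. Applying Table~3 in the column $((23),T)$ with $(\alpha,\beta,\gamma)=(\lambda_1,\lambda_2,\lambda_3)$ I read off, for instance, ${}_1N^{A}_l={}_1N_l^{\circ}=\alpha^{-1}{}_1N_m^{\cdot}\alpha=\lambda_1^{-1}{}_1N^A_m\lambda_1$; using the second autostrophy (component $\lambda_1^{-1}$ in the first slot) gives ${}_1N^A_l=\lambda_1\,{}_1N^A_m\lambda_1^{-1}$, which is (a). Reading the $_3N_l^{\circ}$ and $_2N_r^{\circ}$ rows of the same column, with both versions of the autostrophy, produces (b) and (c) respectively. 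The permutation used in each conjugation is exactly the component of $T$ sitting in the slot that is fixed between the two members of the orbit under $\sigma=(23)$ (namely slot $1$ for $_1N_l$, and the swapped slots $2\leftrightarrow3$ for the others), which is why $\lambda_1$ appears for $_1N^A_l$ while $\lambda_2,\lambda_3$ appear for the others.

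\textbf{Cases 2 and 3.} These are handled identically. For the $\rho$-inverse case I use the $((1\,3),(\rho_1,\rho_2,\rho_3))$ autostrophy and its inverse $((1\,3),(\rho_3^{-1},\rho_2^{-1},\rho_1^{-1}))$ from Lemma~\ref{INVERSE_AUTOSTRP}, and read the relevant rows of the $((13),T)$ column of Table~3: $_1N_l^{\circ}=\alpha^{-1}{}_3N_l^{\cdot}\alpha$ gives the first equality of (a), $_2N_r^{\circ}=\beta^{-1}{}_2N_m^{\cdot}\beta$ gives (b), $_3N_r^{\circ}=\gamma^{-1}{}_1N_m^{\cdot}\gamma$ gives (c); the second form of each equality comes from substituting the inverse autostrophy. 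For the $\mu$-inverse case I use $((12),(\mu_1,\mu_2,\mu_3))$ and its inverse $((12),(\mu_2^{-1},\mu_1^{-1},\mu_3^{-1}))$, reading the $((12),T)$ column: $_1N_l^{\circ}=\alpha^{-1}{}_2N_r^{\cdot}\alpha$ gives (a), $_3N_l^{\circ}=\gamma^{-1}{}_3N_r^{\cdot}\gamma$ gives (b), $_1N_m^{\circ}=\alpha^{-1}{}_2N_m^{\cdot}\alpha$ gives (c).

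\textbf{Anticipated obstacle.} There is no deep difficulty; the only thing requiring care is bookkeeping — making sure that in each case the correct component of the isotopy triple is the one performing the conjugation, which amounts to tracking through the definition $T^{\sigma}=(\alpha_{\sigma^{-1}1},\alpha_{\sigma^{-1}2},\alpha_{\sigma^{-1}3})$ and the derivation of Lemma~\ref{ISOSTR_AUTOT_GR_1vsp}. One must also verify that the two apparently different conjugating permutations in each line (e.g. $\lambda_1^{-1}\cdot\lambda_1$ versus $\lambda_1\cdot\lambda_1^{-1}$) genuinely give the same subgroup: this follows because the group in question is closed under conjugation by the relevant element, so applying the direct autostrophy and then the inverse one returns the same set, i.e. the two expressions for each component coincide as subgroups of $S_Q$. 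Hence the proof is simply: invoke Theorem~\ref{INVERS_AUTOSTR_DEF}, Lemma~\ref{INVERSE_AUTOSTRP}, and Table~3, and specialize.

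\medskip

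\noindent\textbf{Proof.} The proof follows from Theorem \ref{INVERS_AUTOSTR_DEF}, Lemma \ref{INVERSE_AUTOSTRP} and Table 3.

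\qed
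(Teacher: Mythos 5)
Your proposal is correct and is essentially the paper's own proof: the paper likewise just invokes Theorem \ref{INVERS_AUTOSTR_DEF}, Lemma \ref{INVERSE_AUTOSTRP} and Table 3, reading the $((23),T)$, $((13),T)$ and $((12),T)$ columns for the direct and inverse autostrophies exactly as you do. Your explicit bookkeeping of which component conjugates in each row matches the stated equalities, so nothing is missing.
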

\begin{proof}
We can use Theorem \ref{INVERS_AUTOSTR_DEF} Lemma \ref{INVERSE_AUTOSTRP} and Table 3.
\end{proof}

A quasigroup $(Q, \cdot)$ with identities $xy = yx$, $x\cdot xy = y$ is called TS-quasigroup. In TS-quasigroup any parastrophy is an autostrophy \cite{VD}, i.e. $(Q, \cdot)^{\sigma} = (Q, \cdot)$ for all parastrophies $\sigma \in S_3$.

\begin{corollary} \label{A_NUCLEI_LIP_QUAS}
 \begin{enumerate}
      \item In  $LIP$-quasigroup  ${}_1N^A_l = \lambda \; ({}_1N^A_m) \lambda$,  $\lambda^2 = \varepsilon$,  ${}_3N^A_l = {}_2N^A_m$,  $  {}_2N^A_r = {}_3N^A_r$.
      \item In  $RIP$-quasigroup  ${}_2N^A_r = \rho \; ({}_2N^A_m) \rho$,  $\rho^2 = \varepsilon$,  ${}_1N^A_l = {}_3N^A_l$,  $  {}_1N^A_m = {}_3N^A_r$.
   \item In  $IP$-quasigroup $\lambda^2 = \varepsilon$,  $\rho^2 = \varepsilon, \,  {}_1N^A_l = {}_3N^A_l = {}_2N^A_m  \cong  {}_1N^A_m = {}_2N^A_r =   {}_3N^A_r$.
       \item In  $TS$-quasigroup ${}_1N^A_l = {}_3N^A_l = {}_1N^A_m = {}_2N^A_m = {}_2N^A_r =   {}_3N^A_r$.
 \end{enumerate}
\end{corollary}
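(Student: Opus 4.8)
The plan is to apply Lemma \ref{COINCIND_OF_A_NUCL_L_R_M} as the main engine, since every case of the corollary concerns a quasigroup that belongs simultaneously to several of the families ($\lambda$-, $\rho$-, $\mu$-inverse) covered there, with explicitly known values of the defining permutations. First I would recall the standard facts: an LIP-quasigroup is a $\lambda$-inverse quasigroup with $\lambda_1 = \lambda$ (the left-inverse map $I_\ell$), $\lambda_2 = \varepsilon$, $\lambda_3 = \varepsilon$, and $\lambda$ is an involution precisely when the left-inverse property forces $I_\ell^2 = \varepsilon$ (this is classical, e.g. \cite{VD, 1a, HOP}); dually an RIP-quasigroup is a $\rho$-inverse quasigroup with $\rho_2 = \rho = I_r$, $\rho_1 = \rho_3 = \varepsilon$, $\rho^2 = \varepsilon$; an IP-quasigroup is both; and a TS-quasigroup satisfies, additionally, commutativity $xy=yx$, which gives it the $\mu$-inverse property with $\mu_1=\mu_2=\mu_3=\varepsilon$ (Theorem \ref{INVERS_AUTOSTR_DEF}, Case 4), and indeed every parastrophe equals the original quasigroup.

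The key steps would be, case by case: For Case 1 (LIP), substitute $\lambda_1=\lambda$, $\lambda_2=\lambda_3=\varepsilon$ into Lemma \ref{COINCIND_OF_A_NUCL_L_R_M}(1). Part (a) gives ${}_1N^A_l = \lambda^{-1}\,{}_1N^A_m\,\lambda = \lambda\,{}_1N^A_m\,\lambda^{-1}$; equating the two expressions yields $\lambda^2 \in N_{S_Q}({}_1N^A_m)$, but the sharper statement $\lambda^2 = \varepsilon$ I would read off directly from the LIP identity itself rather than from the nuclear equality. Part (b) with $\lambda_3=\lambda_2=\varepsilon$ collapses to ${}_3N^A_l = {}_2N^A_m$, and part (c) with $\lambda_2=\lambda_3=\varepsilon$ collapses to ${}_2N^A_r = {}_3N^A_r$. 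Case 2 (RIP) is the mirror image using Lemma \ref{COINCIND_OF_A_NUCL_L_R_M}(2) with $\rho_1=\rho_3=\varepsilon$, $\rho_2=\rho$. Case 3 (IP) combines both: from the LIP part ${}_3N^A_l = {}_2N^A_m$ and from the RIP part ${}_1N^A_l = {}_3N^A_l$, ${}_1N^A_m = {}_3N^A_r$, ${}_2N^A_r = {}_3N^A_r$; chaining these equalities together with ${}_1N^A_l = \lambda\,{}_1N^A_m\,\lambda$ gives the asserted chain, where the isomorphism $\cong$ (rather than equality) between the left block and ${}_1N^A_m$ is exactly the conjugation by $\lambda$. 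Case 4 (TS): here commutativity forces $L_a = R_a$, so ${}_1N^A_l = {}_2N^A_r$ directly; unipotency/TS identity $x\cdot xy = y$ makes the quasigroup also LIP and RIP simultaneously, so all the Case 3 relations hold, and since every parastrophe coincides with $(Q,\cdot)$, Table 3 forces all nine nuclear components listed to coincide. I would also invoke Corollary \ref{ISMS OF COMPONENTS_LOOPS} / Corollary \ref{Eqiviv_NORMALITY} if any normality (the $\unlhd Q$ assertions for Garrison's nuclei) needs to be stated, though as written this corollary only asserts equalities and isomorphisms, not normality.

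The main obstacle I anticipate is bookkeeping, not mathematics: making sure that the particular index permutations $\lambda_i$, $\rho_i$, $\mu_i$ are assigned correctly (the conventions in Definitions for $\lambda$-, $\rho$-, $\mu$-inverse-property must be matched letter-for-letter against the LIP/RIP/IP identities), and that the two expressions offered in each part of Lemma \ref{COINCIND_OF_A_NUCL_L_R_M} are used consistently so that the $\cong$ versus $=$ distinction in Case 3 comes out right. A secondary subtlety is justifying $\lambda^2 = \varepsilon$ and $\rho^2=\varepsilon$: these do not follow formally from the nuclear identities but from the inverse properties themselves, so I would cite the classical fact that in an LIP- (resp. RIP-) quasigroup the left (resp. right) inverse map is an involution, and otherwise the argument is a short substitution-and-chaining exercise built entirely on the earlier lemmas.
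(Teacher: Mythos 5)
Your proposal is correct and follows essentially the same route as the paper: the paper's proof is exactly the specialization of Lemma \ref{COINCIND_OF_A_NUCL_L_R_M} with $\lambda_2=\lambda_3=\varepsilon$ for LIP and $\rho_1=\rho_3=\varepsilon$ for RIP (combined for IP, and with all permutations trivial, via the autostrophy remark, for TS). Your extra care in noting that $\lambda^2=\varepsilon$ and $\rho^2=\varepsilon$ come from the classical involution property of the left/right inverse maps rather than from the nuclear equalities is a point the paper leaves implicit, but it does not change the approach.
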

\begin{proof}
The proof follows from Lemma \ref{COINCIND_OF_A_NUCL_L_R_M}. In $LIP$-quasigroup $\lambda_2 = \lambda_3 = \varepsilon$. In $RIP$-quasigroup $\rho_1 = \rho_3 = \varepsilon$.
\end{proof}
\smallskip

\begin{theorem}
\begin{enumerate}
  \item In  $LIP$-quasigroup   ${}_2\Theta^A_r = {}_3\Theta^A_r$ and these equivalences are  normal from the left.
      \item In  $RIP$-quasigroup  ${}_1\Theta^A_l = {}_3\Theta^A_l$, these equivalences are  normal from the right.
 \item In TS-quasigroup we have ${}_1\Theta^A_l = {}_3\Theta^A_l = {}_2\Theta^A_r = {}_3\Theta^A_r = {}_1\Theta^A_m = {}_2\Theta^A_m$ and all these equivalences are normal congruences.
  \end{enumerate}
\end{theorem}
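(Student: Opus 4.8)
The plan is to deduce all three cases from two facts already in hand: the coincidences among A-nuclear component groups recorded in Corollary \ref{A_NUCLEI_LIP_QUAS}, and the normality criteria of Corollary \ref{Eqiviv_NORMALITY}. The only new — and entirely elementary — observation required is that if two A-nuclear component groups are equal as subgroups of $S_Q$, then the equivalences they induce on $Q$ are equal as well: by construction (Lemma \ref{BIN_REL_IS_EQIVIV}) the classes of ${}_i\Theta^A_\bullet$ are exactly the orbits of the corresponding full component group acting on $Q$, so equal groups yield equal orbit partitions. Recall also that, by our convention, ${}_1\Theta^A_l$, ${}_2\Theta^A_r$, ${}_1\Theta^A_m$, etc. are precisely the equivalences attached to the full A-nuclear component groups, which is exactly the setting in which Corollary \ref{Eqiviv_NORMALITY} is stated.

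For Case 1, I would invoke Corollary \ref{A_NUCLEI_LIP_QUAS}(1): in an $LIP$-quasigroup ${}_2N^A_r = {}_3N^A_r$. By the remark above the orbit partitions coincide, so ${}_2\Theta^A_r = {}_3\Theta^A_r =: \Theta^A_r$, and then the second bullet of Corollary \ref{Eqiviv_NORMALITY} gives that $\Theta^A_r$ is normal from the left. Case 2 is the mirror argument: Corollary \ref{A_NUCLEI_LIP_QUAS}(2) gives ${}_1N^A_l = {}_3N^A_l$ in an $RIP$-quasigroup, hence ${}_1\Theta^A_l = {}_3\Theta^A_l =: \Theta^A_l$, and the first bullet of Corollary \ref{Eqiviv_NORMALITY} yields normality from the right. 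For Case 3, I would use Corollary \ref{A_NUCLEI_LIP_QUAS}(4): in a $TS$-quasigroup all six component groups ${}_1N^A_l$, ${}_3N^A_l$, ${}_2N^A_r$, ${}_3N^A_r$, ${}_1N^A_m$, ${}_2N^A_m$ literally coincide. Consequently
\[
{}_1\Theta^A_l = {}_3\Theta^A_l = {}_2\Theta^A_r = {}_3\Theta^A_r = {}_1\Theta^A_m = {}_2\Theta^A_m .
\]
In particular $\Theta^A_l = \Theta^A_m$ and $\Theta^A_r = \Theta^A_m$ (and $\Theta^A_l = \Theta^A_r$), so the last three bullets of Corollary \ref{Eqiviv_NORMALITY} apply and show that each of these equivalences is a normal congruence.

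I do not expect a real obstacle: the statement is essentially a harvest of the structural results already proved. The two points that deserve a sentence of care are (a) making the passage ``equal component groups $\Rightarrow$ equal orbit equivalences'' explicit, and (b) confirming that the hypotheses of Corollary \ref{Eqiviv_NORMALITY} — namely the equalities ${}_2\Theta^A_r = {}_3\Theta^A_r$, ${}_1\Theta^A_l = {}_3\Theta^A_l$, and the full chain in the $TS$ case — hold precisely because of the group coincidences supplied by Corollary \ref{A_NUCLEI_LIP_QUAS}. No further computation with the inversion permutations $\lambda$, $\rho$ is needed here, since that work has already been absorbed into Corollary \ref{A_NUCLEI_LIP_QUAS}.
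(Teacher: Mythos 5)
Your proposal is correct and follows essentially the paper's own route: the paper likewise derives Cases 1 and 2 from the component coincidences of Lemma \ref{COINCIND_OF_A_NUCL_L_R_M} (of which Corollary \ref{A_NUCLEI_LIP_QUAS} is just the LIP/RIP/TS specialization you cite) together with Corollary \ref{Eqiviv_NORMALITY}, and Case 3 from the TS coincidence of all six components plus Corollary \ref{Eqiviv_NORMALITY}. Your explicit remark that equal component groups yield equal orbit equivalences is the only step the paper leaves tacit, and it is correct.
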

\begin{proof}
Cases 1 and 2. The proof follows from Lemma \ref{COINCIND_OF_A_NUCL_L_R_M},  Corollaries \ref{Eqiviv_NORMALITY} and \ref{Normality_OF_quasigroup_Nuclei}.

By proving of Case 3 it is possible to use Table 3 and Corollary \ref{Eqiviv_NORMALITY}.
\end{proof}
\begin{corollary}
 In  commutative LIP-loop, commutative RIP-loop  $N_l = N_r = N_m  \trianglelefteq Q$.
 In $(\varepsilon; \mu_2; \mu_3)$-, $(\mu_1; \varepsilon; \mu_3)$-, and $(\mu_1; \mu_2; \varepsilon)$-inverse loop $N_l = N_r \trianglelefteq Q$.
\end{corollary}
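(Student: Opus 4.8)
The plan is to read the three stated nuclear coincidences off the relations among the components of the A-nuclei established in Corollary~\ref{A_NUCLEI_LIP_QUAS} and Lemma~\ref{COINCIND_OF_A_NUCL_L_R_M}, and to get normality from Corollaries~\ref{Normality_OF_quasigroup_Nuclei} and \ref{Eqiviv_NORMALITY}. Throughout I shall use the loop descriptions from Theorem~\ref{A-NUCLEI_OF_LOOPS}: in a loop $(Q,\cdot)$ one has ${}_1N^A_l = \{L_a \mid a\in N_l\} = {}_3N^A_l$, ${}_2N^A_r = \{R_b \mid b\in N_r\} = {}_3N^A_r$, ${}_1N^A_m = \{R_c \mid c\in N_m\}$ and ${}_2N^A_m = \{L^{-1}_c \mid c\in N_m\}$. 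Consequently the orbit of the identity $1$ under each of ${}_1N^A_l$ and ${}_3N^A_l$ equals $N_l$, under ${}_2N^A_r$ and ${}_3N^A_r$ equals $N_r$, and under both ${}_1N^A_m$ and ${}_2N^A_m$ equals $N_m$ (for ${}_2N^A_m$ because $N_m$ is a group, hence closed under inversion).

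For a commutative LIP-loop $(Q,\cdot)$, Corollary~\ref{A_NUCLEI_LIP_QUAS}(1) gives ${}_3N^A_l = {}_2N^A_m$; acting on $1$, the left side has orbit $N_l$ and the right side has orbit $N_m$, so $N_l=N_m$. Commutativity together with Corollary~\ref{Normality_OF_quasigroup_Nuclei}(1) yields $N_l=N_r$ and $N_l\trianglelefteq Q$ (as the coset class $\Theta^A_l(1)$ of a normal congruence). Hence $N_l=N_r=N_m\trianglelefteq Q$. The commutative RIP-loop is symmetric: Corollary~\ref{A_NUCLEI_LIP_QUAS}(2) gives ${}_1N^A_m = {}_3N^A_r$, whose orbits at $1$ are $N_m$ and $N_r$ (using ${}_3N^A_r={}_2N^A_r$ in a loop), so $N_m=N_r$, and commutativity again supplies $N_l=N_r$ and normality of $N_l$.

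For the three $\mu$-inverse cases I first establish ${}_1N^A_l = {}_2N^A_r$ in each. If $\mu_1=\varepsilon$ or $\mu_2=\varepsilon$ this is immediate from Lemma~\ref{COINCIND_OF_A_NUCL_L_R_M}(3)(a); if $\mu_3=\varepsilon$, then Lemma~\ref{COINCIND_OF_A_NUCL_L_R_M}(3)(b) gives ${}_3N^A_l = {}_3N^A_r$, and since in a loop ${}_1N^A_l={}_3N^A_l$ and ${}_2N^A_r={}_3N^A_r$, this again yields ${}_1N^A_l={}_2N^A_r$. Thus in all three cases ${}_1N^A_l={}_3N^A_l={}_2N^A_r={}_3N^A_r$, so the equivalences induced by these groups satisfy $\theta^A_l=\theta^A_r$; by Corollary~\ref{Eqiviv_NORMALITY} the equivalence $\theta^A_l$ is a normal congruence of $(Q,\cdot)$, and its coset class at $1$ is simultaneously $N_l$ and $N_r$. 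Therefore $N_l=N_r\trianglelefteq Q$.

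No genuine obstacle arises; the only points requiring attention are the bookkeeping in the last paragraph (in particular the $\mu_3=\varepsilon$ case, where one must pass through ${}_3N^A_l={}_3N^A_r$ before invoking the loop identities) and the verification that the orbit of $1$ under ${}_2N^A_m$ coincides with $N_m$ — both settled by Theorem~\ref{A-NUCLEI_OF_LOOPS} and the group structure of the Garrison nuclei.
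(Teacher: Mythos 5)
Your proof is correct and follows essentially the same route as the paper: the paper's own proof is a one-line appeal to Corollary \ref{A_NUCLEI_LIP_QUAS} (the coincidences of A-nuclear components), implicitly combined with the loop forms of Theorem \ref{A-NUCLEI_OF_LOOPS} and the normality machinery of Corollaries \ref{Eqiviv_NORMALITY} and \ref{Normality_OF_quasigroup_Nuclei}, which is exactly what you spell out. Your explicit use of Lemma \ref{COINCIND_OF_A_NUCL_L_R_M}(3) for the three $\mu$-inverse cases supplies a detail the paper's terse reference leaves implicit, but it is the same argument, not a different method.
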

\begin{proof}
The proof follows from Corollary \ref{A_NUCLEI_LIP_QUAS}.
\end{proof}

\medskip

{\bf Acknowledgment.}
The author is grateful to  Prof. A.A.~Gvaramiya for fruitful discussions on the subject  of this paper.
The author thanks Head of the Department of Mathematics and its Applications of Central European University  Professor G.~Moro\c sanu for hospitality, support, and useful advice during writing this paper.

\addcontentsline{toc}{section}{\protect{\bf \hskip2mm References}}

\end{document}